\newtheorem{theoremABC}{Theorem}
\theoremstyle{definition}
 \newtheorem{defi}{Definition}[section]
\theoremstyle{remark}
 \newtheorem{remark}[defi]{Remark}
 \newtheorem{exam}[defi]{Example}
\theoremstyle{plain}
\newtheorem{theo}[defi]{Theorem}
\newtheorem{prop}[defi]{Proposition}
\newtheorem{cor}[defi]{Corollary}
\newtheorem{lemma}[defi]{Lemma}
\newtheorem{assumption}[defi]{Assumption}
\newtheorem*{acknow}{Acknowledgments}
\def\e#1\e{\begin{equation}#1\end{equation}}
\def\ea#1\ea{\begin{align}#1\end{align}}
\numberwithin{equation}{section}
\newcommand{\zz}{\mathbb{Z}}
\newcommand{\rr}{\mathbb{R}}
\newcommand{\cc}{\mathbb{C}}
\newcommand{\hh}{\mathbb{H}}
\newcommand{\Z}[1]{\zz_{#1}}
\newcommand{\abs}[1]{\left\vert #1 \right\vert}
\newcommand{\red}{/\!\!/}
\newcommand{\Acal}{\mathcal{A}}
\newcommand{\Bcal}{\mathcal{B}}
\newcommand{\Ccal}{\mathcal{C}}
\newcommand{\Dcal}{\mathcal{D}}
\newcommand{\Ecal}{\mathcal{E}}
\newcommand{\Fcal}{\mathcal{F}}
\newcommand{\Hcal}{\mathcal{H}}
\newcommand{\Ical}{\mathcal{I}}
\newcommand{\Jcal}{\mathcal{J}}
\newcommand{\Kcal}{\mathcal{K}}
\newcommand{\Lcal}{\mathcal{L}}
\newcommand{\Mcal}{\mathcal{M}}
\newcommand{\Ncal}{\mathcal{N}}
\newcommand{\Pcal}{\mathcal{P}}
\newcommand{\Qcal}{\mathcal{Q}}
\newcommand{\Rcal}{\mathcal{R}}
\newcommand{\Scal}{\mathcal{S}}
\newcommand{\Tcal}{\mathcal{T}}
\newcommand{\Ucal}{\mathcal{U}}
\newcommand{\Vcal}{\mathcal{V}}
\newcommand{\Wcal}{\mathcal{W}}
\newcommand{\Xcal}{\mathcal{X}}
\newcommand{\Hund}{\underline{H}}
\newcommand{\Jund}{\underline{J}}
\newcommand{\Kund}{\underline{K}}
\newcommand{\Lund}{\underline{L}}
\newcommand{\Mund}{\underline{M}}
\newcommand{\Sund}{\underline{S}}
\newcommand{\Zund}{\underline{Z}}
\newcommand{\uund}{\underline{u}}
\newcommand{\M}{\mathscr{M}}
\newcommand{\Mg}{\mathscr{M}^\mathfrak{g}}
\newcommand{\Lscr}{\mathscr{L}}
\newcommand{\N}{\mathscr{N}}
\newcommand{\g}{\mathfrak{g}}
\newcommand{\Tel}{\mathrm{Tel}}
\newcommand{\Crit}{\mathrm{Crit}}
\newcommand{\ind}{\mathrm{ind}}
\renewcommand{\d}{\mathrm{d}}
\newcommand{\glag}{generalized Lagrangian correspondence}
\newcommand{\gint}{generalized intersection point}
\newcommand{\acs}{almost complex structure}
\newcommand{\arnaque}{\begin{flushright}
$\Box$
\end{flushright}}
\newcommand{\MW}{Manolescu and Woodward}
\newcommand{\WW}{Wehrheim and Woodward}
\newcommand{\KM}{Kronheimer and Mrowka}
\title[Equivariant Floer homology]{Equivariant Lagrangian Floer homology via cotangent bundles of $EG_N$}
\author{Guillem Cazassus}
\address{Centre for Quantum Mathematics,  University of Southern Denmark, Odense}
\email{g.cazassus@gmail.com}
\thanks{This work was funded by EPSRC grant reference EP/T012749/1.}
\begin{document}

\begin{abstract}
We provide a construction of equivariant Lagrangian Floer homology $HF_G(L_0, L_1)$, for a compact Lie group $G$ acting on a symplectic manifold $M$ in a Hamiltonian fashion, and a pair of $G$-Lagrangian submanifolds $L_0, L_1 \subset M$.

We do so by using symplectic homotopy quotients involving cotangent bundles of an approximation of $EG$. Our construction relies on Wehrheim and Woodward's theory of quilts, and the telescope construction.

We show that these groups are independent of the auxiliary choices involved in their construction, and are $H^*(BG)$-bimodules. In the case when $L_0 = L_1$, we show that their chain complex  $CF_G(L_0, L_1)$ is homotopy equivalent to the equivariant Morse complex of $L_0$.

Furthermore, if zero is a regular value of the moment map $\mu$ and if $G$ acts freely on $\mu^{-1}(0)$, we construct two ``Kirwan morphisms'' from $CF_G(L_0, L_1)$ to $CF(L_0/G, L_1/G)$ (respectively from $CF(L_0/G, L_1/G)$ to $CF_G(L_0, L_1)$).

Our construction applies to the exact and monotone settings, as well as in the setting of the extended moduli space of flat $SU(2)$-connections of a Riemann surface, considered in Manolescu and Woodward's work. Applied to the latter setting, our construction provides an equivariant symplectic side for the Atiyah-Floer conjecture.

\end{abstract}

\maketitle
\tableofcontents

%\begin{figure}[!h]
%    \centering
%    \def\svgwidth{.80\textwidth}
%    \input{.pdf_tex}
%      \caption{}
%      \label{fig:}
%\end{figure}

\section{Introduction}
\label{sec:intro}

Lagrangian Floer homology is a group $HF(M; L_0, L_1)$\footnote{It is usually denoted $HF(L_0, L_1)$ when the symplectic manifold is fixed. As this will not be the case in this paper, we prefer to keep $M$ in the notation.}  associated with a pair of Lagrangians $L_0, L_1$ in a symplectic manifold $M$, provided these satisfy some assumptions. Depending on those assumptions, these groups are more or less complicated to define. A particularly difficult setting for defining these groups is when $M$ or $L_0, L_1$ have singularities. In practice, many interesting singular symplectic manifolds and Lagrangians arise as a symplectic reduction: if $M$ is a Hamiltonian $G$-manifold for some compact Lie group, and $L_0, L_1$ are $G$-Lagrangians, then unless the action is nice enough, $M\red G$ and $L_0/G, L_1/G$ might be singular.

For instance, this is the setting of the Atiyah-Floer conjecture in its initial formulation \cite{Atiyahfloer}: given an integral homology 3-sphere $Y$, its instanton homology $I_*(Y)$ should be isomorphic to a Lagrangian Floer homology 
\e
HF(\M(\Sigma); \Lscr(H_0), \Lscr(H_1))
\e 
in the Atiyah-Bott moduli space $\M(\Sigma)$ of flat SU(2)-connexions (which is to say, the SU(2)-character variety of the surface $\Sigma$) of a Heegaard splitting $ Y = H_0 \cup_{\Sigma} H_1$. Unfortunately, this moduli space is singular (as well as the Lagrangians $\Lscr(H_0), \Lscr(H_1)$). Indeed, the main source of difficulty with the conjecture is that the symplectic side $HF(\M(\Sigma); \Lscr(H_0), \Lscr(H_1))$ is not currently well-defined in general. Nevertheless, this conjecture has been studied extensively, and established in some settings where all moduli spaces are smooth \cite{DostoglouSalamon,Salamon_AF,Wehrheim_lbc_asd,
Wehrheim_asd_lbc,SalamonWehrheim,Duncan_thesis,Daemi_Fukaya_strategy,Xu_AF,DaemiFukayaLipyanskiy}.

Yet, Jeffrey \cite{jeffrey} and Huebschmann \cite{huebschmann} observed that the moduli space $\M(\Sigma)$ can be realized as the symplectic reduction of a smooth SU(2)-Hamiltonian space: (an open subset $\N$ of) the so-called extended moduli space. Moreover this space $\N$ contains smooth SU(2)-Lagrangians $L_0, L_1$ such that $\Lscr(H_0) = L_0 /G$ and $ \Lscr(H_1) = L_1 /G$. Manolescu and Woodward \cite{MW} defined symplectic instanton homology $HSI(Y)$ as a (non-equivariant) Lagrangian Floer homology in $\N$, and suggested that a good candidate for the symplectic side of the Atiyah-Floer conjecture would be an \emph{equivariant} version $HF_{SU(2)}(\N ; L_0, L_1)$ of $HSI(Y)$, as a substitute for $HF(\M(\Sigma); \Lscr(H_0), \Lscr(H_1))$.

Several versions of equivariant Floer homologies have appeared in the literature, in different settings:
\begin{itemize}
\item Austin and Braam \cite{AustinBraamMorse} defined an equivariant version of Morse homology  for an equivariant
Morse-Bott function by combining features of Morse and deRham homology.

\item In \cite{Viterbo} Viterbo suggested the definition of an equivariant version of symplectic homology for the reparametrization U(1)-action. This was implemented in \cite{BourgeoisOancea} via a Borel construction.
\item In \cite{MiR_thesis,MiR_Hamilt_GW}, \cite{CGS_sve}, Mundet i Riera and independently Cieliebak, Gaio and Salamon introduced the symplectic vortex equation, which among other things furnishes a possible way of approaching Atiyah-Floer type problems. It was used to build homology group associated with a Hamiltonian $G$-manifold  that could be called an equivariant Floer homology. In the closed string case (no Lagrangians), Frauenfelder \cite{Frauenfelder} defined ``moment Floer homology''.  For a pair of Lagrangians, Woodward  defined ``quasimap Floer homology'' \cite{Woodward_quasimap_FH}. This equation is particularly well-suited for relating equivariant invariants to invariants of the symplectic quotient \cite{TianXu_symplectic_GLSM,Woodwardqkir1,Woodwardqkir2,Woodwardqkir3,NguyenWoodwardZiltener}, in analogy with the Kirwan map \cite{Kirwan_thesis}.

\item In \cite{SeidelSmith}, Seidel and Smith defined a version of equivariant Lagrangian Floer homology for symplectic involutions ($G = \Z{2}$). Generalizations to finite group actions appeared in \cite{HLSflex,HLSflex_corr,HondaBao,ChoHong}.

\item In \cite{HLSsimplicial}, Hendricks, Lipshitz and Sarkar defined an equivariant Lagrangian Floer homology for Lie group actions. Their construction involves the theory of $(\infty,1)$ categories.

\item In \cite{KimLauZheng}, in the case of a single Lagrangian $L_0 = L_1$, Kim, Lau and Zheng defined an equivariant Lagrangian Floer homology using a Borel construction similar (but slightly different, see Remark~\ref{rem:comparison_KLZ}) to the one in this paper.

\item In gauge theory, \KM\  \cite{KMbook} defined several versions of monopole homology, corresponding to U(1)-equivariant theories. For instanton homology, a first construction was given in Austin-Braam \cite{AustinBraamEquivFloer}, and later  generalized by Miller Eismeier \cite{Miller_equiv}. Daemi and Scaduto also defined a version for knots \cite{DaemiScaduto_equiv_si}, related to singular instanton homology.

\end{itemize}

In this paper, we present another construction for equivariant Lagrangian Floer homology for Hamiltonian action of a compact Lie group.  Our construction  applies to \MW's setting, in which case it gives an equivariant symplectic side for the Atiyah-Floer conjecture. See Section~\ref{sec:Equivariant_HSI} for further details. 
Also, it is  as simple as possible, both algebraically and analytically. Algebraically, it is based on the telescope construction, all we need is contained in Section~\ref{sec:telescopes}, some of which might be new. Analytically, we work in a setting that allows us to use domain dependent perturbations, so transversality is standard \cite{FloerHoferSalamon}. In particular we don't need to achieve equivariant transversality, a usually delicate problem.

Our personal motivation for this construction is to recast the Donaldson polynomials as an extended Field theory, where equivariant Floer homology groups would play the role of 3-morphism spaces in the target 3-category, and generalized Donaldson polynomials for 4-manifolds with corners would take their values in these groups. We refer to \cite{ham} for more details. 

\subsection{Outline of the construction}\label{ssec:outline_constr} 
It is known since Floer that Morse homology corresponds to Lagrangian Floer homology in a cotangent bundle. This identification provides a dictionary between these two theories. Our strategy is to first reformulate the definition of equivariant homology in a Morse-theoretical way, and then use this dictionary to translate this construction to that Lagrangian setting.

Let $X$ be a closed smooth manifold acted on by a compact Lie group $G$. By definition, equivariant homology is the homology of the homotopy quotient
\e
H_*^G(X) = H_*(X\times_G EG),
\e
and using a finite dimensional smooth approximation $\lbrace EG_N\rbrace_N$ of 
\e
EG = \lim_{\overrightarrow{\ N\ }}{EG_N},
\e
with inclusions $i_N\colon EG_N \to EG_{N+1}$, one can write 
\e
H_*^G(X) =  \lim_{\overrightarrow{\ N\ }}H_*(X\times_G EG_N).
\e
Now pick a Morse function $f_N$ on $X\times_G EG_N$ for each $N$. One may now express the equivariant homology as a direct limit of Morse
homologies: 
\e
H_*^G(X) =  \lim_{\overrightarrow{\ N\ }}HM_*(X\times_G EG_N, f_N), 
\e
using Morse-theoretic pushforwards of 
\e
id_X\times_G i_N\colon X\times_G EG_N \to X\times_G EG_{N+1}.
\e

Now if $X$ is a $G$-manifold, then $M= T^*X$ is a Hamiltonian $G$-manifold, and its zero section $L_0 = L_1= 0_X\subset T^*X$ is a $G$-Lagrangian. In analogy with the isomorphism  $HF(T^*X; 0_X, 0_X) \simeq HM(X)$, one can define the equivariant Lagrangian Floer homology of $(T^*X; 0_X, 0_X)$ by:
\ea\label{eq:equiv_T*X}
HF^G(T^*X; 0_X, 0_X) &= HM_*^G(X) \\
&=  \lim_{\overrightarrow{\ N\ }}HF(T^* (X\times_G EG_N); 0_{X\times_G EG_N}, 0_{X\times_G EG_N}). \nonumber
\ea
It turns out that 
\ea
T^* (X\times_G EG_N) &= (T^* X \times T^* EG_N) \red G\text{, under which:}\\
0_{X\times_G EG_N} &= (0_{X}\times 0_{EG_N})/G.
\ea
Therefore (\ref{eq:equiv_T*X}) provides a general definition for $HF^G(M;L_0, L_1)$ for more general $G$-Hamiltonian triples $(M;L_0, L_1)$:
\e\label{eq:def_equiv_Floer_homol}
HF^G(M;L_0, L_1) = \lim_{\overrightarrow{\ N\ }}HF((M \times T^* EG_N) \red G; (L_0\times 0_{EG_N})/G,(L_1\times 0_{EG_N})/G),
\e
assuming $(M;L_0, L_1)$ satisfy standard assumptions (exactness or monotonicity, see Assumptions~\ref{ass:exact_setting} and \ref{ass:monotone_setting}) so that the Floer homologies are well-defined. Constructing the maps from $N$ to $N+1$ will involve quilts that generalize  the Morse-theoretic  pushforwards.

\subsection{Statement of results}\label{ssec:results} 
Throughout this paper we work with $\Z{2}$-coefficients to avoid sign discussions, but we believe that everything should work with $\zz$ coefficients as well, provided Lagrangians are endowed with equivariant relative Pin-structures. The following statement summarizes Propositions~\ref{prop:Lcal_moduli_space}, \ref{prop:abs_grading_CF_G},  \ref{prop:phi_htpy_commutes_alpha_beta},  \ref{prop:htpy_continuation}, \ref{prop:relation_kappa} and  \ref{prop:bimod_str}.

\begin{theoremABC}\label{th:construction_intro} The construction outlined in Section~\ref{ssec:outline_constr} can be implemented in the exact and monotone setting (Assumptions~\ref{ass:exact_setting}, \ref{ass:monotone_setting}): at the chain level, it defines a \emph{telescope}
\e
CF_G(M;L_0, L_1) = \Tel (CF_N, \alpha_N),
\e
where $CF_N$ is the Floer chain complex in $(M\times T^* EG_N )\red G$ and $\alpha_N \colon CF_N \to CF_{N+1}$ chain morphisms induced by the inclusions $i_N\colon EG_N \to EG_{N+1}$. The homotopy type of this telescope complex is independent of the auxiliary choices involved in the constructions: Hamiltonian perturbations, \acs s. Its associated homology group $HF_G(M;L_0,L_1)$ is an $H^*(BG)$-bimodule.

If furthermore, in the sense of Definition~\ref{def:G-gradings}, $M$ admits a $\Z{n}$ Maslov $G$-cover $\widehat{\Lcal}_G$ of its Lagrangian Grassmannian bundle, and $L_0$ and $L_1$ admit a $(\widehat{\Lcal}_G, G)$-grading, then $CF_G(M;L_0, L_1)$ has an absolute grading over $\Z{n}$.
\end{theoremABC}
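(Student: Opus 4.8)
The plan is to establish each of the cited propositions in turn, building the telescope and verifying its properties step by step. First I would construct, for each $N$, the Floer chain complex $CF_N = CF((M\times T^*EG_N)\red G; (L_0\times 0_{EG_N})/G, (L_1\times 0_{EG_N})/G)$. Here I would need to check that the symplectic reduction $(M\times T^*EG_N)\red G$ together with the reduced Lagrangians satisfies the standing hypotheses (Assumptions~\ref{ass:exact_setting} or \ref{ass:monotone_setting}) so that ordinary Lagrangian Floer homology is defined; the key points are that $G$ acts freely on the relevant zero level set (which holds because $G$ acts freely on $EG_N$, an approximation of $EG$), that exactness/monotonicity is inherited by the reduction (the cotangent fibre directions contribute the needed exactness, as in the Morse-theoretic model $T^*(X\times_G EG_N)$), and that the reduced spaces are honestly smooth. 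This is the content of Proposition~\ref{prop:Lcal_moduli_space}.

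Next I would build the structure maps $\alpha_N\colon CF_N \to CF_{N+1}$. These come from the inclusions $i_N\colon EG_N\hookrightarrow EG_{N+1}$ viewed as generating a Lagrangian correspondence (the graph of $i_N$, or rather its cotangent lift $0_{\mathrm{graph}(i_N)}\subset T^*EG_N^- \times T^*EG_{N+1}$), reduced by $G$; counting quilted pseudoholomorphic strips with this seam gives the chain map, and the standard quilt machinery of Wehrheim and Woodward, together with domain-dependent perturbations (so transversality is the standard result of \cite{FloerHoferSalamon}, with no equivariant transversality needed), shows $\alpha_N$ is a chain map. One then forms $CF_G(M;L_0,L_1) = \Tel(CF_N,\alpha_N)$ using the telescope construction of Section~\ref{sec:telescopes}. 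For the grading statement I would invoke Proposition~\ref{prop:abs_grading_CF_G}: a $\Z{n}$ Maslov $G$-cover $\widehat{\Lcal}_G$ and $(\widehat{\Lcal}_G,G)$-gradings of $L_0, L_1$ descend to gradings of the reduced Lagrangians in each $(M\times T^*EG_N)\red G$ (the cotangent factor $T^*EG_N$ is canonically graded), the maps $\alpha_N$ shift degree by a fixed amount that can be normalized to zero, and the telescope of graded complexes with degree-zero maps is graded over $\Z{n}$.

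Independence of the homotopy type from the auxiliary choices is a two-layer argument: for fixed $N$, different Hamiltonian perturbations and almost complex structures give homotopy equivalent $CF_N$ by the usual continuation-map argument (Proposition~\ref{prop:htpy_continuation}), and these homotopy equivalences can be made compatible with the $\alpha_N$ up to homotopy (Proposition~\ref{prop:phi_htpy_commutes_alpha_beta}), so by the functoriality of the telescope construction under such compatible systems (again Section~\ref{sec:telescopes}) the telescopes are homotopy equivalent. One must also check independence of the choice of approximation sequence $\{EG_N\}$ itself, using cofinality of any two such sequences in the directed system and the relation between the composed structure maps (Proposition~\ref{prop:relation_kappa}). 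Finally, the $H^*(BG)$-bimodule structure (Proposition~\ref{prop:bimod_str}) comes from the cup-product action of $H^*(BG_N) \cong H^*(X\times_G EG_N$ over $X)$ — more precisely, quilted operations inserting classes pulled back from $BG$ on the left and right seams — which is compatible with the $\alpha_N$ in the limit; passing to the telescope and then to homology gives the bimodule.

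The main obstacle I expect is the second step: showing that the quilted maps $\alpha_N$ are well-defined chain maps and that the whole system assembles coherently enough to feed into the telescope — in particular, proving the requisite compactness and gluing for the quilted strips with a cotangent-type seam in a symplectic reduction, and checking that bubbling is excluded by the exactness/monotonicity hypotheses, uniformly in $N$. Verifying that the homotopies relating different choices commute with the $\alpha_N$ up to coherent homotopy (so the telescope is genuinely an invariant) is the other delicate point, and is exactly why the algebraic preliminaries on telescopes in Section~\ref{sec:telescopes} are isolated first.
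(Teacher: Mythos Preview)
Your overall architecture matches the paper's: construct $CF_N$, build $\alpha_N$ via quilts with seam condition coming from the conormal of the graph of $i_N$, form the telescope, prove invariance by continuation maps, and add the bimodule structure via quilted pair-of-pants. However, you misread the role of two of the cited propositions, and you add a claim that is neither in the theorem nor in the paper.

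First, Proposition~\ref{prop:Lcal_moduli_space} is not about constructing $CF_N$; that is Proposition~\ref{prop:def_CF_N}. Proposition~\ref{prop:Lcal_moduli_space} is precisely about the increment maps $\alpha_N$: it proves the quilted moduli spaces $\Lcal(x,y;\Pcal_N)$ are smooth of expected dimension and that $\alpha_N$ is a chain map. The heart of that proof is a bubbling analysis you only gesture at: quilted sphere bubbles at the seam $\Lambda_N$ and strip breakings at the free ends are ruled out by lifting to $M^-\times T_N^-\times M\times T_{N+1}$ and capping the $N_{\Gamma(i_N)}$-boundary by an explicit strip in $\Delta_M\times N_{\Gamma(i_N)}$, so as to obtain a disc with boundary in $L_i\times 0_N\times L_i\times 0_{N+1}$, whose area/index can then be controlled by the exactness or monotonicity hypotheses. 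This lifting-and-capping trick is the key idea and is not standard quilt machinery.

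Second, your interpretation of Proposition~\ref{prop:relation_kappa} is wrong. It has nothing to do with independence of the approximation sequence $\{EG_N\}$; the paper fixes that sequence and never proves independence from it (and the theorem does not assert such independence --- it only claims independence of Hamiltonian perturbations and almost complex structures). Rather, Propositions~\ref{prop:phi_htpy_commutes_alpha_beta}, \ref{prop:htpy_continuation}, and \ref{prop:relation_kappa} together form a single argument: \ref{prop:phi_htpy_commutes_alpha_beta} builds the continuation map $\Tel(\varphi_N,\kappa_N)$ between telescopes for two choices of Floer and perturbation data; \ref{prop:htpy_continuation} and \ref{prop:relation_kappa} show that two different choices of the intermediate perturbation data $(\Qcal_N,\Rcal_N^L)$ yield homotopic maps on telescopes. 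One then composes a continuation map with one going the other way and checks, via a two-parameter family of perturbations over a square (the families $\Vcal_N^1,\ldots,\Vcal_N^5$ in the paper), that the composition is homotopic to $(\mathrm{id},0)$, so Corollary~\ref{cor:homotopy_equiv_telescopes} applies. You should reorganize your invariance paragraph accordingly and drop the claim about cofinality of $EG_N$ sequences.
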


The next three theorems are likely to hold for the other constructions of equivariant Lagrangian Floer homology, but are new to our best knowledge.

\vspace*{.2cm}

In the case when the Lagrangians coincide, we prove the following, using the Piunikhin-Salamon-Schwarz construction \cite{PSS}:

\begin{theoremABC}\label{th:Floer_vs_Morse_intro}
Let $L\subset M$  satisfy either Assumption~\ref{ass:exact_setting} or \ref{ass:monotone_setting}, with $L_0 = L_1 = L$. Then $CF_G(M; L,L))$ is homotopy equivalent to $CM_G(L)$, and the resulting homology groups are isomorphic  as $H^*(BG)$-bimodules. In particular, if $X$ is a smooth, compact $G$-manifold, then $CM_G(X) \simeq CF_G(T^* X; 0_X, 0_X)$.
\end{theoremABC}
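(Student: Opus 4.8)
The plan is to prove the homotopy equivalence $N$ by $N$ and then pass to telescopes. First I would fix the identification already noted in the excerpt: for each $N$, writing $M_N := (M \times T^*EG_N)\red G$, the PSS construction of \cite{PSS} produces a homotopy equivalence $\Phi_N \colon CF(M_N; (L\times 0_{EG_N})/G, (L\times 0_{EG_N})/G) \to CM(L\times_G EG_N, f_N)$, where $f_N$ is an auxiliary Morse function on $L\times_G EG_N$. This is where Assumptions~\ref{ass:exact_setting} or \ref{ass:monotone_setting} enter: one needs the PSS moduli spaces (half-tubes with a spiked Morse trajectory, and their glued composites) to be cut out transversally and compact after allowing domain-dependent Hamiltonian perturbations, so that $\Phi_N \circ \Psi_N$ and $\Psi_N \circ \Phi_N$ are chain-homotopic to the identity via the usual parametrized moduli spaces. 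Since we are told transversality is standard in this setting (domain-dependent perturbations, no equivariant transversality needed), I would cite the relevant part of \cite{PSS} and the monotone/exact adaptations, and record that the construction of $\Phi_N$ and the homotopies is functorial enough to be carried out uniformly in $N$.

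The second step is to check that the PSS maps are compatible with the structure maps of the two telescopes. On the Floer side the maps $\alpha_N \colon CF_N \to CF_{N+1}$ are the quilt-induced morphisms of Theorem~A; on the Morse side the maps are the Morse-theoretic pushforwards $(\mathrm{id}_L \times_G i_N)_*$ along $\mathrm{id}_L\times_G i_N \colon L\times_G EG_N \to L\times_G EG_{N+1}$, which by construction assemble into the equivariant Morse complex $CM_G(L)$. I would show that the square relating $\alpha_N$, the pushforward, $\Phi_N$ and $\Phi_{N+1}$ commutes up to an explicit chain homotopy, by counting a one-parameter family of quilted/PSS configurations interpolating between ``first do PSS, then push forward'' and ``first do the quilt map, then do PSS'' — concretely, a quilt with one seam labelled by the graph correspondence of $i_N$ and a PSS spike attached on the boundary. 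This is the diagram chase that Lemma/Proposition on telescopes in Section~\ref{sec:telescopes} is designed to absorb: a collection of chain maps $\{\Phi_N\}$ commuting with structure maps up to coherent homotopy induces a homotopy equivalence of telescopes. Applying that result gives $CF_G(M;L,L) = \Tel(CF_N,\alpha_N) \simeq \Tel(CM(L\times_G EG_N, f_N), (\mathrm{id}\times_G i_N)_*) = CM_G(L)$.

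The third step is the module structure. Both sides carry $H^*(BG)$-bimodule structures — on the Morse side via the standard cap/cup action of $H^*(BG) = \varprojlim H^*(EG_N/G)$ on equivariant Morse homology, on the Floer side via the action constructed in Theorem~A (Proposition~\ref{prop:bimod_str}). I would verify that $\Phi_N$ intertwines these: the $H^*(BG)$-action is implemented geometrically by inserting a constraint ``the appropriate marked point maps into a cycle pulled back from $EG_N/G$'', and this constraint can be placed on the PSS half-tube without interacting with the spike, so the intertwining is again a moduli-space cobordism argument. Passing to the limit/telescope identifies the two bimodule structures. Finally, the last sentence of the theorem is the special case $M = T^*X$, $L = 0_X$: here $L\times_G EG_N = 0_X \times_G EG_N = X\times_G EG_N$ and the telescope on the Morse side is by definition $CM_G(X)$, so $CM_G(X)\simeq CF_G(T^*X;0_X,0_X)$ is immediate.

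The main obstacle I expect is the second step: setting up the interpolating moduli space that proves $\Phi_{N+1}\circ\alpha_N \simeq (\mathrm{id}\times_G i_N)_* \circ \Phi_N$, i.e.\ checking that the quilt maps $\alpha_N$ of Theorem~A — built from the geometric composition with $T^*(\text{graph } i_N)$ and strip-shrinking — are genuinely the Floer-theoretic avatar of the Morse pushforward under PSS. This requires a compatible choice of perturbation data across the two constructions and a careful gluing/degeneration analysis (including ruling out bubbling and ensuring the strip-shrinking limit of \WW\ applies), rather than anything conceptually new; once it is in place the telescope formalism of Section~\ref{sec:telescopes} does the rest.
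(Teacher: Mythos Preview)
Your proposal is correct and follows essentially the same route as the paper: levelwise PSS maps $PSS_N$, a one-parameter moduli space giving the homotopy $PSS_{N+1}\circ\alpha_N \simeq j_N\circ PSS_N$, then Corollary~\ref{cor:homotopy_equiv_telescopes} to pass to telescopes, with the bimodule compatibility done by a separate deformation argument (Proposition~\ref{prop:PSS_mod_str}). One correction on your anticipated obstacle: no strip-shrinking or \WW\ limit is needed for the interpolating family --- the paper's $\Xcal^L(x,y)$ (Figure~\ref{fig:PSS_commmutes_i_N}) simply pushes the vertical seam of the $\alpha_N$-quilt to $s=-\infty$ as $L\to 0^-$, and for $L\geq 0$ replaces the configuration by an ordinary PSS half-strip in $M_N$ followed by a \emph{grafted} flowline (a segment of length $L$ in $L^N$, then a half-line in $L^{N+1}$); the compactness and bubbling analysis is then no harder than for $\alpha_N$ itself (Proposition~\ref{prop:Lcal_moduli_space}).
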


When the symplectic quotient is nice enough, we define in Section~\ref{sec:Kirwan_maps} ``Kirwan morphisms'' relating equivariant Floer homology with Floer homology of the quotient:

\begin{theoremABC}\label{th:Kirwan_maps_intro} If the action of $G$ on $M$ is regular (in the sense of Definition~\ref{def:reduction}, so that the symplectic quotient $M\red G$ is smooth), then the equivariant Floer complex of $(M;L_0,L_1)$ is related to the non-equivariant Floer complex of the quotients $(M\red G;L_0/G,L_1/G)$ by two morphisms:
\ea
K &\colon CF_G(M;L_0,L_1)\to CF(M\red G;L_0/G,L_1/G)\\
K' &\colon CF(M\red G;L_0/G,L_1/G) \to CF_G(M;L_0,L_1)
\ea
\end{theoremABC}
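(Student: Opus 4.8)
The plan is to construct $K$ and $K'$ as homotopy colimits of chain maps defined on each stage $CF_N = CF((M\times T^*EG_N)\red G; (L_0\times 0_{EG_N})/G, (L_1\times 0_{EG_N})/G)$, using the quilted Floer theory of Wehrheim and Woodward together with the reduction-by-$G$ operation. The key geometric input is that when the $G$-action on $M$ is regular, there is a Lagrangian correspondence — a version of the one used in the definition of the Kirwan map in the Fukaya-categorical setting — between $(M\times T^*EG_N)\red G$ and $M\red G$. Concretely, $T^*EG_N$ carries the tautological Lagrangian $0_{EG_N}$, and reducing $M\times T^*EG_N$ by $G$ in two steps (first shrinking $T^*EG_N$ to its zero section, which is the base $EG_N$ on which $G$ acts freely) exhibits a correspondence $\Lambda_N \subset \bigl((M\times T^*EG_N)\red G\bigr)^- \times (M\red G)$ whose composition with $(L_i\times 0_{EG_N})/G$ is geometrically $L_i/G$ (this is essentially the statement $0_{X\times_G EG_N} = (0_X\times 0_{EG_N})/G$ from the outline, applied with $M$ in place of $T^*X$). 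First I would make this correspondence precise and check it is embedded, monotone/exact with the correct monotonicity constants, and that the relevant geometric compositions are transverse and embedded so that quilted Floer homology with $\Lambda_N$ inserted is defined and computes $CF(M\red G; L_0/G, L_1/G)$.

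Next I would define $K_N \colon CF_N \to CF(M\red G; L_0/G, L_1/G)$ and $K'_N$ in the opposite direction as the quilted maps associated to $\Lambda_N$ (read in the two directions), exactly as relative invariants of a quilted half-plane with seam labelled by $\Lambda_N$; this is the same mechanism that produces the $\alpha_N \colon CF_N\to CF_{N+1}$ in Theorem A, so no new analysis is needed beyond Wehrheim–Woodward and the standard domain-dependent perturbation package cited in the introduction. Then I would verify the compatibility square: up to chain homotopy, $K_{N+1}\circ \alpha_N \simeq K_N$ (and dually $\alpha_N \circ K'_N \simeq K'_{N+1}$), because both sides are relative invariants of quilted surfaces that differ by a seam-insertion move, and the strip-shrinking / figure-eight removal theorems identify them; the homotopy is the quilt interpolating between the two seam configurations. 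Granting these squares, $K$ and $K'$ are induced on the telescopes $CF_G(M;L_0,L_1) = \Tel(CF_N,\alpha_N)$ by the universal property of the telescope construction developed in Section~\ref{sec:telescopes} (a compatible family of chain maps out of / into a direct system assembles into a map out of / into its telescope, well-defined up to homotopy). One should also record that these maps are $H^*(BG)$-module maps for the appropriate actions — on the quotient side $H^*(BG)$ acts through its augmentation, or via the characteristic classes of the residual data — which follows since the $H^*(BG)$-module structure in Theorem A is itself built from the maps $\alpha_N$ and cap products that are intertwined by the $\Lambda_N$-insertion.

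The main obstacle I expect is the geometric setup of $\Lambda_N$ and the verification of the hypotheses needed for quilted Floer homology to behave well: one must ensure the correspondence is monotone with matching monotonicity constant (this is where regularity of the action, i.e. $0$ a regular value and $G$ acting freely on $\mu^{-1}(0)$, is essential, and one must track how the cotangent factor $T^*EG_N$ — which is non-compact — is handled, presumably by the same bounded-geometry or convexity device used elsewhere in the paper to make Floer homology in $(M\times T^*EG_N)\red G$ well-defined), and one must check the geometric compositions defining $L_i/G$ are embedded, not merely immersed, so that no figure-eight bubbling or quilted disk bubbling obstructs the chain maps or the homotopies. A secondary point is bookkeeping the gradings: if $M$ carries the $\Z{n}$ Maslov $G$-cover of Theorem A, one must check $\Lambda_N$ is graded so that $K,K'$ have well-defined degrees (degree $0$, or a fixed shift by $\tfrac12\dim G$ depending on conventions). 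Once these are in place the telescope formalism does the rest, and I would not expect to need any form of equivariant transversality.
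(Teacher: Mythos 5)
Your proposal matches the paper's construction in Section~\ref{sec:Kirwan_maps}: the maps $K_N, K_N'$ are defined by counting quilts whose seams implement the passage from $M_N$ to $M\red G$, they are checked to commute with the increments $\alpha_N$ up to homotopies $\kappa_N, \kappa_N'$, and the telescope formalism of Section~\ref{sec:telescopes} then produces $K=\Tel(K_N,\kappa_N)$ and $K'=\Tel(K_N',\kappa_N')$. The one substantive difference is that the paper keeps the reduction as a chain of three correspondences $M_N \xrightarrow{\mu_N^{-1}(0)} M\times T_N \xrightarrow{\Delta_M\times 0_N} M \xrightarrow{\mu^{-1}(0)} M\red G$ decorating three separate seams (equivalently a foam), rather than a single composed correspondence $\Lambda_N\subset M_N^-\times(M\red G)$ as you suggest; this sidesteps a point your plan would stumble on, namely that while the three-fold composition is a smooth embedded Lagrangian $(\mu^{-1}(0)\times EG_N)/G$, it is \emph{not} an embedded geometric composition in the Wehrheim--Woodward sense (the fiber product has $G$-dimensional fibers over its image), so ``checking the composition is embedded'' as you propose would fail and one should either define the single correspondence directly without invoking composition, or, as the paper does, never compose at all.
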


Finally, we apply our construction to \MW's setting. The following statement corresponds to  Theorem~\ref{th:equiv_HSI}:

\begin{theoremABC}\label{th:equiv_HSI_intro} 
The construction outlined in Section~\ref{ssec:outline_constr} can also be implemented in \MW's setting, i.e. when $M = \N(\Sigma')$ is the open subset of the extended moduli space involved in \cite{MW}. We call the corresponding group $HSI_G(Y)$ equivariant symplectic instanton homology. It is a relatively $\Z{8}$-graded $H^*(BSU(2))$-module,  and in the case when $Y$ is a rational homology sphere, an absolute $\Z{8}$-grading can be fixed canonically.
\end{theoremABC}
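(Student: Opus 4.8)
The plan is to verify that the triple $(\N(\Sigma'); L_0, L_1)$, equipped with its Hamiltonian $SU(2)$-action, meets the input requirements of the machinery developed in Sections~\ref{sec:telescopes}--\ref{sec:Equivariant_HSI} — essentially Theorem~\ref{th:construction_intro} — and then to read off the grading. Recall \MW's setup: for a Heegaard splitting $Y = H_0\cup_\Sigma H_1$ one removes a disk from $\Sigma$ to obtain $\Sigma'$, and the associated open subset $\N(\Sigma')$ of Jeffrey's extended moduli space is a smooth monotone Hamiltonian $SU(2)$-manifold containing monotone $SU(2)$-Lagrangians $L_0,L_1$ (lifts of $\Lscr(H_0),\Lscr(H_1)$) of minimal Maslov number $\geq 2$; furthermore \MW\ prove that, for suitable Hamiltonian perturbations, all Floer trajectories between $L_0$ and $L_1$ take values in a fixed compact subset of the noncompact space $\N(\Sigma')$, so that $HSI(Y)=HF(\N(\Sigma');L_0,L_1)$ is well defined and relatively $\Z{8}$-graded. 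What must be checked is that monotonicity, the confinement of trajectory spaces, and the $\Z{8}$-grading persist through the operations of (\ref{eq:def_equiv_Floer_homol}) — forming $(\N(\Sigma')\times T^*EG_N)\red SU(2)$ with the reduced Lagrangians $(L_i\times 0_{EG_N})/SU(2)$, and passing to the telescope.

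Monotonicity and smoothness of the reduction come first. Since $SU(2)$ acts freely on $EG_N$, the product action on $\N(\Sigma')\times T^*EG_N$ is free in a neighbourhood of the zero level of the product moment map, so $(\N(\Sigma')\times T^*EG_N)\red SU(2)$ is smooth for every $N$, exactly as in the general construction. Because $T^*EG_N$ and its zero section are exact, the reduction argument used in the monotone case of Theorem~\ref{th:construction_intro} (around Definition~\ref{def:reduction}) shows that this reduced space is monotone with the same monotonicity constant as $\N(\Sigma')$ and that $(L_i\times 0_{EG_N})/SU(2)$ remain monotone with minimal Maslov $\geq 2$; so all the algebra of Section~\ref{sec:telescopes} applies without change.

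The crux, and the step I expect to be the main obstacle, is compactness, since the reduced space is noncompact in two independent directions. In the $\N(\Sigma')$-direction I would reuse \MW's a priori $C^0$-estimates: the perturbations defining the complexes $CF_N$ and the morphisms $\alpha_N$ are to be chosen so that, by the geometry of the extended moduli space together with compactness of the $L_i$ and the monotone energy bound, all quilted Floer trajectories stay in a fixed compact set. In the cotangent-fibre direction of $T^*EG_N$ the picture is that of the Morse-theoretic model underlying the whole construction (Section~\ref{ssec:outline_constr}): with \acs s and Hamiltonians chosen linear (of contact type) in the fibre at infinity, a maximum-principle argument confines trajectories to a compact neighbourhood of $0_{EG_N}$. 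The delicate part is to arrange both confinement mechanisms simultaneously and compatibly with the domain-dependent perturbation scheme that makes transversality standard \cite{FloerHoferSalamon}; once this is in place, Gromov compactness yields well-defined $CF_N$, $\alpha_N$, and all the homotopies of the earlier sections, hence the telescope $CF_G(\N(\Sigma');L_0,L_1)=\Tel(CF_N,\alpha_N)$, whose homotopy type is independent of the choices and whose homology $HSI_G(Y)$ is an $H^*(BSU(2))$-module by Theorem~\ref{th:construction_intro} (note $H^*(BSU(2);\zz_2)=\zz_2[u]$ with $\deg u=4$).

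It remains to pin down the grading. \MW's relative $\Z{8}$-grading persists provided one can produce, in the sense of Definition~\ref{def:G-gradings}, a $\Z{8}$ Maslov $SU(2)$-cover $\widehat{\Lcal}_{SU(2)}$ of the Lagrangian grassmannian bundle of $\N(\Sigma')$ together with $(\widehat{\Lcal}_{SU(2)},SU(2))$-gradings of $L_0,L_1$; since $SU(2)$ is connected (in fact $2$-connected), its action on the relevant first cohomology groups and on the set of components of the Lagrangian grassmannian bundle is trivial, so the obstruction to refining \MW's Maslov cover and gradings $SU(2)$-equivariantly vanishes, and the equivariant data exist and restrict to \MW's. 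Thus $CF_G(\N(\Sigma');L_0,L_1)$ is relatively $\Z{8}$-graded, and when $Y$ is a rational homology sphere one fixes a canonical absolute $\Z{8}$-grading on $HSI_G(Y)$ by repeating \MW's normalization — their distinguished generators having evident equivariant counterparts — equivalently, by requiring compatibility with the non-equivariant grading under the natural map from the initial stage of the telescope.
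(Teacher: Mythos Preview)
Your proposal misses the central mechanism by which the paper (following \MW) achieves compactness in the $\N(\Sigma')$-direction. \MW\ do not establish ``a priori $C^0$-estimates'' on Floer strips in the open manifold $\N(\Sigma')$; rather, they perform a \emph{symplectic cutting} at $\tilde\mu=1/2$ to obtain a compact space $\N^c=\N\cup R$, equip it with two $2$-forms (a monotone but degenerate $\tilde\omega$ and a nondegenerate $\omega$), and define $HSI$ as Floer homology in $\N^c$ \emph{relative to the hypersurface $R$}. The paper's proof of Theorem~\ref{th:equiv_HSI} transports exactly this structure to the homotopy quotients: one cuts $M_N=(M\times T_N)\red G$ fibrewise to get $\N_N^c$ with its hypersurface $R_N$, and defines $CF_N$ as the \emph{relative} complex. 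Compactness in the fibre direction is then automatic (the fibre $\N^c$ is closed), and in the base direction it follows from the holomorphicity of the projection $\pi_N^c\colon \N_N^c\to B_N$ (Proposition~\ref{prop:fibration_homotopy_quotient}). The substantive work is the bubbling analysis: bubbles with positive $\tilde\omega_N$-area are excluded by monotonicity as in Proposition~\ref{prop:Lcal_moduli_space}, while a zero-area bubble lies in $R_N$, projects to a constant in $B_N$, hence lives in a single fibre $\N^c$ where \MW's intersection-number argument (such a sphere has $R_N$-intersection $\leq -2$, forcing the principal component to meet $R_N$) applies. For quilted sphere bubbles at the seam of $\alpha_N$ one folds and again reduces to a fibre. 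None of this appears in your outline.

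Two further points. Your claim that the $L_i$ have minimal Maslov number ``$\geq 2$'' is both incorrect and insufficient: \MW\ show $N_{\N}$ is a positive multiple of $4$, so the $L_i$ (being simply connected) have minimal Maslov a multiple of $8$---this is what yields the relative $\Z{8}$-grading directly, without constructing an equivariant Maslov cover. And for the absolute grading when $Y$ is a $\qq HS^3$, the paper uses that $L_0^N$ and $L_1^N$ intersect \emph{cleanly} along a copy of $0_{B_N}$ (since $L_0,L_1$ meet transversely at the trivial representation), perturbs by a Morse function on $0_{B_N}$, and assigns Morse indices; your ``evident equivariant counterparts'' of distinguished generators is not a construction.
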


We believe $HSI_G(Y)$  is independent of the Heegaard splitting of $Y$, and outline a proof of this in Remark~\ref{rem:indep_Heegaard_splitting}. Furthermore, it provides an equivariant symplectic side for the Atiyah-Floer conjecture: we believe it should be isomorphic to a suitably defined $SU(2)$-equivariant version of instanton homology  \cite{Miller_equiv}.

\subsection{Organization of the paper}\label{ssec:orga} 

In Section~\ref{sec:telescopes}, we start by setting the algebraic framework of telescopes that we will be using in our constructions.

In Section~\ref{sec:equiv_Morse}, we provide more details about the Morse theoretical construction of equivariant homology outlined above.

In Section~\ref{sec:equiv_Floer}, after reviewing some standard material about Hamiltonian actions and Lagrangian Floer homology, we set our working assumptions and construct $CF_G(M,L_0, L_1)$.

In Section~\ref{sec:continuation_maps}, we define continuation maps to prove independence of perturbations.

In Section~\ref{sec:Floer_vs_Morse}, we compute the case $L_0= L_1$, using a PSS construction.

In Section~\ref{sec:module_str}, we define the bimodule structure on $HF_G(M,L_0, L_1)$.

In Section~\ref{sec:Kirwan_maps}, we construct the Kirwan maps of Theorem~\ref{th:Kirwan_maps_intro}.

Finally, in Section~\ref{sec:Equivariant_HSI} we focus on \MW's setting and define equivariant symplectic instanton homology of a 3-manifold.

\begin{acknow}I would like to thank Ciprian Manolescu and Chris Woodward for suggesting this project many years ago, as well as Frédéric Bourgeois, Paolo Ghiggini, Dominic Joyce, Paul Kirk, Artem Kotelskiy, Ciprian Manolescu, Eckhard Meinrenken, Mike Miller Eismeier,  Alex Oancea, James Pascaleff, Alex Ritter, Matt Stoffregen, Claude Viterbo, Chris Woodward, Guangbo Xu and Wai-Kit Yeung for helpful conversations. I also thank Julio Sampietro for pointing out a minor mistake in the definition of $\Lambda_N$ in a previous version. I would also like to thank the anonymous referee, whose careful comments contributed to improving the exposition.
\end{acknow}

\section{Telescopes}
\label{sec:telescopes}

As in symplectic homology \cite{Viterbo,BourgeoisOancea} or Wrapped Floer homology \cite{AbouzaidSeidel}, we will define the equivariant chain complex as a homotopy colimit. The telescope construction is a nice model for this homotopy colimit. We first review its construction at the level of spaces, as it sheds light on the chain level construction that we will be using to define the equivariant Morse and Floer complexes.

\subsection{Spaces}\label{ssec:spaces}
\begin{defi}[Telescopes]\label{def:Telescopes_topol}
Let $(X_N, a_N)_{N\geq 0}$ be a sequence of spaces, with \emph{increment} maps 
\[
a_N\colon X_N \to X_{N+1}.
\]
The \emph{mapping telescope} of this sequence is defined by
\[
\Tel(X_N, a_N) = \left(\mathbin{\coprod}_{N} X_N \times I_t  \right) /(x,1)\sim (a_N(x), 0)
\]
 with $I_t = [0,1]$. The subscript $t$ is here to indicate the variable we will use for elements in $I_t$.

This telescope construction is endowed with a shift map 
\[
a\colon \Tel(X_N, a_N) \to \Tel(X_N, a_N)
\] 
defined by $a(x,t) = (a_N(x),t)$. The shift is a homotopy equivalence, as it is connected to the identity through the path of maps $(a^u)_{0\leq u\leq 1}$ defined by:
\[
a^u(x,t)= \begin{cases}(x, t+u)&\text{ if }t\leq 1-u ,\\
(a_N(x), t+u-1) &\text{ if } t\geq 1-u.
\end{cases}
\]
\end{defi}
\begin{defi}[Maps between telescopes]\label{def:Maps_between_Telescopes}
Let $(X_N, a_N)_{N\geq 0}$ and $(Y_N, b_N)_{N\geq 0}$ be such sequences of spaces. Consider a sequence of maps 
\[
f_N\colon X_N \to Y_N
\]
that commute with the increments $a_N, b_N$ up to homotopy, in the sense that there exist  maps $k_N \colon X_N \times I_t \to Y_{N+1}$ such that
\begin{align*}
k_N(\cdot, 0) &= b_N f_N \\
k_N(\cdot, 1) &=  f_{N+1} a_N .
\end{align*}
In this setting one can define a map
\[
\Tel(f_N,k_N) \colon \Tel(X_N)\to \Tel(Y_N)
\] 
by setting
\e 
\Tel(f_N, k_N) (x,t) = \begin{cases}
(f_N(x), 2t) &\text{ if } 0\leq t \leq \frac{1}{2},\\
(k_N(x, 2t-1),0) &\text{ if } \frac{1}{2}\leq t \leq 1.
\end{cases}\label{eq:map_on_telescopes}
\e
\end{defi}
\begin{proof}[Proof that the map is well-defined]
These two quantities agree when $t=\frac{1}{2}$, since 
\[
(f_N(x), 1) =  (b_N f_N(x), 1) = (k_N(x, 0),0).
\]
Furthermore, $\Tel(f_N, k_N)$ maps both $(x,1)$ and $(a_N(x),0)$ to the same element $(f_{N+1} a_N(x),0)$.

\end{proof}
\begin{remark} One could have used other formulas here, for example by setting
\[
\Tel(f_N, k_N) (x,t) = (k_N(x,t),t).
\]
but in view of the next section, the one we use has the advantage of being a cellular map.
\end{remark}

\begin{defi}[Homotopies between telescopes]\label{def:Homotopies_between_Telescopes}
Consider now two sequences of maps
\[
f_N^0, f_N^1 \colon X_N \to Y_N
\]
that are homotopic and that  commute with the increments $a_N, b_N$ up to homotopy: we are given two maps
\begin{align*}
f_N &\colon X_N\times I_u \to Y_N,\\
k_N &\colon X_N \times I_t\times I_u \to Y_{N+1},
\end{align*}
satisfying:
\begin{align*}
 f_N(\cdot, 0) &= f_N^0 ,\\
f_N(\cdot, 1) &= f_N^1 ,\\
k_N(x,0,u) &= b_N f_N(x, u), \\
k_N(x,1,u) &= f_{N+1}(a_N(x), u)  .
\end{align*}
Notice that this  is the same setting  as in the previous paragraph, replacing $X_N$ by $Z_N = X_N \times I_u$. Therefore we get a map between telescopes:
\[
\Tel(f_N) \colon \Tel(Z_N)\to \Tel(Y_N)
\]
and since $\Tel(Z_N) =  \Tel(X_N) \times I_u$, one can think of $\Tel(f_N)$ as a homotopy between  
\[
\Tel(f_N^0), \Tel(f_N^1) \colon \Tel(X_N)\to \Tel(Y_N).
\]
\end{defi}

Later on, when there is no ambiguity, we will occasionally drop indexing subscripts for readability.

\begin{prop}[Products]\label{prop:product_telescopes_spaces}
Let $(X_m, a_m)$ and $(Y_n, b_n)$ be two sequences of spaces, then the map
\e
\Phi \colon \Tel(X_m, a_m) \times \Tel(Y_n, b_n) \to \Tel(X_m\times Y_m, a_m\times b_m)
\e
defined, with $(x, u) \in X_m \times [0,1)$ and $(y,v) \in Y_n \times [0,1)$ and denoting 
\ea 
a^k &= a\circ \cdots \circ a \colon X_m\to X_{m+k}, \\
b^k &= b\circ \cdots \circ b \colon Y_n\to Y_{n+k}, 
\ea 
by 
\e
\Phi ((x,u), (y,v)) = \begin{cases} ((x, y), \max(u,v) ) & \text{if }m=n \\
 ((a^{n-m} x, y), v ) & \text{if }m<n \\
 (( x, b^{m-n}  y), u) & \text{if }m>n \\
\end{cases}
\e
is a homotopy equivalence.
\end{prop}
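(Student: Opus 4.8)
The plan is to show $\Phi$ is a homotopy equivalence by exhibiting an explicit homotopy inverse, or — more cheaply — by a filtration/telescoping argument that reduces to the fact that each factor telescope deformation retracts onto its ``colimit''. Concretely, I would first record that a mapping telescope $\Tel(X_m,a_m)$ carries a canonical exhausting filtration by the compact pieces $T_{\le M} := \bigl(\coprod_{m\le M} X_m\times I_t\bigr)/\!\sim$, and that the inclusion $X_M \hookrightarrow T_{\le M}$, $x\mapsto (x,0)$ (landing in the last stratum), is a homotopy equivalence; indeed $T_{\le M}$ deformation retracts onto the copy of $X_M$ at the far end by sliding everything along the $I_t$-coordinates and then composing with the increments, which is exactly the ``shift is a homotopy equivalence'' mechanism used in Definition~\ref{def:Telescopes_topol}. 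The same holds for $\Tel(Y_n,b_n)$ and for the target $\Tel(X_m\times Y_m, a_m\times b_m)$.

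Next I would observe that $\Phi$ is filtration-preserving in the appropriate diagonal sense: $\Phi$ sends $T_{\le M}(X)\times T_{\le M}(Y)$ into $T_{\le M}(X\times Y)$ (inspecting the three cases $m=n$, $m<n$, $m>n$ in the definition, using that $a^{n-m}$ and $b^{m-n}$ raise the index to $\max(m,n)\le M$). Restricted to this piece, $\Phi$ becomes, up to the deformation retractions above, the identity map $X_M\times Y_M \to X_M\times Y_M$: tracking a point $(x,u)\in X_m\times[0,1)$ with $m\le M$, the retraction of the source first pushes it to $(a^{M-m}x,\,\cdot\,)$ and similarly for $Y$, while $\Phi$ followed by the target retraction pushes $\Phi((x,u),(y,v))$ to $(a^{M-m}x, b^{M-n}y)$ in the last stratum of $T_{\le M}(X\times Y)$ — the same point. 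Hence each $\Phi|_{T_{\le M}(X)\times T_{\le M}(Y)} \colon T_{\le M}(X)\times T_{\le M}(Y)\to T_{\le M}(X\times Y)$ is a homotopy equivalence, with homotopies that can be chosen compatibly as $M$ grows (they are built from the same shift/slide formulas). Passing to the colimit over $M$ — using that all inclusions $T_{\le M}\hookrightarrow T_{\le M+1}$ are cofibrations, so that a colimit of levelwise homotopy equivalences through cofibrations is a homotopy equivalence — yields that $\Phi$ itself is a homotopy equivalence.

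The main obstacle is the compatibility of the chosen homotopies across the filtration: a naive levelwise homotopy inverse on $T_{\le M}(X\times Y)$ need not restrict to the one on $T_{\le M-1}(X\times Y)$, and one must either (i) carefully choose the deformation retractions so they are strictly nested (which the explicit sliding formulas of Definition~\ref{def:Telescopes_topol} do allow, since $a^u$ only moves the $I_t$-coordinate and applies increments, operations that commute with the stratumwise inclusions), or (ii) invoke a clean general lemma: a map of telescopes arising from a levelwise homotopy equivalence is a homotopy equivalence. I would in fact prefer route (ii) — phrase it once as: if $g_M\colon A_M\to B_M$ are homotopy equivalences and the squares with the (cofibration) structure maps commute up to coherent homotopy, then $\Tel(g_M)$ is a homotopy equivalence — and then simply check that after the retractions $\Phi$ is of this form with $g_M=\mathrm{id}$. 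One extra bookkeeping point worth flagging: the index set of the target is $\{(m,m)\}$ whereas the product of index sets is $\{(m,n)\}$; the case split in $\Phi$ is precisely the cofinality statement that the diagonal is cofinal in $\mathbb{N}\times\mathbb{N}$, and this is what makes the ``$\max$'' and the powers $a^{n-m}, b^{m-n}$ appear. This cofinality is the conceptual heart of the statement, and everything else is the standard telescope homotopy calculus already set up in this section.
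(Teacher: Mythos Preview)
The paper's own ``proof'' of this proposition consists of a single line: ``Left to the reader (see Figure~\ref{fig:product_telescopes}).'' So there is no argument in the paper to compare yours against; you have supplied what the author chose to omit.

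Your filtration approach is correct and is a natural way to carry this out. The key observations are exactly the right ones: (i) the diagonal $\{M\}$ is cofinal in $\mathbb{N}\times\mathbb{N}$, which is why the powers $a^{n-m}$, $b^{m-n}$ and the $\max$ appear; (ii) $\Phi$ respects the finite filtration pieces $T_{\le M}(X)\times T_{\le M}(Y)\to T_{\le M}(X\times Y)$; (iii) on each such piece, after the standard slide-to-the-end deformation retractions onto $X_M\times Y_M$, the map becomes the identity, so by two-out-of-three it is a homotopy equivalence; and (iv) the inclusions $T_{\le M}\hookrightarrow T_{\le M+1}$ are cofibrations, so a colimit of levelwise homotopy equivalences is a homotopy equivalence. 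The compatibility issue you flag is real but, as you say, handled by the fact that the explicit shift homotopies $a^u$ of Definition~\ref{def:Telescopes_topol} are strictly nested across the filtration. One small clarification worth making explicit: the full product $\Tel(X)\times\Tel(Y)$ is filtered by all of $T_{\le M}(X)\times T_{\le N}(Y)$, not just the diagonal pieces, but this is harmless since the diagonal is cofinal in that bi-filtration.
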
 
\begin{proof} Left to the reader (see Figure~\ref{fig:product_telescopes}).
\end{proof}

\begin{figure}[!h]
    \centering
    \def\svgwidth{.50\textwidth}
    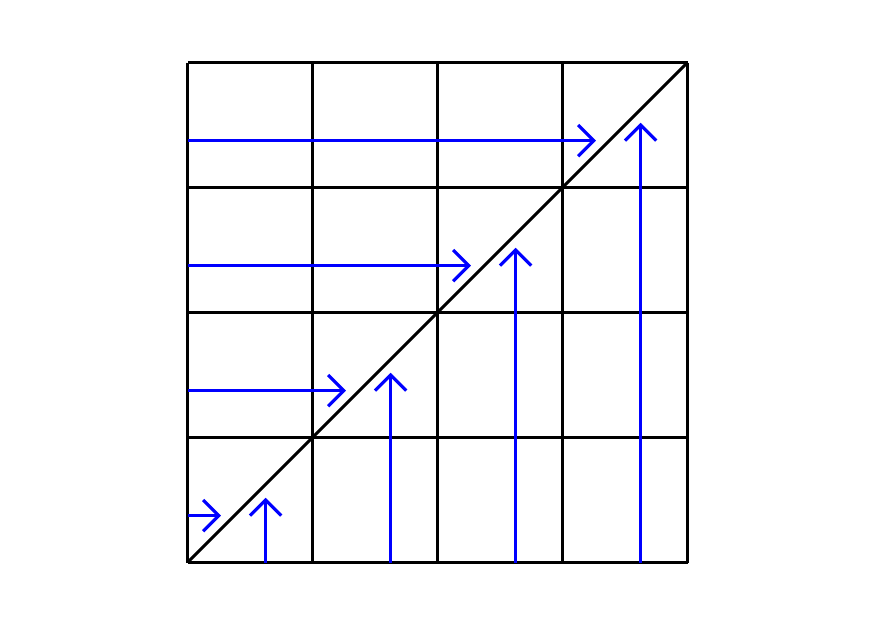
      \caption{Map from a product of telescopes to the telescope of products.}
      \label{fig:product_telescopes}
\end{figure}

\subsection{Chain complexes}\label{ssec:chain_cpx}
This subsection is the algebraic transcription of the previous one.

\begin{defi}[Telescopes of chain complexes]\label{def:telescope_chain_cpx}
Let $(C_N, \alpha_N)_{N\geq 0}$ be a sequence of chain complexes over $\Z{2}$, with \emph{increment} chain morphisms 
\[
\alpha_N\colon C_N \to C_{N+1}.
\]
One can think of $C_N$ as the cellular complex of $X_N$, equipped with some cellular decomposition.
The \emph{telescope} of this sequence is defined by
\[
\Tel(C_N, \alpha_N) = \bigoplus_{N} C_N \oplus q C_N
\]
 with $q$ a formal variable of degree one. If $\sigma$ is a cell of $X_N$ the $C_N$ summand in $\Tel(C_N, \alpha_N)$ corresponds to the cells $\sigma \times \lbrace 0\rbrace$, while the $q C_N$ summand corresponds to the cells $\sigma \times [ 0,1]$. It is endowed with the differential 
\[
\delta(a+ qb) = da + q db + \alpha_N b -b 
\]
Sometimes we may just denote it $\Tel(C_N)$ when there is no ambiguity.
\end{defi}

\begin{defi}[Morphisms between telescopes]\label{def:Morphisms_between_Telescopes}
Let $(C_N, \alpha_N)_{N\geq 0}$ and $(D_N, \beta_N)_{N\geq 0}$ be two sequences as before, consider a sequence of chain morphisms 
\[
\varphi_N\colon C_N \to D_N
\]
that commute with the increment $\alpha_N, \beta_N$ up to a homotopy $\kappa_N \colon C_N  \to D_{N+1}$: 
\[
 \varphi_{N+1} \alpha_N  - \beta_N \varphi_N = d \kappa_N + \kappa_N d
\]

\[
\xymatrix{\cdots \ar[r] & C_N \ar[d]^{\varphi_N} \ar[r]^{\alpha_N} \ar@{-->}[dr]^{\kappa_N} & C_{N+1} \ar[d]^{\varphi_{N+1}}\ar[r]^{} & \cdots \\
\cdots \ar[r] & D_N  \ar[r]^{\beta_N}  & D_{N+1} \ar[r]^{} & \cdots \\
  } .
\]

In this setting one can define a map $\Tel(\varphi_N, \kappa_N) \colon \Tel(C_N)\to \Tel(D_N)$ by
\[
\Tel(\varphi_N, \kappa_N ) (a+qb) = \varphi_N(a)+ \kappa_N(b) +q \varphi_N(b).
\]
\end{defi}
\begin{remark}If $a$ represents a cell in $X_N$, then $\Tel(f_N, k_N)$ will map it to $(f_N\circ a, 0)$. If $b$ represents a cell in $X_N$, then  $q b$ corresponds to the cell $b\times I_t$ in $\Tel(X_N)$ which by (\ref{eq:map_on_telescopes}) is mapped to the union of $(f_N \circ b)\times I_t$ and $k_N(b\times I_t) \times \left\lbrace 0 \right\rbrace$.
\end{remark}

One has $\delta \Tel(\varphi_N, \kappa_N)= \Tel(\varphi_N, \kappa_N) \delta$:
\begin{align*}
\delta \Tel(\varphi_N, \kappa_N) (a+qb)&= \delta (  \varphi_N(a)+ \kappa_N(b) +q \varphi_N(b)) \\
&= d \varphi a + \left[ d \kappa b  + \beta \varphi b\right]  + q d\varphi b - \varphi b \\
&=  \varphi d a + \left[  \kappa d b  +  \varphi \alpha b\right]  + q \varphi d b - \varphi b \\
&=  \Tel(\varphi_N, \kappa_N) \delta (a+qb) .
\end{align*}

\begin{prop}[Compositions of telescopes]\label{prop:compo_telescopes}
Let $(\varphi_N, \kappa_N)$ be a sequence of chain maps and homotopies as above between the sequences of chain complexes $(C_N, \alpha_N)$ and $(D_N, \beta_N)$. Likewise, let $(\chi_N, \lambda_N)$ be a sequence of chain maps and homotopies going from $(D_N, \beta_N)$ to $(E_N, \gamma_N)$. Then $\chi_N \varphi_N$ commutes with $\alpha_N$ and $\gamma_N$ up to the homotopy $\chi_{N+1}\kappa_N +\lambda_N \varphi_N$.
\[
\xymatrix{\cdots \ar[r] & C_N \ar[d]^{\varphi_N} \ar[r]^{\alpha_N} \ar@{-->}[dr]^{\kappa_N}  
& C_{N+1} \ar[d]^{\varphi_{N+1}}\ar[r]^{} & \cdots \\
\cdots \ar[r] & D_N \ar[d]^{\chi_N} \ar@{-->}[dr]^{\lambda_N} \ar[r]^{\beta_N}  & D_{N+1} \ar[d]^{\chi_{N+1}} \ar[r]^{} & \cdots \\
\cdots \ar[r] & E_N  \ar[r]^{\gamma_N}  & E_{N+1} \ar[r]^{} & \cdots \\
  } .
\] 
Define their composition to be the set of maps:
\[
(\chi_N, \lambda_N) \circ (\varphi_N, \kappa_N) = ( \chi_{N+1} \varphi_N, \chi_N\kappa_N +\lambda_N \varphi_N).
\]
The operation of taking telescopes is compatible with composition as follows: 
\[
\Tel\left((\chi_N, \lambda_N) \circ (\varphi_N, \kappa_N)  \right)= \Tel(\chi_N, \lambda_N) \circ \Tel(\varphi_N, \kappa_N).
\]
\end{prop}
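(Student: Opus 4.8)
The plan is to verify the claimed identity $\Tel\left((\chi_N, \lambda_N) \circ (\varphi_N, \kappa_N)\right) = \Tel(\chi_N, \lambda_N) \circ \Tel(\varphi_N, \kappa_N)$ by a direct computation on a general element $a + qb \in C_N \oplus qC_N$, comparing the two sides term by term. First I would record that the composed pair is, by definition, $(\chi_N\varphi_N,\ \chi_N\kappa_N + \lambda_N\varphi_N)$, and that this is a legitimate morphism of telescopes, i.e.\ that $\chi_N\varphi_N$ commutes with the increments up to the stated homotopy; this is the routine chain-homotopy bookkeeping
\[
\chi_{N+1}\varphi_{N+1}\alpha_N - \gamma_N\chi_N\varphi_N = \chi_{N+1}(d\kappa_N + \kappa_N d) + (d\lambda_N + \lambda_N d)\varphi_N = d(\chi_N\kappa_N + \lambda_N\varphi_N) + (\chi_N\kappa_N + \lambda_N\varphi_N)d,
\]
using that $\chi$ and $\varphi$ are chain maps and that $\beta_N\varphi_N = \varphi_{N+1}\alpha_N$ up to $\kappa$ (and similarly for $\lambda$). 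A small point to be careful about here is the indexing: $\chi_{N+1}$ appears, so one uses that $\varphi$ is a chain map and that $\chi_{N+1}\beta_N = \gamma_N\chi_N + (\text{homotopy})$ in the right order; I would write this out once explicitly.

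Next I would expand the left-hand side. By Definition~\ref{def:Morphisms_between_Telescopes} applied to the composed pair,
\[
\Tel\left((\chi_N,\lambda_N)\circ(\varphi_N,\kappa_N)\right)(a+qb) = (\chi_N\varphi_N)(a) + (\chi_N\kappa_N + \lambda_N\varphi_N)(b) + q(\chi_N\varphi_N)(b).
\]
Then I would expand the right-hand side by applying $\Tel(\varphi_N,\kappa_N)$ first and $\Tel(\chi_N,\lambda_N)$ second. We have $\Tel(\varphi_N,\kappa_N)(a+qb) = \varphi_N(a) + \kappa_N(b) + q\varphi_N(b)$; note the ``$q$-part'' of this element is $\varphi_N(b)$ while its ``constant part'' is $\varphi_N(a) + \kappa_N(b)$, and crucially $\varphi_N(a) \in C_N$-component while $\kappa_N(b)\in D_{N+1}$-component, so $\Tel(\chi_N,\lambda_N)$ must be applied componentwise with the correct index shift. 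Applying the formula $\Tel(\chi_N,\lambda_N)(a'+qb') = \chi_N(a') + \lambda_N(b') + q\chi_N(b')$ to $a' + qb' := (\varphi_N(a) + \kappa_N(b)) + q\varphi_N(b)$, and being careful that the $\kappa_N(b)$ piece lives in level $N+1$ so it is hit by $\chi_{N+1}$ (which contributes no $\lambda$-term since it is in the constant part), one obtains $\chi_N\varphi_N(a) + \chi_{N+1}\kappa_N(b) + \lambda_N\varphi_N(b) + q\chi_N\varphi_N(b)$. Comparing with the left-hand side, the two agree once we identify $\chi_N\kappa_N(b)$ with $\chi_{N+1}\kappa_N(b)$ — that is, the notation $\chi_N\kappa_N$ in the composition formula is shorthand for ``$\chi$ applied at whatever level $\kappa_N$ lands in'', namely $N+1$.

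The main obstacle, such as it is, is precisely this indexing subtlety: the homotopy $\kappa_N$ outputs into $D_{N+1}$, so the symbol $\chi_N\kappa_N$ is mildly abusive and must be read as $\chi_{N+1}\kappa_N$; I would state this convention explicitly at the start of the proof (or silently, matching the level of rigour of the surrounding text) so that the term-by-term comparison is unambiguous. Once that is fixed, the identity is a formal manipulation with no further content, and I would simply display the expansion of both sides and observe that they coincide, closing with a remark that the first claim of the proposition (that $\chi_N\varphi_N$ commutes with the increments up to $\chi_N\kappa_N + \lambda_N\varphi_N$) has already been checked above. I would not belabour the diagram-chase beyond the one displayed equation, since it is the standard fact that the composite of two chain homotopies witnessing commutativity of adjacent squares witnesses commutativity of the pasted square.
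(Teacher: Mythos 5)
Your proposal is correct and follows essentially the same route as the paper's proof: a direct verification of the homotopy identity by substituting the two commutator relations, followed by a term-by-term expansion of both sides of the telescope identity. Your observation that $\chi_N\kappa_N$ should strictly be read as $\chi_{N+1}\kappa_N$ is a fair point about the notation (the paper sidesteps it by dropping subscripts in the computation), but it does not change the argument.
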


\begin{proof}

Commutativity up to homotopy:
\begin{align*}
 \chi \varphi \alpha - \gamma\chi \varphi &=   \chi (\beta \varphi + d \kappa + \kappa d) - (\chi \beta + d \lambda + \lambda d )\varphi \\
&= d(\chi\kappa +\lambda \varphi) + (\chi\kappa +\lambda \varphi) d .
\end{align*}

Compatibility with telescopes:
\begin{align*}
\Tel(\chi_N, \lambda_N) \circ \Tel(\varphi_N, \kappa_N)(a+qb)&= \Tel(\chi_N, \lambda_N)(\varphi a + \kappa b + q \varphi b) \\
&=  \chi\varphi a + \chi\kappa b + \lambda \varphi b + q \chi\varphi b\\
&= \Tel((\chi_N, \lambda_N) \circ (\varphi_N, \kappa_N))(a+qb).
\end{align*}
\end{proof}

\begin{defi}[Homotopies between telescopes]\label{def:Homotopies_between_Telescopes_alg}
Consider now two sequences $(\varphi_N^0, \kappa_N^0)$ and $(\varphi_N^1, \kappa_N^1)$ from $(C_N, \alpha_N)$ to $(D_N, \beta_N)$ as before (i.e. for $u=0,1$,  $\varphi_{N+1}^u \alpha_N  - \beta_N \varphi_N^u = d \kappa_N^u + \kappa_N^u d$). 

We say that $(\varphi_N^0, \kappa_N^0)$ and $(\varphi_N^1, \kappa_N^1)$ are \emph{homotopic} if there exists two sequences of maps
\begin{align*}
\phi_N&\colon C_N\to D_{N}  ,\\
\kappa_N&\colon C_N\to D_{N+1}  ,
\end{align*}
such that
\begin{align*}
\varphi^1 - \varphi^0 &= d \phi + \phi d ,\\
\kappa^1 - \kappa^0 + \phi \alpha + \beta \phi &= d \kappa + \kappa d .
\end{align*}
Let then $\Tel(\phi, \kappa)\colon \Tel(C,\alpha) \to \Tel(D,\beta)$ be defined by
\[
\Tel(\phi, \kappa)(a+qb) = \phi a + \kappa b + q\phi b .
\]
\end{defi}

\begin{prop}\label{prop:homotopy_telescopes}
The morphism $\Tel(\phi, \kappa)$ defined above gives a homotopy between $\Tel(\varphi^0, \kappa^0)$ and $\Tel(\varphi^1, \kappa^1)$, in the sense that
\[
\Tel(\varphi^0, \kappa^0) -\Tel(\varphi^1, \kappa^1) = \delta \Tel(\phi, \kappa)+ \Tel(\phi, \kappa) \delta .
\]
\end{prop}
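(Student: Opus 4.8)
The plan is to verify the identity directly by expanding both sides on a general element $a + qb \in \Tel(C_N,\alpha_N)$, exactly in the spirit of the computation checking $\delta\Tel(\varphi_N,\kappa_N) = \Tel(\varphi_N,\kappa_N)\delta$ carried out above. All the maps in sight are explicit, so this is a finite bookkeeping exercise; the only real content is organizing the cancellation cleanly. As before I would drop the subscripts $N$ for readability, keeping in mind that $\alpha,\beta$ raise the index by one while $d$ and the various $\phi,\kappa,\varphi$ preserve it (except $\kappa$, which also raises it), and that $q$ has degree one so it contributes signs only trivially here since we work over $\Z{2}$.

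First I would compute $\delta\,\Tel(\phi,\kappa)(a+qb)$. We have $\Tel(\phi,\kappa)(a+qb) = \phi a + \kappa b + q\phi b$, and applying $\delta$ (using $\delta(x + qy) = dx + q\,dy + \beta y - y$, the differential on $\Tel(D,\beta)$) gives
\[
\delta\,\Tel(\phi,\kappa)(a+qb) = d\phi a + d\kappa b + \beta\phi b + q\, d\phi b - \phi b.
\]
Next I would compute $\Tel(\phi,\kappa)\,\delta(a+qb)$. Since $\delta(a+qb) = da + q\,db + \alpha b - b$, and noting that $\alpha b - b$ lies in the $C_{N+1}$-summand (no $q$), we get
\[
\Tel(\phi,\kappa)\,\delta(a+qb) = \phi(da) + \phi(\alpha b) + \kappa(db) - \phi b + q\,\phi(db).
\]
Adding the two displays, the $q$-part is $q(d\phi b + \phi\,db) = q(d\phi + \phi d)b$, which by the first homotopy relation $\varphi^1 - \varphi^0 = d\phi + \phi d$ equals $q(\varphi^1 - \varphi^0)b$. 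The non-$q$ part is $(d\phi a + \phi\, da) + (d\kappa b + \kappa\, db) + \beta\phi b + \phi\alpha b - \phi b - \phi b$, where the last two terms cancel over $\Z{2}$; the first group is $(d\phi + \phi d)a = (\varphi^1 - \varphi^0)a$, and the remaining terms $d\kappa b + \kappa\, db + \beta\phi b + \phi\alpha b = (d\kappa + \kappa d)b + (\beta\phi + \phi\alpha)b$ equal $(\kappa^1 - \kappa^0)b$ by the second homotopy relation $\kappa^1 - \kappa^0 + \phi\alpha + \beta\phi = d\kappa + \kappa d$. Collecting everything, the sum is $(\varphi^1 - \varphi^0)a + (\kappa^1 - \kappa^0)b + q(\varphi^1 - \varphi^0)b$, which is precisely $\Tel(\varphi^1,\kappa^1)(a+qb) - \Tel(\varphi^0,\kappa^0)(a+qb)$ by the defining formula in Definition~\ref{def:Morphisms_between_Telescopes}, and over $\Z{2}$ this is the desired identity.

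There is no genuine obstacle here — the statement is an algebraic identity that falls out of substituting the two hypothesized relations at exactly the right two spots. The only thing to be careful about is the index bookkeeping (making sure $\kappa b$ lands in $D_{N+1}$ and pairs with the increment $\beta_N$ correctly, and that the telescope differential's cross-term $\alpha b - b$ is fed into $\phi$ and not into $\kappa$), and the observation that the two stray $\phi b$ terms cancel mod $2$, which is the analogue of the sign cancellations one would track more carefully over $\zz$.
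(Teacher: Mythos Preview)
Your proof is correct and follows exactly the same direct computation as the paper's own proof: expand both $\delta\,\Tel(\phi,\kappa)$ and $\Tel(\phi,\kappa)\,\delta$ on $a+qb$, add, and invoke the two homotopy relations. One tiny slip in the commentary (not the computation): $\alpha b$ lies in $C_{N+1}$ but $-b$ stays in $C_N$, so they are not both in the $C_{N+1}$-summand---however this is harmless since $\Tel(\phi,\kappa)$ applies $\phi$ (with the appropriate subscript) to any term without $q$.
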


\begin{proof}
\begin{align*}
&(\delta \Tel(\phi, \kappa)+ \Tel(\phi, \kappa) \delta)(a+qb) \\
&= \delta(\phi a + \kappa b + q\phi b) + \Tel(\phi, \kappa)(da +\alpha b + b + qdb) \\
&= \left[ d \phi a + d \kappa b + \beta \phi b + \phi b + qd\phi b \right]  + \left[ \phi d a + \phi \alpha b + \phi b + \kappa d b + q \phi db \right]  \\
&= (d\phi + \phi d )a + (\phi \alpha + \beta \phi + d \kappa + \kappa d )b + q (\phi d + d \phi)b  \\
&= (\varphi^1 - \varphi^0 )a + (\kappa^1 - \kappa^0 )b + q (\varphi^1 - \varphi^0 )b  \\
&= \left[\Tel(\varphi^0, \kappa^0) -\Tel(\varphi^1, \kappa^1)\right](a+qb).
\end{align*}
\end{proof}

Together with Proposition~\ref{prop:compo_telescopes} this implies:

\begin{cor}\label{cor:homotopy_equiv_telescopes}
Let $(\varphi, \kappa)$ and $(\chi, \lambda)$ be sequences of chain maps and homotopies going respectively from $(C,\alpha)$ to $(D, \beta)$ and from $(D, \beta)$  to  $(C,\alpha)$, and such that $(\chi, \lambda) \circ (\varphi, \kappa)$ and $(\varphi, \kappa)\circ (\chi, \lambda)$ are respectively homotopic to $(id_C,0)$ and $(id_D,0)$. Then $\Tel(\varphi, \kappa)$ and $\Tel(\chi, \lambda)$ are homotopy equivalences between  $\Tel(C,\alpha)$ and $\Tel(D, \beta)$.
\end{cor}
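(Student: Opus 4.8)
The plan is to deduce the statement purely formally from Propositions~\ref{prop:compo_telescopes} and~\ref{prop:homotopy_telescopes}, together with one trivial bookkeeping observation. First I would record that the telescope construction sends the identity pair to the identity morphism: from the formula $\Tel(\varphi_N, \kappa_N)(a+qb) = \varphi_N(a) + \kappa_N(b) + q\varphi_N(b)$ of Definition~\ref{def:Morphisms_between_Telescopes} one reads off immediately that $\Tel(\mathrm{id}_C, 0)(a+qb) = a + qb$, i.e. $\Tel(\mathrm{id}_C, 0) = \mathrm{id}_{\Tel(C,\alpha)}$, and likewise $\Tel(\mathrm{id}_D, 0) = \mathrm{id}_{\Tel(D,\beta)}$.

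Next, by Proposition~\ref{prop:compo_telescopes} the composition of pairs is compatible with $\Tel$, so $\Tel(\chi,\lambda)\circ\Tel(\varphi,\kappa) = \Tel\bigl((\chi,\lambda)\circ(\varphi,\kappa)\bigr)$ and symmetrically $\Tel(\varphi,\kappa)\circ\Tel(\chi,\lambda) = \Tel\bigl((\varphi,\kappa)\circ(\chi,\lambda)\bigr)$. By hypothesis $(\chi,\lambda)\circ(\varphi,\kappa)$ is homotopic, in the sense of Definition~\ref{def:Homotopies_between_Telescopes_alg}, to $(\mathrm{id}_C, 0)$; Proposition~\ref{prop:homotopy_telescopes} then produces an explicit chain homotopy (of the form $\Tel(\phi,\kappa)$) between $\Tel\bigl((\chi,\lambda)\circ(\varphi,\kappa)\bigr)$ and $\Tel(\mathrm{id}_C,0) = \mathrm{id}_{\Tel(C,\alpha)}$. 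Combining the two displays gives $\Tel(\chi,\lambda)\circ\Tel(\varphi,\kappa)\simeq \mathrm{id}_{\Tel(C,\alpha)}$, and running the same argument with the roles of $(\varphi,\kappa)$ and $(\chi,\lambda)$ exchanged gives $\Tel(\varphi,\kappa)\circ\Tel(\chi,\lambda)\simeq \mathrm{id}_{\Tel(D,\beta)}$. This exhibits $\Tel(\varphi,\kappa)$ and $\Tel(\chi,\lambda)$ as mutually inverse homotopy equivalences, as claimed.

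I expect no genuine obstacle here: every ingredient is already in place and the argument is essentially a one-line diagram chase at the level of telescopes. The only points requiring (minimal) care are the observation $\Tel(\mathrm{id},0)=\mathrm{id}$ and checking that the notion of ``homotopic pairs'' invoked in the hypothesis is literally the one of Definition~\ref{def:Homotopies_between_Telescopes_alg}, so that Proposition~\ref{prop:homotopy_telescopes} applies verbatim.
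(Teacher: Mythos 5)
Your argument is correct and is exactly the one the paper intends: the corollary is stated as an immediate consequence of Propositions~\ref{prop:compo_telescopes} and~\ref{prop:homotopy_telescopes}, with the observation $\Tel(\mathrm{id},0)=\mathrm{id}$ left implicit. Nothing to add.
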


\begin{prop}[Products]\label{prop:product_telescopes_chain_cpx}
Let $(C_m, \alpha_m)$ and $(D_n, \beta_n)$ be two sequences of chain complexes, then the map
\e
\Phi \colon \Tel(C_m, \alpha_m) \otimes \Tel(D_n, \beta_n) \to \Tel(C_m\otimes D_m, \alpha_m\otimes \beta_m)
\e
defined, with $(x+ q x') \in C_m \oplus q C_m$ and $(y+qy') \in D_n \oplus q D_n$ and denoting 
\ea
\alpha^k = \alpha\circ \cdots \circ \alpha \colon C_m\to C_{m+k}, \\
\beta^k = \beta\circ \cdots \circ \beta \colon D_n\to D_{n+k}, \\
\ea 
by
\e
\Phi ((x+ q x')\otimes (y+ q y')) = \begin{cases} (x\otimes y) + q (x\otimes y' +x'\otimes y ) & \text{if }m=n \\
\alpha^{n-m} x \otimes y + q(\alpha^{n-m} x \otimes y')  & \text{if }m<n \\
 x  \otimes \beta^{m-n} y  + q (x' \otimes \beta^{m-n} y) & \text{if }m>n \\
\end{cases}
\e
is a morphism of chain complexes.
\end{prop}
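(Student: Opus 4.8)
The plan is to verify directly that $\Phi$ commutes with the two differentials, exactly as was done in the proofs of Propositions~\ref{prop:compo_telescopes} and \ref{prop:homotopy_telescopes} above, by a case analysis on the relation between the indices $m$ and $n$. First I would fix notation: write $\delta_C$, $\delta_D$, $\delta$ for the differentials on $\Tel(C_m,\alpha_m)$, $\Tel(D_n,\beta_n)$ and $\Tel(C_m\otimes D_m,\alpha_m\otimes\beta_m)$ respectively, and recall from Definition~\ref{def:telescope_chain_cpx} that $\delta(a+qb)=da+q\,db+\alpha_N b-b$, while on the tensor-product telescope the increment chain map is $\alpha_m\otimes\beta_m$ and the internal differential on $C_m\otimes D_m$ is the usual graded Leibniz one, $d(x\otimes y)=dx\otimes y+x\otimes dy$ (over $\Z{2}$ there are no signs to track, which is the point of working with $\Z{2}$-coefficients). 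Note also that on the source we use the tensor product of the two telescope complexes, so its differential is $\delta_C\otimes 1+1\otimes\delta_D$ and we must check $\Phi\circ(\delta_C\otimes 1+1\otimes\delta_D)=\delta\circ\Phi$.

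The core of the argument is the case split. In the case $m=n$, $\Phi$ sends $(x+qx')\otimes(y+qy')$ to $x\otimes y+q(x\otimes y'+x'\otimes y)$; I would expand both $\delta\Phi$ and $\Phi(\delta_C\otimes 1+1\otimes\delta_D)$ term by term and check that the ``$d$'' parts match by Leibniz, that the ``$\alpha b-b$'' and ``$\beta b-b$'' parts of $\delta_C,\delta_D$ recombine into the single ``$(\alpha_m\otimes\beta_m)b-b$'' part of $\delta$ — here one uses that applying $\delta_C\otimes 1$ raises the $C$-index to $m+1$ and $\Phi$ then lands in the $m=n+1>n$ branch, so the bookkeeping of which branch of $\Phi$ is used must be done carefully. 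This is the step I expect to be the main obstacle: it is not a deep point but it is the only genuinely delicate one, because after applying a differential to one tensor factor the pair of indices $(m,n)$ changes, so the branch of $\Phi$ used on the left-hand side and the right-hand side need not be the ``same'' branch, and one must check the various mixed terms (e.g. a $q$-term from $\delta_C$ on an $m<n$ element versus the image of an already-incremented element) cancel or combine correctly. In the cases $m<n$ and $m>n$, $\Phi$ just applies iterated increments $\alpha^{n-m}$ (resp.\ $\beta^{m-n}$) to one factor and leaves the telescope structure on the other; since $\alpha^{k}$ is a chain map and commutes with the $\alpha_N b-b$ terms up to the evident telescoping, the verification is a short computation of the same flavour, and I would present $m>n$ by symmetry with $m<n$.

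Concretely I would organise the write-up as: (1) state the two differentials explicitly; (2) do the computation for $m=n$ in a single aligned display, mirroring the style of the displayed computation after Definition~\ref{def:Morphisms_between_Telescopes}; (3) do $m<n$, noting that $\alpha^{n-m}(\alpha_m b - b)$ telescopes appropriately so that $\Phi$ intertwines the increment terms; (4) remark that $m>n$ is symmetric. One should also note for completeness — as is implicit in Proposition~\ref{prop:product_telescopes_spaces} and Figure~\ref{fig:product_telescopes}, of which this is the chain-level shadow — that $\Phi$ is in fact the cellular map induced by the topological $\Phi$, which both motivates the formula and guarantees it is well-defined on the quotient; but the proposition as stated asks only that $\Phi$ be a chain map, so the case analysis above suffices. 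If a quasi-isomorphism statement were wanted one would filter both sides by the index $N=\max(m,n)$ and compare the associated graded pieces, but that is beyond what is claimed here.
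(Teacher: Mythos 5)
Your proposal is correct and follows essentially the same route as the paper: a direct verification that $\Phi$ intertwines $\delta_C\otimes 1+1\otimes\delta_D$ with $\delta$, split into the cases $m=n$, $m<n$, and $m>n$ (the last by symmetry), with the only delicate point being exactly the one you flag — the terms $\alpha_m x'\otimes(\cdot)$ and $(\cdot)\otimes\beta_n y'$ shift the index pair to $(m+1,n)$ or $(m,n+1)$, so a different branch of $\Phi$ applies to them before comparing with $\delta\circ\Phi$. The paper's proof is precisely this computation written out, so no further changes are needed.
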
 
\begin{proof} Compute first:
\ea
&(Id\otimes \delta + \delta\otimes Id ) ((x+ q x')\otimes (y+ q y')) \nonumber\\
=& (x+ q x')\otimes (d y+ qd y' + \beta y' + y') + (d x+ q d x' + \alpha x' + x')\otimes (y+ q y') . \nonumber   
\ea
Before applying $\Phi$, notice that this is a sum of terms of degree $(m, n)$, except $(x+ q x')\otimes \beta y'$ and $\alpha x' + \otimes (y+ q y') $, which are of degree  $(m, n+1)$ and  $(m+1, n)$ respectively. We will treat the cases $m=n$ and $m<n$ separately (the case $m>n$ is analogous).

\textbf{Case $m=n$:}
\ea
&\Phi \circ (Id\otimes \delta + \delta\otimes Id ) ((x+ q x')\otimes (y+ q y')) \nonumber\\
=& x\otimes (d y  + y' ) + q [x'\otimes (d y  + y' ) + x\otimes d y'] +  \alpha x\otimes \beta y' \nonumber\\
+& (d x  + x') \otimes y + q [ (d x  + x') \otimes y' + d x' \otimes y  ] + \alpha x'\otimes \beta y\nonumber \\
=& x\otimes (d y  + y' ) + q [x'\otimes d y   + x\otimes d y'] +  \alpha x\otimes \beta y' \nonumber\\
+& (d x  + x') \otimes y + q [ d x  \otimes y' + d x' \otimes y  ] + \alpha x'\otimes \beta y, \nonumber 
\ea
which is equal to:
\ea
& \delta \Phi  ((x+ q x')\otimes (y+ q y'))\nonumber\\
=& \delta (  (x\otimes y) + q (x\otimes y' +x'\otimes y ) )\nonumber\\
=& d x\otimes  y + x\otimes  d y + q ( dx\otimes  y' + x\otimes  dy' + dx'\otimes  y + x'\otimes  dy ) \nonumber\\
+& (\alpha \beta) (x\otimes y' +x'\otimes y) + x\otimes y' +x'\otimes y  \nonumber\\
=& d x \otimes y + x \otimes d y + q ( dx\otimes  y' + x\otimes  dy' + dx'\otimes  y + x'\otimes  dy ) \nonumber\\
+&   \alpha x\otimes \beta y' + \alpha x'\otimes \beta y + x\otimes y' +x'\otimes y .\nonumber
\ea

\textbf{Case $m<n$:} Let $k= n-m$, notice that $\Phi ( \alpha x'  \otimes (y+ q y') ) = \alpha^k x'  \otimes (y+ q y')$, whether $k=1$ or $k>1$.

\ea
&\Phi \circ (Id\otimes \delta + \delta\otimes Id ) ((x+ q x')\otimes (y+ q y')) \nonumber\\
=& (\alpha^k x)\otimes (d y+ y' +qd y' )  +  (\alpha^{k+1} x)\otimes (
 \beta y')    \nonumber\\
+& (\alpha^k d x+ \alpha^k x' )\otimes (y+ q y')  + ( \alpha^k x' )\otimes (y+ q y') \nonumber\\
=& (\alpha^k x)\otimes (d y+ y' +qd y' )  +  (\alpha^{k+1} x)\otimes (
 \beta y')    \nonumber\\
+& (\alpha^k d x)\otimes (y+ q y') . \nonumber
\ea
On the other hand,

\ea
& \delta \Phi  ((x+ q x')\otimes (y+ q y'))\nonumber \\
=& \delta\left[  (\alpha^k x)\otimes (y+ q y')\right] \nonumber\\
=& d (\alpha^k x\otimes  y) + \alpha\beta (\alpha^k x\otimes  y') + \alpha^k x\otimes  y' +q d(\alpha^k x\otimes  y') \nonumber\\
=& d \alpha^k x\otimes  y +\alpha^k x \otimes d y + \alpha^{k+1} x \otimes \beta y' \nonumber\\
 +& \alpha^k x \otimes y' +q d\alpha^k x \otimes y' +q \alpha^k x\otimes  d y', \nonumber
\ea
which, since $\alpha d = d \alpha$, are equal.
\end{proof}
\begin{remark}The morphism $\Phi$ is probably a homotopy equivalence as well, but we will not need that.
\end{remark}

\section{Equivariant Morse homology}
\label{sec:equiv_Morse}

We begin with a quick review of Morse homology to set our notations, and refer for example to \cite{AudinDamian} for more details.

\subsection{Morse homology}\label{ssec:Morse_homology}
Let $X$ be a compact smooth manifold of dimension $n$, and $f\colon M\to \rr$ a Morse function. Each critical point $x$ has a Morse index $\ind(x)$. Denote respectively the set of critical points and index $k$ critical points by $\Crit(f)$ and $\Crit_k(f)$.

\begin{defi}\label{def:pseudo_grad} A \emph{pseudo-gradient} for $f$ is a vector field $v\in\mathfrak{X}(X)$ on $X$ such that for all $x\in X\setminus \Crit(f)$, $d_xf. v<0$; and such that in a Morse chart near a critical point, $v$ is the negative gradient of $f$ for the standard metric on $\rr^n$. Denote by $\mathfrak{X}(X,f)\subset\mathfrak{X}(X)$ the space of pseudo-gradients for $f$. This is a convex (hence contractible) space.
\end{defi}

\begin{defi}\label{def:stable_unstable_mfd}Let $x\in \Crit_k(f)$   and  $v\in \mathfrak{X}(X,f)$. Define the \emph{stable} (resp.  \emph{unstable}) submanifold of $x$ by:
\ea
S_x &= \left\lbrace y\in X :\lim_{t\to +\infty} \phi_t^v(y)=x \right\rbrace,\\
U_x &= \left\lbrace y\in X :\lim_{t\to -\infty} \phi_t^v(y)=x \right\rbrace .
\ea
with $\phi_t^v$ the flow at time $t$ of $v$. The subsets $S_x$ and $U_x$ are smooth (non-proper) submanifolds diffeomorphic respectively to $\rr^k$ and $\rr^{n-k}$.  
\end{defi}

\begin{defi}\label{def:Palais_Smale_cond} A pseudo-gradient $v$ is \emph{Palais-Smale} if for any pair $x,y$ of critical points, $S_x$ intersects $U_y$ transversally.
\end{defi}

\begin{defi}\label{def:Morse_complex} Assume $v\in\mathfrak{X}(X,f)$  is Palais-Smale. Define the \emph{Morse complex} 
\[
CM_*(X,f),\ \partial\colon CM_*(X,f)\to CM_{*-1}(X,f) 
\] 
by
\ea
CM_k(X,f) &= \bigoplus_{x\in \Crit_k(f)} \Z{2}\, . x\\
\partial x &= \sum_{y\in \Crit_{k-1}(f)} \#\big ( (U_x\cap S_y)/\rr \big) \, . y ,
\ea
where $\rr$ acts on $U_x\cap S_y$ by the flow of $v$. It is well-known that $\partial^2 = 0$ and that $HM_*(X,f) = H_*(X, \Z{2})$. Intuitively, $x\in  \Crit_k(f)$ corresponds to a $k$-chain obtained by triangulating $U_x$.
\end{defi}

\subsection{Pushforwards on Morse homology}
\label{ssec:pushforwards} Let $F\colon X \to Y$ be a differentiable map between two smooth compact manifolds, then $F$ induces a pushforward in homology $F_*\colon H_*(X)\to H_*(Y)$. We now recall a Morse theoretic construction of such a map, and refer for example to \cite[Section~2.8]{KMbook} for more details. 

Endow $X$ and $Y$ with two Morse functions $f\colon X\to \rr$, $g\colon Y\to \rr$, and pseudo-gradients $\nabla f, \nabla g$. Let $x\in \Crit_k(f)$ be a generator of $CM_k(X,f)$ (say $k\geq 1$ for the following discussion). Heuristically, $x$ corresponds to the $k$-chain of its unstable manifold $U_x$, therefore its image $F_*x$ should correspond to $F(U_x)$, which is a priori unrelated to $g$. Apply the flow of $\nabla g$ to it: for $t$ large enough, most points will fall down to local minimums, except those points lying in a stable manifold $S_y$ of a critical point $y$ of index $l\geq 1$. If $k=l$, then a small neighborhood of those points will concentrate to $U_y$, which now corresponds to a generator of $CM(Y,g)$.

Therefore the previous discussion motivates the following definition. Assume that $f$, $g$ and two pseudo-gradients $\nabla f, \nabla g$ are chosen so that, for any critical points $x$ and $y$ of $f$ and $g$ respectively, the graph $\Gamma(F)$ intersects $U_x\times S_y$ transversely in $X\times Y$. Define then 
\ea \nonumber
F_* &\colon CM_*(X,f)\to CM_*(Y,g)\text{ by} \\
F_* x &= \sum_{y\in \mathrm{Crit}_{\mathrm{ind}\,x}} \# \left( \Gamma (F) \cap U_x \times S_y \right) y. \label{eq:Morse_pushforwards}
\ea
This map induces the actual pushforward in homology \cite[prop.~2.8.2]{KMbook}.

In order to translate this construction to Floer theory, it is convenient to think of $\Gamma (F) \cap U_x \times S_y$ as a moduli space of \emph{grafted flow lines} from $x$ to $y$: By this we mean a pair of flow lines $(\gamma_-,\gamma_+)$, with
\begin{align*}
\gamma_- &\colon \rr_- \to M,\ \gamma_-'(t) =  \nabla f(\gamma_-(t)), \\
\gamma_+ &\colon \rr_+ \to N,\ \gamma_+'(t) =  \nabla g(\gamma_+(t))
\end{align*}
such that 
\begin{align*}
&\lim_{t\to -\infty}{\gamma_-(t)} = x, \\ 
&\lim_{t\to +\infty}{\gamma_+(t)} = y, \\
&F\left(\gamma_-(0)\right) = \gamma_+(0). 
\end{align*}
These are Morse counterparts of quilts, as we shall see. Denoting $\Mcal(x,y)$ the moduli space of such grafted lines, the identification with $\Gamma (F) \cap U_x \times S_y$ is given by:
\e
(\gamma_-,\gamma_+)\mapsto (\gamma_-(0),\gamma_+(0)).
\e

\begin{figure}[!h]
    \centering
    \def\svgwidth{.50\textwidth}
    %% Creator: Inkscape 1.1.2 (0a00cf5339, 2022-02-04), www.inkscape.org
%% PDF/EPS/PS + LaTeX output extension by Johan Engelen, 2010
%% Accompanies image file '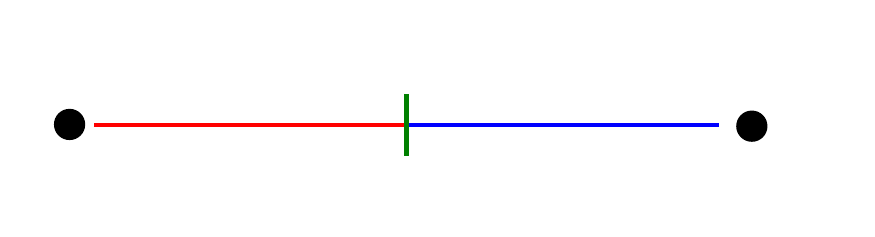' (pdf, eps, ps)
%%
%% To include the image in your LaTeX document, write
%%   \input{<filename>.pdf_tex}
%%  instead of
%%   \includegraphics{<filename>.pdf}
%% To scale the image, write
%%   \def\svgwidth{<desired width>}
%%   \input{<filename>.pdf_tex}
%%  instead of
%%   \includegraphics[width=<desired width>]{<filename>.pdf}
%%
%% Images with a different path to the parent latex file can
%% be accessed with the `import' package (which may need to be
%% installed) using
%%   \usepackage{import}
%% in the preamble, and then including the image with
%%   \import{<path to file>}{<filename>.pdf_tex}
%% Alternatively, one can specify
%%   \graphicspath{{<path to file>/}}
%% 
%% For more information, please see info/svg-inkscape on CTAN:
%%   http://tug.ctan.org/tex-archive/info/svg-inkscape
%%
\begingroup%
  \makeatletter%
  \providecommand\color[2][]{%
    \errmessage{(Inkscape) Color is used for the text in Inkscape, but the package 'color.sty' is not loaded}%
    \renewcommand\color[2][]{}%
  }%
  \providecommand\transparent[1]{%
    \errmessage{(Inkscape) Transparency is used (non-zero) for the text in Inkscape, but the package 'transparent.sty' is not loaded}%
    \renewcommand\transparent[1]{}%
  }%
  \providecommand\rotatebox[2]{#2}%
  \newcommand*\fsize{\dimexpr\f@size pt\relax}%
  \newcommand*\lineheight[1]{\fontsize{\fsize}{#1\fsize}\selectfont}%
  \ifx\svgwidth\undefined%
    \setlength{\unitlength}{419.52755906bp}%
    \ifx\svgscale\undefined%
      \relax%
    \else%
      \setlength{\unitlength}{\unitlength * \real{\svgscale}}%
    \fi%
  \else%
    \setlength{\unitlength}{\svgwidth}%
  \fi%
  \global\let\svgwidth\undefined%
  \global\let\svgscale\undefined%
  \makeatother%
  \begin{picture}(1,0.27027027)%
    \lineheight{1}%
    \setlength\tabcolsep{0pt}%
    \put(0,0){\includegraphics[width=\unitlength,page=1]{grafted_line.pdf}}%
    \put(0.22637739,0.02594272){\color[rgb]{1,0,0}\makebox(0,0)[lt]{\lineheight{1.25}\smash{\begin{tabular}[t]{l}$M$\end{tabular}}}}%
    \put(0.44096268,0.02197347){\color[rgb]{0,0.50196078,0}\makebox(0,0)[lt]{\lineheight{1.25}\smash{\begin{tabular}[t]{l}$F$\end{tabular}}}}%
    \put(0.63167242,0.01998869){\color[rgb]{0,0,1}\makebox(0,0)[lt]{\lineheight{1.25}\smash{\begin{tabular}[t]{l}$N$\end{tabular}}}}%
    \put(0.21052847,0.19085126){\color[rgb]{1,0,0}\makebox(0,0)[lt]{\lineheight{1.25}\smash{\begin{tabular}[t]{l}$\nabla f$\end{tabular}}}}%
    \put(0.61170247,0.19192292){\color[rgb]{0,0,1}\makebox(0,0)[lt]{\lineheight{1.25}\smash{\begin{tabular}[t]{l}$\nabla g$\end{tabular}}}}%
    \put(0,0){\includegraphics[width=\unitlength,page=2]{grafted_line.pdf}}%
  \end{picture}%
\endgroup%

      \caption{A grafted line.}
      \label{fig:grafted_line}
\end{figure}

\subsection{The equivariant Morse complex}
\label{ssec:equiv_Morse_complex}

Let $X$ be a closed $G$-manifold, and $EG_N$ finite dimensional smooth approximations of $EG$, with inclusions 
\e
i_N \colon EG_N \to EG_{N+1}.
\e 
Fix Morse functions $f_N$ on $X_N = X\times_G EG_N$. Let the increment maps be
\e
j_N = id_X\times_G i_N\colon X_N\to X_{N+1} ,
\e
and define the \emph{equivariant Morse complex} as
\e\label{eq:equiv_Morse_complex}
CM^G_*(X, \lbrace f_N \rbrace) = \Tel \Big(CM(X_N,f_N), (j_{N})_* \Big). 
\e
Let also the \emph{equivariant Morse cochain complex} $CM_G^*(X, \lbrace f_N \rbrace)$ be its dual complex. From the facts that Morse homology corresponds to singular homology, and that Morse pushforwards induce usual pushforwards, one gets:

\begin{prop}\label{prop:equiv_Morse_is_equiv_homol} Denoting $HM^G_*$ and $CM_G^*$ the homology groups associated with the above complexes, one has 
\ea
HM^G_*(X, \lbrace f_N \rbrace) &= H^G_*(X), \\
HM_G^*(X, \lbrace f_N \rbrace) &= H_G^*(X).
\ea
\end{prop}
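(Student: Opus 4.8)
The plan is to compute the homology of the telescope $\Tel(CM(X_N,f_N),(j_N)_*)$ and identify it with the homotopy colimit of the $H_*(X_N)$, which by definition of equivariant homology equals $H_*^G(X)$. First I would recall the standard homological algebra fact about telescopes: for a sequence of chain complexes $(C_N,\alpha_N)$, the short exact sequence
\e
0 \to \bigoplus_N C_N \xrightarrow{\ \mathrm{id}-\alpha\ } \bigoplus_N C_N \to \Tel(C_N,\alpha_N)[-1] \to 0
\e
(or equivalently the filtration of $\Tel$ by the number of cells involved) yields a long exact sequence, and since $\bigoplus_N$ is exact one obtains $H_*(\Tel(C_N,\alpha_N)) = \varinjlim_N H_*(C_N)$, the colimit being taken along the maps $(\alpha_N)_*$, together with a $\varprojlim^1$ term that vanishes here because over a field ($\Z{2}$) every tower of vector spaces is Mittag-Leffler. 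Applying this with $C_N = CM(X_N,f_N)$ and $\alpha_N = (j_N)_*$ gives
\e
HM_*^G(X,\lbrace f_N\rbrace) = \varinjlim_N HM_*(X_N,f_N).
\e

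Next I would invoke the two cited classical inputs. By Definition~\ref{def:Morse_complex}, $HM_*(X_N,f_N) = H_*(X_N;\Z{2})$ canonically, and by the discussion of Section~\ref{ssec:pushforwards} (in particular \cite[Prop.~2.8.2]{KMbook}) the map $(j_N)_*$ on Morse homology agrees, under this identification, with the topological pushforward $(j_N)_* \colon H_*(X_N) \to H_*(X_{N+1})$ induced by $j_N = \mathrm{id}_X \times_G i_N$. Hence the directed system $(HM_*(X_N,f_N),(j_N)_*)$ is isomorphic to $(H_*(X\times_G EG_N),(\mathrm{id}\times_G i_N)_*)$, so
\e
HM_*^G(X,\lbrace f_N\rbrace) = \varinjlim_N H_*(X\times_G EG_N) = H_*(X\times_G EG) = H_*^G(X),
\e
where the middle equality is the fact (recalled in Section~\ref{ssec:outline_constr}) that homology commutes with the directed colimit $EG = \varinjlim_N EG_N$ and the classifying space model is exhausted by the finite-dimensional approximations. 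The cohomological statement $HM_G^*(X,\lbrace f_N\rbrace) = H_G^*(X)$ then follows by dualizing: $CM_G^*$ is by definition the $\Z{2}$-dual of $CM^G_*$, and since we work over a field the homology of the dual complex is the dual of the homology, i.e. $HM_G^*(X,\lbrace f_N\rbrace) = \mathrm{Hom}(HM_*^G(X,\lbrace f_N\rbrace),\Z{2})$; combined with the universal coefficient identification of $H_G^*(X)$ with the dual of $H_*^G(X)$ (again valid over $\Z{2}$, at least with the usual finiteness on the $EG_N$), this gives the claim.

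The only genuinely delicate point is making sure the two "well-known" facts are applied in a compatible way: one must check that the Morse-theoretic pushforward $(j_N)_*$ defined in Section~\ref{ssec:pushforwards} really does correspond to the topological $(j_N)_*$ under the Morse$=$singular isomorphism — this is exactly \cite[Prop.~2.8.2]{KMbook}, so I would simply cite it — and, more importantly, that the transversality hypotheses needed to define $(j_N)_*$ (the graph $\Gamma(j_N)$ meeting $U_x \times S_y$ transversely) can be arranged simultaneously for all $N$; this is achieved by choosing the Morse data $(f_N,\nabla f_N)$ inductively in $N$, which is harmless since at each stage it is an open dense condition. Everything else is formal: the telescope computation is pure homological algebra over a field, and the colimit identification is the definition of equivariant homology.
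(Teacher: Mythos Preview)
Your proposal is correct and follows exactly the approach the paper itself indicates: the paper gives no proof beyond the sentence preceding the proposition (``From the facts that Morse homology corresponds to singular homology, and that Morse pushforwards induce usual pushforwards, one gets:''), and you have simply spelled out the telescope--colimit identification that makes this precise. One minor quibble: the short exact sequence you wrote is not literally right (the cokernel of $\mathrm{id}-\alpha$ on $\bigoplus C_N$ is the honest colimit, not $\Tel[-1]$), but your parenthetical alternative via the filtration, or equivalently the mapping-cone triangle $\bigoplus C_N \xrightarrow{\mathrm{id}-\alpha} \bigoplus C_N \to \Tel$, gives the correct long exact sequence and the same conclusion.
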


\section{Equivariant Lagrangian Floer homology}
\label{sec:equiv_Floer}
We first recall some basic facts about Hamiltonian actions.

\subsection{Hamiltonian actions}
\label{ssec:Hamiltonian_actions}

\begin{defi}\label{def:ham_action}(Hamiltonian manifold)
Let $G$ be a Lie group. A  Hamiltonian $G$-manifold $(M,\omega,\mu)$ is a symplectic manifold  $(M,\omega)$  endowed with a  $G$-action by symplectomorphisms, induced by a moment map $\mu \colon M\to \mathfrak{g}^*$. The moment map is $G$-equivariant with respect to this action and  the coadjoint representation on $\mathfrak{g}^*$, and satisfies the following equation: 
\e
\iota_{X_\eta} \omega = d \langle \mu, \eta\rangle ,
\e
for each $\eta \in \mathfrak{g}$, where $X_\eta$ denotes the vector field on $M$ induced by the infinitesimal action, i.e. 
\e
X_\eta (m)= \frac{d}{dt}_{|t=0} (e^{t\eta} m).
\e
In other words, $X_\eta$ is the symplectic gradient of the function $\langle \mu, \eta\rangle$.

\end{defi}

\begin{exam}[Induced Hamiltonian action on a cotangent bundle]\label{exam:ham_action_cotangent_bundle}
A smooth action of $G$ on $X$ lifts to a Hamiltonian action on $T^* X$ defined by 
\e
g\cdot (q,p) = (gq, p\circ (D_q L_g)^{-1}), 
\e
with $g\in G$, $(q,p)\in T^*X$. Its moment map $\mu \colon T^*X \to \mathfrak{g}^*$ is defined by 
\e
\mu(q,p) \cdot \xi = p(X_\xi(q)). 
\e
\end{exam}

\begin{defi}[Weinstein correspondence \cite{Weinstein_sg}]\label{def:Weinstein_corresp}
The data of both the action and the moment map can be conveniently packaged as a Lagrangian submanifold 
\e
\Lambda_G(M)\subset T^*G\times M^-\times M,
\e 
that we will refer to as the \emph{Weinstein correspondence}, defined as:
\e
 \Lambda_G(M) = \lbrace ((q,p),m,m') : m' = q.m,\ R_{g^{-1}}^*p = \mu(m)  \rbrace.
  \e
When $M = T^*X$ is a cotangent bundle and the action and the moment maps are the ones canonically induced from a smooth action on the base $X$, $\Lambda_G(M)$ corresponds to the conormal bundle of the graph of the action  
\e
\Gamma_G(X) \subset G\times X\times X ,
\e
where one identifies $T^* X$ with $(T^* X)^-$ via $(q,p)\mapsto (q,-p)$. 
\end{defi}

\begin{defi}\label{def:reduction}(Symplectic quotient) 
If $(M,\omega,\mu)$ is a Hamiltonian manifold, its \emph{symplectic quotient} (or \emph{reduction}) is defined as 
\e
M\red G  = \mu^{-1}(0) / G . 
\e
When $0$ is a regular value for $\mu$, and $G$ acts freely and properly on $\mu^{-1}(0)$, $M\red G $ is also a symplectic manifold. In this case, we will say that the action is \emph{regular}. 
\end{defi}

\begin{prop}\label{prop:canon_corresp}(Canonical Lagrangian correspondence between $M$ and $M\red G$) If the action of $G$ on $M$ is regular in the sense of Definition~\ref{def:reduction}, the image of the map 
\e
\iota \times \pi  \colon \mu^{-1}(0) \to  M^- \times M\red G
\e 
is a Lagrangian correspondence, where $\iota$ and $\pi$ denote respectively the inclusion and the projection.

\end{prop}

\begin{defi}\label{def:G-lagr}($G$-Lagrangian)
A $G$-Lagrangian of a Hamiltonian $G$-manifold $M$ is a Lagrangian submanifold $L\subset M$ that is both contained in the zero level $\mu^{-1}(0)$, and  $G$-invariant. When the $G$-action is regular, the $G$-Lagrangians of $M$ are in one-to-one correspondence with the Lagrangians on $M\red G$ (though a Lagrangian in $M$ need not be a $G$-Lagrangian to induce a Lagrangian on $M\red G$). 

\end{defi}

\subsection{Symplectic homotopy quotients}
\label{ssec:Symplectic_homotopy_quotients}

Let now $M$ be a Hamiltonian $G$-manifold,  with moment map $\mu_M$, and $L_0$, $L_1$ a pair of $G$-Lagrangians. Let 
\ea
T_N &= T^*EG_N,\text{ and let} \\
0_N &\subset T_N
\ea denote its zero section. For any $N$, set 
\e
M_N = (M \times T_N) \red G,
\e
 which is a smooth symplectic manifold. Indeed, it follows from the fact that $G$ acts freely on $EG_N$ that zero is a regular value of the moment map, and that $G$ acts freely on its zero level in $M \times T_N$. 
Likewise, for $i=0,1$, 
\e
L_i^N = L_i \times_G 0_N
\e
is a smooth Lagrangian submanifold of $M_N$.

Even if $M$ is compact, $M_N$ is never compact (as soon as $EG_N\neq G$), and will not always be convex at infinity in general. The following will be important for ensuring compactness of the moduli spaces of pseudo-holomorphic curves in the monotone setting:

\begin{prop}\label{prop:fibration_homotopy_quotient}
Let $M$ be a G-Hamiltonian manifold with moment map $\mu_M$, and $L\subset M$ a $G$-Lagrangian. Let $E$ be a closed manifold on which $G$ acts freely, and $B=E/G$. 

Equip $E$ with a $G$-invariant Riemannian metric, inducing an equivariant almost complex structure $J_E$ on $T^*E$ and an almost complex structure $J_B$ on $T^* B$. Assume also that $M$ is equipped with an equivariant almost complex structure $J_M$.

Let $Q = (M\times T^*E)\red G$. Then $Q$ inherits a symplectic structure $\omega_Q$ and a compatible \acs\ $J_Q$. The quotient $Q$ fibers over $T^*B$, with fibers $M$, and this fibration restricts to a fibration of $(L\times 0_E)/G$ over $0_B$, with fibers $L$.
\e
\xymatrix{
 & M \ar@{^{(}->}^{\iota_\beta}[r]& Q \ar@{->>}^{\pi}[d]\\
L\ \  \ar@{^{(}->}[r] \ar@{^{(}->}[ru]& (L\times 0_E)/G \ar@{->>}[d] \ar@{^{(}->}[ru] & T^* B\\
 & 0_B \ar@{^{(}->}[ru]& 
}
\e
The projection $Q\to T^*B$ is $(J_Q, J_B)$-holomorphic, and 
 for each $\beta\in T^*B$, the inclusion $\iota_\beta\colon M\hookrightarrow Q$ to the fiber over $\beta$ satisfies:
\e
\iota_\beta^* \omega_Q = \omega_M .
\e
\end{prop}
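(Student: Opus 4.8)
The plan is to work with the explicit description of the symplectic reduction $Q = (M \times T^*E)\red G$. First I would recall the standard fact about reductions of products with cotangent bundles: since $G$ acts freely on $E$, the moment map for the $T^*E$-factor is submersive on its zero level, and one has a canonical identification of $\mu^{-1}(0)$ inside $M\times T^*E$. More precisely, using the Riemannian metric on $E$ to split $T^*E$, the zero-level set of the total moment map $\mu_M + \mu_{T^*E}$ fibers over $E$, and quotienting by $G$ yields a bundle over $B = E/G$. I would make this fibration $\pi\colon Q \to T^*B$ explicit: a point of $T^*B$ is a cotangent vector $\beta$ at $[e]\in B$; its preimage consists of $G$-orbits of triples $(m, e, p)$ with $p\in T_e^*E$ projecting to $\beta$ and $\mu_M(m) + (\text{the }\mathfrak{g}^*\text{-component of }p) = 0$. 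Fixing a representative $e$ and the unique horizontal lift of $\beta$ fixes $p$ up to the vertical (i.e.\ $\mathfrak{g}^*$) part, which is then pinned down by the moment map equation to equal $-\mu_M(m)$ composed with the appropriate identification; hence the fiber is exactly a copy of $M$, with no constraint on $m$. This gives the fiber inclusion $\iota_\beta\colon M \hookrightarrow Q$.

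**The symplectic and almost complex statements.** For $\iota_\beta^*\omega_Q = \omega_M$: the reduced form $\omega_Q$ is characterized by $p^*\omega_Q = \iota^*(\omega_M \oplus \omega_{T^*E})$ where $\iota\colon \mu^{-1}(0)\hookrightarrow M\times T^*E$ and $p$ is the quotient projection. Restricting to the fiber over $\beta$, the $T^*E$-coordinate is (locally) constant along the $M$-directions after the chosen trivialization, so the $\omega_{T^*E}$-term drops out and one is left with $\omega_M$; this is a local computation in a slice chart for the $G$-action on $E$. For the holomorphicity of $\pi$: the \acs\ $J_Q$ is the one induced on the reduction from the product structure $J_M \oplus J_E$ (these descend because everything is $G$-equivariant and the metric-induced complex structures on $T^*E$, $T^*B$ are standard Sasaki-type ones), and $J_B$ on $T^*B$ is likewise induced; the projection $T^*E \to T^*B$ intertwines $J_E$ with $J_B$ by construction of these structures from a submersive Riemannian submersion, and passing to reductions preserves this. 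I would phrase this as: the diagram of symplectic manifolds with their compatible \acs's commutes with the reduction functor, and reduction of a holomorphic submersion is holomorphic. Finally the Lagrangian statement $(L\times 0_E)/G \to 0_B$ is the restriction of $\pi$: a point of $(L\times 0_E)/G$ is a $G$-orbit of $(\ell, e, 0)$ with $\ell\in L\subset \mu_M^{-1}(0)$, which maps to the zero covector at $[e]$, and the fiber over $0_{[e]}$ is visibly a copy of $L$; that $(L\times 0_E)/G$ is Lagrangian in $Q$ is already asserted in Section~\ref{ssec:Symplectic_homotopy_quotients} (it is $L^N$ for $E = EG_N$).

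**Main obstacle.** The routine but delicate point is making the trivialization of $Q$ over $T^*B$ genuinely global and checking that the \acs\ $J_Q$ obtained by reduction agrees with the one that makes $\pi$ holomorphic — i.e.\ that ``reduction'' and ``the fibered \acs'' commute. The cleanest route is to fix a principal connection on $E\to B$; this splits $T^*E$ equivariantly into horizontal and vertical parts, identifies the vertical part with $E\times\mathfrak{g}^*$, and lets one write $Q$ as the associated bundle $E\times_G M$ pulled back over $T^*B$ via the connection, with the moment-map constraint absorbing the $\mathfrak{g}^*$-fiber. One must check that the induced $\omega_Q$ and $J_Q$ in this description are precisely the fiberwise-$\omega_M$/$J_M$ plus the pullback of the $T^*B$-data (up to connection-dependent curvature terms that vanish on the $M$-fibers and on $0_B$), which is the content of the Sternberg/Weinstein description of reduced cotangent bundles. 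I would cite this minimal-coupling picture rather than reproduce it, and note that the only facts actually needed downstream are the two displayed equations and the holomorphicity of $\pi$, all of which follow once the associated-bundle description is in place.
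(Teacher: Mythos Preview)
Your approach is essentially the same as the paper's: both use the $G$-invariant Riemannian metric on $E$ (equivalently, a principal connection on $E\to B$) to split $T^*E$ into horizontal and vertical parts, build the projection $T^*E\to T^*B$ by orthogonal projection to the horizontal part followed by the quotient, and then parametrize the fiber $\pi^{-1}(\beta)$ explicitly. The paper writes this parametrization as $\iota_\beta(m) = [m,(q_0,p_0-\mu_M(m))]$, where $\mu_M(m)$ is viewed as an element of $O_{q_0}^*$ via the metric identification---this matches your description of absorbing the $\mathfrak{g}^*$-component via the moment map constraint.

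There is one point where your sketch is imprecise and the paper's argument is cleaner. You write that ``the $T^*E$-coordinate is (locally) constant along the $M$-directions after the chosen trivialization, so the $\omega_{T^*E}$-term drops out.'' But from the explicit formula above the $T^*E$-coordinate is \emph{not} constant in $m$: its derivative along a tangent vector $v\in T_mM$ is $(0,-d_m\mu_M\cdot v)$. The paper's observation is that this variation lies entirely in the cotangent \emph{fiber} direction of $T^*E$, and cotangent fibers are Lagrangian for $\omega_E$; hence $\omega_E(-d_m\mu_M\cdot v,-d_m\mu_M\cdot w)=0$ and only the $\omega_M$-term survives in $(\iota_\beta)^*\omega_Q$. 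This is a one-line fix to your argument, but it is the actual reason the $\omega_{T^*E}$-term vanishes---appealing to a trivialization and then invoking Sternberg/Weinstein minimal coupling is more machinery than needed and somewhat obscures this point. Otherwise your outline, including the holomorphicity of $\pi$ via the metric-induced structures and the Lagrangian fibration statement, is correct and in line with the paper.
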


\begin{proof}

For $q\in E$, let $O_q = T_q (G\cdot q) \subset E$ be the tangent space of the $G$-orbit passing through $q$. With $\mu_{E}\colon T^*E\to \g^*$ the moment map, one has 
\e
(\mu_{E})^{-1}(0) = \left\lbrace (q,p) \ |\ p_{|O_q}=0  \right\rbrace .
\e
From the orthogonal splitting
\e
T_q E =  O_q \oplus O_q^{\perp},
\e
one gets a dual splitting
\e
T_q^* E =  O_q^* \oplus (O_q^{\perp})^*,
\e
and with $(O^{\perp})^* = \bigcup_q (O_q^{\perp})^*$, one has $(\mu_{E})^{-1}(0) = (O^{\perp})^*$, and a projection 
\e 
T^*E\to (\mu_{E})^{-1}(0)
\e 
defined fiberwise by orthogonal projection to $(O_q^{\perp})^*$. Composing this projection with the quotient projection 
\e
(\mu_{T^*E})^{-1}(0) \to (T^*E)\red G \simeq T^* B,
\e
one obtains a projection $T^*E\to T^* B$.

Notice that $J_{E}$ descends to an almost complex structure $J_{B}$ on $T^*B$, and under the identification $(T^*E)\red G \simeq T^* B$ corresponds to the \acs\ induced by the Riemannian metric on $B$. With respect to these \acs s, the projection $T_N\to B_N$ is pseudo-holomorphic.

(This map corresponds to the projection to the GIT quotient $T^*E \to T^*E / G^\cc$, by the Kempf-Ness theorem and the fact that all the points of $T^*E$ are semi-stable.)

One then gets a projection 
\e
\mu_{diag}^{-1}(0) \hookrightarrow M\times T^*E  \to T^*E  \to T^*B
\e
which is $G$-invariant for the diagonal action, and therefore a projection
\e
\pi \colon Q  \to T^*B
\e
whose fiber is diffeomorphic  to $M$. Indeed, given a point $\beta = [(q_0,p_0)]\in T^*B$, the fiber $\pi^{-1}(\beta)$ can be parametrized by the map $\iota_\beta\colon M\to Q= (M\times T^*E)\red G$ defined by
\e
 m\mapsto [m, (q_0,p_0-\mu(m))],
 \e
 where we view $\mu(m)$ as an element of $O_q^*$. In order to show that $(\iota_\beta)^*\omega_Q  = \omega_M$, let $m\in M$ and $v,w\in T_m M$. One has:
\e
(\iota_\beta)^*(\omega_Q)_m (v,w) = \omega_M(v,w) + \omega_E (-d_m\mu .v,-d_m\mu .w).
\e 
And since both $-d_m\mu .v,-d_m\mu .w$ lie on the fiber direction, which is Lagrangian for $\omega_E$, the second summand vanishes.

\end{proof}

\subsection{Increment correspondences}\label{ssec:incr_corr}
Recall that the approximation of $EG$ comes with inclusions $i_N\colon EG_N \to EG_{N+1}$. These induce Lagrangian \emph{increment correspondences} $\Lambda_N\subset (M_N)^-\times M_{N+1}$ defined as follows. Inside $M^-\times T_N^- \times M \times T_{N+1}$, consider (modulo implicitly exchanging the second and third factors) 
\e
\widehat{\Lambda}_N = \Delta_M \times N_{\Gamma(i_N)},
\e
where $\Delta_M\subset M^-\times M$ is the diagonal, and $N_{\Gamma(i_N)} \subset (T_N)^-\times T_{N+1}$ is given by:
\e
N_{\Gamma(i_N)} = \left\lbrace (q,p),(q',p')\ :\ q' = i_N(q),\ p= p'\circ \d i_N  \right\rbrace.
\e
Notice that, under the identification of $T_N$ with $(T_N)^-$ via $(q,p)\mapsto (q,-p)$, $N_{\Gamma(i_N)}$ corresponds to the conormal bundle of the graph of $i_N$. The group $G\times G$ acts on $M^-\times T_N^- \times M \times T_{N+1}$ by:
\e
(g_1, g_2).(m,t,m',t') = (g_1 m,g_1 t,g_2 m',g_2 t')
\e
and with moment map $\Phi = (-\Phi_1, \Phi_2)$, with 
\ea
\Phi_1 (m,t,m',t') &= \mu_M(m) + \mu_{T_{N}}(t), \\
\Phi_2 (m,t,m',t') &= \mu_M(m') + \mu_{T_{N+1}}(t'). 
\ea
Notice that $\widehat{\Lambda}_N$ is a $G$-Lagrangian for the diagonal action of this $G\times G$-action (this follows from the equivariance of $i_N$), but not a $(G\times G)$-Lagrangian\footnote{This fact was pointed out to us by Julio Sampietro.}.

However, $\widehat{\Lambda}_N$ intersects $\Phi^{-1}(0)$ cleanly. Let $I_N =\widehat{\Lambda}_N \cap \Phi^{-1}(0)$ be this intersection. Indeed, it follows from the fact that $\widehat{\Lambda}_N$ is a $G^{diag}$-Lagrangian (in particular contained in $(-\Phi_1+ \Phi_2)^{-1}(0)$) that $I_N =\widehat{\Lambda}_N \cap \Phi_1^{-1}(0) =\widehat{\Lambda}_N \cap \Phi_2^{-1}(0)$, and one can see that these two intersections are transverse, which implies clean intersection with $\Phi^{-1}(0) = \Phi_1^{-1}(0)\cap \Phi_2^{-1}(0)$.

It follows that $\widetilde{\Lambda}_N := (G\times G) . I_N$  (which also equals $(G\times \lbrace 0 \rbrace) . I_N= (\lbrace 0 \rbrace\times G) . I_N$) is a $(G\times G)$-Lagrangian, which gives our desired increment correspondence:
\e
\Lambda_N = \widetilde{\Lambda}_N /(G\times G) \subset M_N ^- \times M_{N+1}.
\e

\subsection{Exact setting}\label{ssec:exact_setting}
The simplest setting for defining Lagrangian Floer homology is the exact and convex at infinity one, which we recall quickly:

\begin{defi}\label{def:exact_mfd_lag}
A symplectic manifold $(M,\omega)$ is \emph{exact} if $\omega = d\lambda$.

A Lagrangian $\iota\colon L\hookrightarrow M$ inside an exact symplectic manifold $(M,\lambda)$ is \emph{exact} if $\iota^* \lambda = df$.
\end{defi}

\begin{defi}\label{def:convex_at_infinity}
An exact symplectic manifold $(M,\lambda)$ is \emph{convex at infinity} if it is isomorphic to the positive symplectization of a contact manifold outside a compact subset.

A Lagrangian $L$ inside a convex symplectic manifold $(M,\lambda)$  is \emph{cylindrical at infinity} if it is of the form $\rr_+\times \Lambda$ at infinity, where $\Lambda$ is a 
Legendrian.
\end{defi}

\begin{exam}A cotangent bundle $T^*X$ with its Liouville form $\lambda = p\, dq$ is exact and convex at infinity. If $Z\subset X$ is a smooth submanifold, then its conormal bundle $N_Z \subset T^* X$ is exact ($\iota^* \lambda = 0$) and cylindrical at infinity.
\end{exam}

To ensure that the symplectic homotopy quotients will have such structures, one can impose equivariant analogues as follows.

\begin{defi}\label{def:G-exact_mfd_lag}
A $G$-Hamiltonian manifold $(M,\omega)$ is \emph{equivariantly exact} (or \emph{$G$-exact}) if $\omega = d\lambda$, with $\lambda$ a $G$-invariant 1-form.

If $G$ is connected, this is equivalent to saying that 
\e
\forall \xi \in \g,\ \mathcal{L}_{X_\xi}\lambda = 0,
\e 
i.e. the form $\iota_{X_\xi} \lambda + \left\langle \mu, \xi \right\rangle $ is closed. 

A $G$-Lagrangian $\iota\colon L\hookrightarrow M$ in a $G$-exact symplectic manifold $(M,\lambda)$ is \emph{$G$-exact} if $\iota^* \lambda = df$, with $f$ a $G$-invariant smooth function.
\end{defi}
\begin{remark}If $G$ is compact, any exact $G$-Hamiltonian manifold can be made $G$-exact: one can make the primitive $\lambda$ equivariant by averaging over $G$:
\e
\frac{1}{\mathrm{Vol}G} \int_{g\in G} \varphi_g^* \lambda ,  
\e
with $\varphi_g\colon M\to M$ the multiplication by $g$. The same is true for Lagrangians.
\end{remark}

Recall the contact analogues of Hamiltonian actions and symplectic reductions. These appeared in \cite{Albert}, see also \cite[sec.~7.7]{Geiges_book} and the references therein.

\begin{defi}\label{def:contact_Hamiltonian_actions}
Let $(X, \xi=\ker \lambda)$ be a contact manifold. A contact $G$-Hamiltonian action is an action by contactomorphisms, with a moment map $\mu\colon X\to \g^*$ such that 
\e
\iota_{X_\xi}\lambda + \langle \mu, \xi\rangle = 0,\ \forall \xi\in \g .
\e

Say that such an action is \emph{regular} if $0\in \g^*$ is a regular value of $\mu$ and $G$ acts freely on $\mu^{-1}(0)$.

When this is the case, the \emph{contact reduction} $X\red G =\mu^{-1}(0)/G$ inherits a contact form.

A Legendrian $\Lambda\subset X$ is a $G$-Legendrian if it is $G$-invariant and contained in $\mu^{-1}(0)$.
\end{defi}

\begin{prop}\label{prop:symplectization_G-mfd} Let $(X, \lambda, \mu)$ be a contact $G$-manifold. Its symplectization $SX = (\rr\times X, e^t \lambda)$ is $G$-Hamiltonian with moment map $\Phi(t,x) = e^t \mu(x)$, and is $G$-exact. Furthermore, if the $G$-action on $X$ is regular, the same is true for the $G$-action on $SX$, and $(SX)\red G$ is isomorphic to $S(X\red G)$ as an exact symplectic manifold.

If $\Lambda\subset X$ is a $G$-Legendrian, then $S\Lambda = \rr\times \Lambda\subset SX$ is a $G$-exact Lagrangian, $(S\Lambda)\red G \simeq S(\Lambda\red G)$.
\end{prop}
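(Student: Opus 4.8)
The plan is to verify each assertion by a direct computation, using the exact sequence of constructions already in place. First I would check that $SX = (\rr \times X, d(e^t\lambda))$ is $G$-Hamiltonian with $\Phi(t,x) = e^t\mu(x)$: compute $\iota_{X_\xi} d(e^t\lambda) = \iota_{X_\xi}(e^t\, dt\wedge \lambda + e^t\, d\lambda)$ and use the contact moment map identity $\iota_{X_\xi}\lambda + \langle\mu,\xi\rangle = 0$ together with $\mathcal{L}_{X_\xi}\lambda = 0$ (which follows from the contact action being by contactomorphisms preserving $\lambda$, or at worst preserving $\xi$ after rescaling) to obtain $\iota_{X_\xi} d(e^t\lambda) = d(e^t\langle\mu,\xi\rangle) = d\langle\Phi,\xi\rangle$. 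Equivariance of $\Phi$ is immediate from equivariance of $\mu$ and $G$-invariance of the $t$-coordinate (the $G$-action on $SX$ is $g\cdot(t,x) = (t, g\cdot x)$). That $SX$ is $G$-exact is clear since $e^t\lambda$ is a $G$-invariant primitive.

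Next I would establish the regularity and reduction statements. If the $G$-action on $X$ is regular, then $0$ is a regular value of $\mu$ and $G$ acts freely on $\mu^{-1}(0)$; since $\Phi^{-1}(0) = \rr\times\mu^{-1}(0)$ (because $e^t > 0$), the value $0$ is regular for $\Phi$ and $G$ acts freely on $\Phi^{-1}(0)$, so the $G$-action on $SX$ is regular. For the identification $(SX)\red G \simeq S(X\red G)$, note $(SX)\red G = (\rr\times\mu^{-1}(0))/G = \rr\times(\mu^{-1}(0)/G) = \rr\times(X\red G)$, and one must check that the reduced symplectic form matches $d(e^t\lambda_{red})$, where $\lambda_{red}$ is the contact form on $X\red G$ obtained from $\lambda|_{\mu^{-1}(0)}$ by descent (Definition~\ref{def:contact_Hamiltonian_actions}). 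This follows because pulling back $e^t\lambda$ to $\rr\times\mu^{-1}(0)$ and then descending commutes with the obvious identification: $e^t\lambda|_{\rr\times\mu^{-1}(0)}$ is basic for the $G$-action and descends to $e^t\lambda_{red}$, and $d$ commutes with descent of basic forms. Hence the reduced form is $d(e^t\lambda_{red})$, which is precisely the symplectic form of $S(X\red G)$, and the isomorphism is one of exact symplectic manifolds.

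For the Legendrian statement, if $\Lambda\subset X$ is a $G$-Legendrian then $S\Lambda = \rr\times\Lambda$ is $G$-invariant, is contained in $\Phi^{-1}(0) = \rr\times\mu^{-1}(0)$ since $\Lambda\subset\mu^{-1}(0)$, and is Lagrangian in $SX$ (a standard fact: the symplectization of a Legendrian is an exact Lagrangian, with $(e^t\lambda)|_{S\Lambda} = 0$ as $\lambda|_\Lambda = 0$). So $S\Lambda$ is a $G$-exact Lagrangian. Finally $(S\Lambda)\red G = (\rr\times\Lambda)/G = \rr\times(\Lambda/G) = \rr\times(\Lambda\red G) = S(\Lambda\red G)$ as subsets of $(SX)\red G \simeq S(X\red G)$, which is the desired identification. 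I do not expect a serious obstacle here; the only point requiring a little care is confirming that the descended contact form on $X\red G$ produced in Definition~\ref{def:contact_Hamiltonian_actions} is compatible under symplectization with the reduced symplectic form on $(SX)\red G$ — i.e. that "reduce then symplectize" equals "symplectize then reduce" at the level of the $1$-forms — which is the step I would write out most carefully.
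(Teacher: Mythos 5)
Your proposal is correct; the paper states this proposition without proof, and your direct verification (the Cartan-formula computation giving $\iota_{X_\xi}d(e^t\lambda)=d(e^t\langle\mu,\xi\rangle)$, the identification $\Phi^{-1}(0)=\rr\times\mu^{-1}(0)$, and the observation that $e^t\lambda$ restricted to the zero level is basic and descends to $e^t\lambda_{red}$) is exactly the routine argument being taken for granted. The one point to state cleanly rather than hedge on is that the computation genuinely requires $\mathcal{L}_{X_\xi}\lambda=0$ (not merely preservation of $\ker\lambda$ up to conformal rescaling, which would leave an extra term $e^t f\lambda$ in $\iota_{X_\xi}\omega$); this is the standard reading of Definition~\ref{def:contact_Hamiltonian_actions} and costs nothing for compact $G$ after averaging $\lambda$.
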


\begin{defi}\label{def:G-convex_at_infinity}
A $G$-exact symplectic manifold $(M,\lambda)$ is \emph{$G$-convex at infinity} if it is isomorphic (as $G$-exact Hamiltonian manifolds) to the positive symplectization of a contact $G$-manifold outside a compact subset.

A $G$-Lagrangian $L$ inside a $G$-convex symplectic manifold $(M,\lambda)$  is \emph{$G$-cylindrical at infinity} if it is of the form $\rr_+\times \Lambda$ at infinity, where $\Lambda$ is a  $G$-Legendrian.
\end{defi}

\begin{prop}\label{prop:cotangent_bundle_equi_exact}
Let $X$ be a $G$-manifold, then its cotangent bundle $TX$ is $G$-exact and $G$-convex at infinity. If $Z\subset X$ is a $G$-invariant submanifold, then its conormal bundle $N_Z \subset T^*X$ is $G$-exact. Moreover, the Liouville form restricts to zero on $N_Z$.
\end{prop}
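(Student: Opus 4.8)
The plan is to reduce everything to the canonical Liouville structure on $T^*X$ and check that it is already $G$-invariant, then identify the contact $G$-manifold at infinity. First I would recall that the tautological (Liouville) one-form $\lambda_{\mathrm{can}}$ on $T^*X$, defined by $(\lambda_{\mathrm{can}})_{(q,p)}(v) = p(D\pi(v))$ for $v\in T_{(q,p)}T^*X$ and $\pi\colon T^*X\to X$ the projection, satisfies $\d\lambda_{\mathrm{can}} = \omega_{\mathrm{can}}$. The action of $G$ on $T^*X$ from Example~\ref{exam:ham_action_cotangent_bundle} is $g\cdot(q,p) = (gq, p\circ(D_qL_g)^{-1})$; since this is the natural lift of the base action and commutes with $\pi$ (i.e. $\pi\circ\varphi_g = L_g\circ\pi$), a direct computation gives $\varphi_g^*\lambda_{\mathrm{can}} = \lambda_{\mathrm{can}}$. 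This is a standard fact (the cotangent lift of any diffeomorphism preserves the tautological form), so $(T^*X,\lambda_{\mathrm{can}},\mu)$ is $G$-exact in the sense of Definition~\ref{def:G-exact_mfd_lag}, with no need to average.

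Next I would address $G$-convexity at infinity. Equip $X$ with a $G$-invariant Riemannian metric $g$ (which exists since $G$ is compact, by averaging); this identifies the unit cotangent bundle $S^*X = \{(q,p) : |p|_g = 1\}$ with a $G$-invariant hypersurface, and the restriction $\alpha = \lambda_{\mathrm{can}}|_{S^*X}$ is a $G$-invariant contact form. The Liouville flow $(q,p)\mapsto(q,e^s p)$ is $G$-equivariant (it commutes with each $\varphi_g$, since the latter is fiberwise linear), so it identifies $T^*X\setminus 0_X$ with the positive symplectization $(\rr_{>0}\times S^*X, \d(r\alpha))$ as $G$-exact Hamiltonian manifolds; outside the compact disk bundle $\{|p|_g\le 1\}$ this exhibits $T^*X$ as the positive symplectization of the contact $G$-manifold $(S^*X,\alpha)$, which is exactly Definition~\ref{def:G-convex_at_infinity}. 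One should note the moment map is compatible: $\mu(q,e^s p) = e^s\mu(q,p)$ by the formula $\mu(q,p)\cdot\xi = p(X_\xi(q))$, matching the symplectization moment map $\Phi(s,x) = e^s\mu(x)$ of Proposition~\ref{prop:symplectization_G-mfd}, so the identification is one of $G$-Hamiltonian manifolds.

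Finally, for a $G$-invariant submanifold $Z\subset X$, its conormal bundle $N_Z = \{(q,p) : q\in Z,\ p|_{T_qZ} = 0\}$ is $G$-invariant because the action preserves $Z$ and is fiberwise dual-linear; and $\lambda_{\mathrm{can}}$ restricts to zero on $N_Z$ since for $v\in T_{(q,p)}N_Z$ one has $D\pi(v)\in T_qZ$ while $p$ kills $T_qZ$ — so $\iota^*\lambda_{\mathrm{can}} = 0 = \d 0$ with the $G$-invariant primitive $f\equiv 0$, giving $G$-exactness of $N_Z$. (If one also wants $N_Z$ to be $G$-cylindrical at infinity one intersects with $S^*X$ to get the $G$-Legendrian $N_Z\cap S^*X$, again using the $G$-invariant metric.) The only mild subtlety — and the step I would be most careful about — is the equivariance of the cotangent lift, i.e. checking $\varphi_g^*\lambda_{\mathrm{can}} = \lambda_{\mathrm{can}}$ and the commutation of $\varphi_g$ with the Liouville flow; everything else is then formal given Proposition~\ref{prop:symplectization_G-mfd} and the definitions.
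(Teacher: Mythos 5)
Your proof is correct; the paper in fact states Proposition~\ref{prop:cotangent_bundle_equi_exact} without proof, and your argument --- invariance of the tautological form under cotangent lifts, a $G$-invariant metric identifying the complement of the unit disk bundle with the symplectization of the contact $G$-manifold $(S^*X,\lambda_{\mathrm{can}}|_{S^*X})$ compatibly with the moment maps, and the vanishing of $\lambda_{\mathrm{can}}$ on conormal bundles --- is exactly the standard verification the omission presupposes. The only hypothesis worth making explicit is compactness of $X$ (which holds throughout the paper), since otherwise the unit disk bundle is not a compact subset and the ``outside a compact subset'' clause of Definition~\ref{def:G-convex_at_infinity} would fail.
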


In defining $HF^G(M;L_0, L_1)$ we will assume:
\begin{assumption}\label{ass:exact_setting} The symplectic manifold $(M,\omega =  \d \lambda)$ is $G$-exact and $G$-convex at infinity. The Lagrangians  $L_0, L_1 \subset M$ are $G$-exact, $G$-cylindrical at infinity, and disjoint outside a compact subset.
\end{assumption}

It then follows from the above discussion that for each $N$, $M_N$ is exact and convex at infinity, and that $L_0^N, L_1^N \subset M_N$ are exact, cylindrical at infinity and disjoint outside a compact subset. Furthermore, one can show that $\Lambda_N \subset (M_N)^- \times M_{N+1}$ is  exact and cylindrical at infinity as well (but we will not use this).

\subsection{Monotone setting}\label{ssec:monotone_setting}

The monotone setting is another simple case where Lagrangian Floer homology is easy to define, see for example \cite[sec.~2.1]{MW}. 

If $L\subset M$ is a Lagrangian submanifold, let $\mu_L\colon \pi_2(M,L) \to \zz$ denote the Maslov index. If $u$ is a disc in $M$ with boundary in $L$, $v$ a sphere in $M$, and $u\#v$ is the disc corresponding to the connected sum  at an interior point (see Figure~\ref{fig:internal_connected_sum}), recall that
\e\label{eq:Maslov_sphere_bubbling}
\mu_L(u\#v) = \mu_L(u) + 2 c_1(v).
\e

\begin{figure}[!h]
    \centering
    \def\svgwidth{.50\textwidth}
    %% Creator: Inkscape 1.1.2 (0a00cf5339, 2022-02-04), www.inkscape.org
%% PDF/EPS/PS + LaTeX output extension by Johan Engelen, 2010
%% Accompanies image file '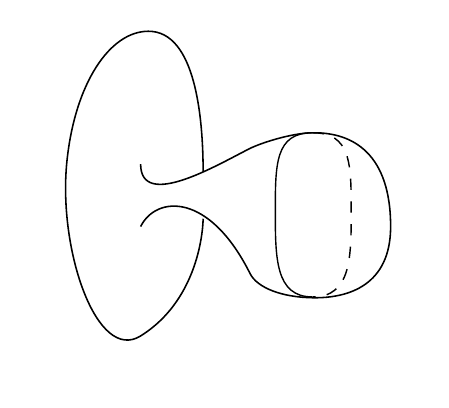' (pdf, eps, ps)
%%
%% To include the image in your LaTeX document, write
%%   \input{<filename>.pdf_tex}
%%  instead of
%%   \includegraphics{<filename>.pdf}
%% To scale the image, write
%%   \def\svgwidth{<desired width>}
%%   \input{<filename>.pdf_tex}
%%  instead of
%%   \includegraphics[width=<desired width>]{<filename>.pdf}
%%
%% Images with a different path to the parent latex file can
%% be accessed with the `import' package (which may need to be
%% installed) using
%%   \usepackage{import}
%% in the preamble, and then including the image with
%%   \import{<path to file>}{<filename>.pdf_tex}
%% Alternatively, one can specify
%%   \graphicspath{{<path to file>/}}
%% 
%% For more information, please see info/svg-inkscape on CTAN:
%%   http://tug.ctan.org/tex-archive/info/svg-inkscape
%%
\begingroup%
  \makeatletter%
  \providecommand\color[2][]{%
    \errmessage{(Inkscape) Color is used for the text in Inkscape, but the package 'color.sty' is not loaded}%
    \renewcommand\color[2][]{}%
  }%
  \providecommand\transparent[1]{%
    \errmessage{(Inkscape) Transparency is used (non-zero) for the text in Inkscape, but the package 'transparent.sty' is not loaded}%
    \renewcommand\transparent[1]{}%
  }%
  \providecommand\rotatebox[2]{#2}%
  \newcommand*\fsize{\dimexpr\f@size pt\relax}%
  \newcommand*\lineheight[1]{\fontsize{\fsize}{#1\fsize}\selectfont}%
  \ifx\svgwidth\undefined%
    \setlength{\unitlength}{215.43307087bp}%
    \ifx\svgscale\undefined%
      \relax%
    \else%
      \setlength{\unitlength}{\unitlength * \real{\svgscale}}%
    \fi%
  \else%
    \setlength{\unitlength}{\svgwidth}%
  \fi%
  \global\let\svgwidth\undefined%
  \global\let\svgscale\undefined%
  \makeatother%
  \begin{picture}(1,0.90789474)%
    \lineheight{1}%
    \setlength\tabcolsep{0pt}%
    \put(0,0){\includegraphics[width=\unitlength,page=1]{internal_connected_sum.pdf}}%
    \put(0.26890061,0.59890399){\color[rgb]{0,0,0}\makebox(0,0)[lt]{\lineheight{1.25}\smash{\begin{tabular}[t]{l}$u$\end{tabular}}}}%
    \put(0.69627193,0.15940231){\color[rgb]{0,0,0}\makebox(0,0)[lt]{\lineheight{1.25}\smash{\begin{tabular}[t]{l}$v$\end{tabular}}}}%
    \put(0.05844117,0.15704054){\color[rgb]{0,0,0}\makebox(0,0)[lt]{\lineheight{1.25}\smash{\begin{tabular}[t]{l}$L$\end{tabular}}}}%
  \end{picture}%
\endgroup%

      \caption{Interior connected sum $u\#v$.}
      \label{fig:internal_connected_sum}
\end{figure}

\begin{defi}\label{def:monotone}Let $\kappa>0$. A symplectic manifold $(M, \omega)$ is \emph{$\kappa$-monotone} if 
\e
[\omega]_{|\pi_2(M)} = \kappa\, c_1(TM).
\e
A Lagrangian submanifold $L\subset M$ is \emph{$\kappa$-monotone} if \e
2[\omega]_{|\pi_2(M,L)} = \kappa\, \mu_L ,
\e
where $\mu_L$ is the Maslov index.
\end{defi}

If $L\subset M$ is $\kappa$-monotone and $M$ is connected, then $M$ is $\kappa$-monotone. The converse is true if $L$ is simply connected.

This allows us to control the energy of pseudo-holomorphic curves to ensure compactness of the moduli spaces involved in the construction of Floer homology. Still, as Maslov index two discs can obstruct the differential squaring to zero, one usually makes an extra assumption on minimal Maslov numbers.

\begin{defi}\label{def:minimal_Chern_Maslov}
The \emph{minimal Chern number} $N_M$ of $M$ is the positive generator of the image of $c_1(TM)\colon \pi_2(M)\to \zz$.

The \emph{minimal Maslov number} $N_L$ of $L$ is the positive generator of the image of $\mu_L \colon \pi_2(M,L)\to \zz$.
\end{defi}

If $M$ is connected, $N_L$ divides $2 N_M$, and if $L$ is simply connected, $N_L = 2 N_M$.

Floer homology is well-defined under the following:
\begin{assumption}\label{ass:monotone_setting} The symplectic manifold $M$ is compact $\kappa$-monotone, and the Lagrangians $L_0, L_1$ are $\kappa$-monotone and such that $N_{L_0},N_{L_1}\in A \zz$, with $A\geq 3$.
\end{assumption}

Except for compactness, this setting transports to symplectic homotopy quotients, as we shall see.
 
\begin{lemma}\label{lem:Maslov_index_quotient}
Let $(M,\omega)$ be a symplectic manifold, endowed with a regular Hamiltonian action of a compact Lie group $G$ with moment $\mu\colon M\to \mathfrak{g}^* $, and $L\subset \mu^{-1}(0)$ a $G$-Lagrangian. Let $u\colon (D^2, \partial D^2) \to (M\red G, L/G)$ be a disc, and $\overline{u}\colon (D^2, \partial D^2) \to (\mu^{-1}(0), L)$ a lift of $u$ (which exists since $D^2$ is contractible). Then the Maslov indices are equal:

\e
\mu_{L/G}(u) = \mu_L (\overline{u}).
\e
\end{lemma}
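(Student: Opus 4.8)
The plan is to compare the two Maslov indices by exploiting the fact that the quotient map $\pi \colon \mu^{-1}(0) \to M \red G$ together with the inclusion $\iota \colon \mu^{-1}(0) \hookrightarrow M$ realizes $M \red G$ as a symplectic reduction, and that symplectic reduction does not change the relevant linearized symplectic/Lagrangian data along a disc boundary up to a contribution that is itself Lagrangian and hence Maslov-trivial. Concretely, given the disc $u$ and its lift $\overline{u}$, I would pull back the symplectic vector bundle $TM \to M$ along $\overline{u}$ and compare the Lagrangian loop $t \mapsto T_{\overline{u}(e^{it})} L$ inside $(\overline{u}^* TM, \overline{u}^* \omega)$ with the Lagrangian loop $t \mapsto T_{u(e^{it})}(L/G)$ inside $(u^* T(M\red G), u^* \omega_{\mathrm{red}})$.

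First I would recall the pointwise linear algebra of symplectic reduction: at a point $m \in \mu^{-1}(0)$, writing $\mathfrak{g}\cdot m = \{X_\eta(m) : \eta \in \mathfrak{g}\}$ for the tangent to the orbit, one has $(\mathfrak{g}\cdot m)^{\omega} = \ker d_m\mu = T_m \mu^{-1}(0)$, the orbit direction $\mathfrak{g}\cdot m$ is isotropic and contained in $T_m\mu^{-1}(0)$ (using $m \in \mu^{-1}(0)$ and $G$-invariance), and $T_{[m]}(M\red G) = (\mathfrak{g}\cdot m)^{\omega}/(\mathfrak{g}\cdot m)$ with the induced symplectic form. Since the action is regular, the orbit direction has constant rank $\dim G$ along $\overline{u}$, so over the disc we get a symplectic subbundle inclusion and quotient $\overline{u}^* T\mu^{-1}(0) = (\mathfrak{g}\cdot \overline{u})^{\omega} \subset \overline{u}^* TM$ and $u^* T(M\red G) = (\mathfrak{g}\cdot \overline{u})^{\omega}/(\mathfrak{g}\cdot \overline{u})$. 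Choosing a $G$-invariant compatible metric/almost complex structure on $M$ (as in Proposition~\ref{prop:fibration_homotopy_quotient}), I can split $\overline{u}^* TM = W \oplus (\mathfrak{g}\cdot \overline{u} \oplus i(\mathfrak{g}\cdot \overline{u}))$ as symplectic bundles over $D^2$, where $W \cong u^* T(M\red G)$ and the second summand is a trivial symplectic bundle (trivialized by an invariant frame of $\mathfrak{g}$ and its image under $i$); crucially $W$ contains $T L$ along the boundary (since $L \subset \mu^{-1}(0)$ is $G$-invariant, $TL$ already contains the orbit directions, so the "transverse" part $TL \cap W$ maps isomorphically to $T(L/G)$ and the complement $\mathfrak{g}\cdot\overline{u}\oplus\{0\}$ along $\partial D^2$ is Lagrangian in the trivial summand).

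The Maslov index of a disc is computed, after trivializing the symplectic bundle over $D^2$, as the degree of the boundary Lagrangian loop in the Lagrangian Grassmannian; it is additive over direct sums of bundle-pairs-over-the-disc, and a pair consisting of a trivial symplectic bundle with a constant (or even just contractible) Lagrangian boundary loop contributes $0$. So I would write $\mu_L(\overline{u}) = \mu(W, TL\cap W) + \mu(\text{trivial summand}, \mathfrak{g}\cdot\overline{u}\oplus 0)$, identify the first term with $\mu_{L/G}(u)$ under the isomorphism $W \cong u^* T(M\red G)$ carrying $TL\cap W|_{\partial}$ to $T(L/G)$, and observe the second term vanishes because the boundary Lagrangian $\mathfrak{g}\cdot\overline{u}(e^{it})$ is, in an invariant frame of $\mathfrak{g}$, a constant Lagrangian $\mathfrak{g} \subset \mathfrak{g}\oplus i\mathfrak{g} = \mathfrak{g}^{\cc}$. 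This yields $\mu_{L/G}(u) = \mu_L(\overline{u})$.

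The main obstacle I expect is making the splitting $\overline{u}^* TM \cong u^* T(M\red G) \oplus \underline{\mathfrak{g}^{\cc}}$ genuinely global over the disc and compatible with the boundary Lagrangian conditions, rather than just pointwise: one must check that the orbit subbundle $\mathfrak{g}\cdot\overline{u}$ and its $\omega$-orthogonal/complex complements vary smoothly (true by regularity and $G$-invariance of the metric), that the identification of $W|_{\partial D^2}$ with $u^*T(M\red G)$ intertwines $TL\cap W$ with $T(L/G)$ (this is exactly the linearization of $\iota\times\pi$ from Definition~\ref{def:canon_corresp} restricted to $L$), and that the trivialization of the $\mathfrak{g}^{\cc}$-summand can be chosen so that the boundary loop really is constant — here $G$-invariance of the chosen data does the work, since a $G$-invariant frame of $\mathfrak{g}$ pushed along $\overline{u}|_{\partial D^2}$ stays "horizontal". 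Once the global splitting is in place the index computation is immediate from additivity and homotopy invariance of the Maslov index. I would also remark that by \eqref{eq:Maslov_sphere_bubbling} the same argument (with discs replaced by spheres, $L$ empty) gives the companion statement $c_1(TM)|_{\pi_2(\mu^{-1}(0))} = c_1(T(M\red G))$ composed with $\pi_*$, though that is not needed here.
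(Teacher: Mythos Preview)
Your approach is essentially identical to the paper's: both split $\overline{u}^*TM$ as the direct sum of a trivial $\underline{\g}^{\cc}$-summand (carrying the constant Lagrangian $\underline{\g}$, hence Maslov-zero) and its symplectic orthogonal $V \simeq \pi^* T(M\red G)$ (carrying $V\cap TL \simeq \pi^* T(L/G)$), then invoke additivity of the Maslov index. One slip to correct: you write ``$W$ contains $TL$ along the boundary,'' but in fact $TL$ contains the orbit directions $\g\cdot m$ while your $W$ is orthogonal to them --- the correct statement (which your subsequent formula already uses) is that $TL$ splits as $(TL\cap W)\oplus (\g\cdot m)$, compatibly with the ambient splitting.
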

\begin{proof}
Let $\g^{\cc} = \g\oplus i \g$, and $J$ a $G$-invariant almost-complex structure on $M$ compatible with $\omega$.
Since the action is regular, we get an injective map, with $Z=\mu^{-1}(0)$
\ea
Z\times \g^{\cc} &\to TM_{|Z} \\
(x, v+iw) &\mapsto X_v(x) + J_x X_w .
\ea 

Denote by $\underline{\g}^{\cc}$ the (trivial) sub-bundle of $TM_{|Z}$ corresponding to the image of this map. Notice that the sub-bundle $\underline{\g} \subset \underline{\g}^{\cc}$ defined as the image of $Z\times \g$ corresponds to the foliation of $Z$ by orbits.

Let $V$ denote the orthogonal complement of $\underline{\g}^{\cc}$ in $TM_{|Z}$ (with respect to either the symplectic structure or the Riemannian metric induced by $J$), and $W = V\cap TL$.

Let now $u$ and $\overline{u}$ be as in the statement. One has 
\e
\mu_L(\overline{u}) = \mu( \overline{u}^* (\underline{\g}^{\cc}, \underline{\g})) + \mu( \overline{u}^* (V,W)),
\e 
but $\mu( \overline{u}^* (\underline{\g}^{\cc}, \underline{\g})) =0$ as $\underline{\g}$ is a constant sub-bundle of $\underline{\g}^{\cc}$. Moreover, 
\e
\mu( \overline{u}^* (V,W))= \mu_{L/G}(u) ,
\e 
since
\ea
V&\simeq \pi^* T(M\red G)\text{, and} \\ 
W&\simeq \pi^* T(L/G),
\ea
with $\pi\colon Z\to M\red G$ the projection.
\end{proof}

Recall from \cite[Lemma~4.4]{MW} that (under some assumptions), the symplectic quotient of a $\kappa$-monotone symplectic manifold is again $\kappa$-monotone. Here is a relative version that follows from the previous lemma:

\begin{prop}\label{prop:monotone_reduction}
Let $(M,\omega)$ be a symplectic manifold, endowed with a regular Hamiltonian action of a compact Lie group $G$ with moment $\mu\colon M\to \mathfrak{g}^* $, and $L\subset \mu^{-1}(0)$ a $G$-Lagrangian. If $L$ is $\kappa$-monotone, then $L/G$ is also $\kappa$-monotone. Moreover, the minimal Maslov number $N_{L/G}$ is a multiple of $N_L$.
\end{prop}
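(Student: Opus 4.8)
The plan is to deduce Proposition~\ref{prop:monotone_reduction} directly from Lemma~\ref{lem:Maslov_index_quotient} together with the identity $2[\omega]_{|\pi_2(M,L)} = \kappa\,\mu_L$ and the relationship between $\pi_2(M\red G, L/G)$ and $\pi_2(\mu^{-1}(0), L)$.

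First I would establish the $\kappa$-monotonicity of $L/G$. Let $u\colon (D^2,\partial D^2) \to (M\red G, L/G)$ be a disc. Since $D^2$ is contractible, $u$ lifts to $\overline{u}\colon (D^2,\partial D^2)\to (\mu^{-1}(0), L)$. I would first check that the symplectic areas agree: $[\omega_{M\red G}](u) = [\omega_M](\overline{u})$, which follows because $\pi^*\omega_{M\red G} = \iota^*\omega_M$ on $\mu^{-1}(0)$ (the defining property of the reduced form), so $\int_{D^2} u^*\omega_{M\red G} = \int_{D^2}\overline{u}^*\pi^*\omega_{M\red G} = \int_{D^2}\overline{u}^*\iota^*\omega_M = \int_{D^2}\overline{u}^*\omega_M$. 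Combining this with Lemma~\ref{lem:Maslov_index_quotient}, which gives $\mu_{L/G}(u) = \mu_L(\overline{u})$, and with the assumption $2[\omega_M]_{|\pi_2(M,L)} = \kappa\,\mu_L$, we get
\[
2[\omega_{M\red G}](u) = 2[\omega_M](\overline{u}) = \kappa\,\mu_L(\overline{u}) = \kappa\,\mu_{L/G}(u),
\]
so $L/G$ is $\kappa$-monotone.

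Next I would prove that $N_{L/G}$ is a multiple of $N_L$. Equivalently, the image of $\mu_{L/G}\colon \pi_2(M\red G, L/G) \to \zz$ is contained in the image of $\mu_L\colon \pi_2(\mu^{-1}(0), L)\to \zz$. Given any disc $u$ in $(M\red G, L/G)$, lift it to $\overline{u}$ in $(\mu^{-1}(0),L)$ as above; then $\mu_{L/G}(u) = \mu_L(\overline{u})$ lies in the image of $\mu_L$ on $\pi_2(\mu^{-1}(0),L)$. It remains to see that this image equals (or is contained in) $N_L\zz$, where $N_L$ is the minimal Maslov number of $L$ inside $M$ (not inside $\mu^{-1}(0)$). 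For this I would use that the inclusion $\mu^{-1}(0)\hookrightarrow M$ induces a map $\pi_2(\mu^{-1}(0),L)\to \pi_2(M,L)$ compatible with Maslov indices — here one uses that $L$ is already contained in $\mu^{-1}(0)$ and that the Maslov index of a disc with boundary on $L$ can be computed from the symplectic trivialization of $TM$ restricted to the image of $\overline{u}$, which factors through $\mu^{-1}(0)$. Hence the image of $\mu_L$ on $\pi_2(\mu^{-1}(0),L)$ lands in $N_L\zz$, giving $N_L \mid N_{L/G}$.

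The main obstacle I anticipate is the bookkeeping in the last step: making precise that the Maslov index of a lifted disc $\overline{u}$, computed as an invariant of $L\subset M$, only depends on the homotopy class of $\overline{u}$ in $\pi_2(M,L)$, so that it is genuinely divisible by $N_L$. This is essentially the observation already made in Lemma~\ref{lem:Maslov_index_quotient} (where $TM_{|Z}$ splits as $\underline{\g}^{\cc}\oplus V$ with $V\simeq \pi^*T(M\red G)$), so the divisibility should follow by the same trivialization argument; I would simply invoke that splitting rather than redo it. Everything else is formal once Lemma~\ref{lem:Maslov_index_quotient} and the equality of symplectic areas are in hand.
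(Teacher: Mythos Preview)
Your proposal is correct and follows essentially the same approach as the paper: lift a disc, invoke Lemma~\ref{lem:Maslov_index_quotient} for the Maslov index, use $\pi^*\omega_{M\red G}=\iota^*\omega_M$ for the area, and conclude. Your final ``obstacle'' is a non-issue: since $\mu^{-1}(0)\subset M$, the lift $\overline{u}$ is already a disc in $(M,L)$, so $\mu_L(\overline{u})$ is by definition computed in $\pi_2(M,L)$ and hence lies in $N_L\zz$---no extra bookkeeping or invocation of the splitting is needed.
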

\begin{proof}
Let $u$ be a disc in $(M\red G, L/G)$ an $\overline{u}$ a lift to $(M,L)$. The first statement follows from:
\e
\kappa \mu_{L/G}(u)= \kappa \mu_L(\overline{u}) = 2 \omega (\overline{u})= 2\omega_{M\red G}(u).
\e
The statement about minimal Maslov numbers follows from the fact that any disc can be lifted, and from Lemma~\ref{lem:Maslov_index_quotient}.
\end{proof}

\begin{lemma}\label{lem:Diagonal_momo_mini_Maslov}
If $M$ is $\kappa$-monotone, then the diagonal $\Delta_M\subset M^-\times M$ is $\kappa$-monotone. Furthermore, $N_{\Delta_M} = 2N_M$.
\end{lemma}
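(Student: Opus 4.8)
The plan is to reduce the statement about the diagonal $\Delta_M \subset M^- \times M$ to the known monotonicity of $M$ itself by analyzing how $\omega$, $c_1$, and the Maslov index behave on $\pi_2(M^-\times M, \Delta_M)$. The key observation is that $\Delta_M$ is diffeomorphic to $M$, and a disc $u\colon (D^2,\partial D^2) \to (M^-\times M, \Delta_M)$ can be described by its two components $u = (u^-, u^+)$ with $u^-, u^+ \colon D^2 \to M$ agreeing on the boundary; gluing them along $\partial D^2$ with opposite orientations yields a sphere $v = u^- \# \overline{u^+} \in \pi_2(M)$ (here one component gets reversed orientation because of the $M^-$ factor).

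First I would establish the identity $\mu_{\Delta_M}(u) = 2c_1(TM)(v)$ where $v$ is the glued sphere. This is the standard computation relating the Maslov index of a disc with boundary on a diagonal to the first Chern number of the associated sphere: the boundary condition along $\Delta_M$ is the graph of the identity, so the relevant loop of Lagrangian subspaces in $(T_pM)^- \oplus T_pM$ is the graph of a loop of symplectic automorphisms coming from the clutching of $u^-$ and $u^+$, and its Maslov index equals twice the relevant Chern number. Simultaneously, $\omega_{M^-\times M}(u) = -\omega_M(u^-) + \omega_M(u^+)$; when $u^-$ and $u^+$ are glued along the boundary this difference becomes $\omega_M(v)$ (the minus sign is absorbed by the orientation reversal in forming $v$). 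Combining these, $2[\omega_{M^-\times M}](u) = 2\omega_M(v) = 2\kappa\, c_1(TM)(v) = \kappa\, \mu_{\Delta_M}(u)$, using $\kappa$-monotonicity of $M$. This proves $\Delta_M$ is $\kappa$-monotone.

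For the minimal Maslov number, I would argue that the map $\pi_2(M^-\times M, \Delta_M) \to \pi_2(M)$ sending $u \mapsto v$ is surjective (any sphere in $M$ arises, e.g., by taking $u^-$ constant and $u^+ = v$), and that the image of $\mu_{\Delta_M}$ equals $2$ times the image of $c_1(TM)\colon \pi_2(M) \to \zz$ by the identity above. Hence $N_{\Delta_M} = 2N_M$.

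The main obstacle I anticipate is making the gluing/clutching argument for the Maslov index fully precise: one must carefully track orientation conventions for $M^-$ versus $M$, choose symplectic trivializations of $u^{-*}TM$ and $u^{+*}TM$ over $D^2$, and verify that the loop of Lagrangians obtained from the diagonal boundary condition has Maslov index exactly $2c_1(v)$ rather than, say, $-2c_1(v)$ or $c_1(v)$. This is entirely standard (it is essentially the computation behind the PSS isomorphism and the identification of the diagonal in the Fukaya-category setting), so I would cite a reference such as \cite{MW} or a symplectic topology textbook rather than reproduce it, but I would make sure the sign works out so that both the monotonicity constant and the factor of $2$ in $N_{\Delta_M} = 2N_M$ come out correctly.
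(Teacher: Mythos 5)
Your proposal is correct and follows essentially the same route as the paper: both reduce everything to the correspondence between discs $u=(u_1,u_2)$ with boundary on $\Delta_M$ and the glued sphere $v=u_1\cup_{\partial D^2}u_2$, establish the key identity $\mu_{\Delta_M}(u)=2c_1(v)$ (from which monotonicity and $N_{\Delta_M}=2N_M$ follow exactly as you argue), and use surjectivity of $u\mapsto v$ for the minimal Maslov number. The only difference is in how that identity is justified: where you would cite or redo the clutching computation, the paper gives a short self-contained argument by homotoping $u$ to $\alpha\#\beta$ with $\alpha=(u_1,u_1)$ a disc contained in $\Delta_M$ (hence of zero Maslov index) and $\beta$ a sphere, then invoking the connected-sum formula $\mu_L(u\#v)=\mu_L(u)+2c_1(v)$ already recalled in that section --- a device that sidesteps the sign-chasing you flag as the main obstacle.
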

\begin{proof}
A disc 
\e
u= (u_1, u_2)\colon (D^2, \partial D^2)\to (M^-\times M,\Delta_M)
\e 
gives rise to a sphere 
\e
v= u_1  \cup_{\partial D^2} u_2 \colon S^2 = (D^2)^- \cup_{\partial D^2}D^2 \to M,
\e
and conversely any sphere in $M$ gives a disc in $(M^-\times M,\Delta_M)$. The claim follows from $\mu_{\Delta_M}(u) = 2 c_1(v)$. Indeed, $u$ is homotopic to $\alpha\#\beta$ (see Figure~\ref{fig:Maslov_vs_Chern}), with 
\ea
\alpha = (u_1, u_1)&\colon D^2 \to\Delta_M\text{, and} \\
\beta = (u_1, u_1)\cup_{\partial D^2} (u_1, u_2)&\colon (D^2)^- \cup_{\partial D^2}D^2 \to M.
\ea
Then  (\ref{eq:Maslov_sphere_bubbling}) gives $\mu_{\Delta_M}(u) =\mu_{\Delta_M}(\alpha) +2c_1(\beta)$, but $\mu_{\Delta_M}(\alpha)=0$ as $\alpha\subset \Delta_M$, and $\beta= (u_1\cup_{\partial D^2} u_1) \times u$ so  $c_1(\beta) = c_1(u_1\cup_{\partial D^2} u_1) + c_1(u)$, but as $u_1\cup_{\partial D^2} u_1$ is homotopic to a constant map, $c_1(u_1\cup_{\partial D^2} u_1)=0$.

\begin{figure}[!h]
    \centering
    \def\svgwidth{.50\textwidth}
    %% Creator: Inkscape 1.1.2 (0a00cf5339, 2022-02-04), www.inkscape.org
%% PDF/EPS/PS + LaTeX output extension by Johan Engelen, 2010
%% Accompanies image file '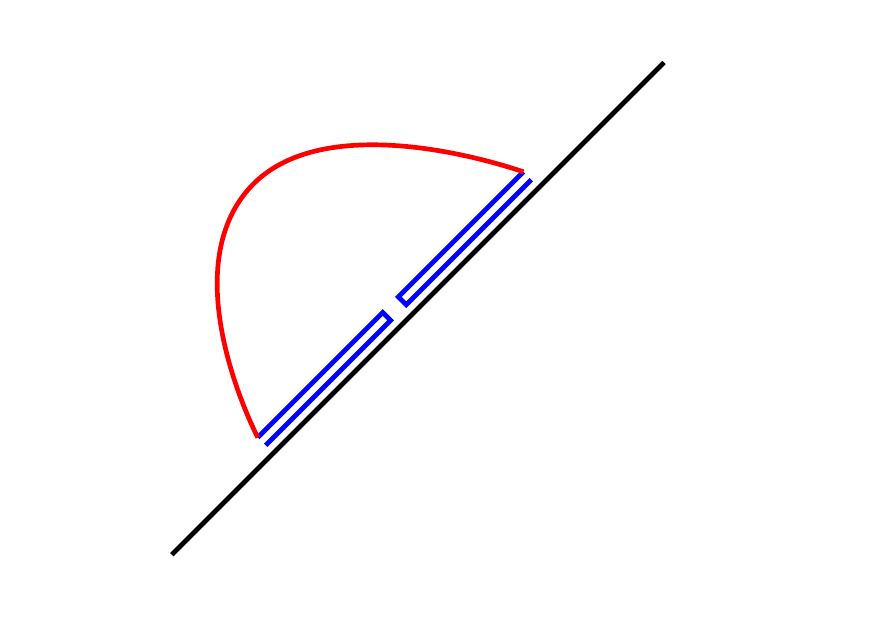' (pdf, eps, ps)
%%
%% To include the image in your LaTeX document, write
%%   \input{<filename>.pdf_tex}
%%  instead of
%%   \includegraphics{<filename>.pdf}
%% To scale the image, write
%%   \def\svgwidth{<desired width>}
%%   \input{<filename>.pdf_tex}
%%  instead of
%%   \includegraphics[width=<desired width>]{<filename>.pdf}
%%
%% Images with a different path to the parent latex file can
%% be accessed with the `import' package (which may need to be
%% installed) using
%%   \usepackage{import}
%% in the preamble, and then including the image with
%%   \import{<path to file>}{<filename>.pdf_tex}
%% Alternatively, one can specify
%%   \graphicspath{{<path to file>/}}
%% 
%% For more information, please see info/svg-inkscape on CTAN:
%%   http://tug.ctan.org/tex-archive/info/svg-inkscape
%%
\begingroup%
  \makeatletter%
  \providecommand\color[2][]{%
    \errmessage{(Inkscape) Color is used for the text in Inkscape, but the package 'color.sty' is not loaded}%
    \renewcommand\color[2][]{}%
  }%
  \providecommand\transparent[1]{%
    \errmessage{(Inkscape) Transparency is used (non-zero) for the text in Inkscape, but the package 'transparent.sty' is not loaded}%
    \renewcommand\transparent[1]{}%
  }%
  \providecommand\rotatebox[2]{#2}%
  \newcommand*\fsize{\dimexpr\f@size pt\relax}%
  \newcommand*\lineheight[1]{\fontsize{\fsize}{#1\fsize}\selectfont}%
  \ifx\svgwidth\undefined%
    \setlength{\unitlength}{419.52755906bp}%
    \ifx\svgscale\undefined%
      \relax%
    \else%
      \setlength{\unitlength}{\unitlength * \real{\svgscale}}%
    \fi%
  \else%
    \setlength{\unitlength}{\svgwidth}%
  \fi%
  \global\let\svgwidth\undefined%
  \global\let\svgscale\undefined%
  \makeatother%
  \begin{picture}(1,0.70945946)%
    \lineheight{1}%
    \setlength\tabcolsep{0pt}%
    \put(0,0){\includegraphics[width=\unitlength,page=1]{Maslov_vs_Chern.pdf}}%
    \put(0.50631496,0.3381099){\color[rgb]{0,0,1}\makebox(0,0)[lt]{\lineheight{1.25}\smash{\begin{tabular}[t]{l}$(u_1, u_1)$\\\end{tabular}}}}%
    \put(0.19636036,0.55684159){\color[rgb]{1,0,0}\makebox(0,0)[lt]{\lineheight{1.25}\smash{\begin{tabular}[t]{l}$(u_1, u_2)$\\\end{tabular}}}}%
    \put(0.75978322,0.55750282){\color[rgb]{0,0,0}\makebox(0,0)[lt]{\lineheight{1.25}\smash{\begin{tabular}[t]{l}$\Delta_M$\end{tabular}}}}%
  \end{picture}%
\endgroup%

      \caption{$u$ is homotopic to $\alpha \# \beta$.}
      \label{fig:Maslov_vs_Chern}
\end{figure}

\end{proof}

\begin{lemma}\label{lem:Maslov_conormal}
Let $Z\subset X$ with $\pi_2(X,Z)=0$, then any disc 
\e
u\colon (D^2, \partial D^2)\to (T^*X, N_Z)
\e 
is homotopic to a constant disc and therefore has Maslov index zero.
\end{lemma}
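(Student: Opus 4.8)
The plan is to exploit the standard fact that a cotangent bundle $T^*X$ deformation retracts onto its zero section, and that under this retraction the conormal bundle $N_Z$ retracts onto $Z$. First I would recall that the fiberwise-scaling homotopy $h_s\colon T^*X \to T^*X$, $h_s(q,p) = (q, s\cdot p)$ for $s\in[0,1]$, is a deformation retraction of $T^*X$ onto the zero section $0_X \cong X$, and that since $N_Z$ is a linear subbundle (over $Z$) of $T^*X|_Z$, it is preserved by each $h_s$; hence $h_s$ restricts to a deformation retraction of $N_Z$ onto $0_Z \cong Z$. Consequently the inclusion $(0_X, 0_Z) \hookrightarrow (T^*X, N_Z)$ is a homotopy equivalence of pairs, inducing an isomorphism $\pi_2(T^*X, N_Z) \cong \pi_2(X, Z)$.

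Next I would invoke the hypothesis $\pi_2(X,Z) = 0$ to conclude that $\pi_2(T^*X, N_Z) = 0$, so that any disc $u\colon (D^2, \partial D^2) \to (T^*X, N_Z)$ is homotopic rel $\partial D^2$ --- or more precisely, homotopic through maps of pairs --- to a constant disc. Indeed, composing $u$ with the retraction $h_s$ gives a homotopy of pairs from $u$ to a disc $h_0 \circ u$ with image in $0_X$ and boundary in $0_Z$; identifying $0_X$ with $X$, this is a class in $\pi_2(X,Z)$, which vanishes, so $h_0\circ u$ is null-homotopic as a map of pairs, hence so is $u$.

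Finally, since the Maslov index $\mu_{N_Z}\colon \pi_2(T^*X, N_Z) \to \zz$ is a homomorphism (it factors through the homotopy class of the disc in $\pi_2(T^*X, N_Z)$), and $u$ represents the trivial class, one gets $\mu_{N_Z}(u) = \mu_{N_Z}(\text{const}) = 0$. Alternatively, one can argue directly that the Maslov index is a homotopy invariant of the disc relative to its boundary behaviour, so that it agrees with that of a constant disc, which is manifestly zero because the bundle pair $(u^*T(T^*X), (u|_{\partial})^* TN_Z)$ is then trivial.

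The only mild subtlety --- and the step I would be most careful about --- is checking that the retraction $h_s$ genuinely carries $N_Z$ into itself, i.e. that $N_Z$ is stable under fiberwise scaling: this is immediate from the definition of the conormal bundle as $N_Z = \{(q,p) : q\in Z,\ p|_{T_qZ} = 0\}$, which is a union of linear subspaces of the fibers and hence scaling-invariant, but it is worth stating explicitly since it is what links the topology of the pair $(T^*X, N_Z)$ back to that of $(X,Z)$. No analytic input is needed; this is purely a homotopy-theoretic statement, and the argument is short.
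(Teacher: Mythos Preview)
Your argument is correct and is exactly the standard verification one would expect here. The paper in fact omits the proof entirely (it simply prints a box via the \texttt{\textbackslash arnaque} macro), so your proposal supplies precisely the details that the paper leaves to the reader: the fiberwise scaling retraction is a homotopy equivalence of pairs $(T^*X, N_Z)\simeq (X,Z)$, whence $\pi_2(T^*X, N_Z)\cong\pi_2(X,Z)=0$, and the Maslov index vanishes by homotopy invariance.
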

\arnaque

Then let $(M;L_0, L_1)$ be a $G$-Hamiltonian manifold with a pair of $G$-Lagrangians satisfying Assumption~\ref{ass:monotone_setting}. Assume that $N$ is large enough, so that $EG_N$ and $EG_{N+1}$ are highly connected.

By Lemma~\ref{lem:Maslov_conormal} it follows that $L_i \times 0_N\subset M\times T_N$ is $\kappa$-monotone and $N_{L_i \times 0_N} = N_{L_i }$. By Lemmas~\ref{lem:Maslov_index_quotient} and Proposition~\ref{prop:monotone_reduction}, $L_i^N$ is monotone and $N_{L_i^N}\in A\zz$. 

\subsection{Construction}\label{ssec:construction}

Fix first a $G$-invariant \acs\ on $M$ and a $G$-invariant Riemannian metric on each $EG_N$, so to get  \acs s $J_{M_N}, J_{B_N}$ on $M_N$ and $B_N= (T_N)\red G$ respectively, as in Section~\ref{ssec:Symplectic_homotopy_quotients}.

Let $\Jcal(M_N)$ be the space of compatible \acs s on $M_N$ agreeing with our fixed $J_{M_N}$ outside the preimage by $\pi_N$ of a compact subset of $B_N$ containing $0_{B_N}$, and 
\e
\Jcal_t(M_N) = C^\infty ([0,1],\Jcal(M_N)).
\e

Let $\Hcal(M_N)$ be the space of compactly supported smooth functions on $M_N$, and  $\Hcal_t(M_N) = C^\infty ([0,1],\Hcal(M_N))$. If $H_t \in \Hcal_t(M_N)$, let $X_{H_t}$ denote its symplectic gradient, and $\phi_{H_t}^t$ its flow.

\begin{defi}[Perturbed (generalized) intersection points]\label{def:perturbed_gints}
Let $\Ical_{H_t}(L_0^N, L_1^N)$ be the set of $H_t$-perturbed intersection points, i.e. Hamiltonian chords $\gamma\colon [0,1]\to M_N$ of $X_{H_t}$  with $\gamma(0)\in L_0^N$ and $\gamma(1)\in L_1^N$. These are in one to one correspondence with $\phi_{H_t}^1(L_0^N) \cap L_1^N$.

More generally, if $\Lund$ is a cyclic \emph{\glag}\ from $M_0$ to itself, i.e. a sequence of Lagrangian correspondences
\e
\underline{L}  = \xymatrix{ M_0 \ar[r]^{L_{0}} & M_1 \ar[r]^{L_{1}} & \cdots \ar[r]^{L_{k-1}}& M_k = M_0,}
\e
$\delta_1, \ldots , \delta_{k-1}$ are positive integers, and $\Hund = (H^{1}_t, \ldots, H^{k-1}_t)$ a sequence of Hamiltonians in $M_1, \ldots , M_{k-1}$ respectively, let $\Ical_{\Hund}(\Lund)$ be the set of \emph{$\Hund$-perturbed \gint s}, i.e. Hamiltonian chords $\gamma_i\colon [0,\delta_i]\to M_i$ such that for all $i=1, \ldots, k-2$, with $\gamma_k := \gamma_1$,
\e
(\gamma_i(\delta_i), \gamma_{i+1}(0))\in L_i,
\e
with $L_i' = (id\times \phi_{H^i_t}^{\delta_i})(L_i)$. The map $(\gamma_1, \ldots , \gamma_{k-1})\mapsto (\gamma_1(\delta_1), \ldots , \gamma_{k-1}(\delta_{k-1}))$ gives a bijection between $\Ical_{\Hund}(\Lund)$ and the intersection of
\ea
\left(\prod \Lund'\right) _0 :=&L_{0}'\times L_2' \times \cdots\text{, and} \\
\left(\prod \Lund'\right) _1 :=&L_{1}'\times L_3' \times \cdots\text{ in} \\
\prod \Mund  :=& M_0^- \times M_1 \times \cdots \times M_{k-1}^\pm.
\ea
\end{defi}

\begin{defi}\label{def:Floer_datum}
As in \cite{Seidel_book}, call a pair $(H_t, J_t)\in \Hcal_t(M_N)\times \Jcal_t(M_N)$ such that $\phi_{H_t}^1(L_0^N)$ and $L_1^N$ intersect transversely a \emph{Floer datum}.

More generally, in the situation of a cyclic {\glag}\ $\Lund$ as above, a \emph{quilted Floer datum} is a pair of sequences
\e
(\Hund_t, \Jund_t)\in \Hcal_t(\Mund)\times \Jcal_t(\Mund):= \prod_{i=1}^{k-1}\Hcal_t(M_i)\times \Jcal_t(M_i),
\e
for which $\Ical_{\Hund}(\Lund)$ is cut out transversely, i.e. $\left(\prod \Lund'\right) _0$ and $\left(\prod \Lund'\right) _1$ intersect transversely in $\prod \Mund$.
\end{defi}

Let $Z=\{s+it: 0\leq t \leq 1\}\subset \cc$ be the strip, and $\partial_0 Z= \lbrace t=0\rbrace$, $\partial_1 Z= \lbrace t=1\rbrace$ its two boundaries. For $J_t\in \Jcal_t(M_N)$, $H_t \in \Hcal_t(M_N)$ and   $x, y \in \Ical(L_0, L_1; H_t)$, let $\widetilde{\Mcal}(x, y; H_t, J_t)$ be the moduli space of perturbed $J_t$-holomorphic strips 
\e
u\colon Z\to M
\e
satisfying the \emph{Floer equation}
\e\label{eq:Floer_eq}
\partial_s u + J_t (\partial_t u - X_{H_t})=0,
\e
the Larangian boundary conditions $u_{|\partial_0 Z} \subset L_0^N$, $u_{|\partial_1 Z} \subset L_1^N$, and asymptotic to $x$ and $y$ when $s\to-\infty$ and $s\to+\infty$ respectively. Let then $\Mcal(x, y; H_t, J_t)$ be its quotient by $\rr$ (modulo translations in the $s$-direction).

For $i\in \zz$, let  $\widetilde{\Mcal}(x, y; H_t, J_t)_i$ and $\Mcal(x, y; H_t, J_t)_i$ denote the subsets of curves with Maslov index $I(u)=i+1$. 
\begin{prop}\label{prop:def_CF_N}
Assume that $(M;L_0,L_1)$ either satisfy the exact (\ref{ass:exact_setting}) or monotone (\ref{ass:monotone_setting}) assumptions. There exists a comeagre subset
\e
\Hcal\Jcal_t^{reg}(M_N)\subset \Hcal_t(M_N) \times \Jcal_t(M_N)
\e
of regular peturbations such that, for $\Fcal = (H_t, J_t)\in \Hcal\Jcal_t^{reg}(M_N)$, $\phi_{H_t}^1(L_0^N)$ intersects $L_1^N$ transversely; $\widetilde{\Mcal}(x, y; H_t, J_t)_i$ and $\Mcal(x, y; H_t, J_t)_i$ are smooth of dimension $i+1$ and i respectively. When $i=0$,  $\Mcal(x, y; H_t, J_t)_0$ is a finite set. In this case, define
\e
CF(M_N ; L_0^N,L_1^N; \Fcal)= \bigoplus_{x\in \Ical(L_0, L_1; H_t)} \Z{2}\, x, 
\e
with differential $\partial$ defined by 
\e
\partial x = \sum_{y\in \Ical(L_0, L_1; H_t)} \# \Mcal(x, y; H_t, J_t)_0 \,y
\e

Then $\partial^2=0$,  therefore one can define $HF(M_N ; L_0^N,L_1^N; \Fcal)$ as the homology group of this chain complex.

\end{prop}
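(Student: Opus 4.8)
The plan is to verify the standard Floer-theoretic package in the two settings separately, since the source of compactness differs, but the transversality and gluing arguments are identical once compactness is in hand. First I would observe that $M_N$ is a smooth symplectic manifold (by the discussion in Section~\ref{ssec:incr_corr}, or directly because $G$ acts freely on $EG_N$ so that $0$ is a regular value of the relevant moment map and the action on its zero level is free), and that $L_0^N, L_1^N$ are smooth Lagrangian submanifolds. In the exact setting, by the discussion following Assumption~\ref{ass:exact_setting}, $M_N$ is exact and convex at infinity and $L_0^N, L_1^N$ are exact, cylindrical at infinity, and disjoint outside a compact set; in the monotone setting, by Lemmas~\ref{lem:Maslov_conormal}, \ref{lem:Maslov_index_quotient} and Proposition~\ref{prop:monotone_reduction}, $L_i^N$ is $\kappa$-monotone with $N_{L_i^N}\in A\zz$, $A\geq 3$. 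These are precisely the hypotheses under which Lagrangian Floer homology is constructed in, e.g., \cite{Seidel_book} and \cite[sec.~2.1]{MW}, so the remaining work is to invoke that machinery, paying attention to the non-compactness of $M_N$.

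The key steps, in order: (1) \emph{Transversality.} Using domain-dependent (time-dependent) Hamiltonian and almost complex structure perturbations $(H_t, J_t)$ supported away from infinity, the standard Floer--Hofer--Salamon argument \cite{FloerHoferSalamon} (or the version in \cite[\S8--9]{Seidel_book}) produces a comeagre set $\Hcal\Jcal_t^{reg}(M_N)$ for which $\phi_{H_t}^1(L_0^N)\pitchfork L_1^N$ and the universal moduli space is cut out transversely; the Sard--Smale argument then gives that for generic $\Fcal$ the spaces $\widetilde{\Mcal}(x,y;H_t,J_t)_i$ are smooth manifolds of dimension $i+1$, and quotienting by the free $\rr$-action (free because strips with $i\ge 0$ are non-constant near the ends) yields $\Mcal(x,y;H_t,J_t)_i$ of dimension $i$. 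Here we crucially do \emph{not} need equivariant transversality, which is the simplification emphasized in the introduction. (2) \emph{Compactness.} For $i=0$ one needs $\Mcal(x,y;H_t,J_t)_0$ finite. In the exact setting this follows from exactness (energy is controlled by the actions of $x,y$) together with convexity at infinity, which confines strips to a compact region (a maximum-principle / monotonicity argument); no bubbling occurs since exact Lagrangians bound no non-constant discs. In the monotone setting, $M_N$ is not compact, so one uses Proposition~\ref{prop:fibration_homotopy_quotient}: the projection $\pi_N\colon M_N\to T^*B_N$ is $(J_{M_N}, J_{B_N})$-holomorphic and maps $L_i^N$ into $0_{B_N}$, so $\pi_N\circ u$ is a holomorphic strip in $T^*B_N$ with boundary on the exact Lagrangian $0_{B_N}$; exactness of $T^*B_N$ plus its convexity forces $\pi_N\circ u$, hence $u$, into a compact subset (after possibly enlarging the region where $J$ is unconstrained). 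Once strips are confined to a compact set, Gromov compactness applies and the monotonicity constraint $N_{L_i^N}\in A\zz$ with $A\ge 3$ rules out disc and sphere bubbling in the relevant dimensions, giving finiteness of $\Mcal(\cdot,\cdot)_0$ and compactness up to broken strips of $\Mcal(\cdot,\cdot)_1$. (3) \emph{$\partial^2=0$.} One examines the one-dimensional spaces $\Mcal(x,y;H_t,J_t)_1$: by the compactness and gluing theorems their compactification is a compact $1$-manifold with boundary $\coprod_z \Mcal(x,z)_0\times\Mcal(z,y)_0$, so the mod-$2$ count of boundary points vanishes, which is exactly $\partial^2=0$. Then $HF(M_N;L_0^N,L_1^N;\Fcal)$ is defined as the homology of $(CF,\partial)$.

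The main obstacle is step (2) in the monotone case: establishing a priori $C^0$-confinement of Floer strips in the non-compact manifold $M_N$. This is where Proposition~\ref{prop:fibration_homotopy_quotient} does the essential work — it reduces the confinement to a maximum principle on the cotangent bundle $T^*B_N$ relative to the zero section, which is classical; one must also check that the chosen perturbations $(H_t,J_t)$, being compactly supported in the base direction, do not destroy this maximum principle (they are supported over a compact subset of $B_N$, so outside that region the strip is honestly $J_{B_N}$-holomorphic and the argument goes through unchanged). Everything else is a routine application of the standard theory to a manifold that, locally near the relevant region, behaves like a compact monotone (resp. exact convex) symplectic manifold.
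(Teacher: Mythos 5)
Your proposal is correct and follows essentially the same route as the paper: standard Floer--Hofer--Salamon transversality via domain-dependent perturbations, and in the monotone case a $C^0$-confinement of strips obtained by composing with the pseudo-holomorphic projection of Proposition~\ref{prop:fibration_homotopy_quotient} onto the cotangent bundle $(T_N)\red G$ and applying a maximum principle outside the support of the perturbations, after which Gromov compactness and the monotonicity bounds give finiteness and $\partial^2=0$. The paper's proof is just a terser version of the same argument.
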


\begin{proof}
The transversality statement is a standard argument \cite{FloerHoferSalamon}. In the monotone case, let $K\subset M_N$ be a compact subset containing $L_0^N,L_1^N$, and such that $H_t = 0$ and $J_t=J_{M_N}$ outside $K$. Consider a strip $u$ in $\widetilde{\Mcal}(x, y; H_t, J_t)$, and assume by contradiction that its image is not contained in $K$. Then outside $K$, composing with the projection $M_N\to B_N$, which is pseudo-holomorphic by Proposition~\ref{prop:fibration_homotopy_quotient}, one gets a pseudo-holomorphic curve in $B_N$ that should have no maximum, which is a contradiction.

Therefore curves are contained in $K$, and the monotonicity assumptions ensure  energy bounds. Therefore Gromov compactness applies to these moduli spaces, and the rest of the argument is standard.
\end{proof}

\paragraph{\textbf{The increment map $\alpha_N\colon CF_N\to CF_{N+1}$}}

For each $N$ fix a Floer datum $\Fcal_N$ for $(M_N;L_0^N,L_1^N )$, and let 
\e
CF_N = CF(M_N;L_0^N,L_1^N;\Fcal_N ).
\e

We quickly recall some notions from Wehrheim-Woodward's theory to introduce some notations, and refer to \cite{WWquilts} for the precise definitions.

\begin{defi} A \emph{quilted surface}  $\underline{S}$ consists of:
\begin{itemize}
\item A collection $\Pcal (\underline{S})$ of \emph{patches}: these are Riemann surfaces with boundary and strip-like ends.
\item A collection $\Scal (\underline{S})$ of \emph{seams}: these are pairwise disjoint identifications of boundary components of patches, satisfying a local real-analycity condition.
\item A collection $\Bcal (\underline{S})$ of \emph{true boundaries}: these are the boundary components of patches not belonging to any seam.
\end{itemize}

If $\underline{S}$ is a quilted surface, let its total space
\e
\abs{\underline{S}} = \left(\coprod_{P\in \Pcal (\underline{S})} P\right) /\sim
\e
be the Riemann surface obtained by gleing together all the patches along the seams. Its boundary is given by the union of components of $\Bcal (\underline{S})$, and the seams become real analytic curves in $\abs{\underline{S}}$. We will often define a quilted surface by giving its total space and seams.

\end{defi}

\begin{exam}Let $\delta_1,\ldots,\delta_k$ be positive numbers,

(Quilted half-strip) Let $\underline{Z}_{\pm}(\delta_1,\ldots,\delta_k)$ be the quilted surface whose total space is $\rr_{\pm}\times [0, \delta_1+\cdots+\delta_k]$ and seams the horizontal lines 
\e
\rr_{\pm}\times \{ \delta_1\}, \rr_{\pm}\times \{ \delta_1 + \delta_2\}, \ldots , \rr_{\pm}\times \{ \delta_1 + \cdots +\delta_{k-1}\}.
\e
Its patches consist of half-strips 
\e\label{eq:patches_P_i}
P_i = \rr_{\pm}\times [ \delta_1 + \cdots +\delta_{i-1}, \delta_1 + \cdots +\delta_{i}].
\e

(Quilted half-cylinder) Let $\underline{C}_{\pm}(\delta_1,\ldots,\delta_k)$ be the quilted surface whose total space is $\rr_{\pm} \times \big( \rr/( \delta_1 + \cdots +\delta_{k})\zz \big)$ and seams the horizontal lines 
\e
\rr_{\pm}\times \{ \delta_1\}, \rr_{\pm}\times \{ \delta_1 + \delta_2\}, \ldots , \rr_{\pm}\times \{ \delta_1 + \cdots +\delta_{k}\}.
\e
It has the same patches as $\underline{Z}_{\pm}(\delta_1,\ldots,\delta_k)$ (except that $P_1$ and $P_k$ are seamed together).

\end{exam}

\begin{defi}
A \emph{quilted strip-like end} on $\underline{S}$ is a quilted map
\e
\underline{\epsilon}\colon  \underline{Z}_{\pm}(\delta_1,\ldots,\delta_k) \to \underline{S},
\e

i.e. a collection of maps $\epsilon_1, \ldots , \epsilon_k$
\e
\epsilon_i \colon P_i \to P_i'
\e
where $P_i$ is as in (\ref{eq:patches_P_i}) and $P_i'\in \Pcal(\underline{S})$, compatible with the seams (i.e. it induces a continuous map on total spaces), pseudo-holomorphic, proper on the total spaces, and mapping true boundaries to true boundaries.

Likewise, a \emph{quilted cylindrical end} on $\underline{S}$ is a quilted map
\e
\underline{\epsilon}\colon  \underline{C}_{\pm}(\delta_1,\ldots,\delta_k) \to \underline{S}
\e
compatible with the seams, pseudo-holomorphic, and proper on total spaces.

A \emph{quilted surface with strip-like and cylindrical ends} is a quilted surface $\Sund$ together with a collection of quilted strip-like and cylindrical ends such that the complement of the images of the ends in the total space is compact. Furthermore, each end is labelled either as an \emph{incoming, outgoing}, or a \emph{free} end. We denote by 
\e
\Ecal(\Sund) = \Ecal_{\rm in}(\Sund) \cup\Ecal_{\rm out}(\Sund) \cup\Ecal_{\rm free}(\Sund).
\e
the set of quilted ends of $\Sund$.
\end{defi}

\begin{defi} A \emph{decoration} $(\underline{M},\underline{L})$ of a quilted surface $\underline{S}$ consists of 
\begin{itemize}
\item a symplectic manifold $M_P$ for each patch $P\in \Pcal(\underline{S})$,
\item a Lagrangian correspondence $L_\sigma \subset (M_P)^-\times M_{P'}$ for each seam $\sigma\in \Scal (\underline{S})$ between two patches $P,P'$,
\item a Lagrangian submanifold $L_b \subset M_P$  for each true boundary of $P$.
\end{itemize}

\end{defi}

\begin{defi}Let $(\underline{S},\underline{M},\underline{L})$ be a decorated quilted surface. A (topological) \emph{quilt} 
\e
\underline{u} \colon \underline{S} \to (\underline{M},\underline{L})
\e
consists of maps $u_P\colon P\to M_P$ for each patch $P$ of $\underline{S}$ satisfying the \emph{seam condition}:
\e
(u_P(x), u_{P'}(x)) \in L_\sigma ,
\e
for $x\in \sigma$, and $\sigma$ a seam between $P$ and $P'$, and the \emph{boundary condition}:
\e
u_P(x) \in L_b ,
\e
for $x\in b$, and $b$ a true boundary of $P$.
\end{defi}

To define the moduli space of quilts that will be involved in $\alpha_N$ we use  perturbations as in \cite[sec.~(8e)]{Seidel_book}, adapted to the quilted setting.

\begin{defi}[Perturbation datum]\label{def:perturbation_datum}
Let $(\underline{S},\underline{M},\underline{L})$ be a decorated quilted surface with strip like ends.  Let 
 \e
 \Kcal(\underline{S},\underline{M},\underline{L}) \subset \Omega^1(\underline{S}; \Hcal(\underline{M})) := \prod_{P\in \Pcal(\underline{S})} \Omega^1(P; \Hcal({M}_{P})),
 \e
consist of quilted 1-forms $\underline{K} = (K_P)_P$ such that 
\begin{itemize}
\item on a boundary $b$ of a patch $P$ (belonging or not belonging to a seam),
\e
K_P(\xi)_{|L_b}=0 \text{, }\forall\xi\in Tb.
\e
\item on a (quilted) incoming or outgoing end (or at least far enough in the end) $K_P$ is only $t$-dependent (i.e. independent of $s$).
\item on a free end, $\Kund$ vanishes.

\item the following transversality condition is satisfied on a quilted incoming or outgoing end:
suppose a quilted end is decorated by the cyclic \glag :
\e
\underline{L}  = \xymatrix{ M_0 \ar[r]^{L_{0}} & M_1 \ar[r]^{L_{1}} & \cdots \ar[r]^{L_{k-1}}& M_k = M_0,}
\e
with $M_0$ a point if we are considering a strip-like end. By the two conditions above, on the end, $\Kund$ is of the form $K_{P_i} = H_i(t)\d t $ on the patch $P_i$ decorated by $M_i$.

Then, we want $\left(\prod \Lund'\right) _0$ and $\left(\prod \Lund'\right) _1$ as in Definition~\ref{def:perturbed_gints} to intersect transversely in $\prod \Mund$.
\end{itemize}
 
Let $\Jcal(P, M_P)= C^\infty(P, \Jcal(M_P))$ be the space  of domain-dependent \acs s, and 
\e
\Jcal(\Sund, \underline{M}, \underline{L}) \subset \prod_{P\in\Pcal(\underline{S})} \Jcal(P, M_P)
\e
be those $\underline{J} = (J_P)_P$ that are only $t$-dependent on quilted ends. 

The set of \emph{perturbation data} is denoted 
\e
 \Kcal\Jcal(\underline{S},\underline{M},\underline{L}) := \Kcal(\underline{S},\underline{M},\underline{L}) \times  \Jcal(\underline{S},\underline{M},\underline{L}) .
\e
Notice that at an incoming (resp. outgoing) end, a perturbation datum $\Pcal$ is asymptotic to a Floer datum $\Fcal_{in}$ (resp. $\Fcal_{out}$). In this situation we will write $\Pcal\colon \Fcal_{in}\to \Fcal_{out}$. Denote by 
\e
 \Kcal\Jcal(\underline{S},\underline{M},\underline{L}, \Fcal_{in}, \Fcal_{out}) \subset  \Kcal\Jcal(\underline{S},\underline{M},\underline{L})
\e
the subset of such perturbation datum.

\end{defi}

\begin{remark}\label{rem:no-1-forms} In most moduli spaces we will be considering, it would be enough to consider 1-forms of the form $H_z \d t$. Except in Section~\ref{sec:module_str}, where these would not satisfy the boundary assumptions, when the boundary is not horizontal.
\end{remark}

\begin{remark}\label{rem:no_pert_free_ends} We don't perturb on the free ends; the reason is that we will need our actual Lagrangians there to rule out strip breaking. Our \glag s at the free ends  will generally not intersect transversely, but only cleanly in the sense of P\'ozniak \cite{Pozniak}.
\end{remark}

 Let $\underline{Z}$ be the quilted surface that will be involved in defining the map $\alpha_N$: its total space is $Z\setminus \{0, i\}$, with a vertical seam at $s=0$. We view it as a quilted surface with one incoming end at $s\to -\infty$, one outgoing end at $s\to +\infty$, and two free quilted strip-like ends near $0$ and $i$. 
The quilted surface $\underline{Z}$ has two patches 
 \ea
 Z_- &= \{s+it: 0\leq t \leq 1,\ s\leq 0 \}\text{, and} \\
 Z_+ &= \{s+it: 0\leq t \leq 1,\ s\geq 0 \},
 \ea
one seam $\sigma = \{ s=0\}$ and four true boundary components $\partial_i Z_\pm =\{ \pm s \geq 0,\ t = i \}$, $i=0,1$. 

Let $(\underline{M}_N, \underline{L}_N)$ be the following decoration of $\underline{Z}$: $Z_-$ and $Z_+$ are decorated respectively by $M_N$ and $M_{N+1}$, $\sigma$ by $\Lambda_N$,  $\partial_i Z_-$ by $L_i^N$  and  $\partial_i Z_+$ by $L_i^{N+1}$, see Figure~\ref{fig:map_from_N_to_N+1}.

\begin{figure}[!h]
    \centering
    \def\svgwidth{.50\textwidth}
    %% Creator: Inkscape 1.1.2 (0a00cf5339, 2022-02-04), www.inkscape.org
%% PDF/EPS/PS + LaTeX output extension by Johan Engelen, 2010
%% Accompanies image file '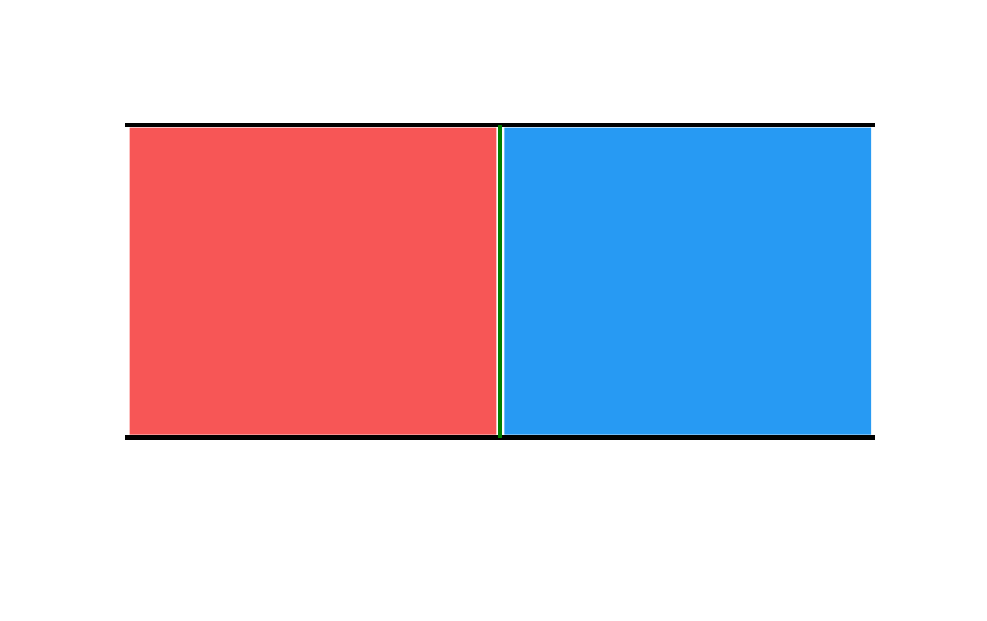' (pdf, eps, ps)
%%
%% To include the image in your LaTeX document, write
%%   \input{<filename>.pdf_tex}
%%  instead of
%%   \includegraphics{<filename>.pdf}
%% To scale the image, write
%%   \def\svgwidth{<desired width>}
%%   \input{<filename>.pdf_tex}
%%  instead of
%%   \includegraphics[width=<desired width>]{<filename>.pdf}
%%
%% Images with a different path to the parent latex file can
%% be accessed with the `import' package (which may need to be
%% installed) using
%%   \usepackage{import}
%% in the preamble, and then including the image with
%%   \import{<path to file>}{<filename>.pdf_tex}
%% Alternatively, one can specify
%%   \graphicspath{{<path to file>/}}
%% 
%% For more information, please see info/svg-inkscape on CTAN:
%%   http://tug.ctan.org/tex-archive/info/svg-inkscape
%%
\begingroup%
  \makeatletter%
  \providecommand\color[2][]{%
    \errmessage{(Inkscape) Color is used for the text in Inkscape, but the package 'color.sty' is not loaded}%
    \renewcommand\color[2][]{}%
  }%
  \providecommand\transparent[1]{%
    \errmessage{(Inkscape) Transparency is used (non-zero) for the text in Inkscape, but the package 'transparent.sty' is not loaded}%
    \renewcommand\transparent[1]{}%
  }%
  \providecommand\rotatebox[2]{#2}%
  \newcommand*\fsize{\dimexpr\f@size pt\relax}%
  \newcommand*\lineheight[1]{\fontsize{\fsize}{#1\fsize}\selectfont}%
  \ifx\svgwidth\undefined%
    \setlength{\unitlength}{473.38582677bp}%
    \ifx\svgscale\undefined%
      \relax%
    \else%
      \setlength{\unitlength}{\unitlength * \real{\svgscale}}%
    \fi%
  \else%
    \setlength{\unitlength}{\svgwidth}%
  \fi%
  \global\let\svgwidth\undefined%
  \global\let\svgscale\undefined%
  \makeatother%
  \begin{picture}(1,0.62874251)%
    \lineheight{1}%
    \setlength\tabcolsep{0pt}%
    \put(0,0){\includegraphics[width=\unitlength,page=1]{map_from_N_to_N+1.pdf}}%
    \put(0.26661809,0.33530147){\color[rgb]{0,0,0}\makebox(0,0)[lt]{\lineheight{1.25}\smash{\begin{tabular}[t]{l}$M_N$\end{tabular}}}}%
    \put(0.62148931,0.3422993){\color[rgb]{0,0,0}\makebox(0,0)[lt]{\lineheight{1.25}\smash{\begin{tabular}[t]{l}$M_{N+1}$\end{tabular}}}}%
    \put(0.61858019,0.53436215){\color[rgb]{0,0,0}\makebox(0,0)[lt]{\lineheight{1.25}\smash{\begin{tabular}[t]{l}$L_1^{N+1}$\end{tabular}}}}%
    \put(0.61731666,0.11028812){\color[rgb]{0,0,0}\makebox(0,0)[lt]{\lineheight{1.25}\smash{\begin{tabular}[t]{l}$L_0^{N+1}$\end{tabular}}}}%
    \put(0.26389099,0.11407915){\color[rgb]{0,0,0}\makebox(0,0)[lt]{\lineheight{1.25}\smash{\begin{tabular}[t]{l}$L_0^{N}$\end{tabular}}}}%
    \put(0.26235571,0.53368264){\color[rgb]{0,0,0}\makebox(0,0)[lt]{\lineheight{1.25}\smash{\begin{tabular}[t]{l}$L_1^{N}$\end{tabular}}}}%
    \put(0.46521307,0.10691977){\color[rgb]{0,0.50196078,0}\makebox(0,0)[lt]{\lineheight{1.25}\smash{\begin{tabular}[t]{l}$\Lambda_N$\end{tabular}}}}%
  \end{picture}%
\endgroup%

      \caption{The quilted surface $\underline{Z}$ and its decoration $(\underline{M}_N, \underline{L}_N)$.}
      \label{fig:map_from_N_to_N+1}
\end{figure}

For an input $x\in \Ical (L_0^N, L_1^N, \Fcal_{N})$, an output $y\in \Ical (L_0^{N+1}, L_1^{N+1}, \Fcal_{N+1})$ and a perturbation datum 
\e
\Pcal_N = (\Kund, \Jund )\in \Kcal\Jcal(\underline{Z},\underline{M}_N,\underline{L}_N; \Fcal_{N},  \Fcal_{N+1}) ,
\e  
let the moduli space of perturbed quilted maps $\Lcal(x,y; \Pcal_N ) $ consist of quilts 
\e
\uund\colon \Zund\to (\Mund_N, \Lund_N)
\e 
satisfying the perturbed Cauchy-Riemann equation on a patch $P$: 
 \e
 (\d u_P - K_P)^{0,1} = 0,
 \e
with limits $x$ (resp. $y$) at $s\to -\infty$ (resp. $s\to +\infty$), and with unprescribed limits at the free quilted strip-like ends (or equivalently of finite energy). The superscript $0,1$ refers to the anti-holomorphic part, with respect to the \acs\ $J_P$.

Let $\Lcal(x,y; \Pcal_N )_i \subset \Lcal(x,y; \Pcal_N )$ denote the subspace of quilts of index $i$, corresponding to the index of the relevant Fredholm section $\overline{\partial}_{\Pcal_N}$ cutting out $\Lcal(x,y; \Pcal_N )$. Since we do not perturb at the free end, which is in clean intersection, one should use weighted Sobolev spaces there, as in \cite[sec.~2.3]{LekiliLipyanskiy}: we refer the reader to this for details. 

\begin{prop}\label{prop:Lcal_moduli_space}
 There exists a comeagre subset 
\e
 \Kcal\Jcal(\underline{Z},\underline{M}_N,\underline{L}_N; \Fcal_{N},  \Fcal_{N+1})^{\rm reg}\subset  \Kcal\Jcal(\underline{Z},\underline{M}_N,\underline{L}_N; \Fcal_{N},  \Fcal_{N+1})
\e
of regular perturbation data $\Pcal_N$ for which $\Lcal(x,y; \Pcal_N )_i$ is smooth and of dimension $i$. The zero-dimensional part $\Lcal(x,y; \Pcal_N )_0 $ is compact and can be used to define a chain map
\[
\alpha_N\colon CF_N \to CF_{N+1}.
\]
and the compactification $\overline{\Lcal}(x,y; \Pcal_N )_1 $ of its one-dimensional part $\Lcal(x,y; \Pcal_N )_1 $ can be used to show that it is actually a chain map.

Therefore, one can apply the telescope construction of Section~\ref{ssec:chain_cpx} and define
\e
CF^G(M; L_0, L_1; \lbrace  \Fcal_{N}, \Pcal_{N} \rbrace_N) = \Tel(CF_N, \alpha_N) .
\e
\end{prop}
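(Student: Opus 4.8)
The plan is to run the standard package of Fredholm theory, transversality, Gromov compactness and gluing for the quilted moduli spaces $\Lcal(x,y;\Pcal_N)$, adapted to quilts as in \cite{WWquilts} and, at the free ends, as in \cite[\S2.3]{LekiliLipyanskiy}, and then to feed the outcome into the telescope machinery of Section~\ref{ssec:chain_cpx}. First I would set up the Fredholm problem: linearizing $\overline{\partial}_{\Pcal_N}$ at a quilt $\uund$ gives an operator on Sobolev sections of $\uund^{*}T\underline{M}_N$ with the linearized seam condition along $\sigma$ (from $\Lambda_N$) and the linearized boundary conditions (from $L_i^N$, $L_i^{N+1}$). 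At the incoming and outgoing ends the asymptotic limits are nondegenerate $\Hund$-perturbed generalized intersection points — this is exactly the transversality clause built into the definition of a perturbation datum — so these ends contribute to the index in the usual Floer-theoretic way. At the two \emph{free} quilted strip-like ends the Lagrangian data meet only cleanly (cf.\ Remark~\ref{rem:no_pert_free_ends}), so following \cite[\S2.3]{LekiliLipyanskiy} I would work in Sobolev spaces with small exponential weights there; this makes $\overline{\partial}_{\Pcal_N}$ Fredholm and pins down the index that defines the strata $\Lcal(x,y;\Pcal_N)_i$.

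For transversality I would form the universal moduli space over $\Kcal\Jcal(\underline{Z},\underline{M}_N,\underline{L}_N;\Fcal_{N},\Fcal_{N+1})$ and show it is a Banach manifold by the standard argument: the domain-dependent Hamiltonian $1$-forms $K_P$ and almost complex structures $J_P$ vary freely in the interior of the patches, and a somewhere-injectivity/unique-continuation argument handles the seam and the boundary, so the linearization of the universal section is surjective, exactly as in \cite{FloerHoferSalamon} and \cite[\S8(e)]{Seidel_book} transplanted to the quilted setting. Crucially, because the perturbations are domain-dependent we never need equivariant transversality. Sard--Smale then yields the comeagre set $\Kcal\Jcal(\underline{Z},\underline{M}_N,\underline{L}_N;\Fcal_{N},\Fcal_{N+1})^{\rm reg}$, and the index count gives $\dim\Lcal(x,y;\Pcal_N)_i=i$.

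Next I would prove compactness of $\Lcal(x,y;\Pcal_N)_0$. Energy bounds come from $G$-exactness of $(M;L_0,L_1)$ under Assumption~\ref{ass:exact_setting}, and under Assumption~\ref{ass:monotone_setting} from $\kappa$-monotonicity together with Proposition~\ref{prop:fibration_homotopy_quotient}: composing an escaping quilt with the holomorphic fibration $M_N\to B_N$ would produce a nonconstant holomorphic map into $B_N$ contradicting the maximum principle, so curves stay in a fixed compact set where monotonicity gives the energy bound. Sphere and disc bubbling is then ruled out — vacuously in the exact case, and in the monotone case because $A\geq 3$ together with the index-$0$ constraint leaves no room for a bubble. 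Breaking at the free ends is impossible because there we retain the genuine Lagrangian data: a broken-off component would be a nonconstant finite-energy holomorphic quilt with boundary on $\Lambda_N$, $L_i^N$, $L_i^{N+1}$, whose energy is bounded below by the clean-intersection/monotonicity threshold and hence incompatible with total index $0$ — this is the point of Remark~\ref{rem:no_pert_free_ends}. Breaking at the incoming/outgoing ends cannot occur in dimension $0$ for index reasons. Hence $\Lcal(x,y;\Pcal_N)_0$ is a finite set and $\alpha_N x=\sum_y\#\Lcal(x,y;\Pcal_N)_0\,y$ is well-defined.

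Finally, the chain-map identity: $\Lcal(x,y;\Pcal_N)_1$ is a smooth $1$-manifold and $\overline{\Lcal}(x,y;\Pcal_N)_1$ a compact $1$-manifold with boundary, whose boundary — by the compactness analysis above, there being no bubbling and no breaking at the free ends — consists precisely of once-broken configurations at the incoming end (a rigid Floer trajectory in $M_N$ followed by a rigid quilt) and at the outgoing end (a rigid quilt followed by a rigid Floer trajectory in $M_{N+1}$), each attained exactly once by the gluing theorem. Counting $\partial\overline{\Lcal}(x,y;\Pcal_N)_1$ modulo $2$ gives
\[
0 \equiv \sum_{z}\#\Mcal(x,z;\Fcal_N)_0\cdot\#\Lcal(z,y;\Pcal_N)_0 \;+\; \sum_{z}\#\Lcal(x,z;\Pcal_N)_0\cdot\#\Mcal(z,y;\Fcal_{N+1})_0 \pmod 2,
\]
i.e. $\partial_{N+1}\alpha_N+\alpha_N\partial_N=0$, so $\alpha_N$ is a chain map. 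As $(CF_N,\alpha_N)_N$ is then a sequence of $\Z{2}$-chain complexes with chain maps as increments, Definition~\ref{def:telescope_chain_cpx} applies verbatim and produces $\Tel(CF_N,\alpha_N)$, which we denote $CF^G(M;L_0,L_1;\{\Fcal_N,\Pcal_N\}_N)$. I expect the only genuinely delicate point to be the analytic bookkeeping at the free quilted ends — the weighted Fredholm setup and the exclusion of breaking there — since once that is in place every other step is routine.
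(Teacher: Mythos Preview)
Your outline is correct in structure and the Fredholm/transversality/compactness framework is the right one, but the step you yourself flag as delicate --- the exclusion of degenerations involving the seam correspondence $\Lambda_N$ --- is genuinely incomplete, and the paper's proof supplies exactly the missing ingredient.

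The issue is that $\Lambda_N$ need not be monotone (see Remark~\ref{rem:}): when $G$ is not simply connected, discs with boundary on $\Lambda_N$ may not lift to $\widehat{\Lambda}_N$, so there is no ``monotonicity threshold'' to invoke directly. Your sentence ``energy bounded below by the clean-intersection/monotonicity threshold'' therefore has no content in the monotone case. The same problem afflicts quilted sphere bubbling at the vertical seam, which you fold into ordinary sphere/disc bubbling but which again has seam condition in $\Lambda_N$ and so is not covered by monotonicity of $L_i^N$ alone.

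What the paper actually does is a lifting-and-capping trick: a broken strip at a free end is folded to a strip $u$ in $(M_N^-\times M_{N+1};\ \Lambda_N,\ L_i^N\times L_i^{N+1})$, lifted to a strip $\tilde{u}$ in $M^-\times T_N^-\times M\times T_{N+1}$ with boundary in $I_N$ and $L_i\times 0_N\times L_i\times 0_{N+1}$, and then the $I_N$-boundary is capped off by an explicit strip $\tilde{d}$ lying entirely in $\Delta_M\times N_{\Gamma(i_N)}$ (using that the cotangent fibre direction can be linearly retracted to the zero section). The resulting disc $\tilde u_{\rm cap}$ has boundary only on $L_i\times 0_N\times L_i\times 0_{N+1}$, which \emph{is} monotone with minimal Maslov number in $A\zz$, and has the same area and Maslov index as $u$. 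Now the standard argument applies: in the monotone case the index is at least $A\geq 3$, forcing the principal component to negative expected dimension; in the exact case the area vanishes. Quilted sphere bubbles at the seam are then reduced to this case by connecting a boundary point of the bubble to $L_i^N\times L_i^{N+1}$ along the seam of the principal component. You should replace your handwave at the free ends by this argument.
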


\begin{proof}
The first part of the statement (smoothness and expected dimension) follows from standard transversality arguments \cite{FloerHoferSalamon}. Compactness of $\Lcal(x,y; \Pcal_{N} )_0 $ follows from Gromov compactness.

For the compactification $\overline{\Lcal}(x,y; \Pcal_{N} )_1 $, a priori the degenerations that one could observe  are (see Figure~\ref{fig:bubbling_f_N}):
\begin{enumerate}
\item sphere bubbling in the interior of the patches, or disc bubbling at the true boundary component,
\item quilted sphere bubbling at the vertical seam,
\item strip breaking at the free ends,
\item strip breaking at the incoming or outgoing end.
\end{enumerate}

\begin{figure}[!h]
    \centering
    \def\svgwidth{.50\textwidth}
    %% Creator: Inkscape 1.1.2 (0a00cf5339, 2022-02-04), www.inkscape.org
%% PDF/EPS/PS + LaTeX output extension by Johan Engelen, 2010
%% Accompanies image file '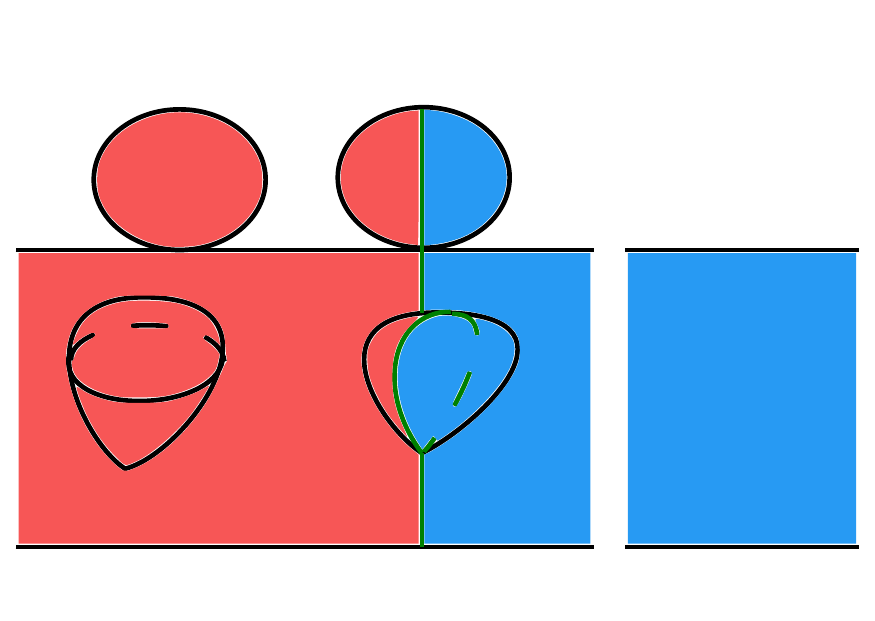' (pdf, eps, ps)
%%
%% To include the image in your LaTeX document, write
%%   \input{<filename>.pdf_tex}
%%  instead of
%%   \includegraphics{<filename>.pdf}
%% To scale the image, write
%%   \def\svgwidth{<desired width>}
%%   \input{<filename>.pdf_tex}
%%  instead of
%%   \includegraphics[width=<desired width>]{<filename>.pdf}
%%
%% Images with a different path to the parent latex file can
%% be accessed with the `import' package (which may need to be
%% installed) using
%%   \usepackage{import}
%% in the preamble, and then including the image with
%%   \import{<path to file>}{<filename>.pdf_tex}
%% Alternatively, one can specify
%%   \graphicspath{{<path to file>/}}
%% 
%% For more information, please see info/svg-inkscape on CTAN:
%%   http://tug.ctan.org/tex-archive/info/svg-inkscape
%%
\begingroup%
  \makeatletter%
  \providecommand\color[2][]{%
    \errmessage{(Inkscape) Color is used for the text in Inkscape, but the package 'color.sty' is not loaded}%
    \renewcommand\color[2][]{}%
  }%
  \providecommand\transparent[1]{%
    \errmessage{(Inkscape) Transparency is used (non-zero) for the text in Inkscape, but the package 'transparent.sty' is not loaded}%
    \renewcommand\transparent[1]{}%
  }%
  \providecommand\rotatebox[2]{#2}%
  \newcommand*\fsize{\dimexpr\f@size pt\relax}%
  \newcommand*\lineheight[1]{\fontsize{\fsize}{#1\fsize}\selectfont}%
  \ifx\svgwidth\undefined%
    \setlength{\unitlength}{419.52755906bp}%
    \ifx\svgscale\undefined%
      \relax%
    \else%
      \setlength{\unitlength}{\unitlength * \real{\svgscale}}%
    \fi%
  \else%
    \setlength{\unitlength}{\svgwidth}%
  \fi%
  \global\let\svgwidth\undefined%
  \global\let\svgscale\undefined%
  \makeatother%
  \begin{picture}(1,0.70945946)%
    \lineheight{1}%
    \setlength\tabcolsep{0pt}%
    \put(0,0){\includegraphics[width=\unitlength,page=1]{bubbling_f_N.pdf}}%
    \put(0.10578638,0.61370367){\color[rgb]{0,0,0}\makebox(0,0)[lt]{\lineheight{1.25}\smash{\begin{tabular}[t]{l}$(1)$\end{tabular}}}}%
    \put(0.18516984,0.15475328){\color[rgb]{0,0,0}\makebox(0,0)[lt]{\lineheight{1.25}\smash{\begin{tabular}[t]{l}$(1)$\end{tabular}}}}%
    \put(0.41299224,0.61825765){\color[rgb]{0,0,0}\makebox(0,0)[lt]{\lineheight{1.25}\smash{\begin{tabular}[t]{l}$(3)$\end{tabular}}}}%
    \put(0.51160928,0.15936193){\color[rgb]{0,0,0}\makebox(0,0)[lt]{\lineheight{1.25}\smash{\begin{tabular}[t]{l}$(2)$\end{tabular}}}}%
    \put(0.76724471,0.4528084){\color[rgb]{0,0,0}\makebox(0,0)[lt]{\lineheight{1.25}\smash{\begin{tabular}[t]{l}$(4)$\end{tabular}}}}%
  \end{picture}%
\endgroup%

      \caption{A priori bubling in $\overline{\Lcal}(x,y; \Pcal_{N} )$.}
      \label{fig:bubbling_f_N}
\end{figure}

We show that the three first cases are ruled out by our assumptions. It will follow that the fourth case is the only one that can actually happen, and therefore
\e
\partial \overline{\Lcal}(x,y; \Pcal_{N} )_1 = \coprod_z \Mcal(x,z; \Fcal_{N})\times \Lcal(z,y; \Pcal_{N}) \cup \coprod_z \Lcal(x,z;\Pcal_{N})\times \Mcal(z,y; \Fcal_{N+1}),
\e
from which the identity $\partial \alpha_N + \alpha_N \partial = 0$ follows.

The first kind of degenerations are clearly ruled out by our assumptions, for the same reasons as they are ruled out from the moduli space of the differential.

We will rule out (2) as a special case of (3). Suppose that one has a strip breaking at the free end: by folding it one can view it as a strip
\e
u\colon (Z; \partial_0 Z, \partial_1 Z)\to (M_N^- \times M_{N+1}; \Lambda_N, L_i^{N}\times L_i^{N+1}).
\e

Lift it to a strip
\ea
\tilde{u} &= (u_M^1, u_N, u_M^2, u_{N+1})\colon \\  (Z; \partial_0 Z, \partial_1 Z) &\to ( M^-\times T_N^-\times M\times T_{N+1} ; I_N, L_i\times 0_N \times L_i \times 0_{N+1} ),
\ea
with limits $x= (x_M, x_N, x_M, x_{N+1})$ and $y= (y_M, y_N, y_M, y_{N+1})$ at the ends. This is always possible: first lift the $\partial_0 Z$ boundary to $I_N$ (using the fibration $I_N \to \Lambda_N$), then extend this lift to the strip $Z$, and the other boundary $\partial_1 Z$ will automatically be mapped to $L_i\times  0_N \times L_i \times 0_{N+1}$. 
 Notice that $\Delta_M \times N_{\Gamma(\iota_N)}$ and $L_i\times  0_N \times L_i \times 0_{N+1}$ intersect cleanly along $\Delta_{L_i} \times \Gamma(\iota_N)$.

Now we claim we can find another strip 
\e
\tilde{d} = (d_M^1, d_N, d_M^2, d_{N+1})\colon Z\to M^-\times T_N^-\times M\times T_{N+1}
\e 
that coincides with $\tilde{u}$ on $\partial_0 Z$, is entirely contained in $\Delta_M \times N_{\Gamma(\iota_N)}$, and such that $\partial_1 \tilde{d}$ is in the intersection $\Delta_{L_i} \times \Gamma(\iota_N)$. Indeed, take $d_M^1= d_M^2 = u_M^1$, and for  $d_N, d_{N+1}$ homotope $\partial_0 u_N, \partial_0 u_{N+1}$ to the zero section:
\ea
d_N (s,t) &=  (1-t) u_N (s,0) ,  \\
d_{N+1} (s,t) &= (1-t) u_{N+1} (s,0) .
\ea

Now,  glue $\tilde{d}$ and $\tilde{u}$ along their $\partial_0 Z$ boundary to get a disc $\tilde{u}_{cap}$ with boundary in $L_i\times 0_N \times L_i  \times 0_{N+1}$. By construction, $u$, $\tilde{u}$ and $\tilde{u}_{cap}$ have same symplectic area and Maslov index.

In the monotone setting, $u$ has index greater that $A$, which would force the principal component to live in a moduli space of negative dimension, empty for transversality reasons.

In the exact setting, $\tilde{u}_{cap}$ must have zero area, which contradicts $u$ being nonconstant.

Assume now we have a quilted sphere bubble of type (2): we can fold it to a disc 
\e
u\colon (D^2, \partial D^2) \to (M_N^- \times M_{N+1}, \Lambda_N).
\e
Pick a path in $\Lambda_N$ that connects a boundary point of $u$ to a point in $L_i^{N}\times L_i^{N+1}$. Such a path exists: for example one can follow the seam of the principal component of the quilt the bubble is attached to. This path allows one to view $u$ as a strip 
\e
(Z; \partial_0 Z, \partial_1 Z)\to (M_N^- \times M_{N+1}; \Lambda_N, L_i^{N}\times L_i^{N+1})
\e
of the kind (3) with same area and index, which therefore can be ruled out for the same reasons.

\end{proof}

\begin{remark}\label{rem:} If $G$ is not simply connected, lifting discs of  type (2) to discs with boundary in $\widehat{\Lambda}_N$ (and not $\widetilde{\Lambda}_N$) might not be possible, and for this reason $\Lambda_N$ might not be monotone. Fortunately, the above trick using $L_i^{N}\times L_i^{N+1}$ allows us to overcome this problem. 
\end{remark}

\begin{remark}\label{rem:comparison_KLZ} In \cite{KimLauZheng}, the authors define equivariant self-Floer homology of a Lagrangian using similar symplectic homotopy quotients: they consider $M\times_G \mu_{T_N}^{-1}(0)$, which fibers over $B_N$ with fibers $M$ and therefore admits a symplectic structure, by Thurston's theorem. We believe this space is equivalent to ours, i.e. the symplectic structure can be chosen so that it is symplectomorphic to $M_N$. However, the increment maps are constructed differently (they don't use quilts), and it is unclear to us whether these give equivalent constructions.
\end{remark}

\subsection{Gradings}
\label{ssec:gradings}

Depending on the setting, the Floer complex may admit some grading.  In the case when Lagrangians are oriented, comparing the orientation of the direct sum $T_x L_0 \oplus T_x L_1 $ with that of $T_x L$ at a transverse intersection point $x$ provides an absolute $\Z{2}$ grading on $CF(M;L_0, L_1)$.

If  $M, L_0$ and $L_1$ are simply connected (and hence orientable, but not necessarily oriented), then $CF(M;L_0, L_1)$ can be endowed with a \emph{relative} grading over $\Z{A}$, with $A\in \zz$ as in Assumption~\ref{ass:monotone_setting}, or $A= +\infty$ in the exact setting. Relative means that for $x,y$ generators of $CF(M;L_0, L_1)$, one has a number $I(x,y)\in \Z{A}$ corresponding to the difference in degrees.  This will be the case in Section~\ref{sec:Equivariant_HSI} (\MW's setting).

In the above two cases, it is clear that $M_N, L_0^N, L_1^N$ will satisfy similar assumptions, and hence $CF(M_N; L_0^N, L_1^N)$ and their telescope will inherit the same kind of absolute or relative gradings.

It is usually convenient to endow the Lagrangians with the structure of a \emph{grading} \cite{Kontsevich_ICM,Seidel_graded}, in order to get a refined absolute grading on the Floer complex:

\begin{defi}[\cite{Seidel_graded}]\label{def:gradings}For $n\geq 2$, or $n= +\infty$, an $n$-fold Maslov covering of $M$ is a $\Z{n}$-covering $\Lcal^n \to \Lcal$ of the Lagrangian Grassmannian bundle $\Lcal \to M$.

If $L\subset M$ is a Lagrangian, its tangent subspaces defines a section $s_L \colon L \to \Lcal$ of $\Lcal_{|L} \to L$.

Assume $M$ is given an $n$-fold Maslov covering $\Lcal^n$. A $\Lcal^n$-\emph{grading} on $L$ is a lift $\tilde{L}\colon L\to \Lcal^n$ of $s_L$.
\end{defi}

If in addition $(M, \omega, \mu)$ is $G$-Hamiltonian and $L\subset M$ is a $G$-Lagrangian, one can define equivariant analogues of Maslov coverings and gradings:

\begin{defi}\label{def:G-gradings} Assume $M$ is $G$-Hamiltonian and $L\subset M$ is a $G$-Lagrangian. Let $\Lcal_G \subset \Lcal_{|\mu^{-1}(0)} \to \mu^{-1}(0)$ consist of $G$-Lagrangian subspaces. An $n$-fold Maslov $G$-covering of $M$ is a $\Z{n}$-covering $\Lcal^n_G \to \Lcal_G$ of $\Lcal_G  \to \mu^{-1}(0)$, with a lift to $\Lcal^n_G$ of the $G$-action on $\Lcal_G$.

If $L\subset M$ is a $G$-Lagrangian, then $s_L$ as defined previously is a section of ${\Lcal_G}_{|L}$.  A $(\Lcal_G^n, G)$-\emph{grading} on $L$ is a $G$-equivariant lift $\tilde{L}\colon L\to \Lcal^n_G$ of $s_L$.
\end{defi}

This definition is motivated by the following immediate result:
\begin{prop}\label{prop:G-gradings_vs_quotient}Assume that the action of $G$ on $M$ is regular. Then the datum of a $n$-fold Maslov $G$-covering $\Lcal_G^n$ of $M$ is equivalent to a $n$-fold Maslov covering $\Lcal^n$ of $M\red G$. If such a datum is given, a $(\Lcal_G^n, G)$-\emph{grading} on $L$ is equivalent to a $\Lcal^n$-\emph{grading} on $L/G$.
\end{prop}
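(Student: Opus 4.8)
The plan is to realize $\Lcal_G$ as the pullback of the Lagrangian grassmannian bundle of $M\red G$ along the quotient projection $\pi\colon\mu^{-1}(0)\to M\red G$, and then to transport the auxiliary data (the $\Z{n}$-cover and the grading section) back and forth across this identification. The one geometric input is the local normal form for a regular symplectic quotient.

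First I would carry out linear symplectic reduction fiberwise. Fix $x\in\mu^{-1}(0)$ and set $O_x=T_x(G\cdot x)\subset T_xM$. Since $\mu$ is equivariant and the coadjoint action fixes $0$, the orbit $G\cdot x$ lies in $\mu^{-1}(0)$, so $O_x\subset T_x\mu^{-1}(0)=O_x^{\omega}$, and $T_{\pi(x)}(M\red G)\cong O_x^{\omega}/O_x$ carries the reduced symplectic form. A subspace $\ell\subset T_xM$ is a $G$-Lagrangian subspace (a point of the fiber of $\Lcal_G$ over $x$) exactly when $\ell$ is Lagrangian and $O_x\subset\ell$; an elementary computation — isotropy of $\ell/O_x$, together with a dimension count using that the action is free so $\dim O_x=\dim G$ — shows that $\ell\mapsto\ell/O_x$ is a bijection from such $\ell$ onto the Lagrangian subspaces of $O_x^{\omega}/O_x$, with inverse the preimage under $O_x^{\omega}\to O_x^{\omega}/O_x$. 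Since the action is regular, $\pi$ is a principal $G$-bundle; over a trivializing chart this identification is the Marsden--Weinstein local model, hence smooth, and it globalizes to a $G$-equivariant bundle isomorphism over $\mu^{-1}(0)$
\e
\Lcal_G\ \simeq\ \pi^*\Lcal(M\red G),
\e
with $G$ acting trivially on the second factor. Equivalently, $G$ acts freely on $\Lcal_G$ with $\Lcal_G/G=\Lcal(M\red G)$, and the quotient map $\Lcal_G\to\Lcal(M\red G)$ is itself a principal $G$-bundle.

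Next I would deduce the statement on Maslov covers: passing to $G$-quotients sends a $\Z{n}$-cover $\Lcal_G^n\to\Lcal_G$ equipped with a lift of the $G$-action to a $\Z{n}$-cover $\Lcal^n:=\Lcal_G^n/G\to\Lcal(M\red G)$, i.e.\ an $n$-fold Maslov covering of $M\red G$, while pulling back along $\Lcal_G\to\Lcal(M\red G)$ recovers a $G$-equivariant $\Z{n}$-cover of $\Lcal_G$; these two operations are mutually inverse up to canonical isomorphism. For the grading statement, note that for a $G$-Lagrangian $L$ one has $O_x\subset T_xL$ by $G$-invariance and $d_x\pi$ restricts to a surjection $T_xL\to T_{\pi(x)}(L/G)$ with kernel $O_x$, so under the isomorphism above the section $s_L\colon x\mapsto T_xL$ of $\Lcal_G|_L$ is exactly the pullback of $s_{L/G}$. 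Hence a $G$-equivariant lift $\widetilde L\colon L\to\Lcal_G^n$ of $s_L$ descends to a lift $L/G\to\Lcal_G^n/G=\Lcal^n$ of $s_{L/G}$, and an $\Lcal^n$-grading of $L/G$ pulls back to a $(\Lcal_G^n,G)$-grading of $L$; again these are mutually inverse.

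The only step that is not purely formal is the globalization in the first paragraph — that the fiberwise linear reduction varies smoothly with $x$ and assembles into the bundle isomorphism $\Lcal_G\simeq\pi^*\Lcal(M\red G)$ — but over trivializing charts of the principal bundle $\pi$ this is just the Marsden--Weinstein normal form, so I expect it to be routine rather than a genuine obstacle; everything after that is bookkeeping with pullbacks and quotients by the free $G$-action.
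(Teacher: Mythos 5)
Your argument is correct and matches what the paper intends: the paper states this proposition as an ``immediate result'' and gives no proof at all, and your fiberwise linear reduction identifying $\Lcal_G \simeq \pi^*\Lcal(M\red G)$ (with $G$ acting trivially on the second factor), followed by quotienting/pulling back the $\Z{n}$-cover and the grading section along the free $G$-action, is the standard justification one would supply. No gaps worth flagging beyond the routine globalization point you already acknowledge.
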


\begin{exam}\label{exam:grading}
Let $M = T^*X$ be a cotangent bundle. The bundle $\Lcal \to M$ admits a ``section by Maslov cycles'' $\mathfrak{m}$: for $m\in M$ let 
\ea
\mathfrak{m}_m &= \{l\in \Lcal_m \ : \ l\text{ is not transverse to the fiber at }m \} ,\\
\mathfrak{m} &= \bigcup_{m\in M} \mathfrak{m}_m ,\\
\Lcal^{\pitchfork} &= \Lcal\setminus \mathfrak{m} .
\ea

We claim that the existence of $\mathfrak{m}$ ensures $\Lcal$ admits a Maslov $\zz$-cover $c\colon \widehat{\Lcal} \to \Lcal$. Indeed, recall that $\Lcal_m$ can be identified with $U(n)/O(n)$, in such a way that Maslov cycles correspond to fibers of the map $det^2\colon U(n)/O(n) \to U(1)$. Having $\mathfrak{m}$ globally defined allows one to extend $det^2$ to a map $d\colon \Lcal \to U(1)$, by requiring $\mathfrak{m} = d^{-1}(1)$, say. Then the pullback $\widehat{\Lcal} = \exp^* \Lcal$, with $\exp\colon \rr \to U(1)$, will be a Maslov $\zz$-cover of $\Lcal$.

Fix a component $\widehat{\Lcal}^{\pitchfork}_0$ of $\widehat{\Lcal}^{\pitchfork} = c^{-1}(\Lcal^{\pitchfork})$. 
If $L\subset M$ is a Lagrangian transverse to the fibers at every point, then the section $s_L$ takes its values in $\Lcal^{\pitchfork}$, and admits a unique lift contained in $\widehat{\Lcal}^{\pitchfork}_0$, which is defined to be its canonical grading.

If now $X$ is acted on by $G$ (inducing a Hamiltonian action on $M$), then $\widehat{\Lcal}_G = c^{-1} (\Lcal_G)$ is a Maslov $G$-cover as defined previously. With
\ea
\Lcal^{\pitchfork}_G &= \Lcal_G\setminus \mathfrak{m} , \\
\widehat{\Lcal}^{\pitchfork}_G &= c^{-1} (\Lcal^{\pitchfork}_G ) ,
\ea
if $L\subset M$ is a $G$-Lagrangian transverse to the fibers, then its canonical grading takes its values in $\widehat{\Lcal}^{\pitchfork}_G$ and defines a $G$-grading.
\end{exam}

If $M$  has a Maslov cover, and $L_0, L_1\subset M$ are $G$-graded, by Example~\ref{exam:grading} $L_0\times 0_N, L_1\times 0_N\subset M\times T_N$ are $G$-graded, and by Proposition~\ref{prop:G-gradings_vs_quotient} $L_0^N, L_1^N \subset M_N$ are graded. Therefore:
\begin{prop}\label{prop:abs_grading_CF_G} If $L_0, L_1\subset M$ are $G$-graded in addition of satisfying Assumptions~\ref{ass:exact_setting},~\ref{ass:monotone_setting}, then $CF_G(M;L_0, L_1)$ is absolutely $\Z{n}$-graded.
\end{prop}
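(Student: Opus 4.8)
The plan is to equip each chain group $CF_N$ with an absolute $\Z{n}$-grading, to verify that the increment maps $\alpha_N\colon CF_N\to CF_{N+1}$ are homogeneous of degree $0$ for these gradings, and then to read off an absolute $\Z{n}$-grading on the telescope $\Tel(CF_N,\alpha_N)$ from the explicit form of its differential.

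First I would grade $CF_N$. By hypothesis $M$ carries an $n$-fold Maslov $G$-covering $\Lcal_G^n$ and $L_0,L_1$ are $(\Lcal_G^n,G)$-graded (in the exact case take $n=+\infty$). As recalled just before the statement, Example~\ref{exam:grading} endows $T_N=T^*EG_N$ with its canonical Maslov $\zz$-cover and the zero section $0_N$ with its canonical $G$-grading (it is everywhere transverse to the cotangent fibres), so the external product provides an $n$-fold Maslov $G$-covering of $M\times T_N$ for which $L_0\times 0_N$ and $L_1\times 0_N$ are $G$-graded. The $G$-action on $M\times T_N$ is regular (Section~\ref{ssec:Symplectic_homotopy_quotients}), so Proposition~\ref{prop:G-gradings_vs_quotient} transports this data to an $n$-fold Maslov covering of $M_N$ together with gradings of $L_0^N$ and $L_1^N$. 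A grading of a transverse pair of Lagrangians induces in the standard way an absolute degree $\deg\colon\Ical(L_0^N,L_1^N;\Fcal_N)\to\Z{n}$ for which the Floer differential of Proposition~\ref{prop:def_CF_N} is homogeneous of degree $-1$.

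Next I would grade the increment correspondence and deduce that $\alpha_N$ preserves degrees. Recall from Section~\ref{ssec:incr_corr} that $\Lambda_N$ is the symplectic reduction of $\widetilde{\Lambda}_N=(G\times G)\cdot(\widehat{\Lambda}_N\cap\Phi^{-1}(0))$, with $\widehat{\Lambda}_N=\Delta_M\times N_{\Gamma(i_N)}$. The diagonal $\Delta_M\subset M^-\times M$ has a tautological grading, and $N_{\Gamma(i_N)}\subset (T^*EG_N)^-\times T^*EG_{N+1}$, being a conormal bundle, has a canonical grading by Example~\ref{exam:grading}; both are $G\times G$-equivariant, so applying Proposition~\ref{prop:G-gradings_vs_quotient} once more, $\Lambda_N$ inherits a grading as a Lagrangian correspondence from $M_N$ to $M_{N+1}$. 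With every Maslov cover and all Lagrangian data of the quilt $\underline{Z}$ graded, the standard index formula for Cauchy-Riemann operators with graded seam and boundary conditions identifies the Fredholm index $i$ cutting out $\Lcal(x,y;\Pcal_N)_i$ with $\deg(x)-\deg(y)\bmod n$ (up to a fixed global constant, which the canonical gradings above normalise to zero); hence the count of rigid quilts defining $\alpha_N$ pairs only generators of equal degree, i.e.\ $\alpha_N$ is homogeneous of degree $0$.

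Finally, declaring $q$ to have degree $+1$ as in Definition~\ref{def:telescope_chain_cpx}, in $\Tel(CF_N,\alpha_N)=\bigoplus_N CF_N\oplus qCF_N$ one has $\deg(qb)=\deg(b)+1$; for homogeneous $a,b$ the three terms of $\delta(a+qb)=da+q\,db+(\alpha_N b-b)$ then have degrees $\deg(a)-1$, $\deg(b)$ and $\deg(b)$ respectively, so $\delta$ lowers the total degree by $1$ and $CF_G(M;L_0,L_1)$ is absolutely $\Z{n}$-graded. The main obstacle is the middle step: one must genuinely produce a grading on $\Lambda_N$ compatible with the Maslov $G$-covers of $M_N$ and $M_{N+1}$ and check that the quilted index for $\Lcal(x,y;\Pcal_N)$ computes the mod-$n$ difference of the absolute gradings. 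Care is needed because $\Lambda_N$ arises as a reduction of $\widehat{\Lambda}_N$, which is only a $G^{diag}$-Lagrangian rather than a $(G\times G)$-Lagrangian, and because $\underline{Z}$ has free strip-like ends where the correspondences meet only cleanly, so the relevant index is that of a Fredholm operator on weighted Sobolev spaces as in \cite{LekiliLipyanskiy}; the grading contribution of these clean ends must be accounted for. Once the correspondence is graded and this index identity is in place, homogeneity of $\alpha_N$, and with it the grading of the telescope, follows formally.
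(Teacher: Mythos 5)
Your proposal is correct and follows essentially the same route as the paper: the absolute grading on each $CF_N$ is obtained exactly as in the text, by combining the canonical $G$-grading of the zero sections from Example~\ref{exam:grading} with Proposition~\ref{prop:G-gradings_vs_quotient} to grade $L_0^N, L_1^N\subset M_N$, and then reading off the grading of the telescope with $q$ of degree one as in Definition~\ref{def:telescope_chain_cpx}. The paper's proof is only the sentence preceding the statement and leaves the degree-zero property of the increments $\alpha_N$ implicit, so your additional work on grading the correspondence $\Lambda_N$ and on the quilted index identity is a legitimate filling-in of that step (and you correctly flag the two places where care is needed: the passage from $\widehat{\Lambda}_N$ to its reduction, and the clean free ends) rather than a divergence from the paper's approach.
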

\section{Continuation maps}
\label{sec:continuation_maps}

In this section we aim to apply Corollary~\ref{cor:homotopy_equiv_telescopes} to show that the homotopy type of the equivariant Floer complex is independent of the choice of Floer and perturbation data.

For each $N$, with $i=0, 1$, let $\Fcal^i_N$ be a regular Floer datum for $(M_N; L_0^N, L_1^N)$, and
\ea
C_N &= CF(M_N; L_0^N, L_1^N; \Fcal^0_N) ,\\
D_N &= CF(M_N; L_0^N, L_1^N; \Fcal^1_N) .
\ea
Let also $\Pcal^i_N$ be regular perturbation datum for $(\Zund; \Mund_N, \Lund_N)$, defining increment morphisms:
\ea
\alpha_N\colon C_N\to C_{N+1}\text{, with }i=0, \\
\beta_N\colon D_N\to D_{N+1}\text{, with }i=1.
\ea

We are going to introduce perturbation data, as summarized in the following diagram:
\e\label{diag:pert_data_continuation_maps}
\xymatrix{ \Fcal^0_{N} \ar[d]^{\Qcal_{N}} \ar[r]^{\Pcal^0_{N}} \ar[dr]^{\Rcal^L_{N}} & \Fcal^0_{N+1} \ar[d]^{\Qcal_{N+1}} \\
\Fcal^1_{N}  \ar[r]^{\Pcal^1_{N}}  & \Fcal^1_{N+1}  \\ } .
\e
These will be used to define maps:
\e
\xymatrix{\cdots \ar[r] & C_N \ar[d]^{\varphi_N} \ar[r]^{\alpha_N} \ar@{-->}[dr]^{\kappa_N} & C_{N+1} \ar[d]^{\varphi_{N+1}}\ar[r]^{} & \cdots \\
\cdots \ar[r] & D_N  \ar[r]^{\beta_N}  & D_{N+1} \ar[r]^{} & \cdots \\
  } .
\e

Let $\Qcal_{N}$ be a perturbation datum on $(Z; M_N; L_0^N, L_1^N)$ going from $\Fcal^0_{N}$ to $\Fcal^1_{N}$. Use it to define a moduli space $\Ccal(x, y; \Qcal_{N})$ of $\Qcal_{N}$-perturbed pseudo-holomorphic maps. For generic $\Qcal_{N}$ the moduli space is smooth and of expected dimension. Its zero dimensional part $\Ccal(x, y; \Qcal_{N})_0$ defines a chain map
\e
\varphi_N\colon C_N \to D_N.
\e
\begin{figure}[!h]
    \centering
    \def\svgwidth{.50\textwidth}
    %% Creator: Inkscape 1.1.2 (0a00cf5339, 2022-02-04), www.inkscape.org
%% PDF/EPS/PS + LaTeX output extension by Johan Engelen, 2010
%% Accompanies image file '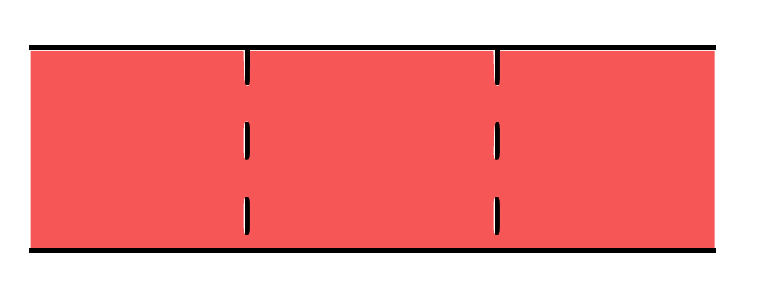' (pdf, eps, ps)
%%
%% To include the image in your LaTeX document, write
%%   \input{<filename>.pdf_tex}
%%  instead of
%%   \includegraphics{<filename>.pdf}
%% To scale the image, write
%%   \def\svgwidth{<desired width>}
%%   \input{<filename>.pdf_tex}
%%  instead of
%%   \includegraphics[width=<desired width>]{<filename>.pdf}
%%
%% Images with a different path to the parent latex file can
%% be accessed with the `import' package (which may need to be
%% installed) using
%%   \usepackage{import}
%% in the preamble, and then including the image with
%%   \import{<path to file>}{<filename>.pdf_tex}
%% Alternatively, one can specify
%%   \graphicspath{{<path to file>/}}
%% 
%% For more information, please see info/svg-inkscape on CTAN:
%%   http://tug.ctan.org/tex-archive/info/svg-inkscape
%%
\begingroup%
  \makeatletter%
  \providecommand\color[2][]{%
    \errmessage{(Inkscape) Color is used for the text in Inkscape, but the package 'color.sty' is not loaded}%
    \renewcommand\color[2][]{}%
  }%
  \providecommand\transparent[1]{%
    \errmessage{(Inkscape) Transparency is used (non-zero) for the text in Inkscape, but the package 'transparent.sty' is not loaded}%
    \renewcommand\transparent[1]{}%
  }%
  \providecommand\rotatebox[2]{#2}%
  \newcommand*\fsize{\dimexpr\f@size pt\relax}%
  \newcommand*\lineheight[1]{\fontsize{\fsize}{#1\fsize}\selectfont}%
  \ifx\svgwidth\undefined%
    \setlength{\unitlength}{371.33858268bp}%
    \ifx\svgscale\undefined%
      \relax%
    \else%
      \setlength{\unitlength}{\unitlength * \real{\svgscale}}%
    \fi%
  \else%
    \setlength{\unitlength}{\svgwidth}%
  \fi%
  \global\let\svgwidth\undefined%
  \global\let\svgscale\undefined%
  \makeatother%
  \begin{picture}(1,0.38167939)%
    \lineheight{1}%
    \setlength\tabcolsep{0pt}%
    \put(0,0){\includegraphics[width=\unitlength,page=1]{cont_map.pdf}}%
    \put(0.14616661,0.16251564){\color[rgb]{0,0,0}\makebox(0,0)[lt]{\lineheight{1.25}\smash{\begin{tabular}[t]{l}$\Fcal_N^0$\end{tabular}}}}%
    \put(0.7314325,0.16199382){\color[rgb]{0,0,0}\makebox(0,0)[lt]{\lineheight{1.25}\smash{\begin{tabular}[t]{l}$\Fcal_N^1$\end{tabular}}}}%
    \put(0.44612808,0.15978004){\color[rgb]{0,0,0}\makebox(0,0)[lt]{\lineheight{1.25}\smash{\begin{tabular}[t]{l}$\Qcal_N$\end{tabular}}}}%
  \end{picture}%
\endgroup%

      \caption{The perturbation $\Qcal_{N}$  on $(Z; M_N; L_0^N, L_1^N)$.}
      \label{fig:cont_map}
\end{figure}

In order to prove that $\varphi_N$ commutes with $\alpha_N$ and $\beta_N$ up to homotopy we define a parametrized moduli space, involving a family $\lbrace \Rcal^L_{N}\rbrace_{L\in \rr}$ of perturbations on $(\Zund, \Mund_N, \Lund_N)$ such that (see Figure~\ref{fig:cont_map_htpy}): 
\begin{itemize}
\item if $L\ll 0$, then  $\Rcal^L_{N}$ corresponds to the superposition of $\Qcal_N$ shifted by $L$ in the region $s<L/2$ of $\Zund$ and $\Pcal_N^1$ in $s>L/2$. Notice that when $L$ is negatively large enough, these coincide with $\Fcal_N^1$ at $s=L/2$.
\item if $L\gg 0$, then  $\Rcal^L_{N}$ corresponds to the superposition of $\Pcal_N^0$ shifted by $L$ in the region $s<L/2$ of $\Zund$ and $\Qcal_{N+1}$ in $s>L/2$. Notice that when $L$ is large enough, these coincide with $\Fcal_{N+1}^0$ at $s=L/2$.
\end{itemize}

\begin{figure}[!h]
    \centering
    \def\svgwidth{.6\textwidth}
    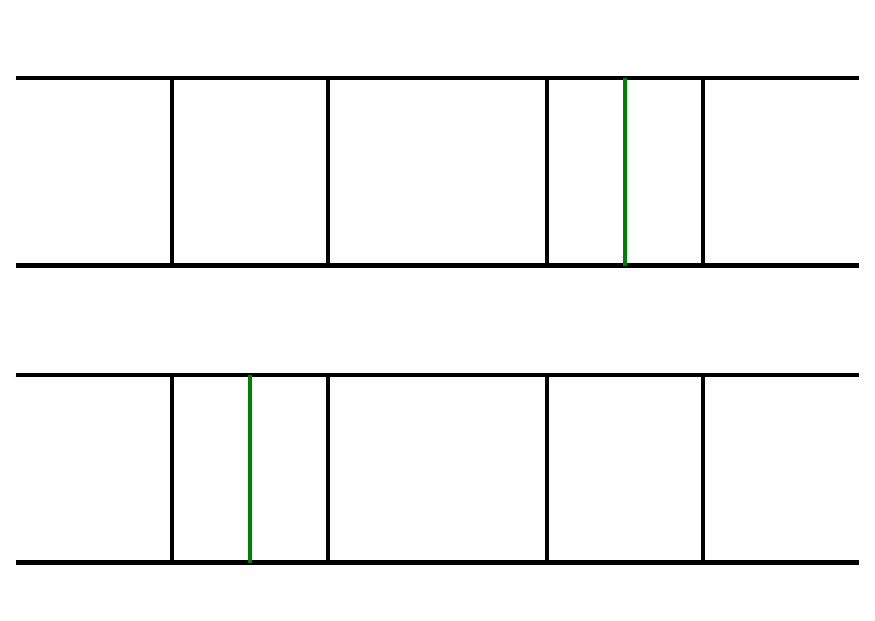
      \caption{The perturbation $\Rcal^L_{N}$ for large $\abs{L}$.}
      \label{fig:cont_map_htpy}
\end{figure}

For each $L$, let then  $\Hcal_L(x,y; \Rcal^L_{N})$ be the moduli space of $\Rcal^L_{N}$-perturbed pseudo-holomorphic quilts $\uund\colon \Zund\to (\Mund_N, \Lund_N)$, and the corresponding parametrized moduli space:
\e
\Hcal(x,y; \lbrace\Rcal^L_{N} \rbrace_L ) =  \bigcup_{L\in \rr} \Hcal_L(x,y; \Rcal^L_{N}),
\e

\begin{prop}\label{prop:phi_htpy_commutes_alpha_beta}
For generic families $ \lbrace\Rcal^L_{N} \rbrace_L$, $\Hcal(x,y; \lbrace\Rcal^L_{N} \rbrace_L )$ is smooth and of expected dimension (i.e. the index of the corresponding Fredholm operator).

Its zero-dimensional part 
\e
\Hcal(x,y; \lbrace\Rcal^L_{N} \rbrace_L )_0 =  \bigcup_{L\in \rr} \Hcal_L(x,y; \Rcal^L_{N})_{-1}
\e
can be used to define a map 
\e
\kappa_N\colon C_N\to D_{N+1}, 
\e
and its one-dimensional part  
\e
\Hcal(x,y; \lbrace\Rcal^L_{N} \rbrace_L )_1 =  \bigcup_{L\in \rr} \Hcal_L(x,y; \Rcal^L_{N})_{0}
\e 
can be used to show that $\kappa_N$ gives a homotopy 
\e
\varphi_{N+1} \alpha_N - \beta_N\varphi_N = \d \kappa_N + \kappa_N \d .
\e 

Therefore one gets a continuation morphism 
\e
\Tel (\varphi_N, \kappa_N ) \colon CF_G^0 \to CF_G^1,
\e 
with $CF_G^0 = \Tel(C_N, \alpha_N)$ and $CF_G^1 = \Tel(D_N, \beta_N)$.
\end{prop}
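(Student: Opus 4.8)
The plan is to run the standard parametrized moduli space argument, transplanting the transversality and compactness analysis already carried out for $\Lcal(x,y;\Pcal_N)$ in the proof of Proposition~\ref{prop:Lcal_moduli_space}, and then reading the homotopy relation off the boundary of the one-dimensional stratum.

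First I would set up transversality. Consider the universal moduli space over the (contractible) space of admissible families $\lbrace\Rcal^L_N\rbrace_L$ --- those interpolating between the Floer data $\Fcal^0_N,\Fcal^1_N$ at the quilted ends and which have, for $|L|$ large, the superposed form prescribed before the statement. It is cut out by a Fredholm section whose linearization is surjective on the patches where perturbations are allowed, by the argument of \cite{FloerHoferSalamon}; as usual the free ends carry no perturbation and the {\glag}\ there is only clean, so one works in weighted Sobolev spaces as in \cite[sec.~2.3]{LekiliLipyanskiy}. Sard--Smale then produces a comeagre set of regular families. For such a family $\Hcal_L(x,y;\Rcal^L_N)$ at index $i$ is smooth of dimension $i$ for each fixed $L$, so the parametrized space $\Hcal(x,y;\lbrace\Rcal^L_N\rbrace_L)$ at index $i$ is smooth of dimension $i+1$; in particular $\Hcal(x,y;\lbrace\Rcal^L_N\rbrace_L)_0$, glued from index-$(-1)$ quilts over isolated values of $L$, is a finite set, defining $\kappa_N\colon C_N\to D_{N+1}$, and $\Hcal(x,y;\lbrace\Rcal^L_N\rbrace_L)_1$ is a one-manifold whose compactification $\overline{\Hcal}(x,y;\lbrace\Rcal^L_N\rbrace_L)_1$ I will analyse.

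The degenerations to control are: sphere or disc bubbling in the patches, quilted sphere bubbling at the vertical seam, strip breaking at the free ends, strip breaking at the incoming or outgoing ends, and the family degeneration $L\to\pm\infty$. The first three are excluded by exactly the folding-and-capping argument of Proposition~\ref{prop:Lcal_moduli_space} (cap against $L_i^N\times L_i^{N+1}$ and push forward to $B_N$ via the pseudo-holomorphic fibration); since the whole family $\lbrace\Rcal^L_N\rbrace_L$ is supported in the same compact region, the area and index bounds are uniform in $L$. The genuinely new point is the behaviour as $L\to\pm\infty$: by construction of the family, an element of $\Hcal_L(x,y;\Rcal^L_N)_0$ converges, as $L\to-\infty$, to a broken configuration consisting of a rigid $\Qcal_N$-perturbed strip in $\Ccal(x,z;\Qcal_N)_0$ (a contribution to $\varphi_N$) followed by a rigid $\Pcal^1_N$-perturbed quilt in $\Lcal(z,y;\Pcal^1_N)_0$ (a contribution to $\beta_N$), and symmetrically, as $L\to+\infty$, to a rigid $\Pcal^0_N$-quilt in $\Lcal(x,z;\Pcal^0_N)_0$ (contributing to $\alpha_N$) followed by a rigid $\Qcal_{N+1}$-strip in $\Ccal(z,y;\Qcal_{N+1})_0$ (contributing to $\varphi_{N+1}$); a standard gluing argument shows these are all genuine endpoints. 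Meanwhile the finite-$L$ boundary consists of strip breaking at the incoming end, $\coprod_z \Mcal(x,z;\Fcal^0_N)_0\times\Hcal(z,y;\lbrace\Rcal^L_N\rbrace_L)_0$, and at the outgoing end, $\coprod_z \Hcal(x,z;\lbrace\Rcal^L_N\rbrace_L)_0\times\Mcal(z,y;\Fcal^1_{N+1})_0$.

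Counting $\#\,\partial\overline{\Hcal}(x,y;\lbrace\Rcal^L_N\rbrace_L)_1=0$ over $\Z{2}$ then yields
\[
\varphi_{N+1}\alpha_N + \beta_N\varphi_N + \d\kappa_N + \kappa_N\d = 0,
\]
which is the homotopy-commutation relation of Definition~\ref{def:Morphisms_between_Telescopes}. Hence $(\varphi_N,\kappa_N)$ is a morphism of sequences from $(C_N,\alpha_N)$ to $(D_N,\beta_N)$, and the computation $\delta\,\Tel(\varphi_N,\kappa_N)=\Tel(\varphi_N,\kappa_N)\,\delta$ recorded after Definition~\ref{def:Morphisms_between_Telescopes} shows that $\Tel(\varphi_N,\kappa_N)\colon CF_G^0\to CF_G^1$ is a chain map, as claimed. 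The main obstacle is the compactness and gluing bookkeeping at $L\to\pm\infty$: one must verify that the superposed perturbation data degenerate precisely to the pair of data defining the compositions $\beta_N\varphi_N$ and $\varphi_{N+1}\alpha_N$, with no index escaping into the breaking, and this is exactly what the careful choice of the interpolating family $\lbrace\Rcal^L_N\rbrace_L$ made before the statement guarantees.
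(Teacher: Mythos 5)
Your proposal is correct and follows essentially the same route as the paper: parametrized transversality for the family $\lbrace\Rcal^L_N\rbrace_L$, exclusion of bubbling and free-end breaking exactly as in Proposition~\ref{prop:Lcal_moduli_space}, and identification of the boundary of the one-dimensional stratum with the $L\to\mp\infty$ breakings (giving $\beta_N\varphi_N$ and $\varphi_{N+1}\alpha_N$) plus finite-$L$ strip breaking at the quilted ends (giving $\d\kappa_N+\kappa_N\d$). The only slight imprecision is the claim that each slice $\Hcal_L$ is regular for fixed $L$ --- generic families only make the total parametrized space smooth of dimension $\mathrm{index}+1$ --- but this does not affect the argument.
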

\begin{proof} Smoothness of these moduli spaces follows from parametrized transversality \cite[Def.~3.1.6, Th.~3.1.6]{McDuSal}.

The boundary of the compactification of $\Hcal(x,y; \lbrace\Rcal^L_{N} \rbrace_L )_1$ consists of three pieces:
\begin{itemize}
\item When $L\to -\infty$, the quilt breaks in a strip in $M_N$ perturbed by $\Qcal_N$, and a quilt perturbed by $\Pcal^1_N$, leading to a contribution for $\beta_N\varphi_N$
\item When $L\to +\infty$, the quilt breaks in a strip in $M_{N+1}$ perturbed by $\Qcal_{N+1}$, and a quilt perturbed by $\Pcal^0_N$, leading to a contribution for $\varphi_{N+1} \alpha_N$
\item At finite $L$, one can have strip breaking at either the incoming or outgoing end, contributing to $\d \kappa_N + \kappa_N \d$.
\end{itemize}
Indeed, all other possible bubbling or strip breaking is excluded, as in the proofs of Propositions~\ref{prop:def_CF_N} and  \ref{prop:Lcal_moduli_space}.
\end{proof}

Now suppose that two different choices $(\Qcal_N^0, \Rcal_N^{L, 0} )$ and $(\Qcal_N^1, \Rcal_N^{L, 1} )$ of regular perturbations were made, leading to maps 
$(\varphi_N^0 ,\kappa_N^0)$ and $(\varphi_N^1 ,\kappa_N^1)$. In order to construct homotopies $(\phi_N ,\kappa_N)$ as in Definition~\ref{def:Homotopies_between_Telescopes_alg} we introduce two kinds of families of perturbations $\Scal_N^v$,  $\Tcal_N^{L,v}$, with $L\in \rr$ and $v\in [0,1]$.

Let $\lbrace \Scal_N^v\rbrace_{v\in [0,1]}$ be a family of perturbations on  $(Z, M_N, L_0^N, L_1^N)$  from $\Fcal_N^0$ to $\Fcal_N^1$ such that, when $v=0$ or 1, $\Scal_N^v = \Qcal_N^v$. Let $\Ccal^v(x,y, \Scal_N^v)$ be the moduli space associated with $\Scal_N^v$, and let
\e
\Ccal(x,y; \lbrace \Scal_N^v\rbrace) = \bigcup_{v\in [0,1]} \Ccal^v(x,y; \Scal_N^v)
\e
be the corresponding parametrized moduli space.

\begin{prop}\label{prop:htpy_continuation}
For generic families $ \lbrace\Scal^v_{N} \rbrace_v$, $\Ccal(x,y; \lbrace \Scal_N^v\rbrace)$ is smooth and of expected dimension.

Its zero-dimensional part $\Ccal(x,y)_0 = \bigcup_{v\in [0,1]} \Ccal^v(x,y)_{-1}$ defines $\phi_N \colon C_N \to D_N$, and its one-dimensional part  $\Ccal(x,y)_1 = \bigcup_{v\in [0,1]} \Ccal^v(x,y)_{0}$ compactifies to a cobordism that gives the relation
\e
\varphi^1_N -\varphi^0_N = \d \phi_N + \phi_N \d .
\e
\end{prop}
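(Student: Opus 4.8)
The plan is to run the standard parametrized argument that two continuation maps built from homotopic interpolating data are chain homotopic, in complete parallel with the proof of Proposition~\ref{prop:phi_htpy_commutes_alpha_beta}, but one step simpler since the domains here are honest strips $(Z;M_N;L_0^N,L_1^N)$ rather than quilted surfaces with free ends.

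First I would set up the parametrized moduli space. Keeping the endpoint data $\Scal_N^0=\Qcal_N^0$ and $\Scal_N^1=\Qcal_N^1$ fixed, I would show that for a generic path $\{\Scal_N^v\}_{v\in[0,1]}$ of interpolating perturbation data the universal space $\bigcup_{v}\Ccal^v(x,y;\Scal_N^v)$ is cut out transversely, via parametrized transversality \cite[Def.~3.1.6, Th.~3.1.6]{McDuSal} as in Proposition~\ref{prop:phi_htpy_commutes_alpha_beta}; then the locus on which the fixed-$v$ Fredholm index is $i-1$ is a smooth manifold of dimension $i$. Its zero-dimensional stratum $\Ccal(x,y;\{\Scal_N^v\})_0=\bigcup_v\Ccal^v(x,y)_{-1}$ is finite, by Gromov compactness together with the exactness (resp.\ monotonicity) energy bounds and the absence of negative-dimensional strata into which it could break, and I would define $\phi_N x=\sum_y \#\Ccal(x,y;\{\Scal_N^v\})_0\,y$.

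Next I would examine the compactification $\overline{\Ccal}(x,y;\{\Scal_N^v\})_1$ of the one-dimensional stratum, a compact $1$-manifold with boundary. The possible degenerations are: interior sphere bubbling in $M_N$ and disc bubbling on $L_0^N$ or $L_1^N$, which are ruled out by the monotonicity (resp.\ exactness) assumptions exactly as in the proofs of Propositions~\ref{prop:def_CF_N} and \ref{prop:Lcal_moduli_space}; the two parameter endpoints $v=0$ and $v=1$, which reproduce the continuation moduli spaces $\Ccal(x,y;\Qcal_N^0)_0$ and $\Ccal(x,y;\Qcal_N^1)_0$ and hence contribute $\varphi_N^0$ and $\varphi_N^1$; strip breaking at the incoming end $s\to-\infty$, contributing $\phi_N\partial$; and strip breaking at the outgoing end $s\to+\infty$, contributing $\partial\phi_N$. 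The usual gluing theorem shows these account for all boundary points, so counting $\partial\overline{\Ccal}(x,y;\{\Scal_N^v\})_1$ modulo $2$ yields $\varphi_N^1+\varphi_N^0+\partial\phi_N+\phi_N\partial=0$, which over $\Z{2}$ is the asserted relation $\varphi_N^1-\varphi_N^0=\d\phi_N+\phi_N\d$.

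The main obstacle, as throughout this section, is the exclusion of bubbling at the boundary of the one-dimensional stratum; here, however, the Lagrangians $L_0^N,L_1^N$ are compact and the monotonicity/exactness hypotheses apply directly, so this step is verbatim the one already carried out in Proposition~\ref{prop:def_CF_N}. The remaining ingredients — parametrized transversality and the identification of the $v=0,1$ endpoint strata with the moduli spaces defining $\varphi_N^0$ and $\varphi_N^1$ — are routine.
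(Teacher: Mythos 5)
Your proposal is correct and is exactly the argument the paper intends: the paper's own proof of this proposition is a one-line reference to the proof of Proposition~\ref{prop:phi_htpy_commutes_alpha_beta}, and what you have written out — parametrized transversality for the family $\{\Scal_N^v\}$, the map $\phi_N$ from the index $-1$ strata, and the boundary of the compactified one-dimensional part decomposing into the $v=0,1$ endpoints plus strip breaking, with bubbling excluded as in Propositions~\ref{prop:def_CF_N} and \ref{prop:Lcal_moduli_space} — is precisely that analogous argument, correctly simplified to unquilted strips.
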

\begin{proof}
The proof is analogous to the proof of Proposition~\ref{prop:phi_htpy_commutes_alpha_beta}
\end{proof}

Let $\lbrace\Tcal_N^{L,v}\rbrace_{(L,v)\in \rr\times [0,1]}$ be a family of perturbations on $(\Zund, \Mund_N, \Lund_N)$ such that:
\begin{itemize}
\item if $L\ll 0$, then  $\Tcal_N^{L,v}$ corresponds to the superposition of $\Scal_N^v$ shifted by $L$ in the region $s<L/2$ of $\Zund$ and $\Pcal_N^1$ in $s>L/2$. 
\item if $L\gg 0$, then  $\Tcal_N^{L,v}$ corresponds to the superposition of $\Pcal_N^0$ shifted by $L$ in the region $s<L/2$ of $\Zund$ and $\Scal_{N+1}^v$ in $s>L/2$. 
\item when $v=0$ or 1, $\Tcal_N^{L,v} = \Rcal_N^{L,v}$.
\end{itemize}

Let $\Kcal_L^v(x,y; \lbrace\Tcal_N^{L,v}\rbrace)$ be the associated moduli space of quilts, and 
\e
\Kcal(x,y;\lbrace\Tcal_N^{L,v}\rbrace) = \bigcup_{v\in [0,1],\ L\in \rr} \Kcal_L^v(x,y;\Tcal_N^{L,v}).
\e

\begin{prop}\label{prop:relation_kappa} For generic $\lbrace\Tcal_N^{L,v}\rbrace$, $\Kcal(x,y;\lbrace\Tcal_N^{L,v}\rbrace)$ is smooth and of expected dimension. 
Its zero-dimensional part $\Kcal(x,y)_0 = \bigcup_{v\in [0,1],\ L\in \rr} \Kcal_L^v(x,y)_{-2}$ defines the map $\kappa_N\colon C_N\to D_{N+1}$ and its one-dimensional part 
\e
\Kcal(x,y)_1 = \bigcup_{v\in [0,1],\ L\in \rr} \Kcal_L^v(x,y)_{-1}
\e 
compactifies to a cobordism that gives the relation:
\e\label{eq:relation_kappa}
\kappa^1_N- \kappa^0_N + \phi_{N+1} \alpha_N + \beta_N \phi_N = \d\kappa_N + \kappa_N \d.
\e
Therefore, by Proposition~\ref{prop:homotopy_telescopes}, different paths of perturbations  induce the same maps on telescopes, up to homotopy.
\end{prop}
\begin{proof}

The boundary of the compactification of $\Kcal(x,y)_1$ consists of five kind of degenerations (see Figure~\ref{fig:relation_kappa}):
\begin{enumerate}
\item at $v=0$, contribution for $\kappa_N^0$,
\item at $v=1$, contribution for $\kappa_N^1$,
\item at $L\to -\infty$, contribution for $\beta_N \phi_N$,
\item at $L\to +\infty$, contribution for $\phi_{N+1} \alpha_N$,
\item at finite $L$ and $v\in (0,1)$, strip breaking at incoming and outgoing ends, contributing to $\d\kappa_N +\kappa_N \d$.
\end{enumerate}
\end{proof}

\begin{figure}[!h]
    \centering
    \def\svgwidth{.80\textwidth}
    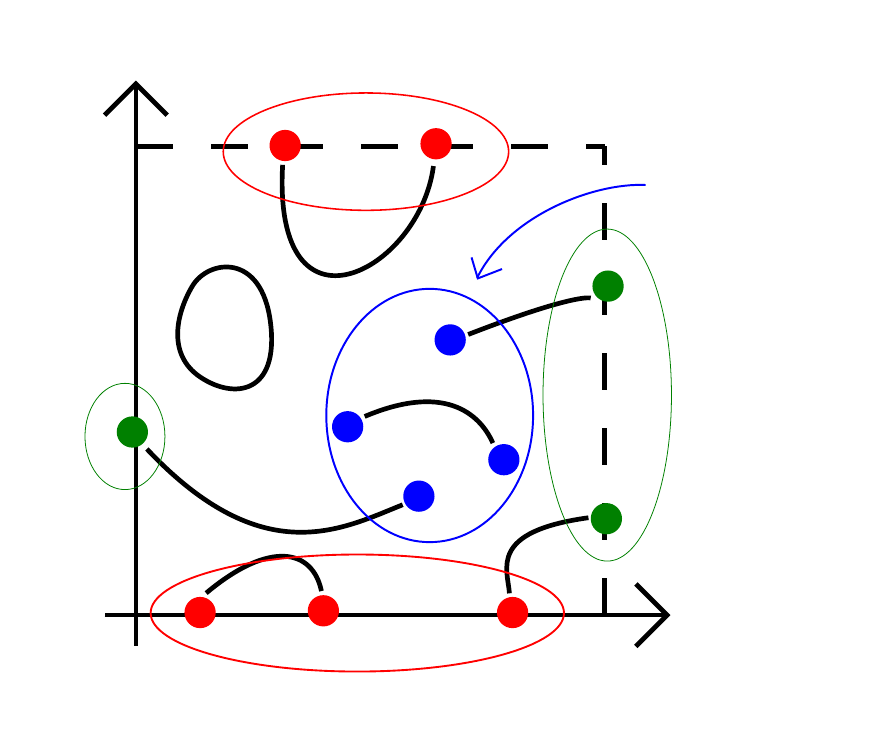
      \caption{The moduli space $\Kcal(x,y)_1$ and its compactification. The boundary components are labelled with their associated contribution to relation~(\ref{eq:relation_kappa})}
      \label{fig:relation_kappa}
\end{figure}

 To show that furthermore these are homotopy equivalences, by Corollary~\ref{cor:homotopy_equiv_telescopes}, one has to check  that if:
 \begin{itemize}
 \item $(\varphi_N, \kappa_N)$ are as in Proposition~\ref{prop:phi_htpy_commutes_alpha_beta} 
 from $(C_N, \alpha_N)$ to $(D_N, \beta_N)$,
 \item $(\tilde{\varphi}_N, \tilde{\kappa}_N)$  are   from $(D_N, \beta_N)$ to $(C_N, \alpha_N)$,
 \end{itemize}
then their composition  $(\tilde{\varphi}_N, \tilde{\kappa}_N) \circ(\varphi_N, \kappa_N)$ is homotopic to $(id_{C_N}, 0)$, i.e. there exists 
 \e
 (\phi_N, H_N)\colon (C_N, \alpha_N) \to (C_N, \alpha_N)
 \e 
 such that:
 
\ea
\tilde{\varphi}_N \varphi_N - id &= \d\phi_N + \phi_N \d ,  \label{eq:compo_htpic_id_1} \\
\tilde{\kappa}_N \phi_N + \tilde{\varphi}_{N+1} \kappa_N + \phi_{N+1}\alpha_N + \alpha_N \phi_N   &= \d H_N + H_N \d \label{eq:compo_htpic_id_2} .
\ea 
 
 Let $\Qcal_N$, $\Rcal_N^L$ (resp. $\tilde{\Qcal}_N$, $\tilde{\Rcal}_N^{\tilde{L}}$) be the (families of) perturbations as earlier involved in the definitions of $(\varphi_N, \kappa_N)$ (resp. $(\tilde{\varphi}_N, \tilde{\kappa}_N)$).

 Let $\lbrace\Ucal_N^v\rbrace_{v\in [0, +\infty )}$ be a family of perturbations on $(Z, M_N, L_0^N, L_1^N)$ going from $\Fcal_N^0$ to $\Fcal_N^0$, and such that:
 \begin{itemize}
 \item when $v=0$, the perturbation $\Ucal_N^v$ is just $\Fcal_N^0$ (unperturbed),
 \item when $v\gg 0$, the perturbation $\Ucal_N^v$ is a superposition of $\Qcal_N$ and $\tilde{\Qcal}_N$ shifted by $v$, as in Figure~\ref{fig:pert_Ucal}.
 \end{itemize}
 
\begin{figure}[!h]
    \centering
    \def\svgwidth{.50\textwidth}
    %% Creator: Inkscape 1.1.2 (0a00cf5339, 2022-02-04), www.inkscape.org
%% PDF/EPS/PS + LaTeX output extension by Johan Engelen, 2010
%% Accompanies image file '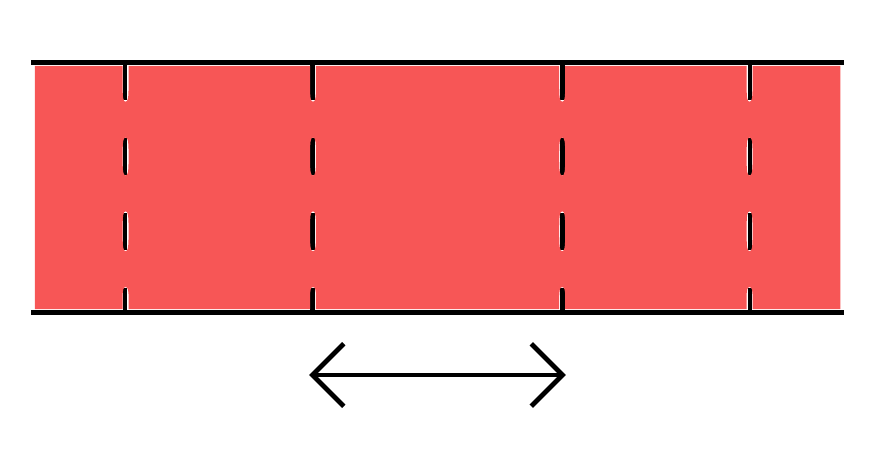' (pdf, eps, ps)
%%
%% To include the image in your LaTeX document, write
%%   \input{<filename>.pdf_tex}
%%  instead of
%%   \includegraphics{<filename>.pdf}
%% To scale the image, write
%%   \def\svgwidth{<desired width>}
%%   \input{<filename>.pdf_tex}
%%  instead of
%%   \includegraphics[width=<desired width>]{<filename>.pdf}
%%
%% Images with a different path to the parent latex file can
%% be accessed with the `import' package (which may need to be
%% installed) using
%%   \usepackage{import}
%% in the preamble, and then including the image with
%%   \import{<path to file>}{<filename>.pdf_tex}
%% Alternatively, one can specify
%%   \graphicspath{{<path to file>/}}
%% 
%% For more information, please see info/svg-inkscape on CTAN:
%%   http://tug.ctan.org/tex-archive/info/svg-inkscape
%%
\begingroup%
  \makeatletter%
  \providecommand\color[2][]{%
    \errmessage{(Inkscape) Color is used for the text in Inkscape, but the package 'color.sty' is not loaded}%
    \renewcommand\color[2][]{}%
  }%
  \providecommand\transparent[1]{%
    \errmessage{(Inkscape) Transparency is used (non-zero) for the text in Inkscape, but the package 'transparent.sty' is not loaded}%
    \renewcommand\transparent[1]{}%
  }%
  \providecommand\rotatebox[2]{#2}%
  \newcommand*\fsize{\dimexpr\f@size pt\relax}%
  \newcommand*\lineheight[1]{\fontsize{\fsize}{#1\fsize}\selectfont}%
  \ifx\svgwidth\undefined%
    \setlength{\unitlength}{419.52755906bp}%
    \ifx\svgscale\undefined%
      \relax%
    \else%
      \setlength{\unitlength}{\unitlength * \real{\svgscale}}%
    \fi%
  \else%
    \setlength{\unitlength}{\svgwidth}%
  \fi%
  \global\let\svgwidth\undefined%
  \global\let\svgscale\undefined%
  \makeatother%
  \begin{picture}(1,0.52702703)%
    \lineheight{1}%
    \setlength\tabcolsep{0pt}%
    \put(0,0){\includegraphics[width=\unitlength,page=1]{pert_Ucal.pdf}}%
    \put(0.46922151,0.01882658){\color[rgb]{0,0,0}\makebox(0,0)[lt]{\lineheight{1.25}\smash{\begin{tabular}[t]{l}$v$\end{tabular}}}}%
    \put(0.22295651,0.30285454){\color[rgb]{0,0,0}\makebox(0,0)[lt]{\lineheight{1.25}\smash{\begin{tabular}[t]{l}$\Qcal_N$\end{tabular}}}}%
    \put(0.70952653,0.30292448){\color[rgb]{0,0,0}\makebox(0,0)[lt]{\lineheight{1.25}\smash{\begin{tabular}[t]{l}$\tilde{\Qcal}_N$\end{tabular}}}}%
  \end{picture}%
\endgroup%

      \caption{The perturbation $\Ucal_N^v$ for large $v$.}
      \label{fig:pert_Ucal}
\end{figure}
 
 These fit into our existing diagram of perturbations as follows:
 \e\label{diag:pert_data_htpy_inv}
\xymatrix{ \Fcal^0_{N} \ar[d]^{\Qcal_{N}} \ar[r]^{\Pcal^0_{N}} \ar[dr]^{\Rcal^L_{N}} \ar@/^-3pc/[dd]^{\Ucal_N^v} & \Fcal^0_{N+1} \ar[d]^{\Qcal_{N+1}} \ar@/^3pc/[dd]^{\Ucal_{N+1}^v} \\
\Fcal^1_{N} \ar[d]^{\tilde{\Qcal}_{N}} \ar[dr]^{\tilde{\Rcal}^{\tilde{L}}_{N}}  \ar[r]^{\Pcal^1_{N}}  & \Fcal^1_{N+1}\ar[d]^{\tilde{\Qcal}_{N+1}}  \\ 
\Fcal^0_{N} \ar[r]^{\Pcal^0_{N}} & \Fcal^0_{N+1}  \\} 
\e

 These define moduli spaces $\Dcal^v(x,y;\Ucal_N^v)$ and a corresponding parametrized  moduli space 
 \e
 \Dcal(x,y;\lbrace\Ucal_N^v\rbrace) = \bigcup_{v}\Dcal^v(x,y;\Ucal_N^v)
 \e 
whose zero dimensional component defines the map $\phi_N$, and  one-dimensional part serves to prove the Relation~(\ref{eq:compo_htpic_id_1}).

 To construct $H_N$ and prove the Relation~(\ref{eq:compo_htpic_id_2}) we define a two-parameter family of perturbations on $(\Zund, \Mund_N, \Lund_N)$ parametrized by a square, and obtained by patching together five families $\Vcal^1_N, \Vcal^2_N, \Vcal^3_N, \Vcal^4_N, \Vcal^5_N$ of perturbations, with $K>0$ large enough (see Figure~\ref{fig:cont_map_htpy_inverse}):
 \begin{enumerate}
 \item $\Vcal^1_N$ is parametrized by $(v, \delta_1)\in [0,+\infty)\times [K,+\infty)$ and corresponds to the superposition of $\Ucal_N^v$ and $\Pcal_N^0$ spaced by $\delta_1$,
 \item $\Vcal^2_N$ is parametrized by $(\tilde{L}, \delta_2)\in \rr\times [K,+\infty)$ and corresponds to the superposition of $\Qcal_N$ and $\tilde{\Rcal}_N^{\tilde{L}}$ spaced by $\delta_2$,
 \item $\Vcal^3_N$ is parametrized by $(L, \delta_3)\in \rr\times [K,+\infty)$ and corresponds to the superposition of $\Rcal_N^{L}$ and $\tilde{\Qcal}_{N+1}$   spaced by $\delta_3$,
 \item $\Vcal^4_N$ is parametrized by $(v, \delta_4)\in [0,+\infty)\times [K,+\infty)$ and corresponds to the superposition of $\Pcal_N^0$ and $\Ucal_{N+1}^v$ and  spaced by $\delta_4$,
 \end{enumerate}

\begin{remark}Intuitively, one can think about $\Vcal^1_N, \Vcal^2_N, \Vcal^3_N, \Vcal^4_N$ as "filling in" the four 2-cells in Diagram~(\ref{diag:pert_data_htpy_inv}) after removing the arrows ${\Rcal}^{{L}}_{N}$ and $\tilde{\Rcal}^{\tilde{L}}_{N}$, see Figure~\ref{fig:twocell}. 
\end{remark}

\begin{figure}[!h]
    \centering
    \def\svgwidth{.50\textwidth}
    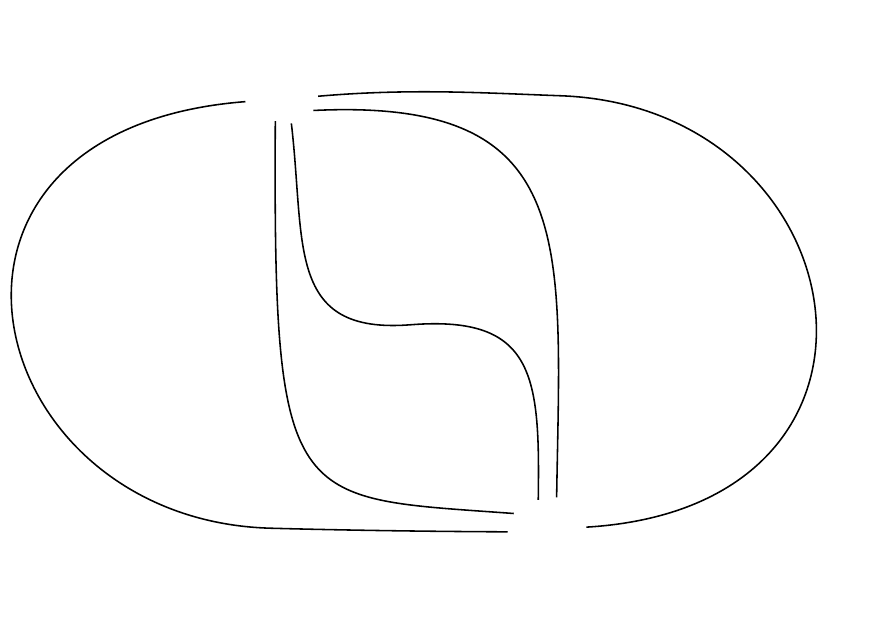
      \caption{$\Vcal^1_N, \Vcal^2_N, \Vcal^3_N, \Vcal^4_N$ as 2-cells.}
      \label{fig:twocell}
\end{figure}

Notice that these four families have some overlaps as indicated in Figure~\ref{fig:cont_map_htpy_inverse} : for example $\Vcal^1_N$ and  $\Vcal^2_N$ coincide when $v=\delta_2$ and $\delta_1 = -\tilde{L}$. Therefore one can patch together their parameter spaces as in  Figure~\ref{fig:cont_map_htpy_inverse_2} to obtain a square with a smaller square at its center removed. Let then $\Vcal^5_N$ be any family that extends smoothly the boundary of this central square: use it to fill the middle square. We then get a family $\lbrace\Vcal^w_N\rbrace_{w\in [0,1]^2}$ that allows us to define moduli spaces $\Ecal^w(x,y; \Vcal^w_N)$ and 
 \e
 \Ecal(x,y; \lbrace\Vcal^w_N\rbrace) = \bigcup_{w\in [0,1]^2}\Ecal^w(x,y; \Vcal^w_N).
 \e
Use their zero dimensional part to define $H_N$, and get (\ref{eq:compo_htpic_id_2}) from its one dimensional part: each side of the square giving respectively the summands $\alpha_N \phi_N $, $\tilde{\kappa}_N \phi_N$, $\tilde{\varphi}_{N+1} \kappa_N$, $\phi_{N+1}\alpha_N$, and the right hand side $\d H_N + H_N \d$ corresponds to strip breaking at interior points.
\begin{figure}[!h]
    \centering
    \def\svgwidth{1.0\textwidth}
    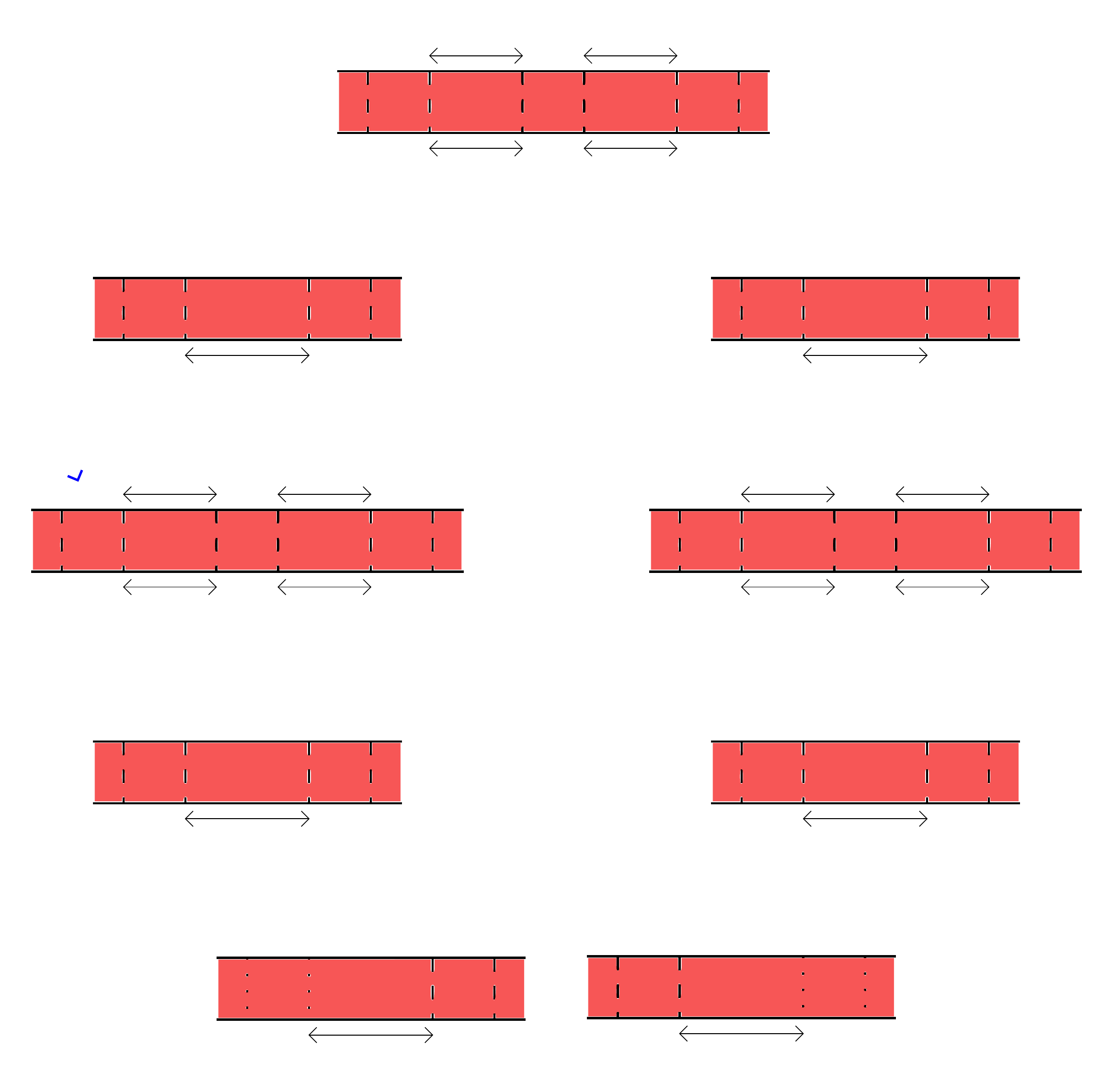
      \caption{The families $\Vcal^1_N, \Vcal^2_N, \Vcal^3_N, \Vcal^4_N$ are drawn in lines 2 and 4. The blue arrows indicate their overlaps, as in Figure~\ref{fig:cont_map_htpy_inverse_2}.}
      \label{fig:cont_map_htpy_inverse}
\end{figure}

\begin{figure}[!h]
    \centering
    \def\svgwidth{.50\textwidth}
    %% Creator: Inkscape 1.1.2 (0a00cf5339, 2022-02-04), www.inkscape.org
%% PDF/EPS/PS + LaTeX output extension by Johan Engelen, 2010
%% Accompanies image file '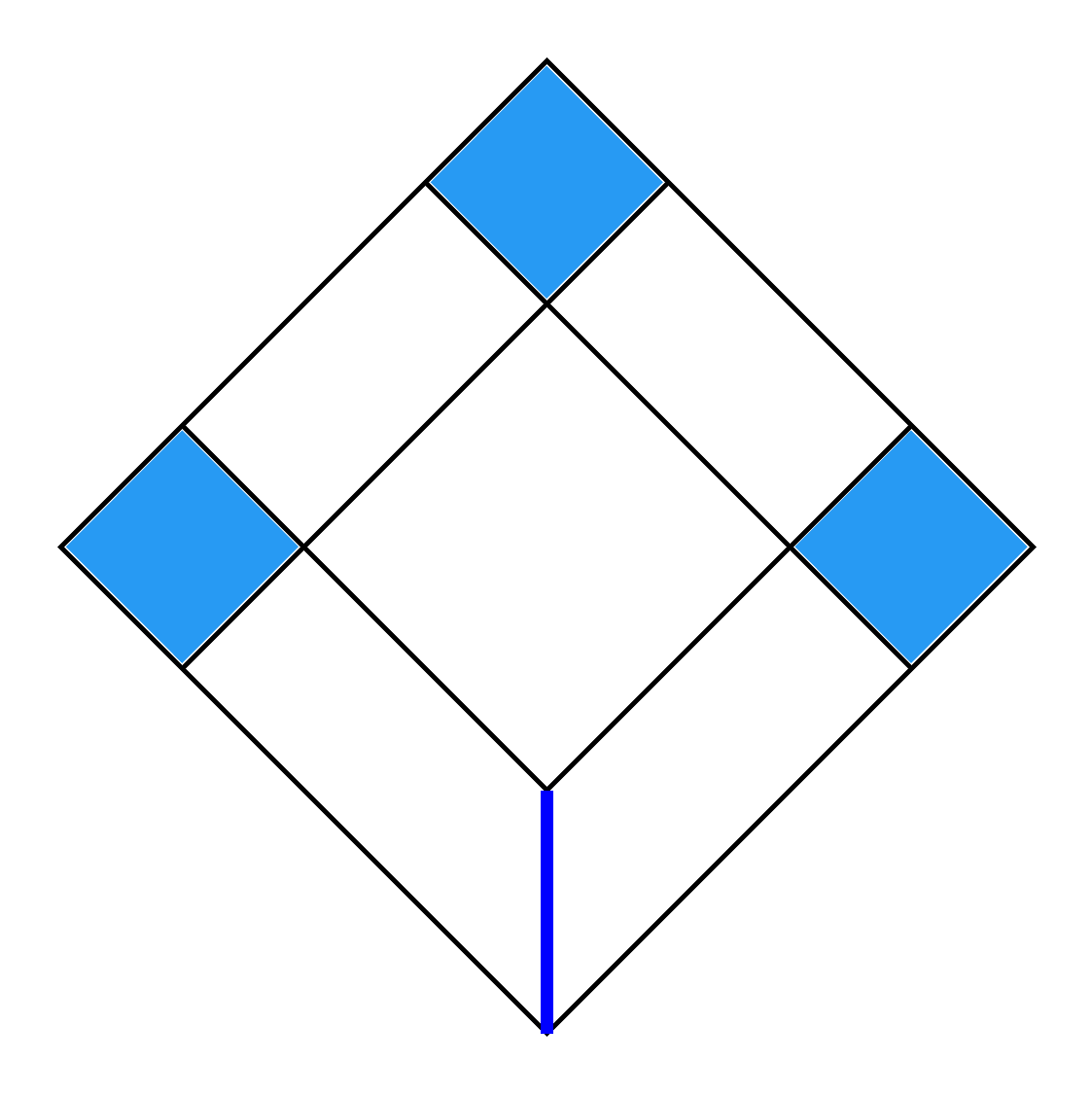' (pdf, eps, ps)
%%
%% To include the image in your LaTeX document, write
%%   \input{<filename>.pdf_tex}
%%  instead of
%%   \includegraphics{<filename>.pdf}
%% To scale the image, write
%%   \def\svgwidth{<desired width>}
%%   \input{<filename>.pdf_tex}
%%  instead of
%%   \includegraphics[width=<desired width>]{<filename>.pdf}
%%
%% Images with a different path to the parent latex file can
%% be accessed with the `import' package (which may need to be
%% installed) using
%%   \usepackage{import}
%% in the preamble, and then including the image with
%%   \import{<path to file>}{<filename>.pdf_tex}
%% Alternatively, one can specify
%%   \graphicspath{{<path to file>/}}
%% 
%% For more information, please see info/svg-inkscape on CTAN:
%%   http://tug.ctan.org/tex-archive/info/svg-inkscape
%%
\begingroup%
  \makeatletter%
  \providecommand\color[2][]{%
    \errmessage{(Inkscape) Color is used for the text in Inkscape, but the package 'color.sty' is not loaded}%
    \renewcommand\color[2][]{}%
  }%
  \providecommand\transparent[1]{%
    \errmessage{(Inkscape) Transparency is used (non-zero) for the text in Inkscape, but the package 'transparent.sty' is not loaded}%
    \renewcommand\transparent[1]{}%
  }%
  \providecommand\rotatebox[2]{#2}%
  \newcommand*\fsize{\dimexpr\f@size pt\relax}%
  \newcommand*\lineheight[1]{\fontsize{\fsize}{#1\fsize}\selectfont}%
  \ifx\svgwidth\undefined%
    \setlength{\unitlength}{538.58267717bp}%
    \ifx\svgscale\undefined%
      \relax%
    \else%
      \setlength{\unitlength}{\unitlength * \real{\svgscale}}%
    \fi%
  \else%
    \setlength{\unitlength}{\svgwidth}%
  \fi%
  \global\let\svgwidth\undefined%
  \global\let\svgscale\undefined%
  \makeatother%
  \begin{picture}(1,1.00526316)%
    \lineheight{1}%
    \setlength\tabcolsep{0pt}%
    \put(0,0){\includegraphics[width=\unitlength,page=1]{cont_map_htpy_inverse_2.pdf}}%
    \put(0.31436489,0.2881904){\color[rgb]{0,0,0}\makebox(0,0)[lt]{\lineheight{1.25}\smash{\begin{tabular}[t]{l}$\Vcal_N^1$\end{tabular}}}}%
    \put(0.28650474,0.64161428){\color[rgb]{0,0,0}\makebox(0,0)[lt]{\lineheight{1.25}\smash{\begin{tabular}[t]{l}$\Vcal_N^2$\end{tabular}}}}%
    \put(0.62021153,0.2919202){\color[rgb]{0,0,0}\makebox(0,0)[lt]{\lineheight{1.25}\smash{\begin{tabular}[t]{l}$\Vcal_N^4$\end{tabular}}}}%
    \put(0.62984829,0.64474759){\color[rgb]{0,0,0}\makebox(0,0)[lt]{\lineheight{1.25}\smash{\begin{tabular}[t]{l}$\Vcal_N^3$\end{tabular}}}}%
    \put(0.46020022,0.47673739){\color[rgb]{0,0,0}\makebox(0,0)[lt]{\lineheight{1.25}\smash{\begin{tabular}[t]{l}$\Vcal_N^5$\end{tabular}}}}%
  \end{picture}%
\endgroup%

      \caption{The square parametrizing $\Vcal^1_N, \Vcal^2_N, \Vcal^3_N, \Vcal^4_N, \Vcal^5_N$.}
      \label{fig:cont_map_htpy_inverse_2}
\end{figure}

\section{Self-Floer homology and Morse homology}
\label{sec:Floer_vs_Morse}

In this section we prove Theorem~\ref{th:Floer_vs_Morse_intro} (except for the $H^*(BG)$-bimodule part, which will be proved in Proposition~\ref{prop:PSS_mod_str}). To do so, we use its well-known non-equivariant analogue, involving the Piunikhin-Salamon-Schwarz isomorphism, and show that these maps commute with the increments 
\ea
\alpha_N&\colon CF_N \to CF_{N+1}, \\
j_N&\colon CM_N \to CM_{N+1}
\ea
 up to homotopy. Such isomorphisms first appeared in \cite{PSS} for Hamiltonian Floer homology, and were extended to the Lagrangian setting in \cite{Albers_PSS}. We briefly review their construction, and refer to the later reference for more details.

\subsection{PSS isomorphisms}
\label{ssec:PSS_isomorphisms}
 
 Suppose $G=1$ and $L\subset M$ satisfies either Assumption~\ref{ass:exact_setting} or \ref{ass:monotone_setting}, with $L_0 = L_1 = L$ (i.e. no $G$-action). 
 
 Let $\Fcal$ be a perturbation datum for $(M; L, L)$, $f$ a Morse function on $L$, with a pseudo-gradient $v$. Then two chain morphisms
 \ea
 PSS&\colon CF(M; L, L; \Fcal) \to CM(L; f,v), \\
 SSP&\colon CM(L; f,v) \to CF(M; L, L; \Fcal),
 \ea
can be defined, and are inverses from each other up to homotopy, in the sense that 
\ea
PSS\circ SSP -Id_{CM} = \d H+H\d , \\
SSP\circ PSS -Id_{CF} = \d K+K\d .
\ea

Let $x$ and $y$ be generators of $CF(M; L, L; \Fcal)$ and $CM(L; f,v)$ respectively. The morphism $PSS$ is defined by counting the zero-dimensional part of a moduli space $\Pcal\Scal\Scal(x,y)$ consisting of pairs of strips and flow half-lines (see Figure~\ref{fig:PSS_SSP}):
\ea
u &\colon Z\to M , \\
\gamma &\colon \rr_{\geq 0} \to L , 
\ea
such that: 
\begin{itemize}
\item $u$ is a $\Qcal$-holomorphic curve, with $\Qcal$ a regular perturbation datum on $Z$ from $\Fcal$ to $(H= 0, J= J_0)$, with $J_0$ a constant \acs .
\item $\gamma$ is a flowline for $v$.
\item $\lim_{s\to -\infty} u(s+it) = x$, $\lim_{s\to +\infty} \gamma(s) = y$, and $(u,\gamma)$ satisfy the matching condition $\lim_{s\to +\infty} u(s+it) = \gamma(0)$.
\end{itemize}
The moduli space $\Scal\Scal\Pcal(y,x)$ involved in defining $SSP$ is defined analogously, but the other way around.
\begin{figure}[!h]
    \centering
    \def\svgwidth{.50\textwidth}
    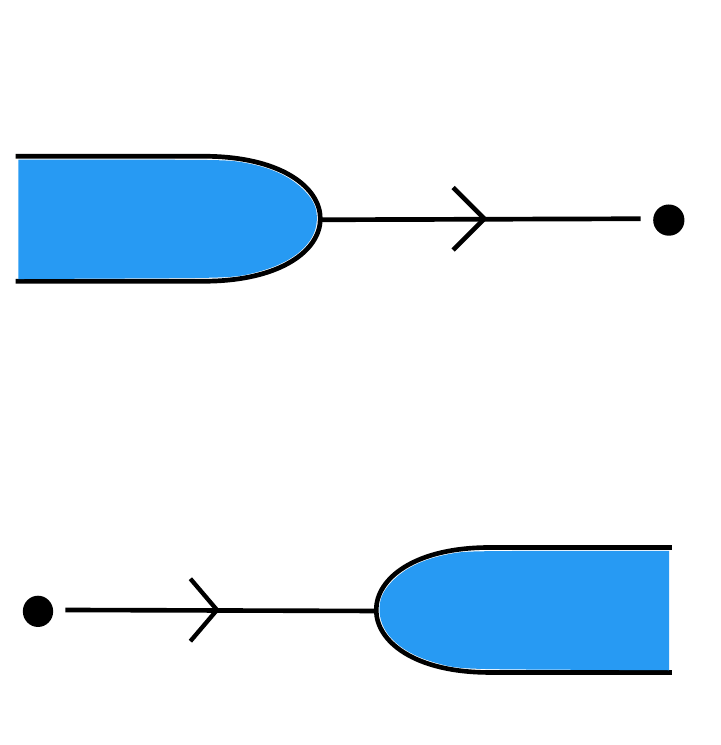
      \caption{The moduli spaces $\Pcal\Scal\Scal(x,y)$ and $\Scal\Scal\Pcal(y,x)$.}
      \label{fig:PSS_SSP}
\end{figure}

Now for each $N$, let 
\begin{itemize}

\item $\Fcal_N$ be a Floer datum for $(M_N; L^N, L^N)$, and 
\e
CF_N = CF(M_N; L^N, L^N;\Fcal_N)
\e

\item $\Pcal_N$ a regular perturbation datum on $\Zund$, inducing 
\e
\alpha_N\colon CF_N \to CF_{N+1}
\e

\item  $f_N$ be a Morse function on $L^N$, $v_N$ a pseudo-gradient for $f_N$, and 
\e
CM_N= CM(L^N; f_N, v_N)
\e

\item $\Qcal_N$ a regular perturbation datum on $ \rr_{\leq 0} \times [0,1]$, inducing a $PSS$-morphism
\e
PSS_N\colon CF_N\to CM_N
\e
\end{itemize}
Proving commutativity with $\alpha_N, j_N$, i.e.
\e\label{eq:PSS_commutes_N_to_N+1}
PSS_{N+1}\alpha_N -j_N PSS_N = \d\kappa_N+ \kappa_N \d ,
\e
involves a parametrized moduli space (Figure~\ref{fig:PSS_commmutes_i_N})

\e
\Xcal (x,y) = \bigcup_{L\in \rr} \Xcal^L (x,y) ,
\e
where, for $L<0$, $\Xcal^L (x,y)$ consists of a quilted strip and a flow line, i.e. triples 
\ea
u_N &\colon (-\infty, \psi(L)] \times [0, 1] \to M_N ,\\
u_{N+1} &\colon [ \psi(L), +\infty) \times [0, 1] \to M_{N+1} ,\\
\gamma_{N+1} &\colon \rr_{\geq 0} \to L_{N+1},
\ea
with $\psi\colon \rr_{<0}\to \rr$ some fixed increasing diffeomorphism. These triples satisfy the conditions below. 
\begin{itemize}
\item Fix first a one parameter family of perturbations $\Rcal^L_N$, $L<0$, on $\Zund [\psi(L)]$ (i.e. $\Zund$ with the vertical seam at $s=\psi(L)$) such that when $L\ll 0$, $\Rcal^L_N$ is as in Figure~\ref{fig:perturbation_R_N_L_PSS}
\item $(u_N ,u_{N+1})$ is a $\Rcal^L_N$-holomorphic quilt, with the obvious boundary and seam conditions
\item $\gamma_{N+1}$ is a flow line for $v_N$, 
\item $\lim_{s\to -\infty} u_N = x$, $\lim_{s\to +\infty} \gamma_{N+1} = y$, $\lim_{s\to +\infty} u_{N+1} = \gamma_{N+1}(0)$.
\end{itemize}

\begin{figure}[!h]
    \centering
    \def\svgwidth{.70\textwidth}
    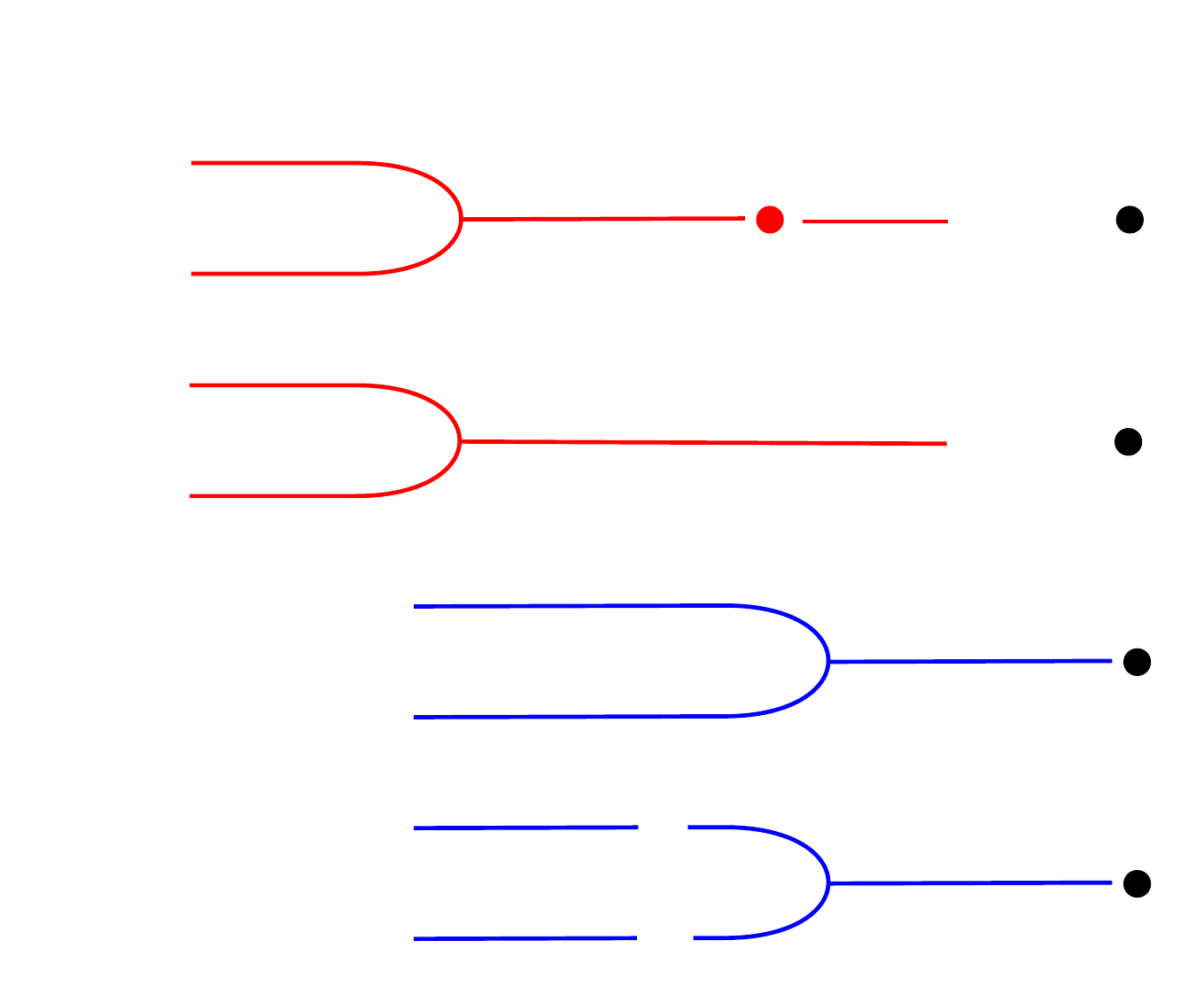
      \caption{PSS commutes with the increments: the deformation parametrizing the moduli space $\Xcal (x,y)$.}
      \label{fig:PSS_commmutes_i_N}
\end{figure}

And for $L\geq 0$, $\Xcal^L (x,y)$ consists of a strip and a grafted line, i.e. triples 
\ea
u_N &\colon Z \to M_N ,\\
\gamma_N &\colon [0, L]  \to L_N ,\\
\gamma_{N+1} &\colon [L, +\infty ) \to L_{N+1},
\ea
such that
\begin{itemize}
\item $u_N$ is $\Fcal_N$-holomorphic, 
\item $\gamma_N$ (resp. $\gamma_{N+1}$) is a flow line for $v_N$ (resp. $v_{N+1}$),
\item $\lim_{s\to -\infty} u_N = x$, $\lim_{s\to +\infty} u_N = \gamma_N (0)$, $\gamma_N(L) = \gamma_{N+1}(L)$, $\lim_{s\to +\infty}\gamma_{N+1} = y$.
\end{itemize}

\begin{figure}[!h]
    \centering
    \def\svgwidth{.50\textwidth}
    %% Creator: Inkscape 1.1.2 (0a00cf5339, 2022-02-04), www.inkscape.org
%% PDF/EPS/PS + LaTeX output extension by Johan Engelen, 2010
%% Accompanies image file '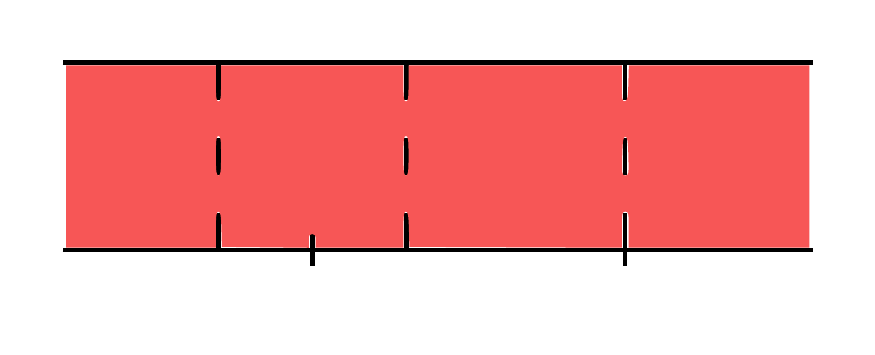' (pdf, eps, ps)
%%
%% To include the image in your LaTeX document, write
%%   \input{<filename>.pdf_tex}
%%  instead of
%%   \includegraphics{<filename>.pdf}
%% To scale the image, write
%%   \def\svgwidth{<desired width>}
%%   \input{<filename>.pdf_tex}
%%  instead of
%%   \includegraphics[width=<desired width>]{<filename>.pdf}
%%
%% Images with a different path to the parent latex file can
%% be accessed with the `import' package (which may need to be
%% installed) using
%%   \usepackage{import}
%% in the preamble, and then including the image with
%%   \import{<path to file>}{<filename>.pdf_tex}
%% Alternatively, one can specify
%%   \graphicspath{{<path to file>/}}
%% 
%% For more information, please see info/svg-inkscape on CTAN:
%%   http://tug.ctan.org/tex-archive/info/svg-inkscape
%%
\begingroup%
  \makeatletter%
  \providecommand\color[2][]{%
    \errmessage{(Inkscape) Color is used for the text in Inkscape, but the package 'color.sty' is not loaded}%
    \renewcommand\color[2][]{}%
  }%
  \providecommand\transparent[1]{%
    \errmessage{(Inkscape) Transparency is used (non-zero) for the text in Inkscape, but the package 'transparent.sty' is not loaded}%
    \renewcommand\transparent[1]{}%
  }%
  \providecommand\rotatebox[2]{#2}%
  \newcommand*\fsize{\dimexpr\f@size pt\relax}%
  \newcommand*\lineheight[1]{\fontsize{\fsize}{#1\fsize}\selectfont}%
  \ifx\svgwidth\undefined%
    \setlength{\unitlength}{419.52755906bp}%
    \ifx\svgscale\undefined%
      \relax%
    \else%
      \setlength{\unitlength}{\unitlength * \real{\svgscale}}%
    \fi%
  \else%
    \setlength{\unitlength}{\svgwidth}%
  \fi%
  \global\let\svgwidth\undefined%
  \global\let\svgscale\undefined%
  \makeatother%
  \begin{picture}(1,0.40540541)%
    \lineheight{1}%
    \setlength\tabcolsep{0pt}%
    \put(0,0){\includegraphics[width=\unitlength,page=1]{perturbation_R_N_L_PSS.pdf}}%
    \put(0.73636335,0.20342857){\color[rgb]{0,0,0}\makebox(0,0)[lt]{\lineheight{1.25}\smash{\begin{tabular}[t]{l}$(0, J_0)$\end{tabular}}}}%
    \put(0.12292833,0.20520719){\color[rgb]{0,0,0}\makebox(0,0)[lt]{\lineheight{1.25}\smash{\begin{tabular}[t]{l}$\Fcal_{N}$\end{tabular}}}}%
    \put(0.31921193,0.20102392){\color[rgb]{0,0,0}\makebox(0,0)[lt]{\lineheight{1.25}\smash{\begin{tabular}[t]{l}$\Pcal_{N}$\end{tabular}}}}%
    \put(0.50796384,0.20127597){\color[rgb]{0,0,0}\makebox(0,0)[lt]{\lineheight{1.25}\smash{\begin{tabular}[t]{l}$\Fcal_{N+1}$\end{tabular}}}}%
    \put(0.30136152,0.02452533){\color[rgb]{0,0,0}\makebox(0,0)[lt]{\lineheight{1.25}\smash{\begin{tabular}[t]{l}$\psi(L)$\end{tabular}}}}%
    \put(0.6963612,0.02998311){\color[rgb]{0,0,0}\makebox(0,0)[lt]{\lineheight{1.25}\smash{\begin{tabular}[t]{l}0\end{tabular}}}}%
  \end{picture}%
\endgroup%

      \caption{$\Rcal_N^L$ when $L \ll 0$.}
      \label{fig:perturbation_R_N_L_PSS}
\end{figure}

\begin{prop}\label{prop:equiv_PSS}
For regular perturbations, $\Xcal(x,y)$ is smooth of expected dimension, its zero dimensional part defines a map $\kappa_N$, and its one-dimensional part allows us to prove (\ref{eq:PSS_commutes_N_to_N+1}).

Therefore one has a well-defined map between telescopes
\e
PSS_G = \Tel(PSS_N, \kappa_N) \colon CF_G(M;L,L) \to CM_G(L) .
\e
Likewise, one can define analogously maps  $SSP_N$ commuting with $\alpha_N, j_N$ up to $\d \kappa_N' + \kappa_N' \d $, and a map
\e
SSP_G = \Tel(SSP_N, \kappa_N ') \colon CM_G(L) \to CF_G(M;L,L) .
\e
These two maps are inverse of each other up to homotopy.

\end{prop}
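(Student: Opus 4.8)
The plan is to reduce the statement to Corollary~\ref{cor:homotopy_equiv_telescopes}. Since $PSS_G = \Tel(PSS_N,\kappa_N)$ and $SSP_G = \Tel(SSP_N,\kappa_N')$, with $(PSS_N,\kappa_N)$ running from $(CF_N,\alpha_N)$ to $(CM_N,j_N)$ and $(SSP_N,\kappa_N')$ the other way, it is enough to check that the two composed pairs
\[
(SSP_N,\kappa_N')\circ(PSS_N,\kappa_N) \qquad\text{and}\qquad (PSS_N,\kappa_N)\circ(SSP_N,\kappa_N'),
\]
formed via Proposition~\ref{prop:compo_telescopes}, are homotopic in the sense of Definition~\ref{def:Homotopies_between_Telescopes_alg} to $(id_{CF_N},0)$ and $(id_{CM_N},0)$ respectively; passing to telescopes through Propositions~\ref{prop:compo_telescopes} and \ref{prop:homotopy_telescopes} then gives $SSP_G\circ PSS_G\simeq id$ and $PSS_G\circ SSP_G\simeq id$. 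I would treat the first composition in detail; the second is symmetric under exchanging $PSS$/$SSP$ and the Floer/Morse sides.

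By Proposition~\ref{prop:compo_telescopes}, $(SSP_N,\kappa_N')\circ(PSS_N,\kappa_N) = \big(SSP_N\circ PSS_N,\ SSP_N\kappa_N+\kappa_N' PSS_N\big)$. The non-equivariant $PSS$ package recalled above (cf.\ \cite{Albers_PSS}) already provides a chain homotopy $K_N\colon CF_N\to CF_N$ with $SSP_N\circ PSS_N-id_{CF_N}=\d K_N+K_N\d$, which is exactly the ``$\phi_N$'' required by the first equation of Definition~\ref{def:Homotopies_between_Telescopes_alg}. So the only thing left to build is a map $\Xi_N\colon CF_N\to CF_{N+1}$ of the appropriate degree with
\[
SSP_N\kappa_N + \kappa_N' PSS_N + K_N\alpha_N + \alpha_N K_N = \d\Xi_N + \Xi_N\d .
\]

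I would define $\Xi_N$ exactly along the lines of the homotopy-invariance argument at the end of Section~\ref{sec:continuation_maps}, by counting the zero-dimensional part of a moduli space parametrized by a square. The point is that $K_N$ itself comes from a moduli space parametrized by a neck length $\rho\in\rr$ separating a $PSS$-half-strip from an $SSP$-half-strip, while $\kappa_N$ and $\kappa_N'$ come from moduli spaces parametrized by the position $L\in\rr$ of the vertical seam of $\Zund$ relative to a $PSS$- resp.\ $SSP$-half-strip. Letting the seam position and the $PSS$-$SSP$ neck length vary independently yields configurations over a two-parameter domain; on its four boundary sides one patches the families realizing $SSP_N\kappa_N$ (seam in the $PSS$ patch), $\kappa_N' PSS_N$ (seam in the $SSP$ patch), $K_N\alpha_N$ (the $\alpha_N$-quilt below a long $PSS$-$SSP$ neck), and $\alpha_N K_N$ (a long $PSS$-$SSP$ neck below the $\alpha_N$-quilt), and fills the interior by an arbitrary smooth extension, as was done with $\Vcal^5_N$. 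The zero-dimensional stratum gives $\Xi_N$, and the compactified one-dimensional stratum is a cobordism whose boundary contributes the four terms on the left-hand side (one per side of the square) together with $\d\Xi_N+\Xi_N\d$ from strip breaking at the incoming and outgoing ends at interior parameter values; all other degenerations (interior sphere bubbling, disc bubbling on true boundaries, quilted sphere bubbling along the seam, breaking at the free ends) are excluded exactly as in Propositions~\ref{prop:def_CF_N} and \ref{prop:Lcal_moduli_space}, using monotonicity/exactness and the capping trick through $L_i^N\times L_i^{N+1}$, with weighted Sobolev spaces at the free ends as in \cite{LekiliLipyanskiy}. Then $\Tel(K_N,\Xi_N)$ is, by Proposition~\ref{prop:homotopy_telescopes}, a homotopy from $SSP_G\circ PSS_G$ to $id_{CF_G}$; the symmetric argument, with $PSS_N\circ SSP_N-id_{CM_N}=\d H_N+H_N\d$ in place of $K_N$ and a square of mixed flow-line/Floer-quilt configurations, gives $PSS_G\circ SSP_G\simeq id_{CM_G}$. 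Corollary~\ref{cor:homotopy_equiv_telescopes} then delivers that $PSS_G$ and $SSP_G$ are mutually inverse homotopy equivalences.

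The main obstacle I anticipate is not conceptual but bookkeeping: one has to choose the two-parameter families of perturbation data on $\Zund$ so that the pieces filling the parameter square agree on all their overlaps (as in Figures~\ref{fig:cont_map_htpy_inverse} and \ref{fig:cont_map_htpy_inverse_2}), and then verify carefully that the compactification of the one-dimensional parametrized moduli space has precisely the boundary described. Once that is in place, the transversality, Gromov compactness and gluing inputs are the same ones already established earlier in the paper.
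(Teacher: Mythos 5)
Your proposal is correct and follows essentially the same route as the paper: the paper's proof is precisely the one-line instruction to upgrade the standard homotopies $SSP_N\, PSS_N - id = \d K_N + K_N\d$ and $PSS_N\, SSP_N - id = \d H_N + H_N\d$ to the telescopic setting via the two-parameter square construction at the end of Section~\ref{sec:continuation_maps} together with Corollary~\ref{cor:homotopy_equiv_telescopes}, which is exactly what you spell out. The only point you leave implicit is the first paragraph of the statement (regularity of $\Xcal(x,y)$ and the relation (\ref{eq:PSS_commutes_N_to_N+1}) producing $\kappa_N$), but that is a routine repeat of the parametrized transversality and compactness arguments of Propositions~\ref{prop:def_CF_N}, \ref{prop:Lcal_moduli_space} and \ref{prop:phi_htpy_commutes_alpha_beta}, consistent with how tersely the paper itself treats it.
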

\begin{proof}
The proof is the same proof that $PSS - SSP = Id+\d H + H\d$, upgraded to the telescopic setting, in an analogous way to what we did in Section~\ref{sec:continuation_maps}.

\end{proof}

\section{Bimodule structure}
\label{sec:module_str}

Recall that equivariant cohomology $H_G^*(X)$ has a $H^*(BG)$-module structure. By combining the (pair of pants) product structure on Floer cohomology with the Lagrangian correspondence between $M_N$ and $B_N$ we define a chain level bimodule structure analogous to it. Let 
\ea
CF_N &= CF(M_N; L_0^N,  L_1^N) , \\
A_N &= CF(B_N; 0_{B_N}, 0_{B_N}) \simeq CM(BG_N).
\ea

For $i=0,1$, let $P_i^N \subset M_N^-  \times B_N$ be the quotient of
\e
 L_i \times \Delta_{T_N} \subset  M^- \times T_N^- \times T_N
\e
by the action of $G\times G$.

One can then define maps 
\ea
L_N&\colon A_N \otimes CF_N \to   CF_N  \\
R_N&\colon CF_N  \otimes A_N \to   CF_N ,
\ea
defined by counting quilts as in the left side of Figure~\ref{fig:bimod_str}. 

\begin{remark}\label{rem:module_str_compo_pair_of_pants} By stretching the quilt as in the right side of Figure~\ref{fig:bimod_str}, one can show that $L_N$ (resp. $R_N$) is obtained by composing the morphism 
\e
A_N \to CF(M_N; L_0^N,  L_0^N)
\e 
(resp. $A_N \to CF(M_N; L_0^N,  L_0^N) $) induced by $P_0^N$ (resp. $P_1^N$) with the pair of pants product 
\e
CF(M_N; L_0^N,  L_0^N) \otimes CF_N \to CF_N
\e 
(resp. $CF_N \otimes CF(M_N; L_1^N,  L_1^N) \to CF_N$).
\end{remark}

\begin{remark}\label{rem:module_str_Morse} In Morse homology, these quilts correspond to grafted trees as in Figure~\ref{fig:module_str_Morse}.
\end{remark}

\begin{figure}[!h]
    \centering
    \def\svgwidth{1.0\textwidth}
    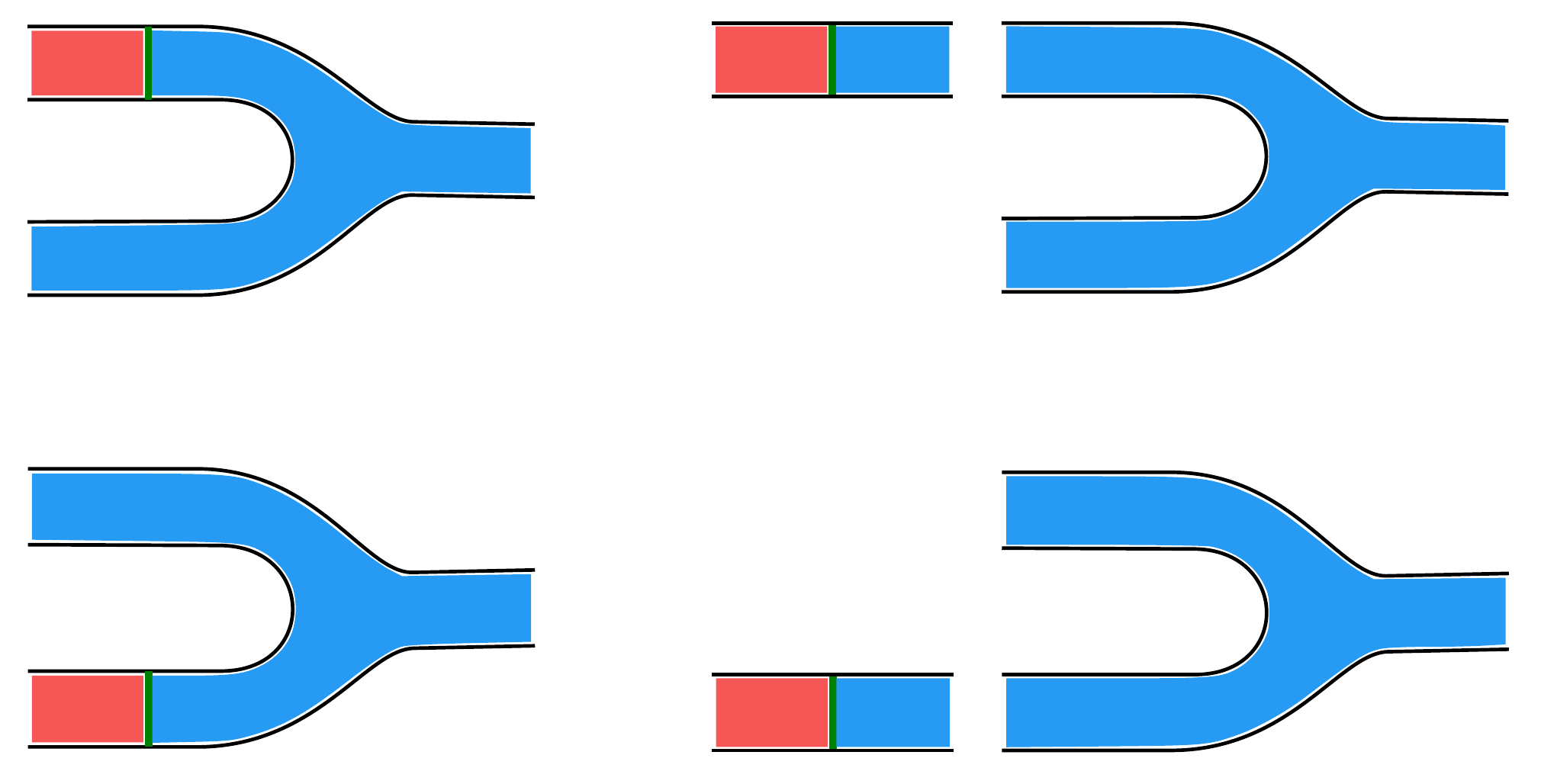
      \caption{Quilts defining $L_N$ and $R_N$.}
      \label{fig:bimod_str}
\end{figure}

\begin{figure}[!h]
    \centering
    \def\svgwidth{1.0\textwidth}
    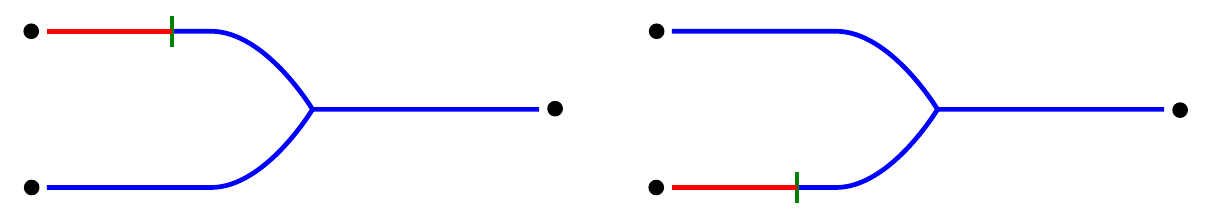
      \caption{Grafted trees analogues in Morse homology.}
      \label{fig:module_str_Morse}
\end{figure}

\begin{prop}\label{prop:bimod_str}

The morphisms $L_N$ and $R_N$ are associative up to homotopy, i.e. there exists
\ea
\lambda^L_N &\colon A_N \otimes A_N \otimes CF_N \to CF_N , \\
\lambda^R_N &\colon   CF_N \otimes A_N \otimes A_N \to CF_N  
\ea
such that
\ea
L_N \circ (id_{A_N} \otimes L_N) + L_N \circ (m_{A_N} \otimes id_{CF_N}) &= \d \lambda^L_N + \lambda^L_N \d , \label{eq:module_relation_left}\\
R_N\circ (R_N \otimes id_{A_N}) + R_N\circ (id_{CF_N} \otimes m_{A_N}  ) &= \d \lambda^R_N + \lambda^R_N \d , \label{eq:module_relation_right}
\ea
with $ m_{A_N}\colon A_N \otimes A_N \to A_N$ the pair of pants product.

Furthermore, these maps commute with the increment maps, respectively up to homotopies $\d\kappa^L_N + \kappa^L_N\d$ and  $\d \kappa^R_N + \kappa^R_N \d$, and therefore induce maps between telescopes
\ea
\Tel(L_N, \kappa^L_N) &\colon  \Tel ( A_N \otimes CF_N) \to \Tel(  CF_N)  \\
\Tel(R_N, \kappa^R_N)&\colon \Tel ( CF_N  \otimes A_N ) \to \Tel(  CF_N) ,
\ea
Combined with the maps of Proposition~\ref{prop:product_telescopes_chain_cpx} one gets product maps
\ea
L&\colon  \Tel ( A_N) \otimes \Tel ( CF_N) \to \Tel(  CF_N),  \\
R&\colon \Tel ( CF_N) \otimes \Tel (A_N) \to \Tel( CF_N) .
\ea
At the homology level, these induce a $H^*(BG)$-bimodule structure on $HF_G(L_0, L_1)$.
\end{prop}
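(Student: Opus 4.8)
The plan is to build everything on quilted moduli spaces with one vertical seam labelled by the correspondence $P_i^N$ (which turns a generator of $A_N = CF(B_N; 0_{B_N}, 0_{B_N})$ into the seam datum relating $M_N$ and $B_N$), together with the pair-of-pants quilt realizing the module action. First I would set up the master moduli space for the left action $L_N$: a quilted surface obtained from a pair of pants with one incoming end decorated by $A_N$, the corresponding leg passing through the seam $P_0^N$, the other incoming end and the outgoing end decorated by $CF_N$; true boundaries labelled by $L_0^N, L_1^N$. A standard transversality argument (as in Proposition~\ref{prop:Lcal_moduli_space}, using domain-dependent perturbation data from Definition~\ref{def:perturbation_datum}) gives smoothness and expected dimension, and the count of the $0$-dimensional part defines $L_N$ (resp.\ $R_N$). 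The bubbling analysis ruling out the offending degenerations (sphere/disc bubbling, quilted sphere bubbling at a seam, strip breaking at a free end) is the same as in the proof of Proposition~\ref{prop:Lcal_moduli_space}: one folds and caps off using the $L_i^N$-boundary trick, and either the exact or monotone assumption rules out nonconstant caps; so only boundary strip breaking at the genuine ends survives.

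Next I would establish associativity up to homotopy, Relations~(\ref{eq:module_relation_left})--(\ref{eq:module_relation_right}). For this I introduce a $1$-parameter family of quilted surfaces interpolating between the two ways of bracketing two $A_N$-inputs with one $CF_N$-input: on one side, the two $A_N$-legs merge (via a pair-of-pants in $B_N$, i.e.\ $m_{A_N}$) before hitting the $P_0^N$-seam; on the other, they hit the seam at distinct points and the two resulting module operations are composed. The $0$-dimensional part of the associated parametrized moduli space $\Lcal^L$ defines $\lambda^L_N$, and the $1$-dimensional part compactifies to a cobordism whose boundary consists of the two endpoints of the family ($L_N\circ(\mathrm{id}\otimes L_N)$ and $L_N\circ(m_{A_N}\otimes\mathrm{id})$) together with strip breaking at the ends contributing $\d\lambda^L_N+\lambda^L_N\d$. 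The analogous family on the other side gives $\lambda^R_N$; one should also record that $L_N$ and $R_N$ commute with each other up to homotopy if one wants a genuine \emph{bi}module rather than two one-sided modules, which follows from a similar interpolation moving the $P_0^N$- and $P_1^N$-seams past each other.

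Then I would show $L_N$ and $R_N$ commute with the increments. Here I use a third parametrized family, exactly parallel to the one in Proposition~\ref{prop:phi_htpy_commutes_alpha_beta}: for $L\ll 0$ the domain degenerates into the module-action quilt over $M_N, B_N$ postcomposed with the increment $\alpha_N$ (on the $CF$ factor) and the Morse/pair-of-pants increment $j_N$ (on the $A_N$ factor, using the identification $A_N\simeq CM(BG_N)$ and its own increment), while for $L\gg 0$ it degenerates into the increments precomposed with the module action at level $N+1$; at finite $L$ one gets strip breaking at the incoming/outgoing ends contributing $\d\kappa^L_N+\kappa^L_N\d$. This produces the data $(L_N,\kappa^L_N)$, $(R_N,\kappa^R_N)$ in the sense of Definition~\ref{def:Morphisms_between_Telescopes}, hence $\Tel(L_N,\kappa^L_N)$ and $\Tel(R_N,\kappa^R_N)$. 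Composing with the product comparison map $\Phi$ of Proposition~\ref{prop:product_telescopes_chain_cpx}, $\Tel(A_m)\otimes\Tel(CF_n)\to\Tel(A_m\otimes CF_m)$, gives $L$ and $R$. Finally, passing to homology: associativity (\ref{eq:module_relation_left}) together with the fact that $\Tel(A_N)$ computes $H^*(BG)$ with its cup product (via the PSS/Morse identification of Section~\ref{sec:Floer_vs_Morse} and Proposition~\ref{prop:equiv_Morse_is_equiv_homol}, plus the standard identification of the Morse/Floer product with cup product) shows that $HF_G(L_0,L_1)$ is a left and right $H^*(BG)$-module, and the left/right commutation homotopy upgrades this to a bimodule structure.

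The main obstacle I expect is not any single estimate but the bookkeeping of the perturbation data on the quilted pieces so that all the relevant parametrized moduli spaces are simultaneously regular and their compactifications have exactly the claimed boundary strata — in particular ensuring the gluing/stretching argument of Remark~\ref{rem:module_str_compo_pair_of_pants} (that $L_N$ factors through $P_0^N$ followed by the pair-of-pants product) is compatible with the increment-commutation family, and that the clean (not transverse) intersections at the free ends, treated with weighted Sobolev spaces as in \cite{LekiliLipyanskiy}, behave well under the degenerations. Once the moduli-theoretic framework of Section~\ref{sec:continuation_maps} is in place, each of these is a routine adaptation, but verifying there are no extra boundary strata (e.g.\ from the $B_N$-factor failing to be convex at infinity, handled as in Proposition~\ref{prop:def_CF_N} via the fibration $Q\to T^*B$ of Proposition~\ref{prop:fibration_homotopy_quotient}) is where the real work lies.
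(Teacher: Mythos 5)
Your overall strategy --- parametrized quilted moduli spaces giving the associativity homotopy and the commutation with the increments --- is the route the paper takes, and your transversality and bubbling discussion correctly mirrors the proof of Proposition~\ref{prop:Lcal_moduli_space}. However, two concrete steps on which the paper's proof actually turns are absent from your plan, and the naive versions of your families would fail precisely there. First, in the associativity family the interpolation parameter passes through a value at which the $P_0^N$-seam becomes tangent to the true boundary condition: at that instant the puncture is no longer a strip-like end, so the family $\Mcal=\bigcup_t\Mcal_t$ you describe does not extend across it as a continuous family of Fredholm problems, and the ``cobordism'' between the two bracketings does not exist as stated. The paper stops the family at $t=\pm\epsilon$ and splices in two further parametrized moduli spaces $\Acal=\bigcup_{L\geq 0}\Acal_L$ and $\Bcal=\bigcup_{L\geq 0}\Bcal_L$ obtained by stretching the offending puncture to infinity, and must then verify that the two limits $\Acal_\infty$ and $\Bcal_\infty$ coincide, which requires ruling out quilted disc bubbles by the lift-unfold-and-cap argument. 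Some such detour is unavoidable.

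Second, your increment-commutation family is not ``exactly parallel'' to Proposition~\ref{prop:phi_htpy_commutes_alpha_beta}: there a single vertical seam is translated to $\pm\infty$, whereas here the seam decorated by $P_1^{N+1}$ and the increment seam decorated by $\Lambda_N$ must pass \emph{through} each other, and a translation-only family cannot interchange two seams of different types. The paper handles this by first stretching the free ends away, then shrinking the width between the two seams and replacing them by a single seam decorated by the geometric composition $\Lambda_{B_N}\circ P_1^{N+1}$; the essential geometric input is that this composition is embedded and equals $P_1^{N}\circ\Lambda_{N}$, which is what allows the family to be continued on the other side. The strip-shrinking step brings in figure-eight bubbling (controlled via Bottman's removal-of-singularity theorem) and several new breaking configurations, each of which has to be excluded by the same lifting/capping arguments as before. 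These two mechanisms --- the stretched continuation at the tangency, and the embedded geometric composition with its figure-eight bubble analysis --- are the real content of the proof and would need to be supplied to make your plan work.
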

\begin{remark}\label{rem:A_infty_bimodule} By combining the $A_\infty$-structures on $CF(M_N; L_i^N,  L_i^N)$ with the morphism $A_N \to CF(M_N; L_i^N,  L_i^N)$ induced by $P_i^N$, one should be able to show that the chain complex $CF_G$ is an $A_\infty$-bimodule over $CM_G(BG)$.
\end{remark}
\begin{proof}
Let us first prove (\ref{eq:module_relation_left}) (the proof of (\ref{eq:module_relation_right}) is similar). This can be done, at first glance, by considering the deformation suggested in  Figure~\ref{fig:bimod_assoc_1}: one defines a moduli space 
\e
\Mcal = \bigcup_{t\in \rr} \Mcal_t
\e
 that interpolates from $L_N \circ (m_{A_N} \otimes id_{CF_N})$ to $L_N \circ (id_{A_N} \otimes L_N)$.
 
\begin{figure}[!h]
    \centering
    \def\svgwidth{.350\textwidth}
    %% Creator: Inkscape 1.1.2 (0a00cf5339, 2022-02-04), www.inkscape.org
%% PDF/EPS/PS + LaTeX output extension by Johan Engelen, 2010
%% Accompanies image file '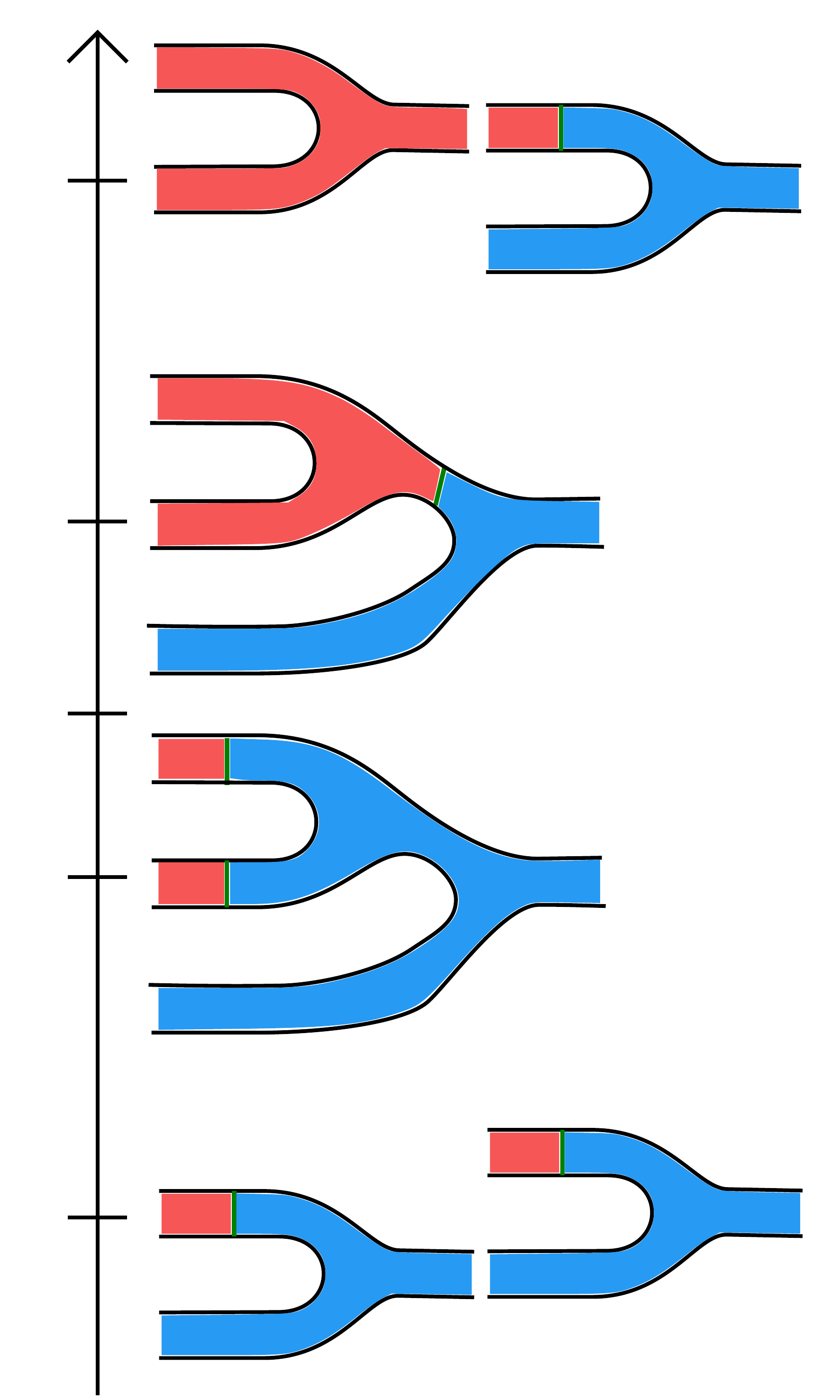' (pdf, eps, ps)
%%
%% To include the image in your LaTeX document, write
%%   \input{<filename>.pdf_tex}
%%  instead of
%%   \includegraphics{<filename>.pdf}
%% To scale the image, write
%%   \def\svgwidth{<desired width>}
%%   \input{<filename>.pdf_tex}
%%  instead of
%%   \includegraphics[width=<desired width>]{<filename>.pdf}
%%
%% Images with a different path to the parent latex file can
%% be accessed with the `import' package (which may need to be
%% installed) using
%%   \usepackage{import}
%% in the preamble, and then including the image with
%%   \import{<path to file>}{<filename>.pdf_tex}
%% Alternatively, one can specify
%%   \graphicspath{{<path to file>/}}
%% 
%% For more information, please see info/svg-inkscape on CTAN:
%%   http://tug.ctan.org/tex-archive/info/svg-inkscape
%%
\begingroup%
  \makeatletter%
  \providecommand\color[2][]{%
    \errmessage{(Inkscape) Color is used for the text in Inkscape, but the package 'color.sty' is not loaded}%
    \renewcommand\color[2][]{}%
  }%
  \providecommand\transparent[1]{%
    \errmessage{(Inkscape) Transparency is used (non-zero) for the text in Inkscape, but the package 'transparent.sty' is not loaded}%
    \renewcommand\transparent[1]{}%
  }%
  \providecommand\rotatebox[2]{#2}%
  \newcommand*\fsize{\dimexpr\f@size pt\relax}%
  \newcommand*\lineheight[1]{\fontsize{\fsize}{#1\fsize}\selectfont}%
  \ifx\svgwidth\undefined%
    \setlength{\unitlength}{850.39370079bp}%
    \ifx\svgscale\undefined%
      \relax%
    \else%
      \setlength{\unitlength}{\unitlength * \real{\svgscale}}%
    \fi%
  \else%
    \setlength{\unitlength}{\svgwidth}%
  \fi%
  \global\let\svgwidth\undefined%
  \global\let\svgscale\undefined%
  \makeatother%
  \begin{picture}(1,1.66666667)%
    \lineheight{1}%
    \setlength\tabcolsep{0pt}%
    \put(0,0){\includegraphics[width=\unitlength,page=1]{bimod_assoc_1.pdf}}%
    \put(-0.07777497,1.43768019){\color[rgb]{0,0,0}\makebox(0,0)[lt]{\lineheight{1.25}\smash{\begin{tabular}[t]{l}$+\infty$\end{tabular}}}}%
    \put(0.02211082,1.62493989){\color[rgb]{0,0,0}\makebox(0,0)[lt]{\lineheight{1.25}\smash{\begin{tabular}[t]{l}$t$\end{tabular}}}}%
    \put(0.00481329,0.8054832){\color[rgb]{0,0,0}\makebox(0,0)[lt]{\lineheight{1.25}\smash{\begin{tabular}[t]{l}0\end{tabular}}}}%
    \put(-0.0693653,0.20856389){\color[rgb]{0,0,0}\makebox(0,0)[lt]{\lineheight{1.25}\smash{\begin{tabular}[t]{l}$-\infty$\end{tabular}}}}%
  \end{picture}%
\endgroup%

      \caption{The moduli space $\Mcal = \bigcup_{t\in \rr} \Mcal_t$.}
      \label{fig:bimod_assoc_1}
\end{figure}

 Care must be taken however, since when $t=0$ one has a seam condition tangent to the boundary condition, so the puncture is not a strip like end. To remedy this we instead consider
\e
\Mcal_{ \leq -\epsilon} = \bigcup_{t\leq -\epsilon} \Mcal_t
\e
and at $t=-\epsilon$ we continue the moduli space by stretching away the puncture as in  Figure~\ref{fig:bimod_assoc_2}, i.e. we glue to it a parametrized moduli space
\e
\Acal = \bigcup_{L\geq 0} \Acal_L
\e
It compactifies to a moduli space $\overline{\Acal}$ by adding strip breaking at the ends at finite $L$ (giving some homotopy terms) and when  $L\to +\infty$ by adding $\Acal_\infty$ consisting of the picture drawn in Figure~\ref{fig:bimod_assoc_2}, with possibly a quilted disc bubble attached to it.

\begin{figure}[!h]
    \centering
    \def\svgwidth{.50\textwidth}
    %% Creator: Inkscape 1.1.2 (0a00cf5339, 2022-02-04), www.inkscape.org
%% PDF/EPS/PS + LaTeX output extension by Johan Engelen, 2010
%% Accompanies image file '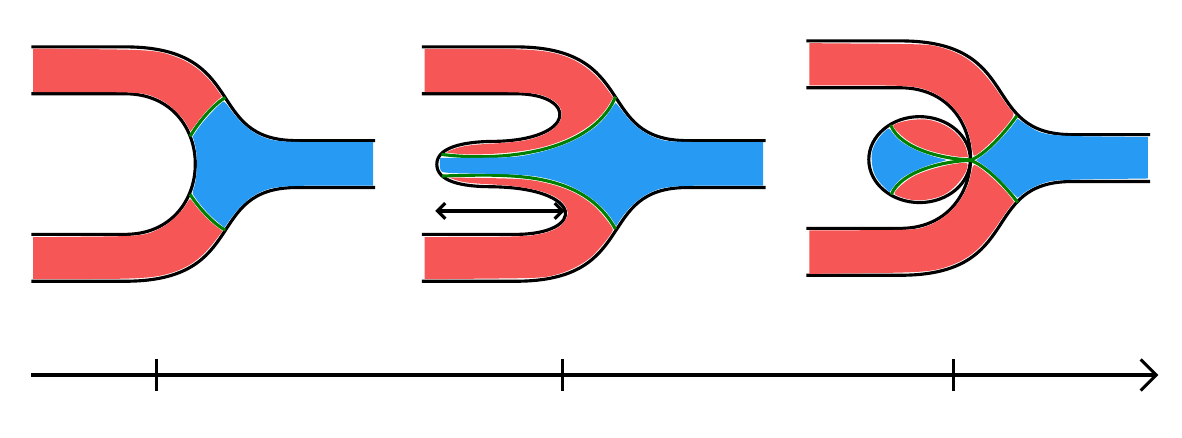' (pdf, eps, ps)
%%
%% To include the image in your LaTeX document, write
%%   \input{<filename>.pdf_tex}
%%  instead of
%%   \includegraphics{<filename>.pdf}
%% To scale the image, write
%%   \def\svgwidth{<desired width>}
%%   \input{<filename>.pdf_tex}
%%  instead of
%%   \includegraphics[width=<desired width>]{<filename>.pdf}
%%
%% Images with a different path to the parent latex file can
%% be accessed with the `import' package (which may need to be
%% installed) using
%%   \usepackage{import}
%% in the preamble, and then including the image with
%%   \import{<path to file>}{<filename>.pdf_tex}
%% Alternatively, one can specify
%%   \graphicspath{{<path to file>/}}
%% 
%% For more information, please see info/svg-inkscape on CTAN:
%%   http://tug.ctan.org/tex-archive/info/svg-inkscape
%%
\begingroup%
  \makeatletter%
  \providecommand\color[2][]{%
    \errmessage{(Inkscape) Color is used for the text in Inkscape, but the package 'color.sty' is not loaded}%
    \renewcommand\color[2][]{}%
  }%
  \providecommand\transparent[1]{%
    \errmessage{(Inkscape) Transparency is used (non-zero) for the text in Inkscape, but the package 'transparent.sty' is not loaded}%
    \renewcommand\transparent[1]{}%
  }%
  \providecommand\rotatebox[2]{#2}%
  \newcommand*\fsize{\dimexpr\f@size pt\relax}%
  \newcommand*\lineheight[1]{\fontsize{\fsize}{#1\fsize}\selectfont}%
  \ifx\svgwidth\undefined%
    \setlength{\unitlength}{566.92913386bp}%
    \ifx\svgscale\undefined%
      \relax%
    \else%
      \setlength{\unitlength}{\unitlength * \real{\svgscale}}%
    \fi%
  \else%
    \setlength{\unitlength}{\svgwidth}%
  \fi%
  \global\let\svgwidth\undefined%
  \global\let\svgscale\undefined%
  \makeatother%
  \begin{picture}(1,0.37)%
    \lineheight{1}%
    \setlength\tabcolsep{0pt}%
    \put(0,0){\includegraphics[width=\unitlength,page=1]{bimod_assoc_2.pdf}}%
    \put(0.12274789,-0.00309782){\makebox(0,0)[lt]{\lineheight{1.25}\smash{\begin{tabular}[t]{l}0\end{tabular}}}}%
    \put(0.45743405,-0.0037047){\makebox(0,0)[lt]{\lineheight{1.25}\smash{\begin{tabular}[t]{l}$L$\end{tabular}}}}%
    \put(0.40352463,0.1427407){\makebox(0,0)[lt]{\lineheight{1.25}\smash{\begin{tabular}[t]{l}$L$\end{tabular}}}}%
    \put(0.78396692,-0.00014427){\makebox(0,0)[lt]{\lineheight{1.25}\smash{\begin{tabular}[t]{l}$+\infty$\end{tabular}}}}%
  \end{picture}%
\endgroup%

      \caption{The moduli space $\overline{\Acal}$.}
      \label{fig:bimod_assoc_2}
\end{figure}

\begin{figure}[!h]
    \centering
    \def\svgwidth{.50\textwidth}
    %% Creator: Inkscape 1.1.2 (0a00cf5339, 2022-02-04), www.inkscape.org
%% PDF/EPS/PS + LaTeX output extension by Johan Engelen, 2010
%% Accompanies image file '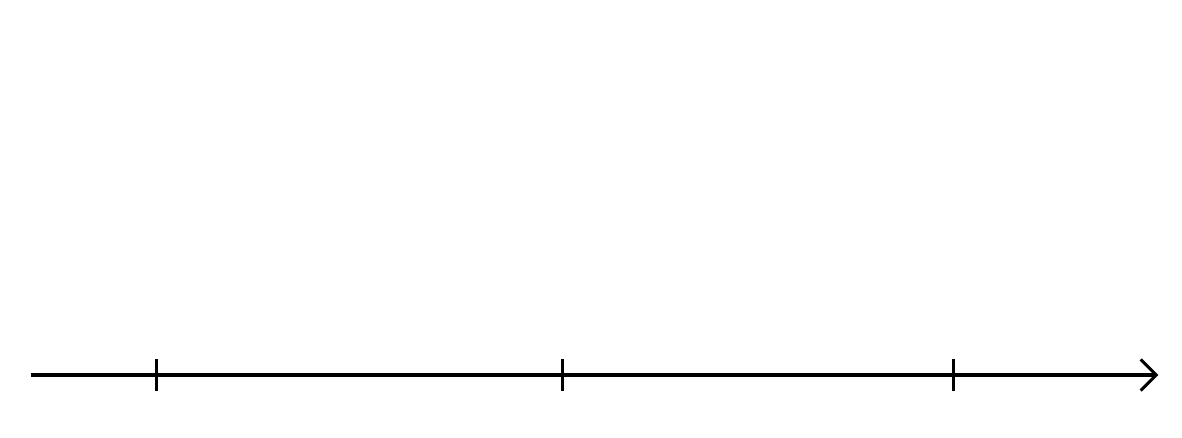' (pdf, eps, ps)
%%
%% To include the image in your LaTeX document, write
%%   \input{<filename>.pdf_tex}
%%  instead of
%%   \includegraphics{<filename>.pdf}
%% To scale the image, write
%%   \def\svgwidth{<desired width>}
%%   \input{<filename>.pdf_tex}
%%  instead of
%%   \includegraphics[width=<desired width>]{<filename>.pdf}
%%
%% Images with a different path to the parent latex file can
%% be accessed with the `import' package (which may need to be
%% installed) using
%%   \usepackage{import}
%% in the preamble, and then including the image with
%%   \import{<path to file>}{<filename>.pdf_tex}
%% Alternatively, one can specify
%%   \graphicspath{{<path to file>/}}
%% 
%% For more information, please see info/svg-inkscape on CTAN:
%%   http://tug.ctan.org/tex-archive/info/svg-inkscape
%%
\begingroup%
  \makeatletter%
  \providecommand\color[2][]{%
    \errmessage{(Inkscape) Color is used for the text in Inkscape, but the package 'color.sty' is not loaded}%
    \renewcommand\color[2][]{}%
  }%
  \providecommand\transparent[1]{%
    \errmessage{(Inkscape) Transparency is used (non-zero) for the text in Inkscape, but the package 'transparent.sty' is not loaded}%
    \renewcommand\transparent[1]{}%
  }%
  \providecommand\rotatebox[2]{#2}%
  \newcommand*\fsize{\dimexpr\f@size pt\relax}%
  \newcommand*\lineheight[1]{\fontsize{\fsize}{#1\fsize}\selectfont}%
  \ifx\svgwidth\undefined%
    \setlength{\unitlength}{566.92913386bp}%
    \ifx\svgscale\undefined%
      \relax%
    \else%
      \setlength{\unitlength}{\unitlength * \real{\svgscale}}%
    \fi%
  \else%
    \setlength{\unitlength}{\svgwidth}%
  \fi%
  \global\let\svgwidth\undefined%
  \global\let\svgscale\undefined%
  \makeatother%
  \begin{picture}(1,0.37)%
    \lineheight{1}%
    \setlength\tabcolsep{0pt}%
    \put(0,0){\includegraphics[width=\unitlength,page=1]{bimod_assoc_3.pdf}}%
    \put(0.12274789,-0.00309782){\makebox(0,0)[lt]{\lineheight{1.25}\smash{\begin{tabular}[t]{l}0\end{tabular}}}}%
    \put(0.45743405,-0.0037047){\makebox(0,0)[lt]{\lineheight{1.25}\smash{\begin{tabular}[t]{l}$L$\end{tabular}}}}%
    \put(0.78396692,-0.00014427){\makebox(0,0)[lt]{\lineheight{1.25}\smash{\begin{tabular}[t]{l}$+\infty$\end{tabular}}}}%
    \put(0,0){\includegraphics[width=\unitlength,page=2]{bimod_assoc_3.pdf}}%
    \put(0.40352463,0.1427407){\makebox(0,0)[lt]{\lineheight{1.25}\smash{\begin{tabular}[t]{l}$L$\end{tabular}}}}%
  \end{picture}%
\endgroup%

      \caption{The moduli space $\overline{\Bcal}$.}
      \label{fig:bimod_assoc_3}
\end{figure}

Likewise, at $t=\epsilon$ we do a similar stretching (see Figure~\ref{fig:bimod_assoc_3}) and get another parametrized moduli space
\e
\Bcal = \bigcup_{L\geq 0} \Bcal_L
\e
which compactifies to $\overline{\Bcal}$ by adding strip breaking at the ends at finite $L$  and when  $L\to +\infty$ by adding $\Bcal_\infty$ as drawn in Figure~\ref{fig:bimod_assoc_3}, with possibly a quilted disc bubble attached to it.

Now it remains to notice that both bubblings in $\Acal_\infty$ and $\Bcal_\infty$ are ruled out by our assumptions: in both cases after lifting and unfolding one gets a pair of discs in $(T_N, 0_N)$ and  $(M, L_1)$, as in Figure~\ref{fig:bimod_assoc_bubbling}. These discs, if nonconstant, would have a Maslov index too large. Therefore  $\Acal_\infty =\Bcal_\infty$ and one can form the moduli space 
\e
\overline{\Mcal}_{ \leq -\epsilon}   \bigcup_{\Mcal_{ -\epsilon} = \Acal_0} \overline{\Acal} \bigcup_{\Acal_\infty = \Bcal_\infty} \overline{\Bcal} \bigcup_{\Bcal_0 = \Mcal_{ \epsilon} } \overline{\Mcal}_{ \geq \epsilon} 
\e
whose zero dimensional part defines $\lambda^L_N$ and one dimensional part proves Equation~(\ref{eq:module_relation_left}).

\begin{figure}[!h]
    \centering
    \def\svgwidth{.80\textwidth}
    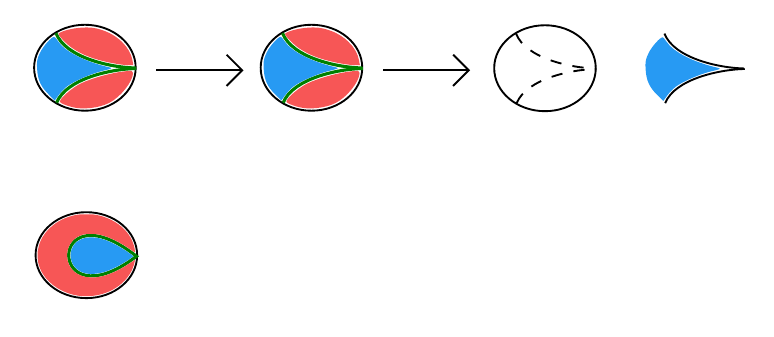
      \caption{Bubbling in $\Acal_\infty$ and $\Bcal_\infty$. After lifting and unfolding, one gets a pair of discs in $(T_N, 0_N)$ and  $(M, L_1)$, which must be constant.}
      \label{fig:bimod_assoc_bubbling}
\end{figure}

The fact that the left and right actions commute up to homotopy is a straightforward deformation argument suggested in Figure~\ref{fig:left_right_action_commute}.

\begin{figure}[!h]
    \centering
    \def\svgwidth{.40\textwidth}
    %% Creator: Inkscape 1.1.2 (0a00cf5339, 2022-02-04), www.inkscape.org
%% PDF/EPS/PS + LaTeX output extension by Johan Engelen, 2010
%% Accompanies image file '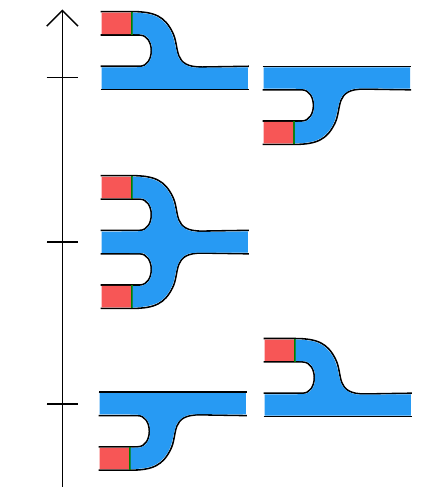' (pdf, eps, ps)
%%
%% To include the image in your LaTeX document, write
%%   \input{<filename>.pdf_tex}
%%  instead of
%%   \includegraphics{<filename>.pdf}
%% To scale the image, write
%%   \def\svgwidth{<desired width>}
%%   \input{<filename>.pdf_tex}
%%  instead of
%%   \includegraphics[width=<desired width>]{<filename>.pdf}
%%
%% Images with a different path to the parent latex file can
%% be accessed with the `import' package (which may need to be
%% installed) using
%%   \usepackage{import}
%% in the preamble, and then including the image with
%%   \import{<path to file>}{<filename>.pdf_tex}
%% Alternatively, one can specify
%%   \graphicspath{{<path to file>/}}
%% 
%% For more information, please see info/svg-inkscape on CTAN:
%%   http://tug.ctan.org/tex-archive/info/svg-inkscape
%%
\begingroup%
  \makeatletter%
  \providecommand\color[2][]{%
    \errmessage{(Inkscape) Color is used for the text in Inkscape, but the package 'color.sty' is not loaded}%
    \renewcommand\color[2][]{}%
  }%
  \providecommand\transparent[1]{%
    \errmessage{(Inkscape) Transparency is used (non-zero) for the text in Inkscape, but the package 'transparent.sty' is not loaded}%
    \renewcommand\transparent[1]{}%
  }%
  \providecommand\rotatebox[2]{#2}%
  \newcommand*\fsize{\dimexpr\f@size pt\relax}%
  \newcommand*\lineheight[1]{\fontsize{\fsize}{#1\fsize}\selectfont}%
  \ifx\svgwidth\undefined%
    \setlength{\unitlength}{209.76377953bp}%
    \ifx\svgscale\undefined%
      \relax%
    \else%
      \setlength{\unitlength}{\unitlength * \real{\svgscale}}%
    \fi%
  \else%
    \setlength{\unitlength}{\svgwidth}%
  \fi%
  \global\let\svgwidth\undefined%
  \global\let\svgscale\undefined%
  \makeatother%
  \begin{picture}(1,1.13513514)%
    \lineheight{1}%
    \setlength\tabcolsep{0pt}%
    \put(0,0){\includegraphics[width=\unitlength,page=1]{left_right_action_commute.pdf}}%
    \put(-0.05350574,0.93341353){\color[rgb]{0,0,0}\makebox(0,0)[lt]{\lineheight{1.25}\smash{\begin{tabular}[t]{l}$+\infty$\end{tabular}}}}%
    \put(-0.00912747,0.55799101){\color[rgb]{0,0,0}\makebox(0,0)[lt]{\lineheight{1.25}\smash{\begin{tabular}[t]{l}$0$\end{tabular}}}}%
    \put(-0.04666247,0.19005056){\color[rgb]{0,0,0}\makebox(0,0)[lt]{\lineheight{1.25}\smash{\begin{tabular}[t]{l}$-\infty$\end{tabular}}}}%
  \end{picture}%
\endgroup%

      \caption{The left and right actions commute.}
      \label{fig:left_right_action_commute}
\end{figure}

\begin{figure}[!h]
    \centering
    \def\svgwidth{1.0\textwidth}
    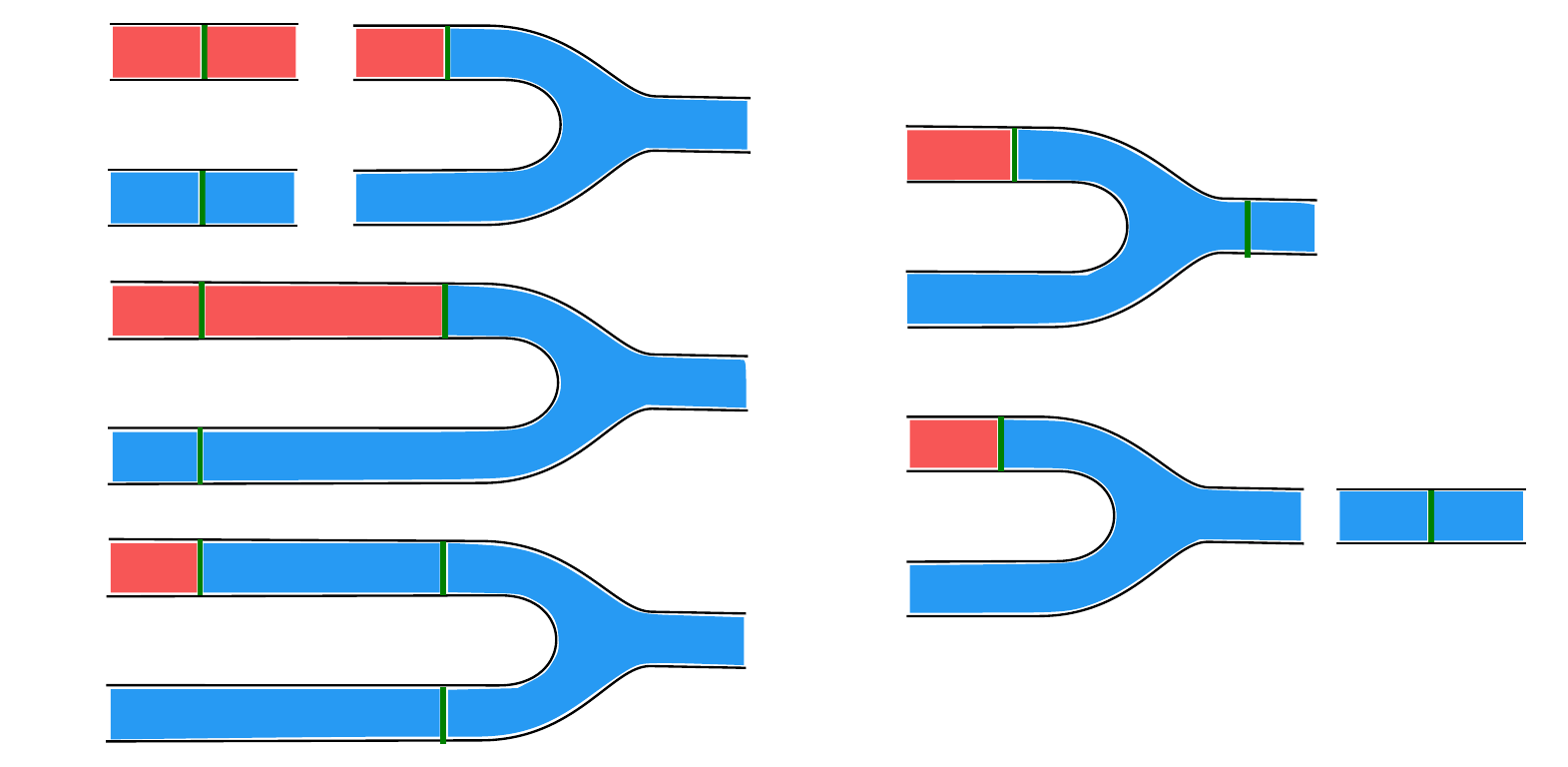
      \caption{$L_N$ commutes with the increments. Details about the strip-shrinking from Step (2) to Step (3) are indicated in Figure~\ref{fig:bimod_commutes_f_N_3}. Details for going from Step (3) to Step (4) will be shown in Figure~\ref{fig:bubbling_bimod_comm_3}. }
      \label{fig:bimod_commutes_f_N}
\end{figure}

The proof that $L_N$ and $R_N$ commute with the increment maps up to homotopies is a similar deformation argument, suggested in Figure~\ref{fig:bimod_commutes_f_N}. Again, the following problems can occur, which we will indicate further below how we handle them.
\begin{itemize}
\item Between steps (2) and (3) when the two seams come together: different strip-like ends have to come together, so the strip-like end structure wouldn't be fixed. Therefore, as in the previous argument, we first stretch the free ends away as in Figure~\ref{fig:bimod_commutes_f_N_3}. 
Then one can shrink the width between the two seams and replace the two seams by a single one decorated by the composition $\Lambda_{B_N} \circ P_1^{N+1}$, with $\Lambda_{B_N} = (N_{\Gamma(i_N)}\cap \mu_{T_N^- \times T_{N+1}}^{-1}(0))/G^2$. As this composition is embedded and equal to $P_1^{N}\circ \Lambda_{N}$, one can continue the moduli space as drawn in the picture.

\item The transition from step (3) to (4), which is similar to the breaking observed in the proof of equation (\ref{eq:module_relation_left}), and handled in the same way.

\end{itemize}

\begin{figure}[!h]
    \centering
    \def\svgwidth{1.0\textwidth}
    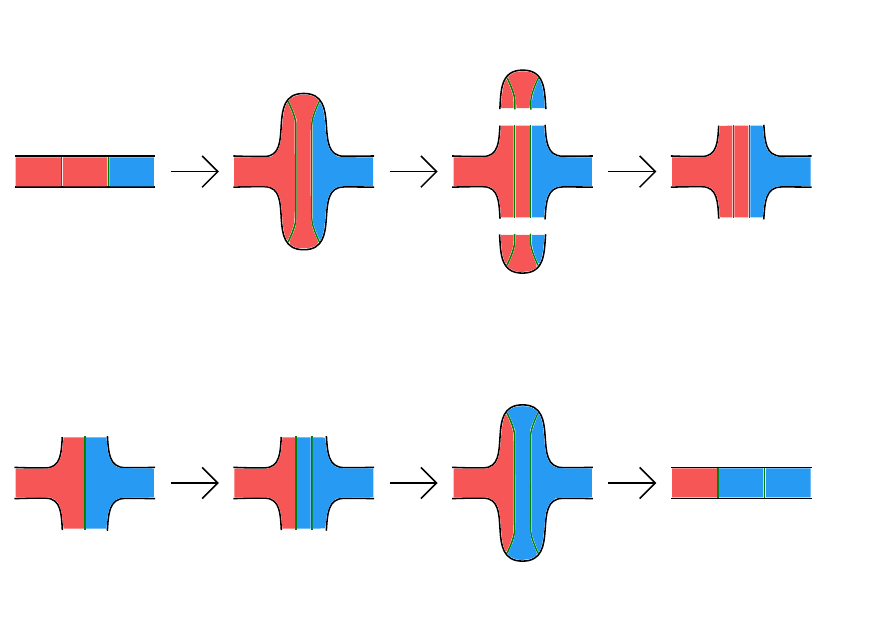
      \caption{From Step (2) to Step (3) in Figure~\ref{fig:bimod_commutes_f_N}. Figure 8 bubbling appearing at Step (5) will be ruled out, as indicated in Figure~\ref{fig:bubbling_bimod_comm_1}. Likewise, breakings at Steps (3) and (7) will be ruled out, as indicated in Figures~\ref{fig:bubbling_bimod_comm_2} and \ref{fig:bubbling_bimod_comm_2_}.}
      \label{fig:bimod_commutes_f_N_3}
\end{figure}

Along this process, several kinds of bubblings/breakings might a priori occur, in addition to the ones that we already ruled out. We now detail each of these:
\begin{itemize}
\item At step (5) of  Figure~\ref{fig:bimod_commutes_f_N_3}, some figure 8 bubbling can occur in the strip-shrinking process, from Bottman's removal of singularity theorem \cite{Bottman_remov_sing}. These are drawn in Figure~\ref{fig:bubbling_bimod_comm_1}: the lefthand column corresponds to the transition from step (4) to step (5), and the righthand column corresponds to the transition from step (6) to step (5). 
As drawn in Figure~\ref{fig:bubbling_bimod_comm_1}, such bubbles, after lifting and unfolding, give rise to a quilted sphere (i.e. a disc in $T_N\times T_{N+1}$ with boundary in $N_{\Gamma(i_N)}$), and a disc in $M$ with boundary in $L_1$. For example, the top-right figure 8 bubble in $(B_N, B_{N+1}, M_{N+1})$ can first be lifted to a similar figure 8 bubble in $(T_N, T_{N+1}, M\times T_{N+1})$. Then the patch in $M\times T_{N+1}$ can be unfolded, giving rise to a quilted sphere in $(T_N, T_{N+1})$ and a disc in $M$. And finally this last quilted sphere can be folded to a disc in $T_N^-\times  T_{N+1}$.

\begin{figure}[!h]
    \centering
    \def\svgwidth{.80\textwidth}
    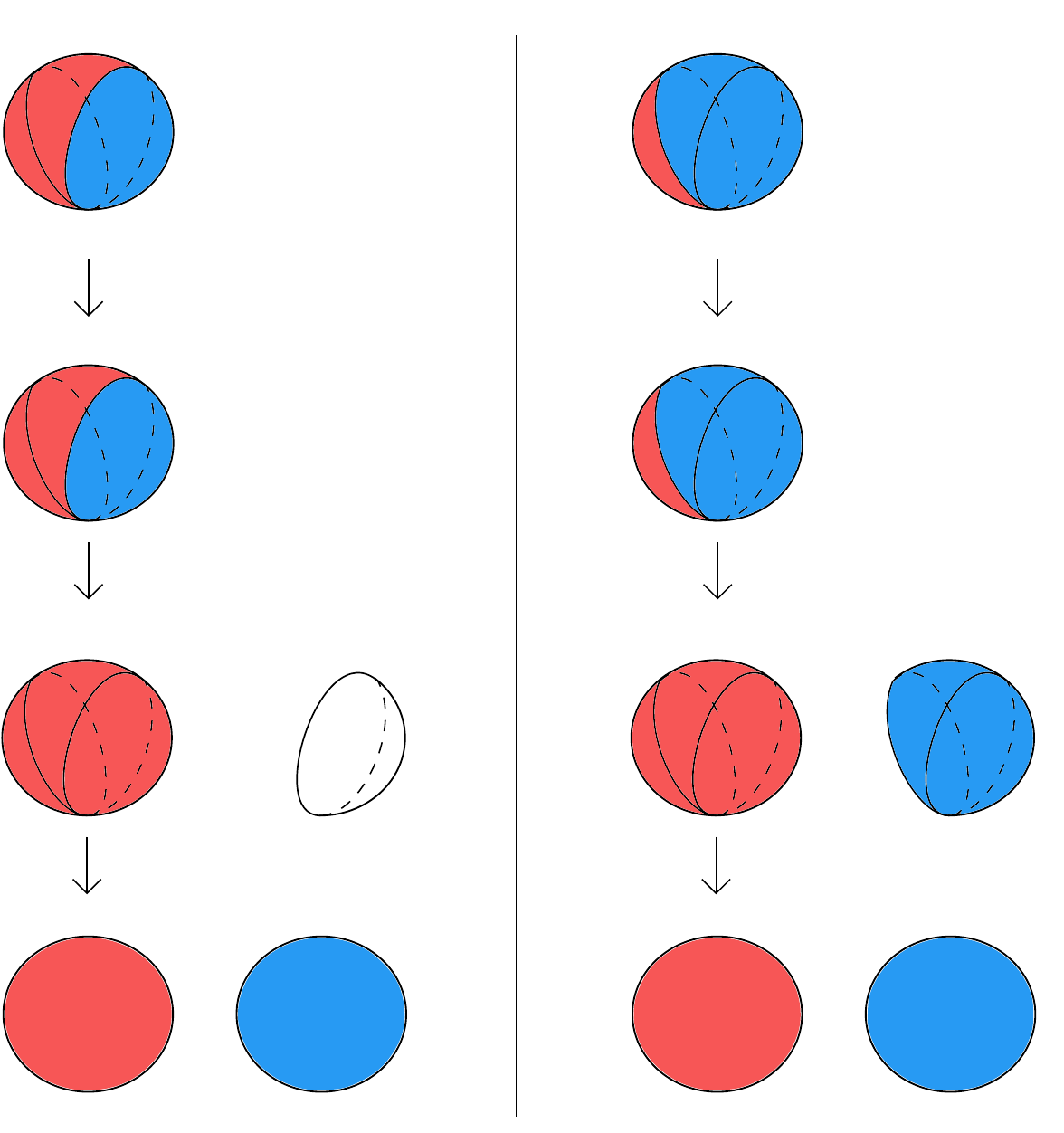
      \caption{Bubbling at Step (5) of Figure~\ref{fig:bimod_commutes_f_N_3}.  After lifting and unfolding, one gets a quilted sphere and a disc in $M$ with boundary in $L_1$. The quilted sphere can be folded to a disc in $(T_N\times T_{N+1},0_N \times 0_{N+1})$.}
      \label{fig:bubbling_bimod_comm_1}
\end{figure}

\item At steps (3) and (7) of  Figurs~\ref{fig:bimod_commutes_f_N_3}, one can observe the breakings of  Figures~\ref{fig:bubbling_bimod_comm_2} and \ref{fig:bubbling_bimod_comm_2_}: these are disc analogues of the previous figure 8 bubbles. Consider the breaking of Figure~\ref{fig:bubbling_bimod_comm_2}, corresponding to step (3). After lifting and unfolding, one gets a quilted sphere and a disc in $M$ with boundary in $L_1$. By folding the quilted sphere along the seam between $T_N$ and $T_{N+1}$, one gets a disc in $T_N\times T_{N+1}$ with one boundary in $0_N \times 0_{N+1}$, and another boundary in $N_{\Gamma(i_N)}$). As in the proof of Proposition~\ref{prop:Lcal_moduli_space}, one can cap the part of the boundary in $N_{\Gamma(i_N)}$ with a disc in  $N_{\Gamma(i_N)}$, and get a disc with the whole boundary in $0_N \times 0_{N+1}$.

\begin{figure}[!h]
    \centering
    \def\svgwidth{1.0\textwidth}
    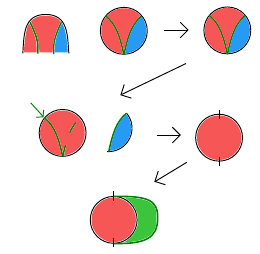
      \caption{Strip breaking at Step (3) of Figure~\ref{fig:bimod_commutes_f_N_3}. After lifting and unfolding, one gets a quilted disc and a disc in $M$ with boundary in $L_1$. The quilted disc can be folded to a disc in $T_N\times T_{N+1}$ with boundary in $0_N \times 0_{N+1}$ and $N_{\Gamma(i_N)}$. Capping at the second boundary with a disc in $N_{\Gamma(i_N)}$ yields a disc in $T_N\times T_{N+1}$ with boundary in $0_N \times 0_{N+1}$.}
      \label{fig:bubbling_bimod_comm_2}
\end{figure}

\begin{figure}[!h]
    \centering
    \def\svgwidth{1.0\textwidth}
    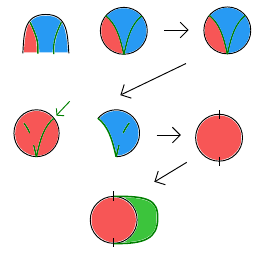
      \caption{Strip breaking at Step  (7) of Figure~\ref{fig:bimod_commutes_f_N_3}.}
      \label{fig:bubbling_bimod_comm_2_}
\end{figure}

\item Between steps (3) and (4) of Figure~\ref{fig:bimod_commutes_f_N}, there can be breaking as in  Figure~\ref{fig:bubbling_bimod_comm_3}. After lifting and unfolding, in both cases one gets a quilt on $T_N$, $T_{N+1}$, and a disc in $M$ with boundary in $L_1$. 

For case (A), we cap the quilt with a strip in $0_N$ as shown in the picture, so that the true boundary is mapped to a constant value (this is possible since $0_N$ is simply connected). This gives a quilted sphere, which folds to a disc in $T_N \times T_{N+1}$ with boundary in $N_{\Gamma(i_N)}$.

For case (B), the quilted part is different, with one boundary in $0_N\times 0_{N+1}$ and another in $N_{\Gamma(i_N)}$. We cap the part in $N_{\Gamma(i_N)}$ with a strip in $N_{\Gamma(i_N)}$, such that the true boundary gets mapped in ${\Gamma(i_N)} \subset 0_N\times 0_{N+1}$. We therefore end up with a disc in $T_N \times T_{N+1}$ with boundary in $ 0_N\times 0_{N+1}$.

\begin{figure}[!h]
    \centering
    \def\svgwidth{1.0\textwidth}
    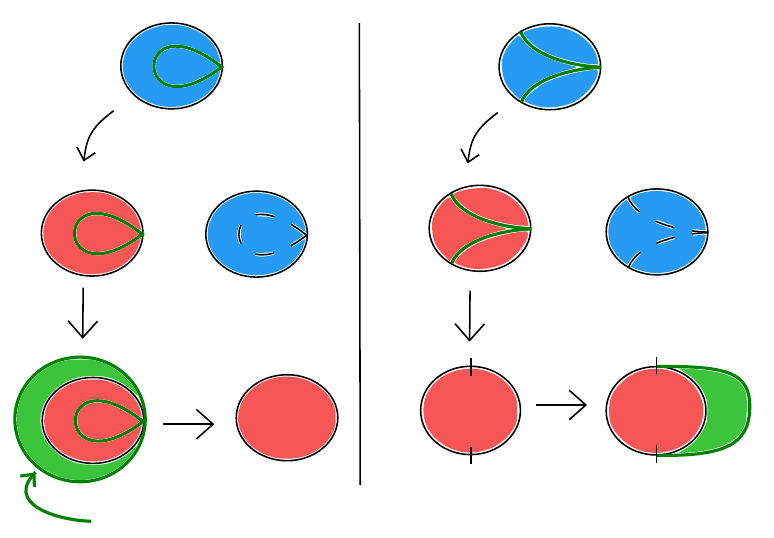
      \caption{Strip breaking between Steps (3) and (4) of Figure~\ref{fig:bimod_commutes_f_N}. The red quilted disc in (A) line 2 can be capped, so that we can view it as a quilted sphere, and fold it to a disc in $(T_N\times T_{N+1}, N_{\Gamma(i_N)})$. For the red quilted disc in (B) line 2, we concatenate both patches in $T_N$ to a single one. This permits to fold it to a bigon. Capping the part of the boundary in $N_{\Gamma(i_N)}$ yields a disc in $T_N\times T_{N+1}$ with boundary in $0_N\times 0_{N+1}$.}
      \label{fig:bubbling_bimod_comm_3}
\end{figure}
\end{itemize}

The same arguments as in the proof of Proposition~\ref{prop:Lcal_moduli_space} allows us to show that all these bubblings/breakings are excluded by our assumptions. This ends the proof.
\end{proof}

\begin{prop}\label{prop:PSS_mod_str}
The isomorphisms $PSS_G$ and $SSP_G$ defined in Section~\ref{sec:Floer_vs_Morse} preserve the $H^*(BG)$-bimodule structures.
\end{prop}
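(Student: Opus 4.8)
The plan is to reduce the statement, exactly as in the proof of Proposition~\ref{prop:equiv_PSS}, to a finite tower of moduli-space constructions, and to observe first that it suffices to treat $PSS_G$: since $SSP_G$ induces the inverse of $PSS_G$ on homology (Proposition~\ref{prop:equiv_PSS}), once $PSS_G$ is shown to intertwine the two $H^*(BG)$-actions the same follows automatically for $SSP_G$. Recall that the bimodule structure on $HF_G(L,L)$ is induced by the telescopic maps $\Tel(L_N,\kappa^L_N)$ and $\Tel(R_N,\kappa^R_N)$ of Proposition~\ref{prop:bimod_str} composed with the product morphism $\Phi$ of Proposition~\ref{prop:product_telescopes_chain_cpx}, while $HM_G(L)=H^*_G(L)$ carries the $H^*(BG)$-module structure built in the identical way from the Morse-theoretic actions $L^M_N,R^M_N\colon A_N\otimes CM_N\to CM_N$ obtained by counting the grafted trees of Figure~\ref{fig:module_str_Morse} (the gradient-flow analogues of the quilts of Figure~\ref{fig:bimod_str}, using the fibration of Proposition~\ref{prop:fibration_homotopy_quotient} restricted to $L^N$ and the identification $A_N\simeq CM(BG_N)$). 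Since $\Phi$ and all telescope operations are functorial, it suffices to construct, for each $N$, chain homotopies $PSS_N\circ L_N\simeq L^M_N\circ(\mathrm{id}_{A_N}\otimes PSS_N)$ and $PSS_N\circ R_N\simeq R^M_N\circ(PSS_N\otimes\mathrm{id}_{A_N})$, and then to verify that these homotopies are compatible (up to a further homotopy) with the increment maps $\alpha_N$ and $j_N$.

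First I would establish the non-equivariant statement. For fixed $N$ one grafts the module quilt of Figure~\ref{fig:bimod_str} onto the PSS half-tube defining $PSS_N$ and lets the insertion point of the $A_N$-input slide along a real parameter: at one extreme the module quilt is contained entirely in the region governed by the Floer equation, yielding $PSS_N\circ L_N$, while at the other extreme it is carried into the gradient-flow region, where it degenerates to the grafted tree defining $L^M_N$, yielding $L^M_N\circ(\mathrm{id}\otimes PSS_N)$. The zero-dimensional part of the resulting parametrized moduli space defines the homotopy $\Theta_N$ (and similarly for $R_N$), and the boundary of its one-dimensional part yields the relation, once every other degeneration has been excluded. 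As in the proof of Proposition~\ref{prop:bimod_str}, the instant at which a seam becomes tangent to a boundary condition and a puncture ceases to be a strip-like end must be navigated by first stretching away the free ends and temporarily replacing a pair of seams by the embedded composed correspondence. All sphere, disc, quilted-sphere and figure-eight bubbles that could a priori occur are excluded exactly as in the proofs of Propositions~\ref{prop:Lcal_moduli_space} and~\ref{prop:bimod_str}: after lifting and unfolding, each such bubble produces discs in $(T_N,0_N)$ and in $(M,L)$ whose Maslov indices are forced to vanish by the conormal / simple-connectivity argument, or, in the monotone case, are too large to be compatible with transversality of the principal component.

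Next I would upgrade this to the telescope level. The homotopy $\Theta_N$ just constructed, the module--increment homotopy $\kappa^L_N$ of Proposition~\ref{prop:bimod_str}, and the PSS--increment homotopy $\kappa_N$ of Proposition~\ref{prop:equiv_PSS} together occupy three faces of a square of chain maps relating the two ways of passing from $A_N\otimes CF_N$ to $CM_{N+1}$; one fills the remaining face and the interior of the square by a two-parameter family of perturbations, patched together from the pieces already constructed, exactly as in the homotopy-invariance argument of Section~\ref{sec:continuation_maps}. This produces the higher homotopy required by Definition~\ref{def:Homotopies_between_Telescopes_alg} to conclude that $\Tel(PSS_N,\kappa_N)\circ\Tel(L_N,\kappa^L_N)$ is chain homotopic to the corresponding composite on the Morse side; composing with $\Phi$ then shows that $PSS_G$ respects the left $H^*(BG)$-action on homology, and the identical argument with $R_N$ in place of $L_N$ handles the right action, which completes the proof.

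I expect the principal obstacle to be organisational rather than conceptual: assembling the whole tower of homotopies --- the interpolating moduli space, then the square of homotopies, then its two-parameter filling --- and carrying out once more the bubbling-exclusion analysis in the presence of the non-transverse seams and the cleanly-intersecting (in the sense of P\'ozniak) free ends. No genuinely new geometric phenomenon appears beyond those already treated in Propositions~\ref{prop:Lcal_moduli_space} and~\ref{prop:bimod_str}, so the construction is routine but lengthy; as for Proposition~\ref{prop:bimod_str}, one could alternatively phrase all of this in $A_\infty$ language, obtaining that $PSS_G$ is a morphism of $A_\infty$-bimodules, but since only the induced bimodule structure on homology is claimed, the chain-homotopy-level statements above are all that is needed.
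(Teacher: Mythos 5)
Your proposal is correct and follows essentially the same route as the paper, whose entire proof is the single sentence that this is ``a routine deformation argument, using the deformation of Figure~\ref{fig:PSS_mod_str}''; your parametrized moduli space interpolating between $PSS_N\circ L_N$ and $L^M_N\circ(\mathrm{id}\otimes PSS_N)$, the bubbling exclusion borrowed from Propositions~\ref{prop:Lcal_moduli_space} and~\ref{prop:bimod_str}, and the square of homotopies at the telescope level are exactly the unpacking of that deformation. The only addition is your (valid) observation that the statement for $SSP_G$ follows formally from that for $PSS_G$ since they are mutually inverse on homology.
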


\begin{proof}
This is a routine deformation argument, using the deformation of Figure~\ref{fig:PSS_mod_str}.
\end{proof}

\begin{figure}[!h]
    \centering
    \def\svgwidth{1.0\textwidth}
    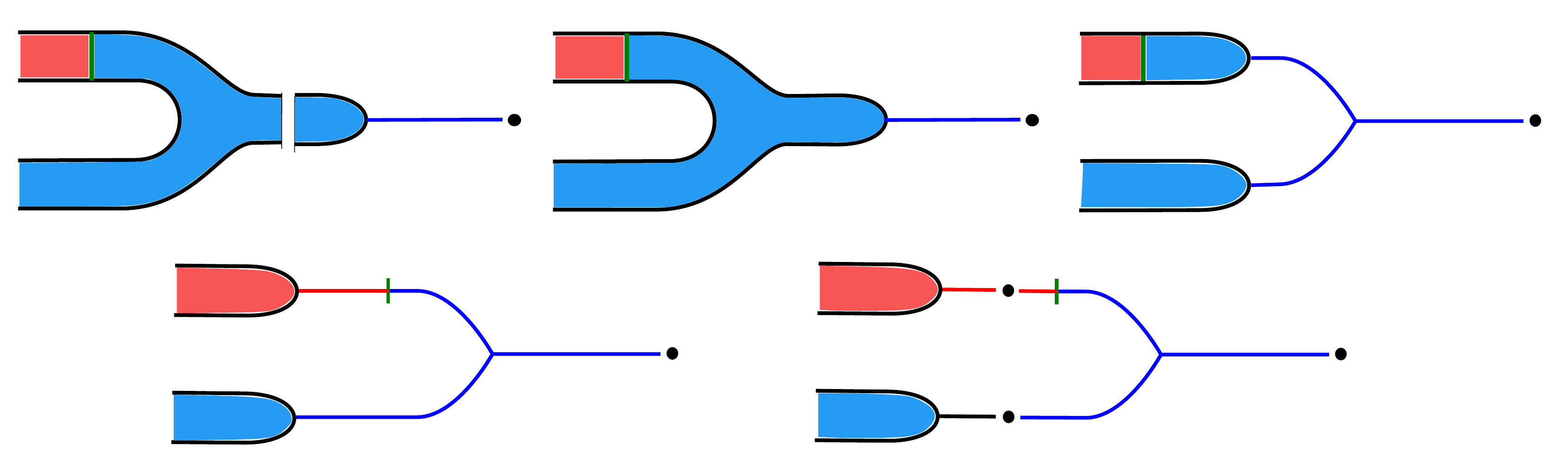
      \caption{The deformation showing that the PSS and SSP isomorphisms preserve the bimodule structures.}
      \label{fig:PSS_mod_str}
\end{figure}

\section{Relations with the symplectic quotient (Kirwan maps)}
\label{sec:Kirwan_maps}

Assume in this section that the action of $G$ on $M$ is regular in the sense of Definition~\ref{def:reduction}, so that $M\red G$ is smooth and symplectic, $L_0/G, L_1/G \subset M\red G $ are smooth Lagrangians satisfying either Assumption~\ref{ass:exact_setting} or Assumption~\ref{ass:monotone_setting} (without Hamiltonian actions). The group $HF(M\red G; L_0/G, L_1/G )$ is then well-defined, and we aim to compare it with $HF_G(M; L_0, L_1)$.

For $N\geq 1$, one has a sequence of Lagrangian correspondences:
\e
\xymatrix{ M_N \ar[r]^{\mu_N^{-1}(0)} & M\times T_N \ar[r]^{\Delta_M \times 0_N} & M \ar[r]^{\mu^{-1}(0)}& M\red G , }
\e
with $\mu_N\colon M\times T_N \to \g^*$ defined by $\mu_N(m,t)=\mu(m)+ \mu_{T_N}(t)$. This sequence of correspondences induces  morphisms
\ea
K_N &\colon CF(M_N;L_0^N,L_1^N ) \to CF(M\red G;L_0/G, L_1/G ) , \\ 
K_N' &\colon CF(M\red G;L_0/G, L_1/G ) \to  CF(M_N;L_0^N,L_1^N ), 
\ea
defined by counting quilts as in the left of Figure~\ref{fig:Kirwan_maps}: the vertical seams are decorated by the corresponding correspondences, while the boundaries in $M_N,  M\times T_N ,  M , M\red G$, are decorated respectively by $L_i^N, L_i \times 0_N, L_i, L_i/G$ (with $i=0, 1$). Notice that by unfolding the patches in $M\times T_N$, one can alternatively view these quilts as ``\emph{foams}'' as in the right side of Figure~\ref{fig:Kirwan_maps}.

\begin{figure}[!h]
    \centering
    \def\svgwidth{1.0\textwidth}
    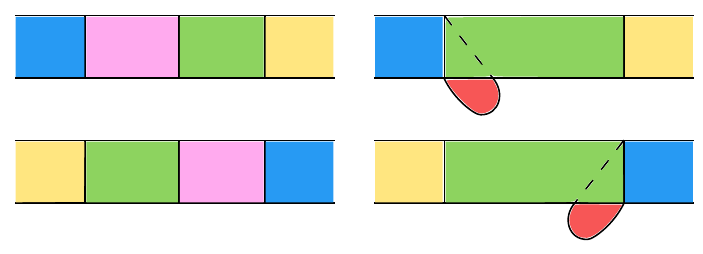
      \caption{Quilted strips defining $K_N$ and $K_N'$. The pink patches in $M\times T_N$ can be unfolded, yielding the foams on the right.}
      \label{fig:Kirwan_maps}
\end{figure}
 As usual, one can show that these commute with the increment maps up to homotopies $\d \kappa_N + \kappa_N \d$ and $\d \kappa_N' + \kappa_N' \d$, and therefore induce maps between telescopes:
\ea
K = \Tel(K_N, \kappa_N) &\colon CF_G(M;L_0,L_1 ) \to CF(M\red G;L_0/G, L_1/G ) , \\ 
K' = \Tel(K_N', \kappa_N')&\colon CF(M\red G;L_0/G, L_1/G ) \to  CF_G(M;L_0,L_1 ) . 
\ea

\begin{remark}If $M$ is a compact symplectic manifold with a regular $G$-action, then its equivariant cohomology is related to the cohomology of the symplectic quotient by the Kirwan map:
\e
H^*_G(M) \to H^*(M\red G) .
\e
(obtained by composing the pullback of the inclusion $\mu^{-1}(0)\subset M$ with the Cartan isomorphism $H^*_G(\mu^{-1}(0)) \simeq H^*(M\red G)$.) Kirwan showed in \cite{Kirwan_thesis} that this map is surjective. However it is not injective in general.
\end{remark}

\begin{remark} In the setting of Section~\ref{sec:Floer_vs_Morse}, where $G$ acts on a smooth manifold $X$, $M= T^* X$ and $L_0 = L_1 = 0_X$, if furthermore the action of $G$ on $X$ is free, then the action on $M$ is regular. And under the PSS isomorphisms, the morphisms $K, K'$ correspond at the homology level to the Cartan isomorphisms $H^G(X) \simeq H(X/G)$. One can ask whether $K, K'$  induce isomorphisms more generally. Notice that this would not contradict the non-injectivity of the classical Kirwan maps (at least in the obvious way) since under the PSS isomorphisms $K$ does not correspond to a Kirwan map.
\end{remark}

\section{Equivariant Manolescu-Woodward's Symplectic Instanton homology}
\label{sec:Equivariant_HSI}

We now apply our construction to the setting of \emph{Symplectic Instanton Homology} defined by Manolescu and Woodward \cite{MW}, which is a slightly more involved one from the monotone setting of Section~\ref{ssec:monotone_setting}. We start by quickly reviewing their construction, and refer to \cite{MW} for more details.

\subsection{Manolescu-Woodward's Symplectic Instanton homology}
\label{ssec:HSI}

Let $\Sigma'$ be a connected oriented  surface of genus $g$ with one boundary component, and let $G= SU(2)$ throughout this section. Associated to it is the \emph{extended moduli space} defined by Jeffrey \cite{jeffrey} $(M, \omega) = \Mg(\Sigma')$. This space is a (singular, degenerate) symplectic manifold with a Hamiltonian $G$-action with moment map $\mu$. The important features of it is that it is smooth and nondegenerate near $\mu^{-1}(0)$, and its symplectic quotient is identified with the (singular) Atiyah-Bott flat moduli space $\M(\Sigma)$ of $\Sigma$, the closed surface obtained by capping $\Sigma'$ with a disc. If $Y = H_0 \cup_{\Sigma} H_1$ is a closed oriented connected 3-manifold with $\Sigma$ as a Heegaard splitting, then associated to the two handlebodies is a pair of smooth $G$-Lagrangians $L_0, L_1 \subset \Mg(\Sigma')$.

The Atiyah-Floer conjecture states that the instanton homology of $Y$ should correspond to $HF(\M(\Sigma); L_0/G, L_1/G)$, wich is ill-defined since $\M(\Sigma),  L_0/G$ and $L_1/G$ are singular. However, (after a cutting construction on $\Mg(\Sigma')$ that we will outline) Manolescu and Woodward succeeded in defining $HF(\Mg(\Sigma'); L_0, L_1)$ and suggested defining $HF_G(\Mg(\Sigma'); L_0, L_1)$ as an equivariant symplectic side for the Atiyah-Floer conjecture.

We now outline their cutting construction. Equip $\g \subset \hh$ with the standard inner product of $\hh$, and let $\tilde{\mu}\colon M\to \rr$ be defined by $\tilde{\mu}(m) = \abs{\mu(m)}$, it is the moment map of the circle action on $M\setminus \mu^{-1}(0)$ defined by 
\e
 m\mapsto e^{u \frac{\mu(m)}{2\tilde{\mu}(m)}}m , \text{ for }u\in U(1)\simeq \rr/ 2\pi \zz .
\e 
It turns out that $\tilde{\mu}^{-1}( [0, 1) )$ is smooth, and that $\omega$ is non-degenerate on 
\e
\N := \tilde{\mu}^{-1}( [0, 1/2) ) \subset M
\e 
(but is degenerate on $\tilde{\mu}^{-1}( 1/2) $). Furthermore, $M$ is $1/4$-monotone, and its minimal Chern number is a multiple of 4.

Morally speaking, $HSI$ is defined in $\N$, but this space is noncompact, and a priori not convex at infinity. To get a compact space, \MW \  consider the symplectic cutting 
\e
M_{\leq \lambda} = \Phi^{-1}( [0, \lambda) ) \cup (\Phi^{-1}( \lambda ))/ U(1) .
\e
It turns out that $\lambda = 1/2$ is the only value for which $M_{\leq \lambda} $ is monotone, but unfortunately the symplectic form is degenerate on $(\tilde{\mu}^{-1}( 1/2 ))/ U(1)$.  To remedy this, they also consider, for small $\epsilon>0$, the cutting $M_{\leq 1/2 - \epsilon}$. This cutting is symplectic (but not monotone) and diffeomorphic to $M_{\leq 1/2}$ via a diffeomorphism 
\e
\phi_\epsilon \colon M_{\leq 1/2} \to M_{\leq 1/2 - \epsilon}
\e 
that is a symplectomorphism away from a neighborhood of the cut locus. In the end, they get two 2-forms $\omega_{\leq 1/2}$ and $\phi_\epsilon ^* \omega_{\leq 1/2 -\epsilon}$ on 
\e
\N ^c =  \colon M_{\leq 1/2} = \N \cup R,
\e 
with $R = \tilde{\mu}^{-1}(0)/ U(1)$, and with the interplay of these two 2-forms and a good understanding of the way $\omega_{\leq 1/2}$ degenerates, they are able to show that the Floer homology in $\N^c$ \emph{relative} to $R$ (i.e. counting curves not intersecting $R$) is well-defined.

\subsection{Equivariant Symplectic Instanton homology}
\label{ssec:HSI_G}

Our strategy will be to apply \MW's construction to the symplectic homotopy quotients of $M$, rather than to $M$.

Let then $M_N = (M\times T_N)\red G$. It has a projection $\pi_N \colon M_N \to B_N$ as in Section~\ref{ssec:Symplectic_homotopy_quotients}.

The circle $U(1)$ acts on $M_N$ by $u. [m,t] = [u. m,t]$, with moment map $\tilde{\mu}_N\colon M_N \to \rr$ given by $\tilde{\mu}_N([m,t])= \tilde{\mu}(m)$. Let then 
\ea 
\N_N &= \tilde{\mu}_N^{-1}([0, 1/2) ) \text{, and} \\ 
\N^c_N &= (M_N)_{\leq 1/2} =\N_N \cup R_N \text{, with }R_N = \tilde{\mu}_N^{-1}( 1/2 )/ U(1).
\ea
This last moduli space still has a projection $\pi_N^c\colon \N^c_N \to B_N$, with fibers $\N^c$, and a fiber-preserving action of $G$. 

Let $\epsilon >0$, and suppose we are given $\phi_\epsilon \colon  M_{\leq 1/2} \to M_{\leq 1/2 - \epsilon}$ as above, and such that it is the identity on $\Wcal := M_{< 1/2 - 2\epsilon}$, and $G$-equivariant. Such a diffeomorphism can be constructed using the gradient flow of $\tilde{\mu}_N$ with respect to a $G$-invariant metric. Define then
\e
\phi_\epsilon^N \colon \N^c_N \to (M_N)_{\leq 1/2-\epsilon}
\e
by $\phi_\epsilon^N ([m,t]) = [\phi_\epsilon (m),t]  $. This extends to the cut locus by U(1)-equivariance of $\phi_\epsilon$. Then, with $\omega_{M_N}$ the symplectic form of $M_N$, the two forms
\ea
\tilde{\omega}_N &= (\omega_{M_N})_{\leq 1/2} \\
{\omega}_N &= (\phi_\epsilon^N)^* (\omega_{M_N})_{\leq 1/2 - \epsilon} 
\ea
coincide on $\Wcal_N  := (M_N)_{< 1/2 - 2\epsilon}$.

Fix a ground almost complex structure $J_N$ on $\N_N$ such that $R_N$ is an almost complex submanifold w.r.t. $J_N$, and such that the projection $\pi_N$ is $J_N$-holomorphic (w.r.t a fixed \acs\ on $B_N$). Let $\Jcal_N$ be the set of \acs s on  $\N_N$ that are equal to $J_N$ outside a compact subset of $\Wcal_N$.

To the handlebodies of the Heegaard splitting $Y = H_0 \cup_{\Sigma} H_1$ is associated a pair of $G$-Lagrangians $L_0, L_1 \subset \mu^{-1}(0) \subset M$. These are simply connected (and therefore monotone),  spin, and their minimal Maslov number is equal to $2 N_M$. So we get two Lagrangians in $\N^c_N$ disjoint from $R_N$:
\e
L_0^N = (L_0 \times 0_N)/G,\ L_1^N = (L_1 \times 0_N)/G \subset \Wcal_N \subset \N^c_N .
\e

In the end we are given $(\N_N^c , R_N, \Wcal_N, \omega_N, \tilde{\omega}_N, J_N, L_0^N, L_1^N )$, and restricted to a fiber of $\pi_N^c\colon \N^c_N \to B_N$, this is exactly the setting of \MW, therefore it satisfies the list of assumptions \cite[Assumption~2.5]{MW} in restriction to any fiber.

Let then $CF_N = CF( \N_N^c ; L_0^N, L_1^N ; R_N)$ denote the Floer complex of $L_0^N, L_1^N$ \emph{relative to} $R_N$, i.e. its Floer differential is defined by counting strips $u$ with intersection number $u. R_N =0 $, for a generic \acs\ $J$ in $\Jcal_N$ (see \cite[Section~2.2]{MW} for more about relative Lagrangian Floer homology). By positivity of intersection, such curves are actually disjoint from $R_N$.

The constructions in the previous sections carry through this setting:

\begin{theo}\label{th:equiv_HSI} As defined above, $CF_N$ is a chain complex (i.e. $\partial ^2 = 0$). Moreover, the increment $\alpha_N \colon CF_N \to CF_{N+1}$ defined as in Section~\ref{ssec:construction} (and counting quilts not intersecting $R_N$ and $R_{N+1}$) are chain morphisms.

Let then $CSI_G(Y) = \Tel (CF_N, \alpha_N)$, and $HSI_G(Y)$ its homology group. It is relatively $\Z{8}$-graded, and in the case when $Y$ is a rational homology sphere, an absolute $\Z{8}$-grading can be fixed canonically. Furthermore, it has the structure of an $H^*(BG)$-module, as defined in Section~\ref{sec:module_str}.

\end{theo}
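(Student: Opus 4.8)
The plan is to verify that every ingredient used in Sections~\ref{sec:telescopes}--\ref{sec:module_str} survives the passage to the relative setting of Manolescu--Woodward, fibrewise over $B_N$. I would organize the proof around four claims, in this order.

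\textbf{Step 1: $\partial^2 = 0$ and the increment maps are chain maps.} Both assertions follow by replaying the proofs of Proposition~\ref{prop:def_CF_N} and Proposition~\ref{prop:Lcal_moduli_space}, now with the two-form interplay of $\omega_N, \tilde{\omega}_N$ playing the role of the monotonicity bound. The key inputs are: (i) the fibration $\pi_N^c\colon \N_N^c \to B_N$ is $J_N$-holomorphic, so that the no-escape argument of Proposition~\ref{prop:def_CF_N} confines curves to a compact subset of a fibre, exactly as in \MW; (ii) within a fibre, all the compactness and transversality of \cite[Section~2]{MW} apply verbatim because, by construction, $(\N_N^c, R_N, \Wcal_N, \omega_N, \tilde{\omega}_N, J_N, L_0^N, L_1^N)$ satisfies \cite[Assumption~2.5]{MW} fibrewise; (iii) positivity of intersection with the almost complex submanifold $R_N$ rules out curves touching $R_N$ away from the Lagrangians. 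The bubbling analysis of Proposition~\ref{prop:Lcal_moduli_space} (sphere bubbling, quilted sphere bubbling at the seam, strip breaking at the free ends) is ruled out by the same lifting-and-capping trick using $L_i^N \times L_i^{N+1}$: here one caps against discs in the cotangent factors $T_N$, which contribute zero Maslov index by Lemma~\ref{lem:Maslov_conormal}, while the $M$-component is controlled by the \MW\ assumptions.

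\textbf{Step 2: the telescope and its homotopy invariance.} Given Step~1, Definition~\ref{def:telescope_chain_cpx} applies and $CSI_G(Y) = \Tel(CF_N, \alpha_N)$ is a well-defined complex. Independence of the auxiliary choices ($\acs$s in $\Jcal_N$, perturbation data, the diffeomorphisms $\phi_\epsilon$, the Morse data on $B_N$) follows from the continuation-map machinery of Section~\ref{sec:continuation_maps}: one constructs the families $\Qcal_N, \Rcal_N^L$, etc., now in the relative setting, and invokes Corollary~\ref{cor:homotopy_equiv_telescopes}. The only new point is that the interpolating families must stay within the class of data for which the \MW\ relative-Floer compactness holds on each fibre; this is a standard openness statement.

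\textbf{Step 3: the $\Z{8}$-grading.} Since $M = \Mg(\Sigma')$ has minimal Chern number a multiple of $4$ and $L_0, L_1$ are simply connected spin with minimal Maslov number $2N_M$, the non-equivariant Floer complex $CF(\N^c; L_0, L_1; R)$ carries a relative $\Z{8}$-grading (this is \MW's grading). By Lemma~\ref{lem:Maslov_conormal} and Proposition~\ref{prop:monotone_reduction}, the homotopy quotient $M_N$ inherits the same Maslov-index arithmetic, so $CF_N$ is relatively $\Z{8}$-graded; the increment maps $\alpha_N$ preserve this grading because $\Lambda_N$ (equivalently $\widehat\Lambda_N = \Delta_M \times N_{\Gamma(i_N)}$) contributes no Maslov defect, again by Lemma~\ref{lem:Maslov_conormal}. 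The telescope differential shifts degree by the usual amount, giving $HSI_G(Y)$ a relative $\Z{8}$-grading. When $Y$ is a rational homology sphere, one fixes the absolute lift exactly as \MW\ do, using the gradings of $L_0, L_1$ coming from the handlebodies (Example~\ref{exam:grading} with $\Z{8}$ in place of $\Z{n}$).

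\textbf{Step 4: the $H^*(BG)$-module structure.} Here I would invoke Proposition~\ref{prop:bimod_str}: the maps $L_N, R_N$ are defined by counting the quilts of Figure~\ref{fig:bimod_str}, and in the relative setting one additionally imposes that all patches avoid $R_N$ and $R_{N+1}$. The associativity homotopies and the commutation with the increments go through as in the proof of Proposition~\ref{prop:bimod_str}, with the figure-8 and quilted-disc bubbles ruled out by the same capping arguments combined with the \MW\ monotonicity on the $M$-factor. Since $L_0 = L_1$ is not assumed, strictly speaking one obtains a module rather than a bimodule structure in \MW's setting --- consistent with the statement.

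\textbf{Main obstacle.} The delicate point is Step~1(iii) together with Step~4: one must check that the relative condition $u \cdot R_N = 0$ is genuinely compatible with the quilted moduli spaces (for $\alpha_N$, and for $L_N, R_N$) and with their compactifications. Concretely, when a quilt breaks or bubbles, the various components must each individually have zero intersection with the relevant $R$-loci, and one needs a fibrewise positivity-of-intersection argument at the seams (where two different $\N^c_\bullet$'s meet) to ensure no intersection with $R_N$ or $R_{N+1}$ is created in the limit. This requires that the seam correspondence $\Lambda_N$ be disjoint from $R_N \times R_{N+1}$ in a suitable sense and that the ground $\acs$s on the two sides be chosen compatibly near the cut loci; verifying this carefully is where the real work lies, though it is morally the same analysis \MW\ carry out for a single surface.
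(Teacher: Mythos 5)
The skeleton of your proposal (fibrewise reduction to Manolescu--Woodward's assumptions, plus the machinery of Sections~\ref{sec:telescopes}--\ref{sec:module_str}) matches the paper's, but Step~1 misses the one genuinely new argument the proof requires. The paper splits the possible degenerations according to their area for the monotone form $\tilde{\omega}_N$: a bubble with nonzero $\tilde{\omega}_N$-area is excluded exactly as in the monotone setting, but a bubble $b$ with zero $\tilde{\omega}_N$-area is necessarily contained in the degeneracy locus $R_N$, and there neither monotonicity nor your ``lifting-and-capping'' trick says anything. Since $L_0^N, L_1^N$ are disjoint from $R_N$, the only surviving zero-area degenerations are sphere bubbles and quilted sphere bubbles at the increment seam --- configurations with no Lagrangian boundary at all, so capping against discs in $T_N$ is not available. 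The paper kills these by first composing with the holomorphic projection to $B_N$ (resp.\ folding to a disc in $(B_N\times B_{N+1}, N_{\Gamma(i_N)}/G^2)$): the projected curve is constant, so $b$ lies in a single fibre, and there \cite[Prop.~2.10]{MW} applies --- the bubble has intersection number at most $-2$ with $R_N$, forcing the principal component to meet $R_N$, which contradicts the relative condition. Your ``main obstacle'' paragraph correctly senses that the seam is the delicate place, but the fix you propose (disjointness of $\Lambda_N$ from $R_N\times R_{N+1}$) is neither assumed nor needed: the quilted sphere bubble genuinely can sit inside $R_N\times R_{N+1}$ with seam condition in $\Lambda_N$; it is excluded by projecting to the base and then gluing the two half-bubbles into a single sphere in $R$.

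A second, smaller gap is in Step~3: for a rational homology sphere, $L_0$ and $L_1$ intersect transversely at the trivial representation, but $L_0^N$ and $L_1^N$ only intersect \emph{cleanly} along a copy of $0_{B_N}$, so the absolute $\Z{8}$-lift cannot be fixed ``exactly as \MW\ do''; one must additionally perturb the clean intersection by a Morse function on $0_{B_N}$ and incorporate the Morse indices into the grading.
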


\begin{proof} We need to make sure that moduli spaces of curves behave the same way as in the previous sections, i.e. no new bubbling or breaking arises.

Suppose $b$ is a bubble/breaking arising in a moduli space relevant to the theorem (either a strip for $\partial$, a quilted strip for $\alpha_N$, or a quilted pair of pants for the module structure). If $b$ has nonzero area for the monotone form $\tilde{\omega}_N$, then it is ruled out for exactly the same reasons as previously. Namely, it would have an index too large, forcing the principal component to live in a moduli space of negative dimension. Therefore, assume that $b$ has zero $\tilde{\omega}_N$-area, and is therefore contained in $R_N$. 

Since the Lagrangians $L_0^N, L_1^N$ are disjoint from $R_N$, this rules out any degeneration having boundary or seam conditions in these: disc bubbling, quilted strip breaking, or quilted sphere bubbling at the seam for the module structure (which is decorated by $(\Delta_{T_N}\times L_i)/G^2$). One is left with sphere bubbling in $M$, and quilted sphere bubbling at the seam for the increment maps moduli spaces.

Assume first that $b$ is a sphere bubbling. Since $b$ is in $R_N$ and that the \acs\ equals $J_N$, its image by $\pi_N^c$ in $B_N$ is a pseudo-holomorphic sphere, and therefore is constant. Therefore $b$ is included in a fiber of $\pi_N^c$, and the same reasoning as in \cite[Prop.~2.10]{MW} applies, which we briefly sketch for the reader's convenience. The bubble $b$ must have intersection number with $R_N$ at most $-2$, this would force the principal component (which generically intersects $R_N$ transversely) to intersect $R_N$ at points where no bubbles are attached, and this is impossible for a limit of curves disjoint from $R_N$. 

Assume then that $b$ is a quilted sphere bubble at the seam of the increment, i.e. equal to  $(d_N, d_{N+1})$ consists of two discs in $R_N, R_{N+1}$ satisfying a seam condition in the correspondence $\Lambda_N$. This projects to a quilted sphere in $B_N, B_{N+1}$, i.e. a disc in $(B_N\times  B_{N+1}, N_{\Gamma(i_N)}/G^2)$, which is constant. Therefore $d_N$ and $d_{N+1}$ are contained in fibers of $\pi_N^c$, $\pi_{N+1}^c$. These two fibers can both be identified to $\N^c$, and under this identification and possibly after multiplying by an element in $G$, $d_N$ and $d_{N+1}$ glue together to a sphere in $R$, which must have intersection with $R$ less than $-2$, which leads to the same contradiction as for sphere bubbling.

About gradings, \MW\ showed \cite[Corollary~3.6]{MW} that the minimal Chern number of $\Ncal$ is a positive multiple of 4, which implies that the minimal Maslov numbers of $L_0$ and $L_1$  (and therefore of $L_0^N$ and $L_1^N$) are positive multiples of 8. If $Y$ is a rational homology sphere, by \cite[Prop.~III.1.1.(c)]{cassonpup} $L_0$ and $L_1$ intersect transversely at the trivial representation: this fact was used by \MW\ to define an absolute grading on $HSI(Y)$. It follows that $L_0^N$ and $L_1^N$ intersect cleanly along a copy of $0_{B_N}$. One can then perturb this intersection by a Morse function of $0_{B_N}$, and take the Morse indices as an absolute grading for the corresponding intersection points.
\end{proof}

\begin{remark}\label{rem:indep_Heegaard_splitting} By following the same proof as in \cite[Section~6]{MW}, one should be able to show that $HSI_G(Y)$ is independent of the choice of Heegaard splitting $\Sigma$. This would amount to defining a quilted version of equivariant Lagrangian Floer homology, and proving an equivariant analogue of \WW 's geometric composition theorem (which, for our construction, should be a straightforward consequence of the non-equivariant version). 
Of course, another way of proving independence would be through an equivariant version of the Atiyah-Floer conjecture: we expect $HSI_G(Y)$ to be isomorphic to a version of equivariant instanton homology defined in \cite{Miller_equiv} (or something similar, Miller Eismeier defined SO(3)-equivariant versions, while ours is SU(2)-equivariant), and \cite{DaemiMiller} prove topological invariance of it. Indeed, Miller Eismeier's construction considers the framed instanton complex $\widetilde{CI}(Y)$, on which a certain chain complex associated with SO(3) acts. He then defines equivariant instanton homology using a Bar construction. The Atiyah-Floer counterpart of the framed instanton complex $\widetilde{CI}(Y)$ is the Floer complex $CF(\Ncal; L_0, L_1)$ considered by \MW, see \cite[Conjecture~1.2]{MW}. Therefore, it is expected that their equivariant counterparts will be related as well. 
\end{remark}

\bibliographystyle{alpha}
\bibliography{biblio}

\def\cprime{$'$}
\begin{thebibliography}{CKMEY}

\bibitem[AB95]{AustinBraamMorse}
David~M. Austin and Peter~J. Braam.
\newblock Morse-{B}ott theory and equivariant cohomology.
\newblock In {\em The {F}loer memorial volume}, volume 133 of {\em Progr.
  Math.}, pages 123--183. Birkh\"auser, Basel, 1995.

\bibitem[AB96]{AustinBraamEquivFloer}
David~M. Austin and Peter~J. Braam.
\newblock Equivariant {F}loer theory and gluing {D}onaldson polynomials.
\newblock {\em Topology}, 35(1):167--200, 1996.

\bibitem[AD14]{AudinDamian}
Mich\`ele Audin and Mihai Damian.
\newblock {\em Morse theory and {F}loer homology}.
\newblock Universitext. Springer, London; EDP Sciences, Les Ulis, 2014.
\newblock Translated from the 2010 French original by Reinie Ern\'{e}.

\bibitem[Alb89]{Albert}
Claude Albert.
\newblock Le th\'{e}or\`eme de r\'{e}duction de {M}arsden-{W}einstein en
  g\'{e}om\'{e}trie cosymplectique et de contact.
\newblock {\em J. Geom. Phys.}, 6(4):627--649, 1989.

\bibitem[Alb08]{Albers_PSS}
Peter Albers.
\newblock A {L}agrangian {P}iunikhin-{S}alamon-{S}chwarz morphism and two
  comparison homomorphisms in {F}loer homology.
\newblock {\em Int. Math. Res. Not. IMRN}, (4):Art. ID rnm134, 56, 2008.

\bibitem[AM90]{cassonpup}
Selman Akbulut and John~D. McCarthy.
\newblock {\em Casson's invariant for oriented homology {$3$}-spheres},
  volume~36 of {\em Mathematical Notes}.
\newblock Princeton University Press, Princeton, NJ, 1990.
\newblock An exposition.

\bibitem[AS10]{AbouzaidSeidel}
Mohammed Abouzaid and Paul Seidel.
\newblock An open string analogue of {V}iterbo functoriality.
\newblock {\em Geom. Topol.}, 14(2):627--718, 2010.

\bibitem[Ati88]{Atiyahfloer}
Michael Atiyah.
\newblock New invariants of {$3$}- and {$4$}-dimensional manifolds.
\newblock In {\em The mathematical heritage of {H}ermann {W}eyl ({D}urham,
  {NC}, 1987)}, volume~48 of {\em Proc. Sympos. Pure Math.}, pages 285--299.
  Amer. Math. Soc., Providence, RI, 1988.

\bibitem[BH21]{HondaBao}
Erkao Bao and Ko~Honda.
\newblock Equivariant {L}agrangian {F}loer cohomology via semi-global
  {K}uranishi structures.
\newblock {\em Algebr. Geom. Topol.}, 21(4):1677--1722, 2021.

\bibitem[BO13]{BourgeoisOancea}
Fr{\'e}d{\'e}ric Bourgeois and Alexandru Oancea.
\newblock The {G}ysin exact sequence for {$S^1$}-equivariant symplectic
  homology.
\newblock {\em J. Topol. Anal.}, 5(4):361--407, 2013.

\bibitem[Bot20]{Bottman_remov_sing}
Nathaniel Bottman.
\newblock Pseudoholomorphic quilts with figure eight singularity.
\newblock {\em J. Symplectic Geom.}, 18(1):1--55, 2020.

\bibitem[Caz19]{ham}
Guillem Cazassus.
\newblock A two-category of hamiltonian manifolds, and a (1+1+1) field theory,
  2019.

\bibitem[CGS00]{CGS_sve}
Kai Cieliebak, Ana~Rita Gaio, and Dietmar~A. Salamon.
\newblock {$J$}-holomorphic curves, moment maps, and invariants of
  {H}amiltonian group actions.
\newblock {\em Internat. Math. Res. Notices}, (16):831--882, 2000.

\bibitem[CH17]{ChoHong}
Cheol-Hyun Cho and Hansol Hong.
\newblock Finite group actions on {L}agrangian {F}loer theory.
\newblock {\em J. Symplectic Geom.}, 15(2):307--420, 2017.

\bibitem[CKMEY]{equiv_trees}
Guillem Cazassus, Paul Kirk, Michael Miller-Eismeier, and Wai-Kit Yeung.
\newblock Equivariant lagrangian floer homology via multiplicative flow trees.
\newblock {\em in preparation}.

\bibitem[DF18]{Daemi_Fukaya_strategy}
Aliakbar Daemi and Kenji Fukaya.
\newblock Atiyah-{F}loer conjecture: a formulation, a strategy of proof and
  generalizations.
\newblock In {\em Modern geometry: a celebration of the work of {S}imon
  {D}onaldson}, volume~99 of {\em Proc. Sympos. Pure Math.}, pages 23--57.
  Amer. Math. Soc., Providence, RI, 2018.

\bibitem[DFL21]{DaemiFukayaLipyanskiy}
Aliakbar Daemi, Kenji Fukaya, and Maksim Lipyanskiy.
\newblock {L}agrangians, {SO(3)}-instantons and the {A}tiyah-{F}loer
  conjecture, 2021.

\bibitem[DM]{DaemiMiller}
Aliakbar Daemi and Michael Miller.
\newblock Functoriality and invariance of equivariant instanton homology.
\newblock {\em in preparation}.

\bibitem[DS94]{DostoglouSalamon}
Stamatis Dostoglou and Dietmar~A Salamon.
\newblock Self-dual instantons and holomorphic curves.
\newblock {\em Annals of Mathematics}, pages 581--640, 1994.

\bibitem[DS20]{DaemiScaduto_equiv_si}
Aliakbar Daemi and Christopher Scaduto.
\newblock Equivariant aspects of singular instanton {F}loer homology, 2020.

\bibitem[Dun13]{Duncan_thesis}
David~Lee Duncan.
\newblock {\em Compactness results for the quilted {A}tiyah-{F}loer
  conjecture}.
\newblock ProQuest LLC, Ann Arbor, MI, 2013.
\newblock Thesis (Ph.D.)--Rutgers The State University of New Jersey - New
  Brunswick.

\bibitem[FHS95]{FloerHoferSalamon}
Andreas Floer, Helmut Hofer, and Dietmar Salamon.
\newblock Transversality in elliptic {M}orse theory for the symplectic action.
\newblock {\em Duke Math. J.}, 80(1):251--292, 1995.

\bibitem[Fra04]{Frauenfelder}
Urs Frauenfelder.
\newblock The {A}rnold-{G}ivental conjecture and moment {F}loer homology.
\newblock {\em Int. Math. Res. Not.}, (42):2179--2269, 2004.

\bibitem[Gei08]{Geiges_book}
Hansj\"{o}rg Geiges.
\newblock {\em An introduction to contact topology}, volume 109 of {\em
  Cambridge Studies in Advanced Mathematics}.
\newblock Cambridge University Press, Cambridge, 2008.

\bibitem[HLS16]{HLSflex}
Kristen Hendricks, Robert Lipshitz, and Sucharit Sarkar.
\newblock A flexible construction of equivariant {F}loer homology and
  applications.
\newblock {\em J. Topol.}, 9(4):1153--1236, 2016.

\bibitem[HLS20a]{HLSflex_corr}
Kristen Hendricks, Robert Lipshitz, and Sucharit Sarkar.
\newblock Corrigendum: {A} flexible construction of equivariant {F}loer
  homology and applications.
\newblock {\em J. Topol.}, 13(3):1317--1331, 2020.

\bibitem[HLS20b]{HLSsimplicial}
Kristen Hendricks, Robert Lipshitz, and Sucharit Sarkar.
\newblock A simplicial construction of {$G$}-equivariant {F}loer homology.
\newblock {\em Proc. Lond. Math. Soc. (3)}, 121(6):1798--1866, 2020.

\bibitem[Hue95]{huebschmann}
Johannes Huebschmann.
\newblock Symplectic and {P}oisson structures of certain moduli spaces. {II}.
  {P}rojective representations of cocompact planar discrete groups.
\newblock {\em Duke Math. J.}, 80(3):757--770, 1995.

\bibitem[Jef94]{jeffrey}
Lisa~C. Jeffrey.
\newblock Extended moduli spaces of flat connections on {R}iemann surfaces.
\newblock {\em Math. Ann.}, 298(4):667--692, 1994.

\bibitem[Kir84]{Kirwan_thesis}
Frances~Clare Kirwan.
\newblock {\em Cohomology of quotients in symplectic and algebraic geometry},
  volume~31 of {\em Mathematical Notes}.
\newblock Princeton University Press, Princeton, NJ, 1984.

\bibitem[KLZ19]{KimLauZheng}
Yoosik Kim, Siu-Cheong Lau, and Xiao Zheng.
\newblock {T}-equivariant disc potential and {SYZ} mirror construction, 2019.

\bibitem[KM07]{KMbook}
Peter Kronheimer and Tomasz Mrowka.
\newblock {\em Monopoles and three-manifolds}, volume~10 of {\em New
  Mathematical Monographs}.
\newblock Cambridge University Press, Cambridge, 2007.

\bibitem[Kon95]{Kontsevich_ICM}
Maxim Kontsevich.
\newblock Homological algebra of mirror symmetry.
\newblock In {\em Proceedings of the {I}nternational {C}ongress of
  {M}athematicians, {V}ol. 1, 2 ({Z}\"{u}rich, 1994)}, pages 120--139.
  Birkh\"{a}user, Basel, 1995.

\bibitem[LL13]{LekiliLipyanskiy}
Yank{\i} Lekili and Max Lipyanskiy.
\newblock Geometric composition in quilted {F}loer theory.
\newblock {\em Adv. Math.}, 236:1--23, 2013.

\bibitem[Mil19]{Miller_equiv}
S.~Michael Miller.
\newblock Equivariant instanton homology.
\newblock 2019.

\bibitem[MiR99]{MiR_thesis}
Ignasi Mundet~i Riera.
\newblock {\em {Y}ang-{M}ills-{H}iggs theory for symplectic fibrations}.
\newblock PhD thesis, Madrid, 1999.

\bibitem[MiR03]{MiR_Hamilt_GW}
Ignasi Mundet~i Riera.
\newblock Hamiltonian {G}romov-{W}itten invariants.
\newblock {\em Topology}, 42(3):525--553, 2003.

\bibitem[MS12]{McDuSal}
Dusa McDuff and Dietmar Salamon.
\newblock {\em {$J$}-holomorphic curves and symplectic topology}, volume~52 of
  {\em American Mathematical Society Colloquium Publications}.
\newblock American Mathematical Society, Providence, RI, second edition, 2012.

\bibitem[MW12]{MW}
Ciprian Manolescu and Christopher Woodward.
\newblock Floer homology on the extended moduli space.
\newblock In {\em Perspectives in analysis, geometry, and topology}, volume 296
  of {\em Progr. Math.}, pages 283--329. Birkh\"auser/Springer, New York, 2012.

\bibitem[NWZ14]{NguyenWoodwardZiltener}
Khoa~Lu Nguyen, Chris Woodward, and Fabian Ziltener.
\newblock Morphisms of {C}oh{FT} algebras and quantization of the {K}irwan map.
\newblock In {\em Symplectic, {P}oisson, and noncommutative geometry},
  volume~62 of {\em Math. Sci. Res. Inst. Publ.}, pages 131--170. Cambridge
  Univ. Press, New York, 2014.

\bibitem[Po{\'z}99]{Pozniak}
Marcin Po{\'z}niak.
\newblock Floer homology, {N}ovikov rings and clean intersections.
\newblock In {\em Northern {C}alifornia {S}ymplectic {G}eometry {S}eminar},
  volume 196 of {\em Amer. Math. Soc. Transl. Ser. 2}, pages 119--181. Amer.
  Math. Soc., Providence, RI, 1999.

\bibitem[PSS96]{PSS}
Serguei Piunikhin, Dietmar Salamon, and Matthias Schwarz.
\newblock Symplectic {F}loer-{D}onaldson theory and quantum cohomology.
\newblock In {\em Contact and symplectic geometry ({C}ambridge, 1994)},
  volume~8 of {\em Publ. Newton Inst.}, pages 171--200. Cambridge Univ. Press,
  Cambridge, 1996.

\bibitem[Sal95]{Salamon_AF}
Dietmar Salamon.
\newblock Lagrangian intersections, {$3$}-manifolds with boundary, and the
  {A}tiyah-{F}loer conjecture.
\newblock In {\em Proceedings of the {I}nternational {C}ongress of
  {M}athematicians, {V}ol. 1, 2 ({Z}\"{u}rich, 1994)}, pages 526--536.
  Birkh\"{a}user, Basel, 1995.

\bibitem[Sei00]{Seidel_graded}
Paul Seidel.
\newblock Graded {L}agrangian submanifolds.
\newblock {\em Bull. Soc. Math. France}, 128(1):103--149, 2000.

\bibitem[Sei08]{Seidel_book}
Paul Seidel.
\newblock {\em Fukaya categories and {P}icard-{L}efschetz theory}.
\newblock Zurich Lectures in Advanced Mathematics. European Mathematical
  Society (EMS), Z\"{u}rich, 2008.

\bibitem[SS10]{SeidelSmith}
Paul Seidel and Ivan Smith.
\newblock Localization for involutions in {F}loer cohomology.
\newblock {\em Geom. Funct. Anal.}, 20(6):1464--1501, 2010.

\bibitem[SW08]{SalamonWehrheim}
Dietmar Salamon and Katrin Wehrheim.
\newblock Instanton {F}loer homology with {L}agrangian boundary conditions.
\newblock {\em Geom. Topol.}, 12(2):747--918, 2008.

\bibitem[TX17]{TianXu_symplectic_GLSM}
Gang Tian and Guangbo Xu.
\newblock The symplectic approach of gauged linear {$\sigma$}-model.
\newblock In {\em Proceedings of the {G}\"{o}kova {G}eometry-{T}opology
  {C}onference 2016}, pages 86--111. G\"{o}kova Geometry/Topology Conference
  (GGT), G\"{o}kova, 2017.

\bibitem[Vit99]{Viterbo}
Claude Viterbo.
\newblock Functors and computations in {F}loer homology with applications. {I}.
\newblock {\em Geom. Funct. Anal.}, 9(5):985--1033, 1999.

\bibitem[Weh05a]{Wehrheim_asd_lbc}
Katrin Wehrheim.
\newblock Anti-self-dual instantons with {L}agrangian boundary conditions.
  {II}. {B}ubbling.
\newblock {\em Comm. Math. Phys.}, 258(2):275--315, 2005.

\bibitem[Weh05b]{Wehrheim_lbc_asd}
Katrin Wehrheim.
\newblock Lagrangian boundary conditions for anti-self-dual instantons and the
  {A}tiyah-{F}loer conjecture.
\newblock volume~3, pages 703--747. 2005.
\newblock Conference on Symplectic Topology.

\bibitem[Wei81]{Weinstein_sg}
Alan Weinstein.
\newblock Symplectic geometry.
\newblock {\em Bull. Amer. Math. Soc. (N.S.)}, 5(1):1--13, 1981.

\bibitem[Woo11]{Woodward_quasimap_FH}
Christopher~T. Woodward.
\newblock Gauged {F}loer theory of toric moment fibers.
\newblock {\em Geom. Funct. Anal.}, 21(3):680--749, 2011.

\bibitem[Woo15a]{Woodwardqkir1}
Chris~T. Woodward.
\newblock Quantum {K}irwan morphism and {G}romov-{W}itten invariants of
  quotients {I}.
\newblock {\em Transform. Groups}, 20(2):507--556, 2015.

\bibitem[Woo15b]{Woodwardqkir2}
Chris~T. Woodward.
\newblock Quantum {K}irwan morphism and {G}romov-{W}itten invariants of
  quotients {II}.
\newblock {\em Transform. Groups}, 20(3):881--920, 2015.

\bibitem[Woo15c]{Woodwardqkir3}
Chris~T. Woodward.
\newblock Quantum {K}irwan morphism and {G}romov-{W}itten invariants of
  quotients {III}.
\newblock {\em Transform. Groups}, 20(4):1155--1193, 2015.

\bibitem[WW15]{WWquilts}
Katrin Wehrheim and Chris~T. Woodward.
\newblock Pseudoholomorphic quilts.
\newblock {\em J. Symplectic Geom.}, 13(4):849--904, 2015.

\bibitem[Xu20]{Xu_AF}
Guangbo Xu.
\newblock A compactness theorem for {$SO(3)$} {A}nti-{S}elf-{D}ual equation
  with translation symmetry, 2020.

\end{thebibliography}

\end{document}